\newcommand{\BIGOP}[1]{\mathop{\mathchoice%
{\raise-0.22em\hbox{\huge $#1$}}%
{\raise-0.05em\hbox{\Large $#1$}}{\hbox{\large $#1$}}{#1}}}
\newcommand{\bigtimes}{\BIGOP{\times}}
\newcommand{\BIGboxplus}{\mathop{\mathchoice%
{\raise-0.35em\hbox{\huge $\boxplus$}}%
{\raise-0.15em\hbox{\Large $\boxplus$}}{\hbox{\large $\boxplus$}}{\boxplus}}}
\renewcommand{\ldots}{\ensuremath{\dotsc}}
\newcommand{\Rplus}{{\mathbb R}_{>0}}
\def\epsilon{\varepsilon}
\def\hat{\widehat}
\def\undertilde#1{{\baselineskip=0pt\vtop
{\hbox{$#1$}\hbox{$\scriptscriptstyle\sim$}}}{}}
\def\underdtilde#1{{\baselineskip=0pt\vtop
{\hbox{$#1$}\hbox{$\scriptscriptstyle\approx$}}}{}}
\def\epsilon{\varepsilon}
\def\hat{\widehat}
\def\nabq{\undertilde{\nabla}_{q}}
\def\nabqi{\undertilde{\nabla}_{q_i}}
\def\nabqj{\undertilde{\nabla}_{q_j}}
\def\nabx{\undertilde{\nabla}_{x}}
\def\nabr1{\undertilde{\nabla}_{r_1}}
\def\nabr2{\undertilde{\nabla}_{r_2}}
\def\delx{\Delta_{x}}
\def\chit{\undertilde{\chi}}
\def\lae{\ell_{a}}
\def\ut{\undertilde{u}}
\def\utae{\undertilde{u}_{\epsilon,L}}
\def\utaed{\undertilde{u}_{\epsilon,L,\delta}}
\def\uta{\undertilde{u}_{\epsilon,L}}
\def\ute{\undertilde{u}_{\epsilon}}
\def\utaeD{\utae^{\Delta t}}
\def\utaeDm{\utae^{\Delta t,-}}
\def\utaeDp{\utae^{\Delta t,+}}
\def\vt{\undertilde{v}}
\def\wt{\undertilde{w}}
\def\xt{\undertilde{x}}
\def\qt{\undertilde{q}}
\def\pt{\undertilde{p}}
\def\bt{\undertilde{b}}
\def\ft{\undertilde{f}}
\def\gt{\undertilde{g}}
\def\nt{\undertilde{n}}
\def\etat{\undertilde{\eta}}
\def\xit{\undertilde{\xi}}
\def\Ct{\undertilde{C}}
\def\Ft{\undertilde{F}}
\def\Ht{\undertilde{H}}
\def\Lt{\undertilde{L}}
\def\St{\undertilde{S}}
\def\Vt{\undertilde{V}}
\def\Wt{\undertilde{W}}
\def\Yt{\undertilde{Y}}
\def\dq{\,{\rm d}\undertilde{q}}
\def\dx{\,{\rm d}\undertilde{x}}
\def\dt{\,{\rm d}t}
\def\zerot{\undertilde{0}}
\def\tautt{\underdtilde{\tau}}
\def\Att{\underdtilde{A}}
\def\Ctt{\underdtilde{C}}
\def\Itt{\underdtilde{I}}
\def\Ltt{\underdtilde{L}}
\def\unabtt{\underdtilde{\nabla}_{x}\,\ut}
\def\vnabtt{\underdtilde{\nabla}_{x}\,\vt}
\def\wnabtt{\underdtilde{\nabla}_{x}\,\wt}
\def\sigtt{\underdtilde{\sigma}}
\def\pae{p_{\epsilon,L}}
\def\hpsiae{\hat \psi_{\epsilon,L}}
\def\hpsiaed{\hat \psi_{\epsilon,L,\delta}}
\def\psiae{\psi_{\epsilon,L}}
\def\psia{\hat \psi_{\epsilon,L}}
\def\ptut{\frac{\partial \ut}{\partial t}}
\def\ptutae{\frac{\partial \utae}{\partial t}}
\def\dd {{\,\rm d}}
\def\qqquad{\qquad\quad}
\def\qqqquad{\qquad\qquad}
\newcommand{\nabxtt}{\underdtilde{\nabla}_{x}\,}
\newcommand{\bet}{\noalign{\vskip6pt plus 3pt minus 1pt}}
\newcommand{\grad}[2]{\nabla_{#1}#2} 
\newcommand{\compEmb}{\hookrightarrow \!\!\!\rightarrow}
\newtheorem{example}{Example}[section]
\newtheorem{definition}{Definition}[section]
\newtheorem{lemma}{Lemma}[section]
\newtheorem{theorem}{Theorem}[section]
\newtheorem{remark}{Remark}[section]
\renewcommand{\theequation}{\arabic{section}.\arabic{equation}}
\newcounter{ind}
\def\eqlabstart{%
 \setcounter{ind}{\value{equation}}\addtocounter{ind}{1}%
 \setcounter{equation}{0}%
 \renewcommand{\theequation}{\arabic{section}.\arabic{ind}\alph{equation}}%
}
\def\eqlabend{%
 \renewcommand{\theequation}{\arabic{section}.\arabic{equation}}%
 \setcounter{equation}{\value{ind}}%
}
\newcounter{appendix}
\renewcommand\appendix{\par
  	\refstepcounter{appendix}
	   \setcounter{section}{0}
       \setcounter{theorem}{0}
	   \setcounter{equation}{0}
\renewcommand\thesection{\appendixname\ \Alph{section}}
\renewcommand\thesubsection{\Alph{section}.\arabic{subsection}}%
\renewcommand\theequation{\Alph{section}.\arabic{equation}}}%
\begin{document}

\markboth{John W. Barrett and Endre S\"{u}li}
{Existence and Equilibration of Global Weak Solutions for Dilute Polymers}

%
%

\title[Existence and Equilibration of Global Weak Solutions for Dilute Polymers]
{Existence and equilibration of global weak solutions to finitely extensible nonlinear bead-spring chain models for dilute polymers}

\author{JOHN W. BARRETT}

\address{\footnotesize Department of Mathematics, Imperial College London\\
London SW7 2AZ, UK\\
{\tt j.barrett@imperial.ac.uk}}

\author{ENDRE S\"ULI}

\address{
Mathematical Institute, University of Oxford\\ Oxford OX1 3LB, UK\\
{\tt endre.suli@maths.ox.ac.uk}}

\maketitle


\begin{abstract}
We show the existence of global-in-time weak solutions to a general class of coupled FENE-type bead-spring chain models that arise from the kinetic theory of dilute solutions of polymeric liquids with noninteracting polymer chains. The class of models involves the unsteady incompressible Navier--Stokes equations in a bounded domain in $\mathbb{R}^d$, $d = 2$ or $3$, for the velocity and the pressure of the fluid, with an elastic extra-stress tensor appearing on the right-hand side in the momentum equation. The extra-stress tensor stems from the random movement of
the polymer chains and is defined by the Kramers expression through the associated probability density function
that satisfies a Fokker--Planck-type parabolic equation, a crucial feature of which is the presence of a centre-of-mass diffusion term. We require no structural assumptions on the drag term in the Fokker--Planck equation; in particular, the drag term need not be corotational. With a square-integrable and divergence-free initial velocity datum $\undertilde{u}_0$ for the Navier--Stokes equation and a nonnegative initial probability density function $\psi_0$ for the Fokker--Planck equation, which has finite relative entropy with respect to the Maxwellian $M$, we prove, {\em via} a limiting procedure on certain regularization parameters, the existence of a global-in-time weak solution $t \mapsto (\undertilde{u}(t), \psi(t))$ to the coupled Navier--Stokes--Fokker--Planck system, satisfying the initial condition $(\undertilde{u}(0), \psi(0)) = (\undertilde{u}_0, \psi_0)$, such that $t\mapsto \undertilde{u}(t)$ belongs to the classical Leray space and $t \mapsto \psi(t)$ has bounded relative entropy with respect to $M$ and $t \mapsto \psi(t)/M$ has integrable Fisher information (w.r.t. the measure ${\rm d}\mu:= M(\undertilde{q})\,{\rm d}\undertilde{q}\,{\rm d}\undertilde{x}$)
over any time interval $[0,T]$, $T>0$. If the density of body forces $\undertilde{f}$ on the right-hand side of the Navier--Stokes momentum equation vanishes, then
a weak solution constructed  as above is such that
$t\mapsto (\undertilde{u}(t),\psi(t))$ decays exponentially in time to $(\undertilde{0},M)$ in the $\undertilde{L}^2 \times L^1$ norm, at a rate that is independent of $(\undertilde{u}_0,\psi_0)$ and of the centre-of-mass diffusion coefficient.

\medskip

\noindent
\textit{Keywords:} Kinetic polymer models, FENE chain, Navier--Stokes--Fokker--Planck system.

\end{abstract}

\section{Introduction}
\label{sec:1}

This paper establishes the existence of global-in-time weak solutions
to a large class of bead-spring chain models with finitely
extensible nonlinear elastic (FENE) type spring potentials, ---
a system of nonlinear partial differential equations that arises
from the kinetic theory of dilute polymer solutions. The solvent is an
incompressible, viscous, isothermal Newtonian fluid confined to a
bounded open Lipschitz domain $\Omega \subset \mathbb{R}^d$, $d=2$ or $3$, with
boundary $\partial \Omega$. For the sake of simplicity of presentation,
we shall suppose that $\Omega$ has a `solid boundary' $\partial \Omega$;
the velocity field $\ut$ will then satisfy the no-slip boundary condition
$\ut=\zerot$ on $\partial \Omega$. The polymer chains, which are suspended
in the solvent, are assumed not to interact with each other. The
conservation of momentum and mass equations for the solvent then
have the form of the incompressible Navier--Stokes equations in
which the elastic {\em extra-stress} tensor $\tautt$ (i.e. \ the
polymeric part of the Cauchy stress tensor) appears as a source
term:

Given $T \in \mathbb{R}_{>0}$, find $\ut\,:\,(\xt,t)\in
\overline\Omega \times [0,T] \mapsto
\ut(\xt,t) \in {\mathbb R}^d$ and $p\,:\, (\xt,t) \in \Omega \times (0,T]
\mapsto p(\xt,t) \in {\mathbb R}$ such that
\begin{subequations}
\begin{alignat}{2}
\ptut + (\ut \cdot \nabx)\, \ut - \nu \,\delx \ut + \nabx p
&= \ft + \nabx \cdot \tautt \qquad &&\mbox{in } \Omega \times (0,T],\label{ns1a}\\
\nabx \cdot \ut &= 0        \qquad &&\mbox{in } \Omega \times (0,T],\label{ns2a}\\
\ut &= \zerot               \qquad &&\mbox{on } \partial \Omega \times (0,T],\label{ns3a}\\
\ut(\xt,0)&=\ut_{0}(\xt)    \qquad &&\forall \xt \in \Omega.\label{ns4a}
\end{alignat}
\end{subequations}
It is assumed that each of the equations above has been written in its nondimensional form;
$\ut$ denotes a nondimensional velocity, defined as the velocity field
scaled by the characteristic flow speed $U_0$; $\nu\in \mathbb{R}_{>0}$ is the reciprocal of the Reynolds number,
i.e. the ratio of the kinematic viscosity coefficient of the solvent and $L_0 U_0$, where
$L_0$ is a characteristic length-scale of the flow; $p$ is
the nondimensional pressure and $f$ is the nondimensional density of body forces.

In a {\em bead-spring chain model}, consisting of $K+1$ beads coupled with $K$ elastic
springs to represent a polymer chain, the extra-stress tensor
$\tautt$ is defined by the \textit{Kramers expression}
as a weighted average of $\psi$, the probability
density function of the (random) conformation vector $\qt := (\qt_1^{\rm T},\dots, \qt_K^{\rm T})^{\rm T}
\in \mathbb{R}^{Kd}$ of the chain (cf. (\ref{tau1}) below), with $\qt_i$
representing the $d$-component conformation/orientation vector of the $i$th spring.
The Kolmogorov equation satisfied by $\psi$ is a second-order parabolic equation,
the Fokker--Planck equation, whose transport coefficients depend
on the velocity field $\ut$. The domain $D$ of admissible conformation
vectors $D \subset \mathbb{R}^{Kd}$ is a $K$-fold
Cartesian product $D_1 \times \cdots \times D_K$ of balanced convex
open sets $D_i \subset \mathbb{R}^d$, $i=1,\dots, K$; the term
{\em balanced} means that $\qt_i \in D_i$ if, and only if, $-\qt_i \in D_i$.
Hence, in particular, $\undertilde{0} \in D_i$, $i=1,\dots,K$.
Typically $D_i$ is the whole of $\mathbb{R}^d$ or a bounded open $d$-dimensional ball
centred at the origin $\zerot \in \mathbb{R}^d$ for each $i=1,\dots,K$.
When $K=1$, the model is referred to as the {\em dumbbell model.}

Let $\mathcal{O}_i\subset [0,\infty)$ denote the image of $D_i$
under the mapping $\qt_i \in D_i \mapsto
\frac{1}{2}|\qt_i|^2$, and consider the {\em spring potential}~$U_i
\!\in\! C^2(\mathcal{O}_i;\mathbb {R}_{\geq 0})$, $i=1,\dots, K$.
Clearly, $0 \in \mathcal{O}_i$. We shall suppose that $U_i(0)=0$
and that $U_i$ is monotonic increasing and unbounded on $\mathcal{O}_i$ for each $i=1,\dots, K$.
The elastic spring-force $\Ft_i\,:\, D_i \subseteq \mathbb{R}^d \rightarrow \mathbb{R}^d$
of the $i$th spring in the chain is defined by
\begin{equation}\label{eqF}
\Ft_i(\qt_i) = U_i'(\textstyle{\frac{1}{2}}|\qt_i|^2)\,\qt_i, \qquad i=1,\dots,K.
\end{equation}

\begin{example}
\label{ex1.1} \em
In the Hookean dumbbell model $K=1$, and the spring force
is defined by ${\Ft}({\qt}) = {\qt}$, with ${\qt} \in {D}=\mathbb{R}^d$,
corresponding to ${U}(s)= s$, $s \in \mathcal{O} = [0,\infty)$.
This model is physically unrealistic
as it admits an arbitrarily large
extension.$\quad\diamond$
\end{example}

We shall therefore assume in what follows that $D$ is a Cartesian product of $K$
{\em bounded} open balls $D_i \subset \mathbb{R}^d$, centred at the
origin $\zerot \in \mathbb{R}^d$, $i=1,\dots, K,$ with $K \geq 1$.

We shall further suppose that
for $i=1,\dots, K$
there exist constants $c_{ij}>0$,
$j=1, 2, 3, 4$, and $\gamma_i > 1$ such that
the (normalized) Maxwellian $M_i$, defined by
\[
M_i(\qt_i) = \frac{1}{\mathcal{Z}_i} {\rm e}^{-U_i(\frac{1}{2}\,|\qt_i|^2)}, \qquad
\mathcal{Z}_i:= {\displaystyle \int_{D_i} {\rm e}^{-U_i(\frac{1}{2}|\qt_i|^2)} \dq_i}\,,
\]
and the associated spring potential $U_i$ satisfy %
\eqlabstart
\begin{eqnarray}
&& ~\hspace{6mm}c_{i1}\,[\mbox{dist}(\qt_i, \,\partial D_i)]^{\gamma_i} \leq  M_i(\qt_i) \,
\leq
c_{i2}\,[\mbox{dist}(\qt_i, \,\partial D_i)]^{\gamma_i} \quad \forall \qt_i \in
D_i, \label{growth1}\\
&& ~\hspace{15mm}c_{i3} \leq [\mbox{dist}(\qt_i,\,\partial D_i)] \,U_i'
(\textstyle{\frac{1}{2}}|\qt_i|^2)
\leq c_{i4}\quad \forall \qt_i \in D_i. \label{growth2}
\end{eqnarray}
\eqlabend
The Maxwellian in the model is then defined by
\begin{align}
M(\qt) := \prod_{i=1}^K M_i(\qt_i) \qquad \forall \qt:=(\qt_1^{\rm T},\ldots,\qt_K^{\rm T})^{\rm T} \in D
:= \bigtimes_{i=1}^K D_i.
\label{MN}
\end{align}
Observe that, for $i=1, \dots, K$,
\begin{equation}
M(\qt)\,\nabqi [M(\qt)]^{-1} = - [M(\qt)]^{-1}\,\nabqi M(\qt) =
\nabqi U_i(\textstyle{\frac{1}{2}}|\qt_i|^2)
=U_i'(\textstyle{\frac{1}{2}}|\qt_i|^2)\,\qt_i. \label{eqM}
\end{equation}
Since
$[U_i(\textstyle{\frac{1}{2}}|\qt_i|^2)]^2 = (-\log M_i(\qt_i) + {\rm Const}.)^2$,
it follows from (\ref{growth1},b) that (if $\gamma_i >1$,
as has been assumed here,)
\begin{equation}\label{additional-1}
\int_{D_i} \left[1 + [U_i(\textstyle{\frac{1}{2}}|\qt_i|^2)]^2
+ [U_i'(\textstyle{\frac{1}{2}}|\qt_i|^2)]^2\right] M_i(\qt_i) \, \dd
\qt_i < \infty, \qquad i=1, \dots, K.
\end{equation}

\begin{example}
\label{ex1.2} \em
In the FENE (finitely extensible nonlinear elastic)
dumbbell model $K=1$ and the spring force is given by
$
{\Ft}({\qt}) = (1 - |{\qt}|^2/b)^{-1}\,{\qt}$,
$\qt \in D = B(\zerot,b^{\frac{1}{2}})$,
corresponding to ${U}(s) = - \frac{b}{2}\log \left(1-\frac{2s}{b}\right)$,
$s \in \mathcal{O} = [0,\frac{b}{2})$.
Here $B(\zerot,b^{\frac{1}{2}})$ is a bounded open ball
in $\mathbb{R}^d$ centred at the origin
$\zerot \in \mathbb{R}^d$ and of fixed radius $b^{\frac{1}{2}}$, with $b>0$.
Direct calculations show that the Maxwellian $M$ and the
elastic potential $U$ of the FENE model satisfy
the conditions (\ref{growth1},b) with $K=1$ and $\gamma:= \frac{b}{2}$
provided that $b>2$. Thus, (\ref{additional-1}) also holds for $K=1$
and $b>2$.

It is interesting to note that in the (equivalent)
stochastic version of the FENE model a solution to the system of
stochastic differential equations
associated with the Fokker--Planck equation exists and has
trajectorial uniqueness if, and only if, $\gamma = \frac{b}{2}\geq 1$;
(cf. Jourdain, Leli\`evre \& Le Bris \cite{JLL2} for details).
Thus, in the general class of FENE-type bead-spring chain models considered here,
the assumption $\gamma_i > 1$, $i=1,\dots, K$, is the weakest reasonable
requirement on the decay-rate of $M_i$  in (\ref{growth1}) as
$\mbox{dist}(\qt_i,\partial D_i) \rightarrow 0$.
$\quad\diamond$
\end{example}

The governing equations of the general FENE-type bead-spring chain
model with centre-of-mass diffusion are
(\ref{ns1a}--d), where the extra-stress tensor $\tautt$ is defined by the \textit{Kramers
expression}:
\begin{equation}\label{tau1}
\tautt(\xt,t) = k\left( \sum_{i=1}^K \int_{D}\psi(\xt,\qt,t)\, \qt_i\,
\qt_i^{\rm T}\, U_i'\left(\textstyle \frac{1}{2}|\qt_i|^2\right)
{\dd} \qt -
\rho(\xt,t)\,\Itt\right),
\end{equation}
with the density of polymer chains located at $\xt$ at time $t$ given by
\begin{equation}\label{rho1}
\rho(\xt,t) = \int_{D} \psi(\xt, \qt,t)
{\dd}\qt.
\end{equation}
The probability density function $\psi$ is a solution of the Fokker--Planck equation
\begin{align}
\label{fp0}
&\frac{\partial \psi}{\partial t} + (\ut \cdot\nabx) \psi +
\sum_{i=1}^K \nabqi
\cdot \left(\sigtt(\ut) \, \qt_i\,\psi \right)
\nonumber
\\
\bet
&\hspace{0.1in} =
\epsilon\,\Delta_x\,\psi +
\frac{1}{2 \,\lambda}\,
\sum_{i=1}^K \sum_{j=1}^K
A_{ij}\,\nabqi \cdot \left(
M\,\nabqj \left(\frac{\psi}{M}\right)\right) \quad \mbox{in } \Omega \times D \times
(0,T],
\end{align}
with $\sigtt(\vt) \equiv \nabxtt \vt$, where $(\vnabtt)(\xt,t) \in {\mathbb
R}^{d \times d}$ and $\{\vnabtt\}_{ij} = \textstyle
\frac{\partial v_i}{\partial x_j}$.
The dimensionless constant $k>0$ featuring in \eqref{tau1} is a constant multiple of
the product of the Boltzmann constant $k_B$ and the absolute temperature $\mathtt{T}$.
In \eqref{fp0}, $\varepsilon>0$ is the centre-of-mass diffusion coefficient defined
as $\varepsilon := (\ell_0/L_0)^2/(4(K+1)\lambda)$ with
$\ell_0:=\sqrt{k_B \mathtt{T}/\mathtt{H}}$ signifying the characteristic microscopic length-scale
and $\lambda :=(\zeta/4\mathtt{H})(U_0/L_0)$,
where $\zeta>0$ is a friction coefficient and $\mathtt{H}>0$ is a spring-constant.
The dimensionless parameter $\lambda \in \Rplus$, called the Weissenberg number (and usually denoted by $\mathsf{Wi}$), characterizes
the elastic relaxation property of the fluid, and $A=(A_{ij})_{i,j=1}^K$ is the symmetric positive definite
\textit{Rouse matrix}, or connectivity matrix;
for example, $A = {\tt tridiag}\left[-1, 2, -1\right]$ in the case
of a linear chain; see, Nitta \cite{Nitta}.

\begin{definition} The collection of equations and structural hypotheses
(\ref{ns1a}--d)--\eqref{fp0} will be referred to throughout the paper as model
$({\rm P}_{\varepsilon})$, or as the {\em general FENE-type bead-spring chain model with centre-of-mass diffusion}.
\end{definition}

A noteworthy feature of equation (\ref{fp0}) in the model $({\rm P}_{\varepsilon})$
compared to classical Fokker--Planck equations for bead-spring models in the
literature is the presence of the
$\xt$-dissipative centre-of-mass diffusion term $\varepsilon
\,\Delta_x \psi$ on the right-hand side of the Fokker--Planck equation (\ref{fp0}).
We refer to Barrett \& S\"uli \cite{BS} for the derivation of (\ref{fp0}) in the case of $K=1$;
see also the article by Schieber \cite{SCHI}
concerning generalized dumbbell models with centre-of-mass diffusion,
and the recent paper of Degond \& Liu \cite{DegLiu} for a careful justification
of the presence of the centre-of-mass diffusion term through asymptotic analysis.
In standard derivations of bead-spring models the centre-of-mass
diffusion term is routinely omitted on the grounds that it is
several orders of magnitude smaller than the other terms in the
equation. Indeed, when the characteristic macroscopic
length-scale $L_0\approx 1$, (for example, $L_0 = \mbox{diam}(\Omega)$), Bhave,
Armstrong \& Brown \cite{Bh} estimate the ratio $\ell_0^2/L_0^2$ to
be in the range of about $10^{-9}$ to $10^{-7}$. However, the
omission of the term $\varepsilon \,\Delta_x \psi$ from (\ref{fp0})
in the case of a heterogeneous solvent velocity $\ut(\xt,t)$ is a
mathematically counterproductive model reduction. When $\varepsilon
\,\Delta_x\psi$ is absent, (\ref{fp0}) becomes a degenerate
parabolic equation exhibiting hyperbolic behaviour with respect to
$(\xt,t)$. Since the study of weak solutions to the coupled problem
requires one to work with velocity fields $\ut$ that have very
limited Sobolev regularity (typically $\ut \in
L^\infty(0,T;\Lt^2(\Omega)) \cap L^2(0,T; \Ht^1_0(\Omega))$), one is
then forced into the technically unpleasant framework of
hyperbolically degenerate parabolic equations with rough transport
coefficients (cf. Ambrosio \cite{Am} and DiPerna \& Lions \cite{DPL}).
The resulting difficulties are further
exacerbated by the fact that, when $D$ is bounded, a typical spring
force $\Ft(\qt)$ for a finitely extensible model (such as FENE)
explodes as $\qt$ approaches $\partial D$; see
Example~\ref{ex1.2} above. For these reasons, here we
shall retain the centre-of-mass diffusion term in (\ref{fp0}).
In order to emphasize that the positive centre-of-mass diffusion coefficient
$\varepsilon$ is {\em not} a mathematical artifact but the
outcome of the physical derivation of the model, in Section \ref{sec:2} and
thereafter the variables $\ut$ and $\psi$ have been labelled with the
subscript $\varepsilon$.

We continue with a brief
literature survey. Unless otherwise stated, the centre-of-mass
diffusion term is absent from the model considered in the cited reference
(i.e. $\varepsilon$ is set to $0$); also, in all references cited $K=1$,
i.e. a simple dumbbell model is considered rather than a bead-spring
chain model.

An early contribution to the existence and uniqueness of
local-in-time solutions to a family of dumbbell type polymeric
flow models is due to Renardy \cite{R}. While the class of
potentials $\Ft(\qt)$ considered by Renardy \cite{R} (cf.\
hypotheses (F) and (F$'$) on pp.~314--315) does include the case of
Hookean dumbbells, it excludes the practically relevant case of the
FENE dumbbell model (see Example~\ref{ex1.2} above). More recently, E, Li \&
Zhang \cite{E} and Li, Zhang \& Zhang \cite{LZZ} have revisited the
question of local existence of solutions for dumbbell models. A further
development in this direction is the work of Zhang \& Zhang \cite{ZZ},
where the local existence of regular solutions to FENE-type dumbbell
models has been shown. All of these papers require high regularity of the initial data.
Constantin \cite{CON} considered the Navier--Stokes equations
coupled to nonlinear Fokker--Planck equations describing the
evolution of the probability distribution of the particles
interacting with the fluid.
Otto \& Tzavaras \cite{OT} investigated the Doi model (which is
similar to a Hookean model (cf. Example \ref{ex1.1} above), except
that $D=S^2$) for suspensions of rod-like molecules in the dilute regime.
Jourdain, Leli\`evre \& Le Bris \cite{JLL2} studied the existence of
solutions to the FENE dumbbell model in the case of a simple Couette flow.
By using tools from the theory of stochastic differential equations, they showed the existence
of a unique local-in-time solution to the FENE dumbbell model for $d=2$
when the velocity field $\ut$ is unidirectional and of the particular form $\ut(x_1,x_2)
= (u_1(x_2),0)^{\rm T}$.

In the case of Hookean dumbbells ($K=1$), and assuming $\varepsilon=0$, the coupled
microscopic-macroscopic model described above yields, formally,
taking the second moment of $\qt \mapsto \psi(\qt,\xt,t)$, the fully macroscopic,
Oldroyd-B model of viscoelastic flow. Lions \& Masmoudi \cite{LM}
have shown the existence of global-in-time weak solutions to the Oldroyd-B model
in a simplified corotational setting (i.e. with $\sigma(\ut) = \nabxtt \ut$
replaced by $\frac{1}{2}(\nabxtt \ut - (\nabxtt u)^{\rm T})$)
by exploiting the propagation in time of the compactness of the
solution (i.e. the property that if one takes a sequence of weak solutions that
converges weakly and such that the corresponding sequence of
initial data converges strongly, then the weak
limit is also a solution) and the DiPerna--Lions \cite{DPL} theory of
renormalized solutions to linear hyperbolic
equations with nonsmooth transport coefficients. It is not
known if an identical global
existence result for the Oldroyd-B model also holds in the
absence of the crucial assumption
that the drag term is corotational. With $\varepsilon>0$, the coupled microscopic-macroscopic model above yields,
taking the appropriate moments in the case of Hookean dumbbells, a dissipative
version of the Oldroyd-B model. In this sense, the Hookean dumbbell model has a macroscopic closure:
it is the Oldroyd-B model when $\varepsilon=0$, and a dissipative version of Oldroyd-B
when $\varepsilon>0$ (cf. Barrett \& S\"uli \cite{BS}).
Barrett \& Boyaval \cite{barrett-boyaval-09} have proved a global existence result
for this dissipative Oldroyd-B model in two space dimensions.
In contrast, the FENE model is not known to have an
exact closure at the macroscopic level, though Du, Yu \& Liu \cite{DU} and Yu, Du \&
Liu \cite{YU} have recently considered the analysis of approximate closures of the FENE dumbbell model.
Lions \& Masmoudi \cite{LM2} proved the global existence of weak
solutions for the \textit{corotational} FENE dumbbell model,
once again corresponding to the
case of $\varepsilon=0$ and $K=1$, and the Doi model,
also called the rod model;
see also the work of Masmoudi \cite{M}. Recently, Masmoudi \cite{M10} has extended this analysis to the noncorotational case.

Previously, El-Kareh \& Leal \cite{EKL} had proposed a steady macroscopic model,
with added dissipation in the equation satisfied by the conformation tensor, defined as
$$\Att(\xt):=\int_D\qt\,\qt^{\rm T} U'(\frac{1}{2}|\qt|^2) \,\psi(\xt,\qt) {\dd}\qt,$$ in order to account
for Brownian motion across streamlines; the model
can be thought of as an approximate macroscopic closure of a
FENE-type micro-macro model with centre-of-mass diffusion.

Barrett, Schwab \& S\"uli \cite{BSS}
showed the existence of global weak solutions to the coupled microscopic-macroscopic
model (\ref{ns1a}--d),
(\ref{fp0}) with $\varepsilon=0$, $K=1$, an $\xt$-mollified
velocity gradient in the Fokker--Planck
equation and an $\xt$-mollified probability density function $\psi$ in the Kramers
expression, admitting a large class of potentials $U$
(including the Hookean dumbbell model and general FENE-type dumbbell models); in addition to these
mollifications, $\ut$ in the
$\xt$-convective term $(\ut\cdot\nabx) \psi$ in the
Fokker--Planck equation was also mollified.
Unlike Lions \& Masmoudi \cite{LM}, the arguments in
Barrett, Schwab \& S\"uli \cite{BSS} did {\em not} require that
the drag term $\nabq\cdot(\sigtt(\ut)\,\qt\, \psi)$ in the Fokker--Planck equation
was corotational in the FENE case.

In Barrett \& S\"uli \cite{BS},
we derived the coupled Navier--Stokes--Fokker--Planck model
with centre-of-mass diffusion stated above, in the case of $K=1$.
We established the existence of global-in-time weak solutions to a mollification
of the model for a general class of spring-force-potentials including in
particular the FENE potential. We justified also, through a rigorous
limiting process, certain classical reductions of this model appearing
in the literature that exclude the centre-of-mass diffusion term from the
Fokker--Planck equation on the grounds that the diffusion coefficient is
small relative to other coefficients featuring in the equation.
In the case of a corotational drag term we performed a
rigorous passage to the limit as the
mollifiers in the Kramers expression and the drag term converge to identity operators.

In Barrett \& S\"uli \cite{BS2} we showed the existence of global-in-time
weak solutions to the general class of noncorotational FENE type dumbbell models (including the standard
FENE dumbbell model) with centre-of-mass
diffusion, in the case of $K=1$, with microsropic  cut-off (cf. \eqref{cut1} and \eqref{betaLa} below) in the drag term
\begin{equation}\label{drag}
 \nabq\cdot(\sigtt(\ut)\,\qt\, \psi) =
\nabq\cdot \left[\sigtt(\ut) \, \qt\,M \left(\frac{\psi}{M}\right)\right].
\end{equation}
In this paper we prove the existence of global-in-time weak
solutions to the model {\em without} cut-off or mollification, in the general case of $K \geq 1$.
Since the argument is long and technical, we give a brief overview of the main steps of
the proof here.

{\em Step 1.} Following the approach in Barrett \& S\"uli \cite{BS2}
and motivated by recent papers of Jourdain, Leli\`evre, Le Bris \& Otto \cite{JLLO} and
Lin, Liu \& Zhang \cite{LinLZ}
(see also  Arnold, Markowich, Toscani \& Unterreiter \cite{AMTU},
and Desvillettes \& Villani \cite{DV})
concerning the convergence of the probability density function $\psi$ to its equilibrium
value $\psi_{\infty}(\xt,\qt):=M(\qt)$
(corresponding to the equilibrium value $\ut_\infty(\xt) :=\zerot$
of the velocity field) in the absence of body forces $\ft$,
we observe that if $\psi/M$ is bounded above then, for $L \in \mathbb{R}_{>0}$
sufficiently large, the drag term (\ref{drag}) is equal to
\begin{equation}\label{cut1}
\nabq\cdot \left[\sigtt(\ut) \, \qt\,M \,\beta^L\left
(\frac{\psi}{M}\right)\right],
\end{equation}
where $\beta^L \in C({\mathbb R})$ is a cut-off function defined as
\begin{align}
\beta^L(s) := \min(s,L).
\label{betaLa}
\end{align}
More generally, in the case of $K \geq 1$, in analogy with \eqref{cut1}, the drag term with cut-off is
defined by
$ \sum_{i=1}^K \nabqi \cdot \left(\sigtt(\ut) \, \qt_i\,M \,\beta^L\!\left(\frac{\psi}{M}\right)\right)$.
It then follows that, for $L\gg 1$, any solution $\psi$ of (\ref{fp0}),
such that $\psi/M$ is bounded above, also satisfies
\begin{eqnarray}
\label{eqpsi1aa}
&&\hspace{-6.5mm}\frac{\partial \psi}{\partial t} + (\ut \cdot\nabx) \psi + \sum_{i=1}^K \nabqi \cdot \left(\sigtt(\ut) \, \qt_i\,M \,\beta^L\!\left(\frac{\psi}{M}\right)\right)
\nonumber
\\
\bet
&&= \epsilon\,\Delta_x\,\psi + \frac{1}{2 \,\lambda}\,\sum_{i=1}^K\sum_{j=1}^K A_{ij} \nabqi \cdot \left(
M\,\nabqj\!\left(\frac{\psi}{M}\right)\right)\quad \mbox{in } \Omega \times D \times
(0,T]. ~~~
\end{eqnarray}
We impose the following boundary and initial conditions:
\begin{subequations}
\begin{alignat}{2}
&M \left[\frac{1}{2\,\lambda} \sum_{j=1}^K A_{ij}\,\nabqj\!\left(\frac{\psi}{M}\right)
- \sigtt(\ut) \,\qt_i\,\beta^L\!\left(\frac{\psi}{M}\right)
\right]\! \cdot \frac{\qt_i}{|\qt_i|}
=0\qquad &&~~\nonumber\\
&~ \qquad && \hspace{-7.1cm} \mbox{on }
\Omega \times \partial D_i\times \left(\bigtimes_{j=1,\, j \neq i}^K D_j\right)\times (0,T],
\mbox{~~for $i=1,\dots, K$,} \label{eqpsi2aa}\\
&\epsilon\,\nabx \psi\,\cdot\,\nt =0&&\hspace{-6.1cm}\mbox{on }
\partial \Omega \times D\times (0,T],\label{eqpsi2ab}\\
&\psi(\cdot,\cdot,0)=M(\cdot)\,\beta^L\!\left({\psi_{0}(\cdot,\cdot)}/{M(\cdot)}\right) \geq 0&&
\hspace{-2.7cm}\mbox{~on $\Omega\times D$},\label{eqpsi3ac}
\end{alignat}
\end{subequations}
where $\qt_i$ is normal to $\partial D_i$, as $D_i$
is a bounded ball centred at the origin,
and $\nt$ is normal to $\partial \Omega$;
$\psi_0$ is nonnegative,
defined on $\Omega\times D$,
with $\int_{D} \psi_0(\xt,\qt) \dd \qt = 1$ for a.e. $\xt  \in \Omega$, and
assumed to have finite relative entropy with
respect to the Maxwellian $M$; i.e. $\int_{\Omega \times D} \psi_0(\xt,\qt)
\log (\psi_0(\xt,\qt)/M(\qt)) \dq \dx < \infty$.
Clearly, if there exists $L>0$ such that $0 \leq \psi_0 \leq L\, M$,
then $M\,\beta^L(\psi_{0}/M) = \psi_0$. Henceforth $L >1$ is assumed.

\begin{definition}
The coupled problem (\ref{ns1a}--d), (\ref{tau1}), (\ref{rho1}),
(\ref{eqpsi1aa}), (\ref{eqpsi2aa}--c) will be referred to
as model $({\rm P}_{\varepsilon,L})$, or as the
{\em general FENE-type bead-spring chain
model with centre-of-mass diffusion and microscopic cut-off},
with cut-off parameter $L>1$.
\end{definition}

In order to highlight the dependence
on $\varepsilon$ and $L$, in subsequent sections the solution to
(\ref{eqpsi1aa}), (\ref{eqpsi2aa}--c) will be labelled $\psiae$.
Due to the coupling of
(\ref{eqpsi1aa}) to (\ref{ns1a}) through (\ref{tau1}), the velocity and the pressure
will also depend on $\varepsilon$ and $L$ and we shall therefore
denote them in subsequent
sections by $\ut_{\varepsilon,L}$ and $p_{\varepsilon,L}$.

The cut-off $\beta^{L}$ has a convenient property:
the couple $(\ut_{\infty},\psi_{\infty})$,  defined by
$\ut_{\infty}(\xt) := \zerot$ and $\psi_{\infty}(\xt,\qt):=M(\qt)$,
is still
an equilibrium solution of (\ref{ns1a}--d) with $\ft =\zerot$,
(\ref{tau1}), (\ref{rho1}),
(\ref{eqpsi1aa}), (\ref{eqpsi2aa}--c) for all $L>0$.
Thus, unlike the truncation of
the (unbounded) potential proposed in El-Kareh \& Leal \cite{EKL},
the introduction of
the cut-off function $\beta^L$ into the Fokker--Planck equation (\ref{fp0})
does not alter the equilibrium solution $(\ut_{\infty},\psi_\infty)$ of the
original Navier--Stokes--Fokker--Planck system.
In addition, the boundary conditions for $\psi$
on $\partial\Omega\times D\times(0,T]$ and $\Omega \times \partial D
\times (0,T]$ ensure that
$\int_{D}\psi(\xt,\qt,t) \dd \qt  =
\int_{D}\psi(\xt,\qt,0) \dd \qt$
for a.e. $\xt \in \Omega$ and a.e. $t \in \mathbb{R}_{\geq 0}$.

\textit{Step 2.} Ideally, one would like to pass to the limit $L\rightarrow \infty$ in problem $({\rm P}_{\varepsilon,L})$ to
deduce the existence of solutions to $({\rm P}_{\varepsilon})$. Unfortunately,
such a direct attack at the problem is (except in the special case of $d=2$, or in the absence of
convection terms from the model,) fraught with technical difficulties. Instead,
we shall first (semi)discretize problem $({\rm P}_{\varepsilon,L})$
by an implicit Euler scheme with respect to $t$, with step size $\Delta t$;
this then results in a time-discrete version $({\rm P}^{\Delta t}_{\varepsilon,L})$ of $({\rm P}_{\varepsilon,L})$. By using Schauder's fixed point theorem, we will show in
Section \ref{sec:existence-cut-off} the existence of solutions to $({\rm P}^{\Delta t}_{\varepsilon,L})$.
In the course of the proof, for technical reasons, a further cut-off, now from below, is required,
with a cut-off parameter $\delta \in (0,1)$, which we shall let pass to $0$ to complete the proof of existence of solutions to $({\rm P}^{\Delta t}_{\varepsilon,L})$ in the limit of $\delta \rightarrow 0_+$ (cf.
Section \ref{sec:existence-cut-off}).
Ultimately, of course, our aim is to show existence of weak solutions to the
general FENE-type bead-spring chain model with centre-of-mass diffusion, $({\rm P}_{\varepsilon})$, and that demands passing to
the limits $\Delta t \rightarrow 0_+$ and $L \rightarrow \infty$; this then brings us
to the next step in our argument.

{\em Step 3.} We shall link the time step $\Delta t$ to the cut-off parameter $L>1$ by
demanding that $\Delta t = o(L^{-1})$, as $L \rightarrow \infty$, so that the
only parameter in the problem $({\rm P}^{\Delta t}_{\varepsilon,L})$ is the cut-off parameter (the centre-of-mass
diffusion parameter $\epsilon$ being fixed). By using special energy estimates, based on testing the
Fokker--Planck equation in $({\rm P}^{\Delta t}_{\varepsilon,L})$
with the derivative of the relative entropy with respect to
the Maxwellian of the general FENE-type bead-spring
 chain model, we show that $\uta^{\Delta t}$
can be bounded, independent of $L$.
Specifically $\ut^{\Delta t}_{\varepsilon,L}$ is bounded in the norm of
the classical Leray space, independent of $L$;
also, the $L^\infty$ norm in time of the relative entropy of
$\psi^{\Delta t}_{\epsilon,L}$ and the $L^2$ norm in time
of the Fisher information of  $\hat \psi^{\Delta t}_{\epsilon,L}
:=\psi^{\Delta t}_{\epsilon,L}/M$ are bounded,
independent of $L$.
We then use these $L$-independent
bounds on the relative entropy and the Fisher information to derive $L$-independent bounds on the
time-derivatives of $\uta^{\Delta t}$ and $\psia^{\Delta t}$ in very weak, negative-order Sobolev norms.

\textit{Step 4.} The collection of $L$-independent bounds from Step 3
then enables us to extract a weakly convergent subsequence of solutions to
problem $({\rm P}^{\Delta t}_{\varepsilon,L})$ as $L \rightarrow \infty$.
We then apply a general compactness result in seminormed sets due to Dubinski{\u\i} \cite{DUB} (see also \cite{BS-DUB}),
which furnishes strong convergence of a subsequence of solutions
$(\ut^{\Delta t_k}_{\varepsilon,L_k}, \psi^{\Delta t_k}_{\epsilon,L_k})$
to $({\rm P}^{\Delta t}_{\varepsilon,L})$ with $\Delta t = o(L^{-1})$ as $L \rightarrow \infty$,
in $L^2(0,T;L^2(\Omega))
\times L^p(0,T; L^1(\Omega \times D))$ for any $p>1$. A crucial observation is that the set of
functions with finite Fisher information is not a linear space; therefore, typical Aubin--Lions--Simon
type compactness results (see, for example, Simon \cite{Simon})
do not work in our context; however, Dubinski{\u\i}'s compactness theorem, which applies
to seminormed sets in the sense of Dubinski{\u\i}, does, enabling us to pass to the limit
with the microscopic cut-off parameter $L$ in the model $({\rm P}^{\Delta t}_{\varepsilon,L})$, with
$\Delta t = o(L^{-1})$,
as $L \rightarrow \infty$, to finally deduce the existence of a weak solution to the general FENE-type bead-spring chain model with centre-of-mass diffusion,
$({\rm P}_{\varepsilon})$.

The paper is structured as follows. We begin, in Section~\ref{sec:2}, by stating $({\rm P}_{\varepsilon,L})$,
the coupled Navier--Stokes--Fokker--Planck system with centre-of-mass diffusion and microscopic
cut-off for a general class of FENE-type spring potentials. In Section~\ref{sec:existence-cut-off}
we establish the existence of solutions to the time-discrete problem $({\rm P}^{\Delta t}_{\varepsilon,L})$.
In Section~\ref{sec:entropy} we derive a set of $L$-independent bounds on $\uta^{\Delta t}$ in the classical Leray space, together with $L$-independent bounds on the relative entropy of $\psi^{\Delta t}_{\epsilon,L}$ and Fisher information
of $\psia^{\Delta t}$. We then use
these $L$-independent bounds on spatial norms to obtain $L$-independent bounds on very weak
norms of time-derivatives of $\uta^{\Delta t}$ and $\psia^{\Delta t}$.
Section~\ref{sec:dubinskii} is concerned with the application of Dubinski{\u\i}'s
theorem to our problem; and the extraction of a strongly convergent subsequence, which we shall then use
in Section~\ref{sec:passage.to.limit} to pass to the limit with the cut-off parameter $L$
in problem $({\rm P}^{\Delta t}_{\varepsilon,L})$, with $\Delta t = o(L^{-1})$,
as $L \rightarrow \infty$,
to deduce the existence of a weak solution $(\ut_{\varepsilon},\psi_\varepsilon:=M\,\hat\psi_\epsilon)$
to problem $({\rm P}_{\varepsilon})$, the general FENE-type bead-spring chain model
with centre-of-mass diffusion.
Finally, in Section \ref{sec:decay}, we show using a logarithmic Sobolev inequality and the
Csisz\'ar--Kullback inequality that, when $\ft \equiv \zerot$, global weak solutions $t \mapsto (\ut_\epsilon(t),\psi_\epsilon(t))$
thus constructed decay exponentially in time to $(\zerot,M)$, at a rate that is independent
of the initial data for the Navier--Stokes and Fokker--Planck equations and of
the centre-of-mass diffusion coefficient $\varepsilon$.
We shall operate within Maxwellian-weighted Sobolev spaces, which provide the
natural functional-analytic framework for the problem.
Our proofs require special
density and embedding results in these spaces, which
are proved in Appendix C and Appendix D, respectively.

For an analogous set of existence and equilibration results for weak solutions
of Hookean-type bead-spring chain models for dilute polymers, we refer to Part II of the present paper \cite{BS2010-hookean}.

\section{The polymer model $({\rm P}_{\varepsilon,L})$}
\label{sec:2}
\setcounter{equation}{0}

Let $\Omega \subset {\mathbb R}^d$ be a bounded open set with a
Lipschitz-continuous boundary $\partial \Omega$, and suppose that
the set $D:= D_1\times \cdots \times D_K$ of admissible
conformation vectors $\qt := (\qt_1^{\rm T}, \ldots ,\qt_K^{\rm T})^{\rm T}$ in (\ref{fp0}) is
such that $D_i$, $i=1, \dots, K$, is an open ball
in ${\mathbb R}^d$, $d=2$ or $3$, centred at the origin with boundary $\partial D_i$
and radius $\sqrt{b_i}$, $b_i>2$; let
\begin{align}
\mbox{\small $\partial D := \bigcup_{i=1}^K
\left[\partial D_i \times \left(\bigtimes_{j=1,\, j \neq i}^K D_j\right)\right].$}
\label{dD}
\end{align}
Collecting (\ref{ns1a}--d), (\ref{tau1}), and (\ref{fp0}),
we then consider the following initial-boundary-value problem,
dependent on the parameter $L > 1$. As has been already emphasized in the
Introduction, the centre-of-mass diffusion coefficient $\varepsilon>0$ is a
physical parameter and is regarded as being fixed, although we systematically
highlight its presence in the model through our subscript notation.

(${\rm P}_{\epsilon,L}$)
Find $\utae\,:\,(\xt,t)\in \overline{\Omega} \times [0,T]
\mapsto \utae(\xt,t) \in {\mathbb R}^d$ and $\pae\,:\, (\xt,t) \in
\Omega \times (0,T] \mapsto \pae(\xt,t) \in {\mathbb R}$ such that
\begin{subequations}
\begin{eqnarray}
\ptutae + (\utae \cdot \nabx) \utae - \nu \,\delx \utae + \nabx \pae
&=& \ft + \nabx \cdot \tautt(\psiae)
\nonumber\\
&& \qquad \;
\mbox{in } \Omega \times (0,T], \label{equ1}\\
\nabx \cdot \utae &=& 0 ~\hspace{0.5cm}\mbox{in } \Omega \times (0,T],
\label{equ2}\\
\utae &=& \zerot  ~\hspace{0.5cm}\mbox{on } \partial \Omega \times (0,T],
\label{equ3}\\
\utae(\xt,0)&=&\ut_{0}(\xt) \!\hspace{0.5cm}\forall \xt \in \Omega,
\label{equ4}
\end{eqnarray}
\end{subequations}
where $\psiae\,:\,(\xt,\qt,t)\in
\overline{\Omega} \times \overline{D} \times [0,T]
\mapsto \psiae(\xt,\qt,t)
\in {\mathbb R}$, and
$\tautt(\psiae)\,:\,(\xt,t) \in \Omega \times (0,T] \mapsto
\tautt(\psiae)(\xt,t)\in \mathbb{R}^{d\times d}$ is the symmetric
extra-stress tensor defined as
\begin{equation}
\tautt(\psiae) := k \left( \sum_{i=1}^K
\Ctt_i(\psiae)\right)
- k\,\rho(\psiae)\, \Itt.
\label{eqtt1}
\end{equation}
Here $k \in \Rplus$,
$\Itt$ is the unit $d
\times d$ tensor,
\begin{subequations}
\begin{eqnarray}
\Ctt_i(\psiae)(\xt,t) &:=& \int_{D} \psiae(\xt,\qt,t)\, U_i'({\textstyle
\frac{1}{2}}|\qt_i|^2)\,\qt_i\,\qt_i^{\rm T} \dq,\qquad \mbox{and} \label{eqCtt}\\
\rho(\psiae)(\xt,t) &:=& \int_{D} \psiae(\xt,\qt,t)\dq. \label{eqrhott}
\end{eqnarray}
\end{subequations}
The Fokker--Planck equation with microscopic cut-off satisfied by $\psiae$ is:
\begin{align}
\label{eqpsi1a}
&\hspace{-2mm}\frac{\partial \psiae}{\partial t} + (\utae \cdot\nabx) \psiae +
\sum_{i=1}^K \nabqi
\cdot \left[\sigtt(\utae) \, \qt_i\,M\,\beta^L
\left(\frac{\psiae}{M}\right)\right]
\nonumber
\\
&\hspace{0.01in} =
\epsilon\,\Delta_x\,\psiae +
\frac{1}{2 \,\lambda}\,
\sum_{i=1}^K \sum_{j=1}^K
A_{ij}\,\nabqi \cdot \left(
M\,\nabqj \left(\frac{\psiae}{M}\right)\right) \quad \mbox{in } \Omega \times D \times
(0,T].
\end{align}
Here, for a given $L > 1$, $\beta^L \in C({\mathbb R})$ is defined by (\ref{betaLa}),
$\sigtt(\vt) \equiv \nabxtt \vt$, and
\begin{align}
\!\!\!\!\!A \in {\mathbb R}^{K \times K} \mbox{ is symmetric positive definite
with smallest eigenvalue $a_0 \in {\mathbb R}_{>0}$.}
\label{A}
\end{align}
We impose the following boundary and initial conditions:
\begin{subequations}
\begin{align}
&M\left[\frac{1}{2\,\lambda} \sum_{j=1}^K A_{ij}\, \nabqj \left(\frac{\psiae}{M}\right)
- \sigtt(\utae) \,\qt_i\,\beta^L\left(\frac{\psiae}{M}\right)
\right]\cdot \frac{\qt_i}{|\qt_i|}
=0 \nonumber \\
&\hspace{2.52cm}\mbox{on }
\Omega \times \partial D_i \times \left(\bigtimes_{j=1,\, j \neq i}^K D_j\right) \times (0,T],
\quad i=1, \dots, K,
\label{eqpsi2}\\
&\hspace{3.8cm}\epsilon\,\nabx \psiae\,\cdot\,\nt =0 \qquad \,\, \quad \mbox{on }
\partial \Omega \times D\times (0,T],\label{eqpsi2a}\\
&\qquad\psiae(\cdot,\cdot,0)=M(\cdot)\,\beta^L(\psi_{0}(\cdot,\cdot)/M(\cdot)) \geq 0 \qquad \mbox{on  $\Omega\times D$},\label{eqpsi3}
\end{align}
\end{subequations}
where $\nt$ is the unit outward normal to $\partial \Omega$.
The boundary conditions for $\psiae$
on $\partial\Omega\times D\times(0,T]$ and $\Omega \times \partial D
\times (0,T]$ have been chosen so as to ensure that
\begin{align}
\int_{D}\psiae(\xt,\qt,t) \dd \qt  =
\int_{D} \psiae(\xt,\qt,0) \dd \qt  \qquad \forall (\xt,t) \in \Omega \times (0,T].
\label{intDcon}
\end{align}
Henceforth, we shall write $\psia:= \psiae/M$, $\hat\psi_0 := \psi_0/M$.
Thus, for example, \eqref{eqpsi3} in terms of this compact notation becomes:
$\psia(\cdot,\cdot,0) = \beta^L(\hat\psi_0(\cdot,\cdot))$ on $\Omega \times D$.

The notation $|\cdot|$ will be used to signify one of the following. When applied to a real number $x$,
$|x|$ will denote the absolute value of the number $x$; when applied to a vector $\vt$,  $|\vt|$ will
stand for the Euclidean norm of the vector $\vt$; and, when applied to a square matrix $A$, $|A|$ will
signify the Frobenius norm, $[\mathfrak{tr}(A^{\rm T}A)]^{\frac{1}{2}}$, of the matrix $A$, where, for a square matrix
$B$, $\mathfrak{tr}(B)$ denotes the trace of $B$.

\section{Existence of a solution to the discrete-in-time problem}
\label{sec:existence-cut-off}
\setcounter{equation}{0}

Let
\begin{eqnarray}
&\Ht :=\{\wt \in \Lt^2(\Omega) : \nabx \cdot \wt =0\} \quad
\mbox{and}\quad \Vt :=\{\wt \in \Ht^{1}_{0}(\Omega) : \nabx \cdot
\wt =0\},&~~~ \label{eqVt}
\end{eqnarray}
where the divergence operator $\nabx\cdot$ is to be understood in
the sense of distributions on $\Omega$. Let $\Vt'$ be
the dual of $\Vt$.
Let
$\St: \Vt' \rightarrow \Vt$ be such that $\St \,\vt$
is the unique solution to the Helmholtz--Stokes problem
\begin{eqnarray}
&\displaystyle\int_{\Omega} \St\,\vt\cdot\, \wt \dx +
\int_{\Omega}
\nabxtt (\St\,\vt)
: \wnabtt \dx
= \langle \vt,\wt \rangle_V
\qquad \forall \wt \in \Vt,
\label{eqvn1}
\end{eqnarray}
where $ \langle \cdot,\cdot\rangle_V$ denotes the duality pairing
between $\Vt'$ and $\Vt$.
We note that
\begin{eqnarray}
\left\langle \vt, \St\,\vt \right\rangle_V =
\|\St\,\vt\|_{H^1(\Omega)}^2
\qquad \forall \vt \in
\Vt', 
\label{eqvn2}
\end{eqnarray}
and $\|\St \cdot\|_{H^{1}(\Omega)}$ is a norm on $\Vt'$. More generally, let
$\Vt_\sigma$ denote the closure of the set of all divergence-free $\Ct^\infty_0(\Omega)$ functions
in the norm of $\Ht^1_0(\Omega)\cap \Ht^\sigma(\Omega)$, $\sigma \geq 1$, equipped with the Hilbert
space norm, denoted by $\|\cdot\|_{V_\sigma}$, inherited from $\Ht^\sigma(\Omega)$, and let $\Vt_\sigma'$
signify the dual space of $\Vt_\sigma$, with duality pairing $\langle \cdot , \cdot \rangle_{V_\sigma}$.
As $\Omega$ is a bounded Lipschitz domain, we have that $\Vt_1 = \Vt$ (cf. Temam \cite{Temam},
Ch. 1, Thm. 1.6).
Similarly, $ \langle \cdot , \cdot \rangle_{H^1_0(\Omega)}$ will
denote the duality pairing between $(\Ht^1_0(\Omega))'$ and $\Ht^1_0(\Omega)$.
The norm on $(\Ht^1_0(\Omega))'$  will be that induced from taking
$\|\nabxtt \cdot\|_{L^2(\Omega)}$ to be the norm on $\Ht^1_0(\Omega)$.

For later purposes, we recall the following well-known
Gagliardo--Nirenberg inequality. Let $r \in [2,\infty)$ if $d=2$,
and $r \in [2,6]$ if $d=3$ and $\theta = d \,\left(\frac12-\frac
1r\right)$. Then, there is a constant $C=C(\Omega,r,d)$,
such that, for all $\eta \in H^{1}(\Omega)$:
\begin{equation}\label{eqinterp}
\|\eta\|_{L^r(\Omega)}
\leq C\,
\|\eta\|_{L^2(\Omega)}^{1-\theta}
\,\|\eta\|_{H^{1}(\Omega)}^\theta.
\end{equation}

Let $\mathcal{F}\in C(\mathbb{R}_{>0})$ be defined by
$\mathcal{F}(s):= s\,(\log s -1) + 1$, $s>0$.
As $\lim_{s \rightarrow 0_+} \mathcal{F}(s) = 1$,
the function $\mathcal{F}$ can be considered
to be defined and continuous on $[0,\infty)$,
where it is a nonnegative, strictly convex function
with $\mathcal{F}(1)=0$.
We assume the following:
\begin{align}\nonumber
&\partial \Omega \in C^{0,1}; \quad \ut_0 \in \Ht; \quad
\hat \psi_0 := \frac{\psi_0}{M}\geq 0 \ {\rm ~a.e.\ on}\ \Omega \times D\quad \mbox{with}
\\ &
\mathcal{F}(\hat\psi_0)
\in L^1_M(\Omega \times D) \quad \mbox{and}
\quad \int_D M(\qt)\,\hat\psi_0(\xt,\qt)\,\dq = 1\quad \mbox{for a.e.\ } \xt \in \Omega;
\nonumber\\
& \gamma_i>1,\quad i= 1, \dots,  K \quad  \mbox{in (\ref{growth1},b)};
\quad \mbox{and} \quad \ft \in L^{2}(0,T;(\Ht^1_0(\Omega))').\label{inidata}
\end{align}
Here, $L^p_M(\Omega \times D)$, for $p\in [1,\infty)$,
denotes the Maxwellian-weighted $L^p$ space over $\Omega \times D$ with norm
\[
\| \hat \varphi\|_{L^{p}_M(\Omega\times D)} :=
\left\{ \int_{\Omega \times D} \!\!M\,
|\hat \varphi|^p \dq \dx
\right\}^{\frac{1}{p}}.
\]
Similarly, we introduce $L^p_M(D)$,
the Maxwellian-weighted $L^p$ space over $D$. Letting
\begin{eqnarray}
\| \hat \varphi\|_{H^{1}_M(\Omega\times D)} &:=&
\left\{ \int_{\Omega \times D} \!\!M\, \left[
|\hat \varphi|^2 + \left|\nabx \hat \varphi \right|^2 + \left|\nabq \hat
\varphi \right|^2 \,\right] \dq \dx
\right\}^{\frac{1}{2}}\!\!, \label{H1Mnorm}
\end{eqnarray}
we then set
\begin{eqnarray}
\quad \hat X \equiv H^{1}_M(\Omega \times D)
&:=& \left\{ \hat \varphi \in L^1_{\rm loc}(\Omega\times D): \|
\hat \varphi\|_{H^{1}_M(\Omega\times D)} < \infty \right\}. \label{H1M}
\end{eqnarray}
It is shown in Appendix C 
that
\begin{align}
C^{\infty}(\overline{\Omega \times D})
\mbox{ is dense in } \hat X.
\label{cal K}
\end{align}

We have from Sobolev embedding that
\begin{equation}
H^1(\Omega;L^2_M(D)) \hookrightarrow L^s(\Omega;L^2_M(D)),
\label{embed}
\end{equation}
where $s \in [1,\infty)$ if $d=2$ or $s \in [1,6]$ if $d=3$.
Similarly to (\ref{eqinterp}) we have,
with $r$ and $\theta$ as there,
that there is a constant $C$, depending only  on
$\Omega$, $r$ and $d$, such that
\begin{equation}\label{MDeqinterp}
\hspace{-0mm}\|\hat \varphi\|_{L^r(\Omega;L^2_M(D))}
\leq C\,
\|\hat \varphi\|_{L^2(\Omega;L^2_M(D))}^{1-\theta}
\,\|\hat \varphi\|_{H^1(\Omega;L^2_M(D))}^\theta ~~~
\mbox{$\forall\hat \varphi \in
H^{1}(\Omega;L^2_M(D))$}.
\end{equation}
In addition, we note that the embeddings
\begin{subequations}
\begin{align}
 H^1_M(D) &\hookrightarrow L^2_M(D) ,\label{wcomp1}\\
H^1_M(\Omega \times D) \equiv
L^2(\Omega;H^1_M(D)) \cap H^1(\Omega;L^2_M(D))
&\hookrightarrow
L^2_M(\Omega \times D) \equiv L^2(\Omega;L^2_M(D))
\label{wcomp2}
\end{align}
\end{subequations}
are compact if $\gamma_i \geq 1$, $i=1, \dots,  K$, in (\ref{growth1},b);
see Appendix D.

Let $\hat X'$ be the dual space of $\hat X$ with $L^2_M(\Omega\times D)$
being the pivot space.
Then, similarly to (\ref{eqvn1}),
let $\mathcal{G}: \hat X' \rightarrow \hat X$
be such that $\mathcal{G}\, \hat \eta$ is the unique solution of
\begin{align}
&\int_{\Omega \times D}
M\,\biggl[ (\mathcal{G}\,\hat \eta) \,\hat \varphi
+ \nabq\, (\mathcal{G}\,\hat \eta) \cdot \nabq\,\hat \varphi
+ \nabx\, (\mathcal{G}\,\hat \eta) \cdot \nabx\,\hat \varphi \biggr]
\dq \dx
\label{CalG}
\nonumber
\\
&\hspace{2.14in}
=
\langle M\,\hat \eta,\hat \varphi \rangle_{\hat X} \qquad
\forall \hat \varphi \in \hat X,
\end{align}
where $\langle M\,\cdot, \cdot \rangle_{\hat X}$
is the duality pairing between $\hat X'$ and $\hat X$.
Then, as in (\ref{eqvn2}),
\begin{align}
\langle M \,\hat \eta, \mathcal{G}\, \hat \eta\, \rangle_{\hat X}
= \|\mathcal{G}\,\hat \eta \|_{\hat X}^2 \qquad
\forall \hat \eta \in \hat X',
\label{CalG1}
\end{align}
and $\|\mathcal{G} \cdot\|_{\hat X}$ is a norm on ${\hat X}'$.

We recall the Aubin--Lions--Simon compactness theorem, see, e.g.,
Temam \cite{Temam} and Simon \cite{Simon}. Let $\mathcal{B}_0$, $\mathcal{B}$ and
$\mathcal{B}_1$ be Banach
spaces, $\mathcal{B}_i$, $i=0,1$, reflexive, with a compact embedding $\mathcal{B}_0
\hookrightarrow \mathcal{B}$ and a continuous embedding $\mathcal{B} \hookrightarrow
\mathcal{B}_1$. Then, for $\alpha_i>1$, $i=0,1$, the embedding
\begin{eqnarray}
&\{\,\eta \in L^{\alpha_0}(0,T;\mathcal{B}_0): \frac{\partial \eta}{\partial t}
\in L^{\alpha_1}(0,T;\mathcal{B}_1)\,\} \hookrightarrow L^{\alpha_0}(0,T;\mathcal{B})
\label{compact1}
\end{eqnarray}
is compact.

Throughout we will assume that
(\ref{inidata}) hold,
so that (\ref{additional-1}) and (\ref{wcomp1},b) hold.
We note for future reference that (\ref{eqCtt}) and
(\ref{additional-1}) yield that, for
$\hat \varphi \in L^2_M(\Omega \times D)$,
\begin{align}
\label{eqCttbd}
\int_{\Omega} |\Ctt_i( M\,\hat \varphi)|^2\,\dx & =
\int_{\Omega} 
\left|
\int_{D} M\,\hat \varphi \,U_i'\,\qt_{i}\,\qt_{i}^{\rm T} \dq \right|^2 \dx
\nonumber
\\
&\leq
\left(\int_{D} M\,(U_i')^2 \,|\qt_i|^4 \,\dq\right)
\left(\int_{\Omega \times D} M\,|\hat \varphi|^2 \dq \dx\right)
\nonumber \\
& \leq C
\left(\int_{\Omega \times D} M\,|\hat \varphi|^2 \dq \dx\right),
\qquad i=1, \dots,  K,
\end{align}
where $C$ is a positive constant.

We establish a simple integration-by-parts formula.

\begin{lemma} Let $\hat\varphi \in H^1_M(D)$ and suppose that $B \in \mathbb{R}^{d\times d}$ is a square
matrix such that $\mathfrak{tr}(B)=0$; then,
\begin{equation}\label{intbyparts}
\int_D M\, \sum_{i=1}^K (B\qt_i) \cdot \nabqi \hat\varphi \dd \qt = \int_D M\,\hat\varphi \sum_{i=1}^K\qt_i \qt_i^{\rm T} U_i'(\textstyle{\frac{1}{2}|\qt_i|^2}) : B \dq.
\end{equation}
\end{lemma}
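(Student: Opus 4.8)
The plan is to prove the identity \eqref{intbyparts} first for smooth $\hat\varphi$, by a direct integration by parts in each block of variables $\qt_i$, and then to pass to general $\hat\varphi \in H^1_M(D)$ by density and continuity. For the reduction, I would first observe that both sides of \eqref{intbyparts} are continuous linear functionals of $\hat\varphi$ on $H^1_M(D)$: the left-hand side is bounded by $C(B,D)\,\|\nabq\hat\varphi\|_{L^2_M(D)}$ since $|B\qt_i|\le C(B,D)$ on $\overline{D}$ (each $D_i$ being bounded), while the right-hand side is bounded by $C(B,D)\,\|\hat\varphi\|_{L^2_M(D)}$, because the Cauchy--Schwarz inequality together with \eqref{additional-1} and the boundedness of $|\qt_i|$ on $D_i$ gives $\int_D M\,|U_i'({\textstyle\frac{1}{2}}|\qt_i|^2)|\,|\qt_i|^2\,|\hat\varphi|\dq \le C\,\|\hat\varphi\|_{L^2_M(D)}$ for each $i$. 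It therefore suffices to verify \eqref{intbyparts} for $\hat\varphi$ in a dense subset of $H^1_M(D)$; and $C^\infty(\overline{D})$ is dense in $H^1_M(D)$ by the argument of Appendix~C (or, concretely, by averaging over $\xt\in\Omega$ the approximants provided by \eqref{cal K}, which is licit since $\hat\varphi$ may be regarded as $\xt$-independent and then $\|\bar\varphi_n-\hat\varphi\|_{H^1_M(D)}\le|\Omega|^{-1/2}\|\hat\varphi_n-\hat\varphi\|_{H^1_M(\Omega\times D)}$ by Jensen's inequality).

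Next, for $\hat\varphi\in C^\infty(\overline{D})$ I would fix $i\in\{1,\dots,K\}$ and, using $\nabqi\cdot(B\qt_i)=\mathfrak{tr}(B)$, write
\[
\int_D M\,(B\qt_i)\cdot\nabqi\hat\varphi\dq = \int_D M\,\nabqi\cdot(\hat\varphi\,B\qt_i)\dq \;-\; \mathfrak{tr}(B)\int_D M\,\hat\varphi\dq .
\]
To the first integral I would apply the divergence theorem in the variable $\qt_i$ over the ball $D_i$, holding the remaining variables fixed and integrating over them afterwards. The boundary contribution on $\partial D_i$ vanishes: by \eqref{eqM} one has $\nabqi M = -M\,U_i'({\textstyle\frac{1}{2}}|\qt_i|^2)\,\qt_i$, and $\int_D M\,|U_i'({\textstyle\frac{1}{2}}|\qt_i|^2)|\,|\qt_i|\dq<\infty$ by \eqref{additional-1}, so $M$ belongs to $W^{1,1}(D_i)$ as a function of $\qt_i$; moreover each $M_i$ extends continuously to $\overline{D_i}$ with $M_i=0$ on $\partial D_i$ by the growth bound (\ref{growth1},b) (recall $\gamma_i>1$), so the field $M\,\hat\varphi\,B\qt_i$ has vanishing trace on $\partial D_i$. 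Hence, using \eqref{eqM} once more,
\[
\int_D M\,\nabqi\cdot(\hat\varphi\,B\qt_i)\dq = -\int_D \nabqi M\cdot(\hat\varphi\,B\qt_i)\dq = \int_D M\,\hat\varphi\,U_i'({\textstyle\frac{1}{2}}|\qt_i|^2)\,\qt_i^{\rm T} B\,\qt_i\dq ,
\]
and $\qt_i^{\rm T} B\,\qt_i = \qt_i\,\qt_i^{\rm T}:B$. Summing the resulting identity over $i=1,\dots,K$ and invoking $\mathfrak{tr}(B)=0$ to discard the residual terms $\mathfrak{tr}(B)\int_D M\,\hat\varphi\dq$ yields \eqref{intbyparts} for $\hat\varphi\in C^\infty(\overline{D})$; the continuity and density from the first paragraph then extend it to all $\hat\varphi\in H^1_M(D)$.

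I do not expect a genuine obstacle here, since it is a short computation; the one delicate point is the vanishing of the boundary terms in the divergence theorem, which is precisely where the finite-extensibility structure is used: (\ref{growth1},b) forces $M$ to vanish on $\partial D$, and \eqref{additional-1} forces $M$ to be of class $W^{1,1}$ up to $\partial D$, so the integration by parts is legitimate and produces no surface term. This is also the only place where the boundedness of each $D_i$ enters (through the boundedness of $|\qt_i|$), while the hypothesis $\mathfrak{tr}(B)=0$ is exactly what allows the residual terms $\mathfrak{tr}(B)\int_D M\,\hat\varphi\dq$ to be discarded.
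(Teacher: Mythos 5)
Your proposal is correct and follows essentially the same route as the paper: both reduce to smooth $\hat\varphi \in C^\infty(\overline{D})$ via the density result of Appendix~C, apply the divergence theorem block-by-block in $\qt_i$ (using \eqref{eqM}, the vanishing of $M$ on $\partial D$, and $\mathfrak{tr}(B)=0$ to kill boundary and trace terms), and pass to the limit using \eqref{additional-1}. You merely make explicit the continuity estimates on both sides of \eqref{intbyparts} that the paper leaves implicit in its closing sentence.
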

\begin{proof}
By Theorem C.1 in Appendix C,
the set  $C^\infty(\overline D)$ is dense in $H^1_M(D)$; hence, for any $\hat\varphi \in H^1_M(D)$ there exists a sequence $\{\hat\varphi_n\}_{n \geq 0} \subset C^\infty(\overline D)$ converging to $\hat\varphi$ in $H^1_M(D)$. As $M \in C^2(\overline{D})$ and vanishes on $\partial D$, the same is true of each of the functions $M\hat\varphi_n$, $n \geq 1$. On replacing $\hat\varphi$ by $\hat\varphi_n$ on both sides of
\eqref{intbyparts}, the resulting identity is easily verified by using the classical divergence theorem
for smooth functions, noting \eqref{eqM}, that $M\hat\varphi_n$ vanishes on $\partial D$ and that $\mathfrak{tr}(B)=0$.
Then, \eqref{intbyparts} itself follows by
letting $n \rightarrow \infty$, recalling the definition of the norm in $H^1_M(D)$ and hypothesis \eqref{additional-1}.
\end{proof}

We now formulate our discrete-in-time approximation of problem
(P$_{\epsilon,L})$ for fixed parameters $\epsilon \in (0,1]$
and $L > 1$. For any $T>0$ and $N \geq 1$, let
$N \,\Delta t=T$ and $t_n = n \, \Delta t$, $n=0, \dots,  N$.
To prove existence of a solution under minimal
smoothness requirements on the initial datum $\ut_0$ (recall (\ref{inidata})),
we introduce $\ut^0 = \ut^0(\Delta t) \in \Vt$  such that
\begin{alignat}{2}
\int_{\Omega} \left[ \ut^0 \cdot \vt + \Delta t\,
\nabxtt \ut^0 : \nabxtt \vt \right] \dx
&=   \int_{\Omega} \ut_0 \cdot \vt \dx \qquad
&&\forall \vt \in \Vt;
\label{proju0}
\end{alignat}
and so
\begin{equation}
\int_{\Omega} [\,|\ut^0|^2 + \Delta t \,|\unabtt^0|^2 \,]\dx 
\leq
\int_{\Omega} |\ut_0|^2 \dx
\leq C.
\label{idatabd}
\end{equation}
In addition, we have
that $\ut^0$ converges to $\ut_0$ weakly in $\Ht$ in the limit
of $\Delta t \rightarrow 0_+$.
For $p\in [1,\infty)$, let
\begin{align}
\hat Z_p &:= \{ \hat \varphi \in L^p_M(\Omega \times D) :
\hat \varphi
\geq 0 \mbox{ a.e.\ on } \Omega \times D
\mbox{ and }
\nonumber \\ & \hspace{1.4in}
\int_D M(\qt)\,\hat \varphi(\xt,\qt) \dq \leq 1 \mbox{ for a.e. } \xt \in \Omega
\}.
\label{hatZ}
\end{align}

Analogously to defining  $\ut^0$ for a given initial velocity field
$\ut_0$, we shall assign a certain `smoothed' initial
datum, $\hat\psi^0 = \hat\psi^0(\Delta t)$, to the initial datum $\hat\psi_0$. The definition of $\hat\psi^0$
is delicate; it will be given in Section \ref{sec:passage.to.limit}. All we need to know for now is
that there exists a $\hat\psi^0$, independent of the cut-off parameter $L$, such that:
\begin{equation}\label{inidata-1}
\!~\!\hat\psi^0 \in \hat{Z}_1;\;\; \!{\small \left\{\begin{array}{rr}\mathcal{F}(\hat\psi^0)\! \in \!L^1_M(\Omega \times D);\\
                                                    \sqrt{\hat\psi^0} \in H^1_M(\Omega \times D);
                                    \end{array}\right.}
\;\;\!\!
\int_{\Omega \times D}\!\!\! M\, \mathcal{F}(\hat\psi^0)\dq \dx \leq \!\int_{\Omega \times D}\!\!\! M\, \mathcal{F}(\hat\psi_0)\dq \dx.\!\!\!\!\!\!
\end{equation}
The proofs of these properties will be given in Lemma \ref{psi0properties} in Section \ref{sec:passage.to.limit}.
It follows from (\ref{inidata-1}) and (\ref{betaLa}) that
$\beta^L(\hat \psi^0) \in \hat Z_2$; in fact, $\beta^L(\hat \psi^0) \in L^\infty(\Omega \times D) \cap H^1_M(\Omega \times D)$.

Our discrete-in-time  approximation of (P$_{\epsilon,L}$) is then defined as follows.

{\boldmath $({\rm P}_{\varepsilon,L}^{\Delta t})$}
Let $\utae^0 := \ut^0 \in \Vt$ and $\hpsiae^0 := \beta^L(\hat \psi^0) \in \hat Z_2$.
Then, for $n =1, \dots,  N$, given
$(\utae^{n-1},\hpsiae^{n-1}) \in \Vt \times \hat Z_2$,
find $(\utae^n,\hpsiae^n) \in
\Vt \times (\hat X \cap \hat Z_2)
$ such that
\begin{subequations}
\begin{align}
&\int_{\Omega}
\left[\frac{\utae^{n}-\utae^{n-1}}{\Delta t}
+ (\utae^{n-1} \cdot \nabx) \utae^{n} \right]\,\cdot\, \wt
\dx +
\nu\, \int_{\Omega}
\nabxtt \utae^n
: \wnabtt \dx
\nonumber
\\
&\hspace{1cm} = \langle \ft^n , \wt \rangle_{H^1_0(\Omega)}
- k\,\sum_{i=1}^K \int_{\Omega} \Ctt_i(M\,\hpsiae^n): \nabxtt
\wt \dx
\qquad \forall \wt \in \Vt,
\label{Gequn}
\end{align}
\begin{align}
&\int_{\Omega \times D} M\,\frac{\hpsiae^n
- \hpsiae^{n-1}}
{\Delta t}\,\hat \varphi \dq \dx
\nonumber
\\
\bet
&\hspace{0.1cm} + \int_{\Omega \times D}
M\,\sum_{i=1}^K  \left[\, \frac{1}{2\, \lambda}\,
\sum_{j=1}^K A_{ij}\,\nabqj \hpsiae^n
-[\,\sigtt(
\utae^n) \,\qt_i\,]\,\beta^L(\hpsiae^{n})\right]\,\cdot\, \nabqi
\hat \varphi \dq \dx\,
\nonumber \\
\bet
&\hspace{0.1cm} + \int_{\Omega \times D} M
\left[\epsilon\,\nabx \hpsiae^n
- \utae^{n-1}\,\hpsiae^n\right]\,\cdot\, \nabx
\hat \varphi\dq \dx=0
\qquad \forall \hat \varphi \in
\hat X;
\label{psiG}
\end{align}
\end{subequations}
where, for  $t \in [t_{n-1}, t_n)$, and $n=1, \dots,  N$,
\begin{align}
\ft^{\Delta t, +}(\cdot,t) =
\ft^n(\cdot) := \frac{1}{\Delta t}\,\int_{t_{n-1}}^{t_n}
\ft(\cdot,t) \dt \in (\Ht^1_0(\Omega))' \subset \Vt'.
\label{fn}
\end{align}
It follows from (\ref{inidata}) and (\ref{fn}) that
\begin{align}
\ft^{\Delta t, +} \rightarrow \ft \quad \mbox{strongly in } L^{2}(0,T;(\Ht^1_0(\Omega))')
\mbox { as } \Delta t \rightarrow 0_{+}.
\label{fncon}
\end{align}
Note that as the test function $\wt$ in \eqref{Gequn} is chosen to be divergence-free,
the term containing the density $\rho$ in the definition of $\tautt$ (cf. \eqref{eqtt1})
is eliminated from \eqref{Gequn}.

In order to prove the
existence of a solution to (P$_{\epsilon,L}^{\Delta t}$), we
require the following convex regularization
$\mathcal{F}_{\delta}^L \in
C^{2,1}({\mathbb R})$ of $\mathcal{F}$ defined, for any $\delta \in (0,1)$ and
$L>1$, by
\begin{align}
 &\mathcal{F}_{\delta}^L(s) := \left\{
 \begin{array}{ll}
 \textstyle\frac{s^2 - \delta^2}{2\,\delta}
 + s\,(\log \delta - 1) + 1
 \quad & \mbox{for $s \le \delta$}, \\
\mathcal{F}(s)\ \equiv
s\,(\log s - 1) + 1 & \mbox{for $\delta \le s \le L$}, \\
  \textstyle\frac{s^2 - L^2}{2\,L}
 + s\,(\log L - 1) + 1
 & \mbox{for $L \le s$}.
 \end{array} \right. \label{GLd}
\end{align}
Hence,
\begin{subequations}
\begin{align}
\quad &[\mathcal{F}_{\delta}^{L}]'(s) = \left\{
 \begin{array}{ll}
 \textstyle \frac{s}{\delta} + \log \delta - 1
 \quad & \mbox{for $s \le \delta$}, \\
 \log s & \mbox{for $\delta \le s \le L$}, \\
 \textstyle \frac{s}{L} + \log L - 1
& \mbox{for $L \le s$},
 \end{array} \right. \label{GLdp}\\
\quad
 &[\mathcal{F}_{\delta}^{L}]''(s) = \left\{
 \begin{array}{ll}
 {\delta}^{-1} \quad & \mbox{for $s \le \delta$}, \\
 s^{-1} & \mbox{for $\delta \le s \le L$}, \\
 L^{-1} & \mbox{for $L \le s$}. \,
\end{array} \right. \label{Gdlpp}
\end{align}
\end{subequations}
We note that
\begin{align}
\mathcal{F}^L_\delta(s) \geq \left\{
\begin{array}{ll}
\frac{s^2}{2\,\delta} &\quad \mbox{for $s \leq 0$},
\\
\frac{s^2}{4\,L} - C(L)&\quad \mbox{for $s \geq 0$};
\end{array}
\right.
\label{cFbelow}
\end{align}
and that
$[\mathcal{F}_{\delta}^{L}]''(s)$
is bounded below by $1/L$ for all $s \in \mathbb{R}$.
Finally, we set
\begin{align}
\beta^L_\delta(s) := ([\mathcal{F}_{\delta}^{L}]'')^{-1}(s)
= \max \{\beta^L(s),\delta\},
\label{betaLd}
\end{align}
and observe that $\beta^L_\delta(s)$
is bounded above by $L$ and bounded below by $\delta$ for all $s \in \mathbb{R}$. Note also that both $\beta^L$ and $\beta^L_\delta$ are Lipschitz continuous on $\mathbb{R}$, with Lipschitz constants equal to $1$.

\subsection{\boldmath Existence of a solution to  $({\rm P}_{\varepsilon,L}^{\Delta t})$}
\label{sec:existence-cut-off.1}

It is convenient to rewrite (\ref{Gequn}) as
\begin{equation}
b(\utae^n,\wt) =
\ell_b(\hpsiae^n)(\wt)
\qquad \forall \wt \in \Vt;
\label{bLM}
\end{equation}
where, for all $\wt_i \in \Ht^{1}_{0}(\Omega)$, $i=1,2$,
\begin{subequations}
\begin{align}
\!\!\!\!\!\!b(\wt_1,\wt_2) &\!:=\!\!
\int_{\Omega}\! \left[ \wt_1
+ \Delta t \, (\utae^{n-1} \cdot \nabx) \wt_{1} \right]\!\cdot
\wt_2 \dx + \Delta t\, \nu \!\int_{\Omega}
\nabxtt \wt_1
:\nabxtt \wt_2 \dx,\!\!
\label{bgen}
\end{align}
and, for all $\wt \in \Ht^1_0(\Omega)$ and
$\hat \varphi \in L^2_M(\Omega \times D)$,
\begin{align}
\!\!\!\!\ell_b(\hat \varphi)(\wt) &:=
\Delta t \,\langle \ft^n, \wt
\rangle_{H^1_0(\Omega)} +
\displaystyle\int_{\Omega}
\left[
\utae^{n-1} \cdot \wt  - \Delta t \,k\,\sum_{i=1}^K
\Ctt_i(M\,\hat \varphi) : \nabxtt
\wt \right]\! \dx.
\label{lbgen}
\end{align}
\end{subequations}
We note that, for all $\vt \in \Vt$ and all $\wt_1, \wt_2 \in
\Ht^{1}(\Omega)$, we have that
\begin{align}
\int_{\Omega} \left[ (\vt \cdot \nabx) \wt_1 \right]\,\cdot\,
\wt_2 \dx
\label{tripid}
= - \int_{\Omega} \left[ (\vt \cdot \nabx) \wt_2
\right]\,\cdot\, \wt_1 \dx
\end{align}
and hence $b(\cdot,\cdot)$ is a continuous nonsymmetric coercive bilinear
functional on $\Ht^1_0(\Omega) \times \Ht^1_0(\Omega)$.
In addition, thanks to \eqref{eqCttbd}, $\ell_b(\hat \varphi)(\cdot)$ is a continuous linear
functional on $\Ht^1_0(\Omega)$ for any $\hat\varphi \in L^2_M(\Omega\times D)$.

For $r>d$, let
\begin{align}
\Yt^r :=
\left\{\vt \in \Lt^{r}(\Omega) :
\int_{\Omega} \vt\,\cdot\,\nabx w \dx = 0 \quad \forall w
\in W^{1,\frac{r}{r-1}}(\Omega)\right\}.
\label{Ytr}
\end{align}
It is also convenient to rewrite (\ref{psiG}) as
\begin{align}
a(\hpsiae^n,\hat \varphi) = \lae(\utae^n,\beta^L(\hpsiae^n))
(\hat \varphi) \qquad \forall \hat \varphi \in \hat X,
\label{genLM}
\end{align}
where, for all $\hat \varphi_1,\,\hat \varphi_2 \in \hat X$,
\begin{subequations}
\begin{align}
a(\hat \varphi_1,\hat \varphi_2) &:= \int_{\Omega \times D} M\,\biggl(
\hat \varphi_1\,\hat \varphi_2 + \Delta t \left[\epsilon\,\nabx
\hat \varphi_1 - \utae^{n-1}\,\hat \varphi_1\right]\,\cdot\, \nabx
\hat \varphi_2 \label{agen}
\nonumber\\
& \hspace{1in} +\, \frac{\Delta t}{2\,\lambda} \,
\sum_{i=1}^K \sum_{j=1}^K A_{ij}\,
\nabqj
\hat \varphi_1 \, \cdot\, \nabqi
\hat \varphi_2 \biggr) \dq \dx,
\end{align}
and, for all $\vt \in \Ht^1(\Omega)$, $\hat \eta \in L^\infty(\Omega\times D)$
and $\hat \varphi \in \hat X$,
\begin{align}
\lae(\vt,\hat \eta)(\hat \varphi) &:=
\int_{\Omega \times D}
M \left[\hpsiae^{n-1}
\,\hat \varphi
+ \Delta t\,\sum_{i=1}^K [\,\sigtt(\vt)
\,\qt_i\,]\,\hat \eta\, \cdot\, \nabqi
\hat \varphi \right]\!\dq \dx.
\label{lgen}
\end{align}
\end{subequations}
It follows from
(\ref{Ytr}) and (\ref{embed}) that, for $r>d$,
\begin{eqnarray}
\int_{\Omega \times D} M\,
\vt\,\hat \varphi \,\cdot \nabx \hat \varphi
\dq \dx = 0 \qquad \forall \vt \in \Yt^r, \quad \forall \hat \varphi \in \hat
X.
\label{Ytra}
\end{eqnarray}
Hence
$a(\cdot,\cdot)$ is a
continuous
coercive bilinear functional on $ \hat X \times \hat X$.
In addition, we have that,
for all $\vt \in \Ht^1(\Omega)$, $\hat \eta \in L^\infty(\Omega \times D)$
and $\hat \varphi \in \hat X$,
\begin{align}
|\lae(\vt,\hat \eta)(\hat \varphi)| &\leq
\|\hpsiae^{n-1}\|_{L^2_M(\Omega \times D)}
\,\|\hat \varphi\|_{L^2_M(\Omega \times D)}
\nonumber \\
& \hspace{-0.5in}
+ \Delta t\,\left( \int_{D} M\,|\qt|^2 \dq \right)^{\frac{1}{2}}
\|\hat \eta\|_{L^\infty(\Omega \times D)}
\,\|\nabxtt \vt\|_{L^2(\Omega)}
\,\|\nabq \hat \varphi\|_{L^2_M(\Omega \times D)}.
\label{lgenbd}
\end{align}
Therefore, by noting that $\hpsiae^{n-1} \in \hat Z_2$
and recalling (\ref{MN}), it follows that $\ell_a(\vt,\hat \eta)(\cdot)$ is a continuous linear functional on $\hat X$
for all $\vt \in \Ht^1(\Omega)$ and $\hat \eta \in L^\infty(\Omega \times D)$.

In order to prove existence of a solution to (\ref{Gequn},b),
i.e. (\ref{bLM}) and (\ref{genLM}),
we consider a regularized system for a given $\delta \in (0,1)$:

Find $(\utaed^{n},\hpsiaed^n) \in \Vt \times \hat X$ such that
\begin{subequations}
\begin{alignat}{2}
b(\utaed^n,\wt) &=
\ell_b(\hpsiaed^n)(\wt)
\qquad &&\forall \wt \in \Vt,
\label{bLMd} \\
a(\hpsiaed^n,\hat \varphi) &= \lae(\utaed^n,\beta^L_\delta(\hpsiaed^n))
(\hat \varphi) \qquad &&\forall \hat \varphi \in \hat X.
\label{genLMd}
\end{alignat}
\end{subequations}
The existence of a solution to (\ref{bLMd},b) will be proved by
using a fixed-point argument. Given
$\hat \psi \in L^2_M(\Omega \times D)$,
let $(\ut^{\star}, \hat \psi^\star ) \in
\Vt \times \hat X$ be such that
\begin{subequations}
\begin{alignat}{2}
\qquad b(\ut^{\star},\wt) &=
\ell_b(\hat \psi)(\wt)
\qquad &&\forall \wt \in \Vt,
\label{fix4} \\
\qquad a(\hat \psi^{\star},\hat \varphi) &=
\lae(\ut^\star,\beta^L_\delta(\hat \psi))(\hat \varphi) \qquad
&&\forall \hat \varphi \in \hat X. \label{fix3}
\end{alignat}
\end{subequations}
The Lax--Milgram theorem yields the existence of a unique solution to (\ref{fix4},b),
and so the overall procedure (\ref{fix4},b) is well defined.

\begin{lemma}
\label{fixlem} Let $G: L^2_M(\Omega \times D) \rightarrow \hat X
\subset L^2_M(\Omega \times D)$ denote the
nonlinear map that takes the function $\hat{\psi}$ to $\hat \psi^{\star} = G(\hat \psi)$
{\em via} the procedure
{\rm (\ref{fix4},b)}. Then $G$ has a fixed point. Hence there exists a solution
$(\utaed^n,\hpsiaed^n) \in \Vt \times \hat X$ to {\rm (\ref{bLMd},b)}.
\end{lemma}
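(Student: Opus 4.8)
The plan is to apply Schauder's fixed point theorem to the map $G:L^2_M(\Omega\times D)\to L^2_M(\Omega\times D)$ defined through the decoupled linear problems \eqref{fix4} and \eqref{fix3}. First I would establish that $G$ is well defined: given $\hat\psi\in L^2_M(\Omega\times D)$, the bilinear form $b(\cdot,\cdot)$ is continuous and coercive on $\Vt\times\Vt$ (by \eqref{tripid} the convective term is skew-symmetric, so $b(\wt,\wt)=\|\wt\|_{L^2(\Omega)}^2 + \Delta t\,\nu\,\|\nabxtt\wt\|_{L^2(\Omega)}^2$), and $\ell_b(\hat\psi)(\cdot)$ is a continuous linear functional on $\Vt$ by \eqref{eqCttbd}; hence Lax--Milgram gives a unique $\ut^\star\in\Vt$. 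Then $\beta^L_\delta(\hat\psi)\in L^\infty(\Omega\times D)$ with values in $[\delta,L]$, so $\ell_a(\ut^\star,\beta^L_\delta(\hat\psi))(\cdot)$ is a continuous linear functional on $\hat X$ by \eqref{lgenbd}, while $a(\cdot,\cdot)$ is continuous and coercive on $\hat X\times\hat X$ (coercivity from $a(\hat\varphi,\hat\varphi)\geq\|\hat\varphi\|_{L^2_M}^2 + \Delta t\,\epsilon\,\|\nabx\hat\varphi\|_{L^2_M}^2 + \frac{\Delta t\, a_0}{2\lambda}\|\nabq\hat\varphi\|_{L^2_M}^2$, using \eqref{Ytra} after writing $\utae^{n-1}$ as an element of $\Yt^r$ — note $\utae^{n-1}\in\Vt\subset\Yt^r$ since it is divergence-free and in $H^1_0\hookrightarrow L^r$ for suitable $r$); Lax--Milgram again gives a unique $\hat\psi^\star=G(\hat\psi)\in\hat X$.

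Next I would verify the hypotheses of Schauder. \textbf{Boundedness:} testing \eqref{fix3} with $\hat\varphi=\hat\psi^\star$ and using coercivity of $a$ together with the bound \eqref{lgenbd}, in which $\|\beta^L_\delta(\hat\psi)\|_{L^\infty}\leq L$ and $\|\nabxtt\ut^\star\|_{L^2(\Omega)}$ is controlled — via \eqref{fix4} tested with $\wt=\ut^\star$ — by $\|\hpsiae^{n-1}\|_{L^2_M}$, $\|\ft^n\|$ and $\|\hat\psi\|_{L^2_M}$ through \eqref{eqCttbd}, yields an a priori bound on $\|\hat\psi^\star\|_{\hat X}$ in terms of the data and $\|\hat\psi\|_{L^2_M}$. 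A Young's inequality absorption shows that $G$ maps some sufficiently large closed ball $\overline{B_R}\subset L^2_M(\Omega\times D)$ into itself (the quadratic dependence on $\|\hat\psi\|_{L^2_M}$ coming only through $\ut^\star$ and then through $\Ctt_i$ is still absorbable by coercivity, and $\delta$, $L$ being fixed the constants are harmless). \textbf{Compactness:} $G$ maps bounded sets of $L^2_M(\Omega\times D)$ into bounded sets of $\hat X=H^1_M(\Omega\times D)$, and the embedding $H^1_M(\Omega\times D)\hookrightarrow L^2_M(\Omega\times D)$ is compact by \eqref{wcomp2} (recall $\gamma_i\geq 1$); hence $G$ is a compact map on $\overline{B_R}$.

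\textbf{Continuity:} I would show $G$ is continuous on $L^2_M(\Omega\times D)$. If $\hat\psi_m\to\hat\psi$ in $L^2_M$, then $\ell_b(\hat\psi_m)\to\ell_b(\hat\psi)$ in $\Vt'$ (linearity and \eqref{eqCttbd}), so $\ut^\star_m\to\ut^\star$ strongly in $\Vt$ by the uniform coercivity of $b$; moreover $\beta^L_\delta(\hat\psi_m)\to\beta^L_\delta(\hat\psi)$ in $L^2_M$ (Lipschitz continuity of $\beta^L_\delta$ with constant $1$) and, being uniformly bounded by $L$, also strongly in every $L^p_M$, $p<\infty$. Passing to the limit in $\ell_a(\ut^\star_m,\beta^L_\delta(\hat\psi_m))(\hat\varphi)$ for fixed $\hat\varphi\in\hat X$ — the product of a strongly convergent $\Lt^r(\Omega)$ sequence $\sigtt(\ut^\star_m)$ with a bounded-and-a.e.-convergent sequence $\beta^L_\delta(\hat\psi_m)$, tested against $\nabqi\hat\varphi$ — and using uniform coercivity of $a$, gives $\hat\psi^\star_m\to\hat\psi^\star$ strongly in $\hat X$, hence in $L^2_M$. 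The main obstacle is precisely this limit passage in the nonlinear drag term: one must handle the product $\sigtt(\ut^\star_m)\,\qt_i\,\beta^L_\delta(\hat\psi_m)$ carefully, exploiting that the cutoff makes $\beta^L_\delta(\hat\psi_m)$ uniformly bounded (this is exactly why $\delta$ and $L$ are in place), so that weak-times-strong convergence closes. Schauder's theorem then yields a fixed point $\hat\psi=\hpsiaed^n$ of $G$, and setting $\utaed^n:=\ut^\star$ for this $\hat\psi$ gives a solution of \eqref{bLMd}--\eqref{genLMd}.
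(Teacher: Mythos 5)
Your overall strategy (Schauder, via Lax--Milgram for the two decoupled problems, compactness from $\hat X \compEmb L^2_M(\Omega\times D)$, and a limit-passage argument for continuity exploiting the $L^\infty$ bound on $\beta^L_\delta$) matches the paper's, and your continuity and compactness steps are sound. The gap is in the boundedness step, and it is a genuine one.

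You assert that the $L^2$-energy estimate obtained by testing \eqref{fix3} with $\hat\varphi=\hat\psi^\star$ and \eqref{fix4} with $\wt=\ut^\star$ lets you choose $R$ so large that $G(\overline{B_R})\subset\overline{B_R}$, with ``the constants being harmless.'' That is not correct. Tracing the constants, the estimate you describe has the shape
\[
\|G(\hat\psi)\|^2_{L^2_M(\Omega\times D)} \;\leq\; A(\hat\psi^{n-1}_{\epsilon,L},\ut^{n-1}_{\epsilon,L},\ft^n) \;+\; B\,\|\hat\psi\|^2_{L^2_M(\Omega\times D)},
\qquad B \sim \frac{L^2\,\lambda\,k^2\,\Delta t}{a_0\,\nu^2}\,C,
\]
because the elastic-stress term $\Delta t\, k\,\Ctt_i(M\hat\psi):\nabxtt\ut^\star$ on the Navier--Stokes side and the drag term $\Delta t\,[\sigtt(\ut^\star)\qt_i]\beta^L_\delta(\hat\psi)\cdot\nabqi\hat\psi^\star$ on the Fokker--Planck side are bounded separately, each by Cauchy--Schwarz, and neither cancels. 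For the self-map $A+BR^2\leq R^2$ to be solvable one needs $B<1$, i.e.\ $\Delta t \lesssim a_0\nu^2/(L^2\lambda k^2)$. This is an extra smallness condition linking $\Delta t$ and $L$ that is \emph{not} assumed in the lemma (and is strictly stronger than the $\Delta t = o(L^{-1})$ that will later be imposed when passing $L\to\infty$, so it cannot simply be absorbed into the subsequent limit). Note also that $G$ is essentially affine in $\hat\psi$ at this level, so no choice of $R$ can escape the obstruction if $B\geq1$.

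The paper avoids this by using the Leray--Schauder (Schaefer) form of Schauder's theorem: instead of a ball self-map it verifies an a priori bound \eqref{fixbound} on every $\hat\psi$ solving $\hat\psi=\kappa\,G(\hat\psi)$, $\kappa\in(0,1]$. The crucial step is to test \eqref{fix3sig} not with $\hat\psi$ but with $[\mathcal{F}^L_\delta]'(\hat\psi)$: by \eqref{betaLd} one has $\beta^L_\delta(\hat\psi)\,[\mathcal{F}^L_\delta]''(\hat\psi)=1$, so the drag term becomes $\kappa\,\Delta t\sum_i\int M\,\sigtt(\vt)\qt_i\cdot\nabqi\hat\psi\dq\dx$, which by the integration-by-parts identity \eqref{intbyparts} equals $\kappa\,\Delta t\sum_i\int\Ctt_i(M\hat\psi):\sigtt(\vt)\dx$ --- and this cancels \emph{exactly} against the stress term in the $\kappa$-scaled Navier--Stokes energy identity \eqref{Gequnbhat}. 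The resulting inequality \eqref{Ek} bounds $\int M\,\mathcal{F}^L_\delta(\hat\psi)$ in terms of the data alone, with no $\|\hat\psi\|^2$ on the right; \eqref{cFbelow} then converts this to the required $L^2_M$ bound \eqref{fixbound}, valid for all $\Delta t$, $L$, $\delta$. This free-energy cancellation between the momentum equation and the Fokker--Planck equation is the essential structural feature that your $L^2$ estimate misses, and it is the reason the entropy $\mathcal{F}^L_\delta$, rather than the $L^2_M$ norm, is the right quantity to control here.
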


\begin{proof}
Clearly, a fixed point of $G$ yields a
solution of (\ref{bLMd},b).
In order to show that $G$ has a fixed point, we apply
Schauder's fixed-point theorem; that is, we need to show that:
(i)~$G:
L^2_M(\Omega \times D) \rightarrow
L^2_M(\Omega \times D)$ is continuous; (ii)~$G$ is compact; and
(iii)~there exists a $C_{\star} \in {\mathbb R}_{>0}$ such that
\begin{eqnarray}
\|\hat{\psi}\|_{L^{2}_M(\Omega\times D)} \leq C_{\star}
\label{fixbound}
\end{eqnarray}
for every $\hat{\psi} \in L^2_M(\Omega \times D)$ and $\kappa \in (0,1]$
satisfying $\hat{\psi} = \kappa\, G(\hat{\psi})$.

Let $\{\hat{\psi}^{(p)}\}_{p \geq 0}$ be such that
\begin{eqnarray}
\hat{\psi}^{(p)}
\rightarrow \hat{\psi} \qquad \mbox{strongly in }
L^{2}_M(\Omega\times D)\qquad \mbox{as } p \rightarrow \infty.
\label{Gcont1}
\end{eqnarray}
It follows immediately from (\ref{betaLd}) and (\ref{eqCttbd}) that
\begin{subequations}
\begin{alignat}{1}
M^{\frac{1}{2}}\,\beta^L_\delta(\hat{\psi}^{(p)})
\rightarrow M^{\frac{1}{2}}\,
\beta^L_\delta(\hat{\psi}) \quad
\mbox{strongly in }
L^r(\Omega\times D)\quad
\mbox{as } p \rightarrow \infty,
\label{betacon}
\end{alignat}
for all $r \in [1,\infty)$ and, for $i=1, \dots, K$,
\begin{alignat}{1}
&
\Ctt_i(M\,\hat{\psi}^{(p)}) \rightarrow \Ctt_i(M\,\hat{\psi}) \quad
\mbox{strongly in } L^{2}(\Omega)\quad
\mbox{as } p \rightarrow \infty.
\label{Cttcon}
\end{alignat}
\end{subequations}
In order to prove (i) above, we need to show that
\begin{eqnarray}
\hat{\eta}^{(p)}:=G(\hat{\psi}^{(p)}) \rightarrow G(\hat{\psi})
\quad \mbox{strongly in } L^2_M(\Omega\times D)\quad \mbox{as } p
\rightarrow \infty. \label{Gcont2}
\end{eqnarray}
We have from the definition of $G$ (see (\ref{fix4},b)) that, for all $p \geq 0$,
\begin{subequations}
\begin{align}
a(\hat \eta^{(p)}, \hat \varphi)&= \ell_a(\vt ^{(p)},
\beta^L_\delta(\hat \psi^{(p)}))(\hat \varphi)
\qquad \forall \hat \varphi \in \hat X,
\label{Gcont5}
\end{align}
where $\vt^{(p)} \in \Vt$ satisfies
\begin{align}
b(\vt^{(p)},\wt) &= \ell_b(\hat \psi^{(p)})(\wt)
\qquad \forall \wt \in \Vt.
\label{Gcont5a}
\end{align}
\end{subequations}

Choosing $\hat \varphi = \hat \eta^{(p)}$ in (\ref{Gcont5})
yields, on noting the simple identity
\begin{equation}
2\,(s_1-s_2)\,s_1 = s_1^2 + (s_1 -s_2)^2 -s_2^2 \qquad \forall
s_1, s_2 \in {\mathbb R}, \label{simpid}
\end{equation}
(\ref{A}), (\ref{Ytra}) and (\ref{betaLd})
that, for all $p \geq 0$,
\begin{align}
&\int_{\Omega \times D}
M\,\left [ |\hat \eta^{(p)}|^2
+ | \hat \eta^{(p)} - \hpsiae^{n-1}|^2
+ \frac{a_0\,\Delta t}{2\,\lambda}\,|\nabq \hat \eta^{(p)}|^2
+ 2\,\epsilon \, \Delta t\,|\nabx \hat \eta^{(p)}|^2 \right] \dq \dx
\label{Gcont6}
\nonumber
\\
& \qquad \leq \int_{\Omega \times D} M\,
|\hpsiae^{n-1}|^2 \dq \dx + C(L,\lambda,a_0^{-1})\,\Delta t\int_{\Omega}
|\nabxtt \vt^{(p)}|^2 \dx.
\end{align}
Choosing $\wt \equiv \vt^{(p)}$ in (\ref{Gcont5a}), and noting
(\ref{simpid}), (\ref{tripid}),
(\ref{eqCttbd}) 
and (\ref{Gcont1}) yields, for
all $p\geq 0$, that
\begin{align}
&\hspace{-3mm}\int_{\Omega} \left[ |\vt^{(p)}|^2 +
|\vt^{(p)}-\utae^{n-1}|^2  \right] \dx +
\Delta t \,\nu \, \int_{\Omega} |\nabxtt \vt^{(p)}|^2 \dx
\label{Gcont4}
\nonumber\\
&
\leq \int_\Omega |\utae^{n-1}|^2 \dx + C\,\Delta t \,
\|\ft^n\|_{(H^1_0(\Omega))'}^2
+ C\, \Delta t \, \int_{\Omega \times D}
M\,|\hat{\psi}^{(p)}|^2 \dq \dx
\leq C.
\end{align}
Combining (\ref{Gcont6}) and (\ref{Gcont4}),
we have for all $p \geq 0$ that
\begin{eqnarray}
\|\hat \eta^{(p)}\|_{\hat X}
+ \|\vt^{(p)}\|_{H^1(\Omega)} \leq C(L, (\Delta t)^{-1})\,.
\label{Gcont7}
\end{eqnarray}
It follows from
(\ref{Gcont7}), (\ref{embed}) and the compactness of the embedding (\ref{wcomp2}) that there exists a subsequence
$\{(\hat{\eta}^{(p_k)},\vt^{(p_k)})\}_{p_k \geq 0}$ and functions $\hat{\eta}\in \hat X$ and $\vt \in \Vt$ such that, as $p_k \rightarrow \infty$,
\begin{subequations}
\begin{alignat}{2}
\hat{\eta}^{(p_k)}
&\rightarrow
\hat{\eta}
\qquad
&&\mbox{weakly in }
L^{s}(\Omega ;L^2_M(D)),
\label{Gcont8a} \\
\bet
M^{\frac{1}{2}}\,\nabx \hat{\eta}^{(p_k)}
&\rightarrow M^{\frac{1}{2}}\,\nabx \hat{\eta} \qquad
&&\mbox{weakly in }
\Lt^{2}(\Omega \times D),
\label{Gcont8bx} \\
\bet
M^{\frac{1}{2}}\,\nabq \hat{\eta}^{(p_k)}
&\rightarrow M^{\frac{1}{2}}\,\nabq \hat{\eta}
\qquad
&&\mbox{weakly in }
\Lt^{2}(\Omega \times D),
\label{Gcont8b} \\
\bet
\hat{\eta}^{(p_k)}
&\rightarrow
\hat{\eta}
\qquad
&&\mbox{strongly in }
L^{2}_M(\Omega\times D),
\label{Gcont8as} \\
\bet
\vt^{(p_k)} &\rightarrow \vt \qquad
&&\mbox{weakly in }
\Ht^{1}(\Omega);
\label{Gcont8c}
\end{alignat}
\end{subequations}
where $s \in [1,\infty)$ if $d=2$ or $s \in [1,6]$ if $d=3$.
We deduce from (\ref{Gcont5a}), (\ref{bgen},b), (\ref{Gcont8c})
and (\ref{Cttcon})
that the functions $\vt \in \Vt$ and $\hat{\psi} \in \hat X$ satisfy
\begin{eqnarray}\qquad
b(\vt,\wt) &=\ell_b(\hat \psi)(\wt) \qquad \forall \wt \in \Vt.
\label{Gcont9}
\end{eqnarray}
It follows from (\ref{Gcont5}), (\ref{agen},b), (\ref{Gcont8a}--e) and
(\ref{betacon}) that $\hat \eta,\,\hat \psi \in \hat X$
and $\vt \in \Vt$ satisfy
\begin{align}
a(\hat{\eta},\hat \varphi)
&= \lae(\vt,\beta^L_\delta(\hat \psi))(\hat \varphi)
\qquad \forall \hat \varphi \in C^\infty(\overline{\Omega \times D}).
\label{Gcont11}
\end{align}
Then, noting that $a(\cdot,\cdot)$ is a bounded bilinear functional on $\hat X \times \hat X$,
that $\lae(\vt,\beta^L_\delta(\hat \psi))(\cdot)$ is a bounded linear functional on $\hat X$, and recalling (\ref{cal K}), we deduce that (\ref{Gcont11}) holds
for all $\hat \varphi \in \hat X$.
Combining this $\hat X$ version of (\ref{Gcont11}) and (\ref{Gcont9}), we have that
$\hat \eta = G(\hat \psi)\in \hat X$. Therefore
the whole sequence
\[\hat{\eta}^{(p)} \equiv G(\hat{\psi}^{(p)})
\rightarrow G(\hat{\psi})\qquad \mbox{strongly in $L^2_M(\Omega\times D)$},
\]
as $p \rightarrow \infty$, and so (i) holds.

Since the embedding $\hat X \hookrightarrow L^{2}_M(\Omega \times D)$
is compact, it follows that (ii) holds. It therefore remains to show that (iii) holds.

As regards (iii), $\hat{\psi} =
\kappa \, G(\hat{\psi})$ implies that $\{\vt,\hat{\psi}\} \in
\Vt \times \hat X$ satisfies
\begin{subequations}
\begin{alignat}{2}
b(\vt,\wt) &=
\ell_b(\hat \psi)(\wt)
\quad &&\forall \wt \in \Vt,
\label{fix4sig}
\\
a(\hat{\psi},\hat \varphi)&=\kappa\,\lae(\vt,\beta^L_\delta(\hat \psi))(\hat \varphi)\qquad
&&\forall \hat \varphi \in \hat X. \label{fix3sig}
\end{alignat}
\end{subequations}
Choosing $\wt \equiv \vt$ in (\ref{fix4sig})
yields, similarly to (\ref{Gcont4}), that
\begin{eqnarray}
\label{Gequnbhat}
&&\hspace{-2.5cm}\frac{1}{2}\,\displaystyle
\int_{\Omega} \left[ \,|\vt|^2 +
|\vt-\utae^{n-1}|^2 - |\utae^{n-1}|^2 \,\right]
\dx + \Delta t\, \nu \,
\int_{\Omega} |\nabxtt \vt|^2 \dx
\nonumber
 \\
&&\hspace{-0.5cm}
= \Delta t \left[
\langle \ft^n, \vt \rangle_{H^1_0(\Omega)}
- k\,\sum_{i=1}^K
\int_{\Omega}
 \Ctt_i(M\,\hat{\psi}): \nabxtt \vt \dx \right].
\end{eqnarray}
Choosing $\hat \varphi = [\mathcal{F}_\delta^L]'(\hat{\psi})$ in (\ref{fix3sig})
and noting the convexity of $\mathcal{F}_\delta^L$, (\ref{betaLd})
and that $\vt$ is divergence-free, yield
\begin{align}
&
\int_{\Omega \times D} M\,\left[ \,\mathcal{F}_\delta^L (\hat{\psi})
- \mathcal{F}_\delta^L (\kappa \,\hpsiae^{n-1})
+ \Delta t \left[\varepsilon \nabx \hat\psi - \undertilde{u}^{n-1}_{\epsilon, L}\hat\psi \right] \cdot \nabx ([\mathcal{ F}_{\delta}^L]'(\hat{\psi}))
\right]  \dq \dx
\label{acorstab1}
\nonumber
\\
&
\quad + \frac{\Delta t}{2\,\lambda}\,
\sum_{i=1}^K \sum_{j=1}^K A_{ij}
\int_{\Omega \times D} M\,
\,\nabqj
\hat{\psi} \cdot \nabqi ([\mathcal{F}_\delta^L]'(\hat{\psi}))
\dq \dx \nonumber \\
&
\hspace{2cm}
\leq \kappa\,\Delta t \,\sum_{i=1}^K
\int_{\Omega \times D}
M\,\sigtt(\vt) \,\qt_i \cdot
\nabqi \hat{\psi}
\dq \dx
\nonumber
\\
&\hspace{2cm}
= \kappa\,\Delta t \,\sum_{i=1}^K
\int_{\Omega}
\Ctt_i(M\,\hat \psi) : \sigtt(\vt)  \dx,
\end{align}
where in the transition to the final inequality we applied \eqref{intbyparts} with
$B := \sigtt(\vt)$ (on account of it being independent of the variable $\qt$), together
with the fact that $\mathfrak{tr}(\sigtt(\vt))  = \nabx \cdot ~\vt = 0$, and recalled
\eqref{eqCtt}. Next, on noting \eqref{betaLd} and that $\undertilde{u}_{\epsilon, L}^{n-1}
\in \Vt$, it follows that
\begin{eqnarray}\label{convGdL}
\int_{\Omega \times D}\!\! M\, \ut^{n-1}_{\epsilon, L}\hat\psi \cdot \nabx ([\mathcal{F}^L_\delta]'(\hat\psi)) \dq \dx &=& \int_{\Omega \times D}\!\! M\, \ut^{n-1}_{\epsilon, L}\, \frac{\hat \psi}{\beta^L_\delta(\hat\psi)} \cdot \nabx \hat\psi \dq \dx\nonumber\\
&=& \int_{\Omega \times D}\!\! M\, \ut^{n-1}_{\epsilon, L} \cdot \nabx (G^L_\delta (\hat\psi)) \dq \dx = 0,
\nonumber\\
\end{eqnarray}
where $G^L_\delta \in C^{0,1}(\mathbb{R})$ is defined by
\begin{equation}\label{Gdl}
G^L_\delta(s) := \left\{\begin{array}{ll}
\frac{1}{2\delta} s^2 + \frac{\delta - L}{2} & \qquad\mbox{if $s \leq \delta$}, \\
s - \frac{L}{2}                              & \qquad\mbox{if $s \in [\delta, L]$},\\
\frac{1}{2L} s^2                             & \qquad\mbox{if $s \geq L$};
\end{array}
\right.
\end{equation}
and so $[G^L_\delta]'(s) = s/\beta^L_\delta(s)$.  Combining (\ref{Gequnbhat}) and (\ref{acorstab1}), and noting \eqref{convGdL}, \eqref{Gdlpp} and (\ref{A})
yields that
\begin{align}
\label{Ek}
&
\frac{\kappa}{2}\,\displaystyle
\int_{\Omega} \left[ \,|\vt|^2 +
|\vt-\utae^{n-1}|^2 \,\right]
\dx + \kappa\,\Delta t\, \nu \,
\int_{\Omega} |\nabxtt \vt|^2 \dx
+k\,
\int_{\Omega \times D} M\,\mathcal{F}_\delta^L (\hat{\psi})
\dq \dx \nonumber
\\
&\qquad +k\,L^{-1}\,\Delta t\,
\int_{\Omega \times D} M\,
\left[\,\epsilon
\,|\nabx \hat{\psi}|^2 + \frac{a_0}{2\,\lambda} \,|\nabq
\hat{\psi}|^2 \right] \dq \dx \nonumber \\
&\hspace{1cm}\leq
\kappa\,\Delta t \,\langle \ft^n, \vt \rangle_{H^1_0(\Omega)} +
\frac{\kappa}{2}\,\int_{\Omega} |\utae^{n-1}|^2 \dx
+ k\,
\int_{\Omega \times D} M\,\mathcal{F}_\delta^L (\kappa \,\hpsiae^{n-1})
\dq \dx
\nonumber
\\
&\hspace{1cm}\leq
\frac{\kappa \,\Delta t\, \nu}{2} \,
\int_{\Omega} |\nabxtt \vt|^2 \dx
+ \frac{\kappa \,\Delta t}
{2\nu}\,
\|\ft^n\|_{(H^{1}_0(\Omega))'}^2 \nonumber\\
& \hspace{2cm} +
\frac{\kappa}{2}\,\int_{\Omega} |\utae^{n-1}|^2 \dx
+ k\,
\int_{\Omega \times D} M\,\mathcal{F}_\delta^L (\kappa \,\hpsiae^{n-1})
\dq \dx.
\end{align}

It is easy to show that $\mathcal{F}^L_\delta(s)$ is nonnegative for all
$s \in \mathbb{R}$, with  $\mathcal{F}^L_\delta(1)=0$.
Furthermore, for any $\kappa \in (0,1]$,
$\mathcal{F}^L_\delta(\kappa\, s) \leq \mathcal{F}^L_\delta(s)$
if $s<0$ or $1 \leq \kappa\, s$, and also
$\mathcal{F}^L_\delta(\kappa\, s) \leq \mathcal{F}^L_\delta(0) \leq 1$
if $0 \leq \kappa\, s \leq 1$.
Thus we deduce that
\begin{equation}\label{deltaL}
\mathcal{F}_\delta^L(\kappa\, s)
\leq \mathcal{F}_\delta^L(s)+ 1\qquad \forall s \in {\mathbb R},\quad
\forall \kappa \in (0,1].
\end{equation}
Hence, the bounds (\ref{Ek}) and (\ref{deltaL}), on noting (\ref{cFbelow}),
give rise to the desired bound (\ref{fixbound}) with $C_*$ dependent only on
$\delta$, $L$, $k$, $a_0$, $\nu$, $\ft$ and $\hpsiae^{n-1}$.
Therefore (iii) holds, and so $G$ has a fixed point, proving
existence of a solution
to (\ref{bLMd},b).
\qquad\end{proof}

Choosing $\wt \equiv \utaed^n$ in (\ref{bLMd})
and $\hat \varphi \equiv [\mathcal{F}_\delta^L]'(\hpsiaed^n)$ in (\ref{genLMd}),
and combining, then yields, as in
(\ref{Ek}), with $C(L)$ a positive constant, independent of $\delta$ and $\Delta t$,
\begin{align}\label{E1}
&
\frac{1}{2}\,\displaystyle
\int_{\Omega} \left[ \,|\utaed^n|^2 +
|\utaed^n-\utae^{n-1}|^2 \,\right] \dx
+k\, \int_{\Omega \times D} M\,\mathcal{F}_\delta^L (\hpsiaed^n)
\dq \dx
\nonumber
\\
&\quad\quad
+ \Delta t\, \biggl[ \frac{\nu}{2} \,
\int_{\Omega} |\nabxtt \utaed^n|^2 \dx
+k\,L^{-1}\,\epsilon\,
\int_{\Omega \times D} M
\,|\nabx \hpsiaed^n|^2
\dq \dx
\nonumber \\
& \quad \quad + \frac{k\,L^{-1}\,a_0}{2\,\lambda} \,
\int_{\Omega \times D}
M\, |\nabq
\hpsiaed^n|^2
 \dq \dx \biggr] \nonumber
\\
&\quad\quad\quad\leq
\frac{\Delta t}{2\nu} \,\|\ft^n \|_{(H^1_0(\Omega))'}^2 +
\frac{1}{2}\,\int_{\Omega} |\utae^{n-1}|^2 \dx
+ k\,
\int_{\Omega \times D} M\,\mathcal{F}_\delta^L (\hpsiae^{n-1})
\dq \dx
\nonumber \\
& \quad \quad \quad \leq C(L).
\end{align}

We are now ready to pass to the limit $\delta \rightarrow 0_+$, to deduce the existence of
a solution $\{(\utae^n,\hpsiae^n)\}_{n=1}^N$ to
({\rm P}$^{\Delta t}_{\epsilon,L}$), with $\utae^n \in \Vt$ and $\hpsiae^n \in
\hat X \cap \hat{Z}_2$, $n=1,\dots, N$.

\begin{lemma}
\label{conv}
There exists a subsequence (not indicated) of $\{(\utaed^{n},
\hpsiaed^{n})\}_{\delta >0}$, and functions $\utae^{n} \in \Vt$
and $\hpsiae^{n} \in \hat X \cap \hat Z_2$, $n \in \{1,\dots, N\}$,
such that, as $\delta \rightarrow 0_+$,
\begin{subequations}
\begin{eqnarray}
&\utaed^{n} \rightarrow \utae^{n} \qquad &\mbox{weakly in } \Vt, \label{uwconH1}\\
\bet
&\utaed^{n} \rightarrow \utae^{n}
\qquad &\mbox{strongly in }
\Lt^{r}(\Omega), \label{usconL2}
\end{eqnarray}
\end{subequations}
where $r \in [1,\infty)$ if $d=2$ and $r \in [1,6)$ if $d=3$;
and
\begin{subequations}
\begin{alignat}{2}
\!\!\!\!M^{\frac{1}{2}}\,\hpsiaed^{n} &\rightarrow
M^{\frac{1}{2}}\,
\hpsiae^{n} &&\quad \mbox{weakly in }
L^2(\Omega\times D),~~~ \label{psiwconL2}\\
\bet
M^{\frac{1}{2}}\,\nabq \hpsiaed^{n}
&\rightarrow M^{\frac{1}{2}}\,\nabq \hpsiae^{n}
&&\quad \mbox{weakly in }
\Lt^2(\Omega\times D), \label{psiwconH1}\\
\bet
M^{\frac{1}{2}}\,\nabx \hpsiaed^{n}
&\rightarrow M^{\frac{1}{2}}\,\nabx \hpsiae^{n}
&&\quad \mbox{weakly in }
\Lt^2(\Omega\times D), \label{psiwconH1x}\\
\bet
M^{\frac{1}{2}}\,\hpsiaed^{n} &\rightarrow
M^{\frac{1}{2}}\,\hpsiae^{n}
&&\quad \mbox{strongly in }
L^{2}(\Omega\times D),\label{psisconL2}
\\
\bet
M^{\frac{1}{2}}\,\beta_\delta^L(\hpsiaed^{n}) &\rightarrow
M^{\frac{1}{2}}\,\beta^L(\hpsiae^{n})
&&\quad \mbox{strongly in }
L^s(\Omega\times D),\label{betaLdsconL2}
\end{alignat}
for all $s \in [2,\infty)$ and, for $i=1, \dots,  K$,
\begin{alignat}{1}
&\hspace{-4mm}\Ctt_i(M\,\hpsiaed^{n}) \rightarrow \Ctt_i(M\,\hpsiae^{n})
\qquad \!\!\!\!\!\mbox{ strongly in }
\Ltt^{2}(\Omega).\label{CwconL2}
\end{alignat}
\end{subequations}
Further, $(\utae^n, \hpsiae^n)$ solves (\ref{Gequn},b) for $n=1,\dots, N$. Hence there
exists a solution $\{(\utae^n,\hpsiae^n)\}_{n=1}^N$ to
{\em ({\rm P}$^{\Delta t}_{\epsilon,L}$)}, with $\utae^n \in \Vt$ and $\hpsiae^n \in
\hat X \cap \hat{Z}_2$ for all $n=1,\dots, N$.
\end{lemma}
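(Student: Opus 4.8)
The plan is to combine the $\delta$-uniform bound (\ref{E1}) with the compact embeddings (\ref{wcomp1},b) and the Rellich--Kondrachov theorem to extract weakly and strongly convergent subsequences of $\{(\utaed^n,\hpsiaed^n)\}_{\delta>0}$, to pass to the limit $\delta\rightarrow 0_+$ in the regularized system (\ref{bLMd},b), and finally to verify the sign and normalization constraints built into $\hat Z_2$. The argument runs inductively over $n\in\{1,\dots,N\}$: at step $n$ one may assume $\hpsiae^{n-1}\in\hat Z_2$ (with the base case $\hpsiae^0=\beta^L(\hat\psi^0)\in\hat Z_2$), invoke Lemma~\ref{fixlem} to produce $(\utaed^n,\hpsiaed^n)\in\Vt\times\hat X$ solving (\ref{bLMd},b), and the limit passage below then yields $(\utae^n,\hpsiae^n)\in\Vt\times(\hat X\cap\hat Z_2)$, feeding the next step.

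First I would read off from (\ref{E1}) that $\|\utaed^n\|_{H^1(\Omega)}$ and $\|\hpsiaed^n\|_{\hat X}$ are bounded by a constant $C(L,(\Delta t)^{-1})$ independent of $\delta$; here the $\hat X$-bound uses the coercivity lower bounds (\ref{cFbelow}) on $\mathcal{F}_\delta^L$ to turn the control on $\int_{\Omega\times D}M\,\mathcal{F}_\delta^L(\hpsiaed^n)$ into a bound on $\|M^{1/2}\hpsiaed^n\|_{L^2(\Omega\times D)}$, while simultaneously furnishing the smallness estimate $\|M^{1/2}\,[\hpsiaed^n]_-\|_{L^2(\Omega\times D)}^2\leq 2\,\delta\,C(L)\rightarrow 0$ for the negative part $[s]_-:=\min(s,0)$. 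Reflexivity of $\Vt$ and of the Hilbert space $\hat X$ then gives the weak limits (\ref{uwconH1}) and (\ref{psiwconL2})--(\ref{psiwconH1x}); the compact embedding $\Ht^1(\Omega)\hookrightarrow\Lt^r(\Omega)$ (with $r<\infty$ for $d=2$, $r<6$ for $d=3$) gives the strong convergence (\ref{usconL2}); and the compact embedding (\ref{wcomp2}), valid since $\gamma_i\geq 1$, gives (\ref{psisconL2}). Extracting a further subsequence so that $\hpsiaed^n\rightarrow\hpsiae^n$ a.e.\ on $\Omega\times D$, and using $0\leq\beta^L_\delta\leq L$, $\beta^L_\delta\rightarrow\beta^L$ uniformly on compact sets, together with the boundedness of $M$, dominated convergence yields (\ref{betaLdsconL2}) for all $s\in[2,\infty)$; finally (\ref{CwconL2}) follows from (\ref{psisconL2}), the linearity of $\Ctt_i$ and the bound (\ref{eqCttbd}).

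Next I would pass to the limit in the two weak formulations. Passing to the limit in (\ref{bLMd}) is routine: every term of $b(\utaed^n,\wt)$ is linear in $\utaed^n$ (the transported field being the fixed $\utae^{n-1}$), so (\ref{uwconH1}) suffices, and the right-hand side $\ell_b(\hpsiaed^n)(\wt)$ converges by (\ref{CwconL2}); this gives (\ref{Gequn}) for all $\wt\in\Vt$. For (\ref{genLMd}) I would first take $\hat\varphi\in C^\infty(\overline{\Omega\times D})$: the terms of $a(\hpsiaed^n,\hat\varphi)$ pass to the limit by the weak convergences (\ref{psiwconL2})--(\ref{psiwconH1x}), and on the right-hand side $\int_{\Omega\times D}M\,\hpsiae^{n-1}\hat\varphi$ is fixed. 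The one genuinely delicate term is the drag term, in which the merely weakly convergent factor $\sigtt(\utaed^n)=\nabxtt\utaed^n$ multiplies the nonlinear composition $\beta^L_\delta(\hpsiaed^n)$; I would handle it by integrating out the conformation variable first, writing the term as $\Delta t\sum_{i=1}^K\int_\Omega\nabxtt\utaed^n:\Gt_i^{(\delta)}\dx$ with $\Gt_i^{(\delta)}(\xt):=\int_D M\,\qt_i\,(\nabqi\hat\varphi)^{\rm T}\,\beta^L_\delta(\hpsiaed^n)\dq$, and then checking via (\ref{betaLdsconL2}) with $s=2$ (together with $\int_D M\,|\qt_i|^2\dq<\infty$ and the boundedness of $\nabqi\hat\varphi$) that $\Gt_i^{(\delta)}\rightarrow\Gt_i$ strongly in $\Ltt^2(\Omega)$, so that the weak--strong pairing converges. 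Hence (\ref{psiG}) holds for all $\hat\varphi\in C^\infty(\overline{\Omega\times D})$, and then for all $\hat\varphi\in\hat X$ by the density (\ref{cal K}) and the boundedness of $a(\hpsiae^n,\cdot)$ and of $\lae(\utae^n,\beta^L(\hpsiae^n))(\cdot)$ on $\hat X$, the latter because $\beta^L(\hpsiae^n)\in L^\infty(\Omega\times D)$.

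Finally I would check $\hpsiae^n\in\hat Z_2$. Membership in $L^2_M(\Omega\times D)$ is immediate from (\ref{psiwconL2}), and $\hpsiae^n\in\hat X$ from the weak compactness. Nonnegativity follows because $M^{1/2}[\,\cdot\,]_-$ is continuous on $L^2(\Omega\times D)$ and $\|M^{1/2}[\hpsiaed^n]_-\|_{L^2(\Omega\times D)}\rightarrow 0$, so the strong limit satisfies $[\hpsiae^n]_-=0$, i.e.\ $\hpsiae^n\geq 0$ a.e. For the normalization I would test (\ref{psiG}) with functions depending only on $\xt$, i.e.\ $\hat\varphi\in H^1(\Omega)\subset\hat X$: the $\nabq$- and drag-terms vanish and the marginal $\rho^n(\xt):=\int_D M\,\hpsiae^n\dq\in H^1(\Omega)$ satisfies one implicit Euler step of a source-free, divergence-free advection--diffusion equation with $\rho^{n-1}:=\int_D M\,\hpsiae^{n-1}\dq\leq 1$ by the inductive hypothesis; testing that scalar identity with $(\rho^n-1)_+\in H^1(\Omega)$, and using $\nabx\cdot\utae^{n-1}=0$ with $\utae^{n-1}=\zerot$ on $\partial\Omega$ to annihilate the convective contributions, gives $\|(\rho^n-1)_+\|_{L^2(\Omega)}^2\leq 0$, so $\rho^n\leq 1$; combined with $\rho^n\geq 0$ this shows $\hpsiae^n\in\hat Z_2$ and completes the inductive step, whence $\{(\utae^n,\hpsiae^n)\}_{n=1}^N$ solves $({\rm P}^{\Delta t}_{\epsilon,L})$. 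I expect the drag-term limit just described to be the main obstacle: it is the one place where weak compactness of $\nabxtt\utaed^n$ is by itself insufficient, and integrating out $\qt$ to expose the strongly convergent coefficient $\Gt_i^{(\delta)}$ is the device that makes the weak--strong argument close.
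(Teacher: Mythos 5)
Your proposal is correct and follows essentially the same route as the paper: the $\delta$-uniform bound \eqref{E1} (with \eqref{cFbelow} giving both the $\hat X$-bound and the vanishing negative part), compact embeddings for the strong convergences, a weak--strong pairing for the drag term, density \eqref{cal K} to extend to all of $\hat X$, and testing the marginal equation with the positive part of $\rho^n-1$ to obtain $\hpsiae^n\in\hat Z_2$. The only (harmless) deviation is that you extract a weakly convergent subsequence directly in the Hilbert space $\hat X$, which makes the identification of the limiting weighted gradients automatic, whereas the paper extracts the limits of $M^{\frac{1}{2}}\nabq\hpsiaed^{n}$ and $M^{\frac{1}{2}}\nabx\hpsiaed^{n}$ separately in $\Lt^2(\Omega\times D)$ and identifies them with $M^{\frac{1}{2}}\nabq\hpsiae^{n}$ and $M^{\frac{1}{2}}\nabx\hpsiae^{n}$ by the distributional argument \eqref{derivid}--\eqref{last}.
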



\begin{proof}
The weak convergence results (\ref{uwconH1}), (\ref{psiwconL2}) and that $\hpsiae^n \ge 0$ a.e.\ on $\Omega \times D$ follow immediately from the first two bounds on the left-hand side of
(\ref{E1}), on noting (\ref{cFbelow}). The strong convergence
(\ref{usconL2}) for $\utaed^{n}$ follows from (\ref{uwconH1}),
on noting that $\Vt \subset \Ht^{1}_0(\Omega)$ is compactly embedded in $\Lt^r(\Omega)$ for
the stated values of $r$.

It follows immediately from the bound on the fifth term on the left-hand side of (\ref{E1}) that (\ref{psiwconH1})
holds for some limit $\gt \in \Lt^2(\Omega \times D)$, which we need to identify. However, for any $\etat \in
\Ct^{1}_0(\Omega\times D)$, it follows from (\ref{eqM}) and the compact support of $\etat$ on $D$ that
$ [\nabq \cdot (M^{\frac{1}{2}}\,\etat)\,]/M^{\frac{1}{2}} \in L^2(\Omega \times D)$,
and hence the above convergence implies, noting (\ref{psiwconL2}), that
\begin{align}
\hspace{-0.2cm}\int_{\Omega \times D} \gt \cdot
\etat \dq \dx
&\leftarrow
- \int_{\Omega \times D}M^{\frac{1}{2}}\, \hpsiaed^{n}\,
\frac{\nabq \cdot (M^{\frac{1}{2}}\,\etat)}{M^{\frac{1}{2}}}
\dq \dx
\nonumber
\\
\bet
& \hspace{-2.2cm}\rightarrow
- \int_{\Omega \times D} M^{\frac{1}{2}}\,\hpsiae^n\,
\frac{\nabq \cdot (M^{\frac{1}{2}}\,\etat)}{M^{\frac{1}{2}}}
\dq \dx =
- \int_{\Omega \times D} \hpsiae^n\,
\nabq \cdot (M^{\frac{1}{2}}\,\etat)
\dq \dx
\label{derivid}
\end{align}
as $\delta  \rightarrow 0_+$.
Equivalently, on dividing and multiplying by $M^{\frac{1}{2}}$ under the integral sign in the left-most term
in \eqref{derivid}, we have that
\[ \int_{\Omega \times D} M^{-\frac{1}{2}}\gt \cdot M^{\frac{1}{2}}\etat \dq \dx=  -  \int_{\Omega \times D} \hat{\psi}^n_{\varepsilon, L} \, \nabq \cdot (M^{\frac{1}{2}}\etat) \dq \dx\qquad \forall \etat \in \undertilde{C}^1_0(\Omega \times D).
\]
Observe that $\etat \in \undertilde{C}^1_0(\Omega \times D) \mapsto M^{\frac{1}{2}} \etat \in \undertilde{C}^1_0(\Omega \times D)$ is a bijection of $\undertilde{C}^1_0(\Omega \times D)$ onto itself; thus, the equality above is
equivalent to
\[ \int_{\Omega \times D} M^{-\frac{1}{2}}\gt \cdot \chit \dq \dx=  -  \int_{\Omega \times D} \hat{\psi}^n_{\varepsilon, L} \, (\nabq \cdot \chit) \dq \dx\qquad \forall \chit
\in \undertilde{C}^1_0(\Omega \times D).
\]
Since $\Ct^\infty_0(\Omega \times D) \subset \Ct^1_0(\Omega \times D)$, the last identity also holds for all
$\etat \in \Ct^\infty_0(\Omega \times D)$.
As $M^{\frac{1}{2}} \in L^\infty(D)$ and $M^{-\frac{1}{2}} \in L^\infty_{\rm loc}(D)$,
it follows that
$M^{-\frac{1}{2}}\gt
\in \Lt^2_{\rm loc}(\Omega \times D)$
and $\hat{\psi}^n_{\varepsilon, L} \in L^2_{\rm loc}(\Omega \times D)$.
By identification of a locally integrable function with a
distribution we deduce that $M^{-\frac{1}{2}} \gt$ is the distributional gradient of
$\hat{\psi}^n_{\varepsilon, L}$ w.r.t. $\qt$:
\[ M^{-\frac{1}{2}}\gt = \nabq \hat{\psi}^n_{\varepsilon, L} \qquad \mbox{in $\undertilde{\mathcal{D}}'(\Omega \times D)$.}\]
As $M^{-\frac{1}{2}} \gt \in \undertilde{L}^2_{\rm loc}(\Omega \times D)$, whereby also
$\nabq \hat{\psi}^n_{\varepsilon, L} \in \undertilde{L}^2_{\rm loc}(\Omega \times D)$,
it follows that
\[ \gt = M^{\frac{1}{2}} \nabq \hat{\psi}^n_{\varepsilon, L} \in \undertilde{L}^2_{\rm loc}(\Omega \times D).\]
However, the left-hand side belongs to $\undertilde{L}^2(\Omega \times D)$, which then implies that the right-hand side also belongs to $\undertilde{L}^2(\Omega \times D)$. Thus we have shown that
\begin{equation}\label{last}
\gt = M^{\frac{1}{2}} \nabq \hat{\psi}^n_{\varepsilon, L} \in \undertilde{L}^2(\Omega \times D),
\end{equation}
and hence the desired result (\ref{psiwconH1}), as required.
A similar argument proves (\ref{psiwconH1x}) on noting (\ref{psiwconL2}), and the fourth bound in (\ref{E1}).

The strong convergence result (\ref{psisconL2}) for $\hpsiaed^{n}$
follows directly from (\ref{psiwconL2}--c) and (\ref{wcomp2}).
Finally, 
(\ref{betaLdsconL2},f) follow from (\ref{psisconL2}), (\ref{betaLd}), (\ref{eqCtt}) and (\ref{eqCttbd}).

It follows from (\ref{uwconH1},b), (\ref{psiwconH1}--f), (\ref{bgen},b), (\ref{agen},b), (\ref{lgenbd}) and
(\ref{cal K}) that we may pass to the limit $\delta \rightarrow 0_+$ in
(\ref{bLMd},b) to obtain that $(\utae^n,\hpsiae^n) \in \Vt \times \hat X$
with $\hpsiae^n \geq 0$ a.e.\ on $\Omega \times D$ solves (\ref{bLM}), (\ref{genLM}); i.e. it solves (\ref{Gequn},b).

Next we prove the integral constraint on $\hpsiae^n$. First, for $m=n-1,\,n$, let
\begin{align}
\rho^m_{\epsilon,L}(\xt) := \int_D M(\qt)\, \hpsiae^m(\xt,\qt) \dq,\qquad
\xt \in \Omega.
\label{zetan}
\end{align}
For $m=n-1,\,n$, as $\hpsiae^m \in \hat X$,
we deduce from the Cauchy--Schwarz inequality and Fubini's theorem that
$\rho^n_{\epsilon,L} \in H^1(\Omega)$ and $\rho^{n-1}_{\epsilon,L} \in L^2(\Omega)$.
We introduce
also the following closed linear subspace of $\hat X = H^1_M(\Omega \times D)$:
\begin{align}
\!\!\!\!H^1(\Omega)\otimes 1(D)
:= \left\{\hat\varphi \in H^1_M(\Omega \times D)\,:\, \hat\varphi(\cdot,\qt^*)
= \hat\varphi(\cdot,\qt^{**})\quad\mbox{$\forall \qt^*, \qt^{**} \in D$}\right\}.
\label{H101}
\end{align}
Then, on choosing $\hat \varphi = \varphi \in H^1(\Omega)\otimes 1(D)$
in (\ref{psiG}), we deduce from (\ref{zetan}) and Fubini's theorem that,
for all $\varphi \in H^1(\Omega)$,
\begin{align}
\label{psiGI}
&\int_{\Omega}
\frac{\rho^n_{\epsilon,L}-\rho^{n-1}_{\epsilon,L}}{\Delta t}
\,\varphi \dx
+ \int_{\Omega} \left[
\epsilon\,\nabx \rho^n_{\epsilon,L} -
\utae^{n-1}\,\rho^n_{\epsilon,L} \right]\cdot\, \nabx
\varphi \dx
= 0,
\end{align}
with
\begin{align}
\label{psiGIb}
\!\!\!0 \leq \rho^0_{\epsilon,L} :=  \int_D M\, \hat\psi^0_{\epsilon, L} \dq  =  \int_D M\, \beta^L(\hat\psi^0)\dq  \leq  \int_D M\, \hat\psi^0 \dq \leq 1, \quad \!\!\mbox{a.e. on $\Omega$.}
\end{align}

By introducing the function $z^m_{\epsilon,L}:= 1 - \rho^m_{\epsilon,L}$, $m=n-1,\,n$,
and noting that $z^n_{\epsilon,L} \in H^1(\Omega)$ and $z^{n-1}_{\epsilon,L} \in L^2(\Omega)$,
we deduce from (\ref{psiGI}), and as $\utae^{n-1}$ is divergence-free on $\Omega$
with zero trace on $\partial \Omega$, that
\begin{align}
\label{psiGIz}
&\int_{\Omega}
\frac{z^n_{\epsilon,L}-z^{n-1}_{\epsilon,L}}{\Delta t}
\,\varphi \dx
+ \int_{\Omega} \left[
\epsilon\,\nabx z^n_{\epsilon,L} -
\utae^{n-1}\,z^n_{\epsilon,L} \right]\cdot\, \nabx
\varphi \dx
= 0,
\end{align}
for all $\varphi \in H^1(\Omega)$. Let us now define by
$[x]_{\pm}:={\textstyle\frac{1}{2}}\left(x\pm |x|\right)$
the positive and negative parts, $[x]_+$ and $[x]_{-}$, of a real number $x$,
respectively.
As $\hpsiae^{n-1} \in \hat Z_2$,
we then have that $[z^{n-1}_{\epsilon,L}]_{-} =0$ a.e. on $\Omega$.
Taking $\varphi = [z^n_{\epsilon,L}]_{-}$ as a test function in  \eqref{psiGIz}, noting
that this is a legitimate choice since $[z^n_{\epsilon,L}]_{-} \in H^1(\Omega)$, decomposing
$z^m_{\epsilon,L}$, $m=n-1,n$, into their positive and negative parts,
and noting that $\utae^{n-1}$ is divergence-free on $\Omega$ and has zero trace
on $\partial \Omega$, we deduce that
\[
\|[z^n_{\epsilon,L}]_{-}\|^2 + \Delta t \, \varepsilon
\|\nabx [z^n_{\epsilon,L}]_{-}\|^2 = 0,
\]
where $\|\cdot\|$ denotes the $L^2(\Omega)$ norm.
Hence, $[z^n_{\epsilon,L}]_{-} = 0$ a.e. on $\Omega$. In other words,
$z^n_{\epsilon,L} \geq 0$ a.e. on
$\Omega$, which then gives that $\rho^n_{\epsilon,L} \leq 1$ a.e. on $\Omega$,
i.e. $\hpsiae^n \in \hat Z_2$
as required. As $(\utae^0,\hpsiae^0) \in \Vt \times \hat Z_2$, performing
the above existence proof at each time level $t_n$, $n=1,\ldots,N$,
yields a solution  $\{(\utae^n,\hpsiae^n)\}_{n=1}^N$  to (P$^{\Delta t}_{\epsilon,L}$).
\end{proof}

\section{Entropy estimates}
\label{sec:entropy}
\setcounter{equation}{0}

Next, we derive bounds on the solution of $({\rm P}^{\Delta t}_{\epsilon, L})$, independent of $L$.
Our starting point is Lemma \ref{conv}, concerning the existence of a solution to the problem  $({\rm P}^{\Delta t}_{\varepsilon,L})$. The model $({\rm P}^{\Delta t}_{\varepsilon,L})$ includes `microscopic cut-off' in the drag term of the Fokker--Planck equation,  where $L>1$ is a (fixed, but otherwise arbitrary,) cut-off parameter.
Our ultimate objective is to pass to the limits $L \rightarrow \infty$ and $\Delta t \rightarrow 0_+$ in the model $({\rm P}^{\Delta t}_{\varepsilon,L})$, with $L$ and $\Delta t$ linked by the condition $\Delta t = o(L^{-1})$,
as $L \rightarrow \infty$.
To that end, we need to develop various bounds on sequences of weak solutions of $({\rm P}^{\Delta t}_{\varepsilon,L})$ that are uniform in the cut-off parameter
$L$ and thus permit the extraction of weakly convergent subsequences, as $L \rightarrow \infty$, through the use of a weak compactness argument. The derivation of such bounds, based on the use of the relative entropy associated with the Maxwellian $M$, is our
main task in this section.

Let us introduce the following definitions, in line with (\ref{fn}):
\begin{subequations}
\begin{alignat}{1}
\hspace{-4mm}\utaeD(\cdot,t)&:=\,\frac{t-t_{n-1}}{\Delta t}\,
\utae^n(\cdot)+
\frac{t_n-t}{\Delta t}\,\utae^{n-1}(\cdot),
\;\, t\in [t_{n-1},t_n], \;\, n=1,\dots,N, \label{ulin}\\
\hspace{-2.5mm}\utaeDp(\cdot,t)&:=\ut^n(\cdot),\;\;
\utaeDm(\cdot,t):=\ut^{n-1}(\cdot),
\;\; t\in(t_{n-1},t_n], \;\; n=1,\dots,N. \label{upm}
\end{alignat}
\end{subequations}
We shall adopt $\uta^{\Delta t (,\pm)}$ as a collective symbol for $\uta^{\Delta t}$, $\uta^{\Delta t,\pm}$. The corresponding notations $\psia^{\Delta t (,\pm)}$, $\psia^{\Delta t}$, and $\psia^{\Delta t,\pm}$ are defined analogously; recall \eqref{idatabd} and \eqref{inidata-1}.

We note for future reference that
\begin{equation}
\utaeD-\utae^{\Delta t,\pm}= (t-t_{n}^{\pm})
\,\frac{\partial \utaeD}{\partial t},
\quad t \in (t_{n-1},t_{n}), \quad n=1,\dots,N, \label{eqtime+}
\end{equation}
where $t_{n}^{+} := t_{n}$ and $t_{n}^{-} := t_{n-1}$, with an analogous relationship in the case of $\psia^{\Delta t}$.

Using the above notation,
(\ref{Gequn}) summed for $n=1, \dots,  N$ can be restated  in a form
that is reminiscent of a weak formulation of  (\ref{ns1a}--d):
find $\ut^{\Delta t(,\pm)}_{\varepsilon, L} \in \Vt$, $t \in (0,T]$, such that
\begin{align}\label{equncon}
&\!\displaystyle\int_{0}^{T}\!\! \int_\Omega  \frac{\partial \utaeD}{\partial t}\cdot
\wt \dx \dt
+ \int_{0}^T\!\! \int_{\Omega}
\left[ \left[ (\utaeDm \cdot \nabx) \utaeDp \right]\cdot\,\wt
+ \nu \,\nabxtt \utaeDp
:
\wnabtt \right]\! \dx \dt
\nonumber
\\
&\hspace{1cm} =\int_{0}^T
\left[ \langle \ft^{\Delta t,+}, \wt\rangle_{H^1_0(\Omega)}
- k\,\sum_{i=1}^K \int_{\Omega}
\Ctt_i(M\,\hpsiae^{\Delta t,+}): \nabxtt
\wt \dx \right]\! \dt,
\end{align}
for all $\wt \in L^1(0,T;\Vt )$, subject to the initial condition $\utaeD(\cdot,0)= \ut^0 \in \Vt$.

Analogously, (after a minor re-ordering of terms on the left-hand side
for presentational reasons,) (\ref{psiG}) summed through $n=1, \dots,  N$
can be restated as follows: find $\hat\psi^{\Delta t(,\pm)}_{\varepsilon, L}(t) \in \hat Z_2$, $t \in (0,T]$, such that
\begin{align}\label{eqpsincon}
&\int_{0}^T \int_{\Omega \times D}
M\,\frac{ \partial \hpsiae^{\Delta t}}{\partial t}\,
\hat \varphi \dq \dx \dt
\nonumber\\& \hspace{0.5cm}
+ \int_{0}^T \int_{\Omega \times D} M\,\left[
\epsilon\, \nabx \hpsiae^{\Delta
t,+} - \utae^{\Delta t,-}\,\hpsiae^{\Delta t,+} \right]\cdot\, \nabx
\hat \varphi
\,\dq \dx \dt
\nonumber \\
& \hspace{0.5cm} +
\frac{1}{2\,\lambda}
\int_{0}^T \int_{\Omega \times D} M\,\sum_{i=1}^K
 \,\sum_{j=1}^K A_{ij}\,\nabqj \hpsiae^{\Delta t,+}
\cdot\, \nabqi
\hat \varphi
\,\dq \dx \dt
\nonumber \\
&
\hspace{0.5cm}
-
\int_{0}^T \int_{\Omega \times D} M\,\sum_{i=1}^K
\left[\sigtt(\utae^{\Delta t,+})
\,\qt_i\right]\,\beta^L(\hpsiae^{\Delta t,+}) \,\cdot\, \nabqi
\hat \varphi
\,\dq \dx \dt = 0,
\end{align}
for all $\hat \varphi \in L^1(0,T;\hat X)$, subject to the initial condition $\hpsiae^{\Delta t}(\cdot,\cdot,0) = \beta^L(\hat \psi^0(\cdot,\cdot)) \in \hat Z_2$.
We emphasize that \eqref{equncon} and \eqref{eqpsincon} are an equivalent restatement of problem (${\rm P}^{\Delta t}_{\epsilon,L}$), for which existence of a solution has been established (cf. Lemma \ref{conv}).

Similarly, with analogous notation for $\{\rho_{\epsilon,L}^n\}_{n=0}^N$,
(\ref{psiGI}) summed for $n=1,\dots,N$ can be restated as follows:
Given $\ut_{\epsilon,L}^{\Delta t(,\pm)}(t) \in \Vt$, $t \in (0,T]$,  solving (\ref{equncon}),
find $\rho_{\epsilon,L}^{\Delta t(,\pm)}(t)\in \mathcal{K} := \{ \eta \in H^1(\Omega) : \eta \in [0,1] \mbox{ a.e.\ on } \Omega \}$,
$t \in (0,T]$, such that
\begin{align}
&\int_0^T \int_\Omega
\frac{\partial \rho_{\epsilon,L}^{\Delta t}}{\partial t}\, \varphi \dx \dt
+ \int_0^T \int_{\Omega} \left[ \epsilon\, \nabx  \rho_{\epsilon,L}^{\Delta t, +} -
\ut_{\epsilon,L}^{\Delta t,-}  \,\rho_{\epsilon,L}^{\Delta t, +} \right]
\cdot \nabx \varphi \dx \dt =0
\nonumber \\
& \hspace{2.5in}
\qquad \forall \varphi \in L^1(0,T;H^1(\Omega)),
\label{zetacon}
\end{align}
subject to the initial condition $\rho_{\epsilon,L}^{\Delta t}(\cdot,0) = \int_D M(\qt)
\beta^L(\hat \psi^0(\cdot,\qt)) \dq \in \mathcal{K}$; cf.\ (\ref{zetan}) and recall that $
\hat \psi_{\epsilon,L}^0 = \beta^L(\hat \psi^0)$.
Once again, on noting (\ref{zetan}) and (\ref{psiGI}),
we have established the existence of a solution to
(\ref{zetacon}) and that
\begin{align}
\rho_{\epsilon,L}^{\Delta t(,\pm)}(\xt,t) = \int_D M(\qt)\, \hat \psi_{\epsilon,L}^{\Delta t(,\pm)}
(\xt,\qt,t) \dq \qquad \mbox{for a.e. } (\xt,t) \in \Omega \times (0,T).
\label{zetancon}
\end{align}

In conjunction with $\beta^L$, defined by \eqref{betaLa},
we consider the following cut-off version $\mathcal{F}^L$
of the entropy function $\mathcal{F}\,: s \in \mathbb{R}_{\geq 0} \mapsto \mathcal{F}(s) = s (\log s - 1) + 1
\in \mathbb{R}_{\geq 0}$:
\begin{equation}\label{eq:FL}
\mathcal{F}^L(s):= \left\{\begin{array}{ll}
s(\log s - 1) + 1,   &  ~~0 \leq s \leq L,\\
\frac{s^2 - L^2}{2L} + s(\log L - 1) + 1,  &  ~~L \leq s.
\end{array} \right.
\end{equation}
Note that
\begin{equation}\label{eq:FL1}
(\mathcal{F}^L)'(s) = \left\{\begin{array}{ll}
\log s,   &  ~~0 < s \leq L,\\
\frac{s}{L} + \log L - 1,  &  ~~L \leq s,
\end{array} \right.
\end{equation}
and
\begin{equation}\label{eq:FL2}
(\mathcal{F}^L)''(s) = \left\{\begin{array}{ll}
\frac{1}{s},   &  ~~0 < s \leq L,\\
\frac{1}{L},  &  ~~L \leq s.
\end{array} \right.
\end{equation}
Hence,
\begin{equation}\label{eq:FL2a}
\beta^L(s) = \min(s,L) = [(\mathcal{F}^L)''(s)]^{-1},\quad s \in \mathbb{R}_{\geq 0},
\end{equation}
with the convention $1/\infty:=0$ when $s=0$, and
\begin{equation}\label{eq:FL2b}
(\mathcal{F}^L)''(s) \geq \mathcal{F}''(s) = s^{-1},\quad s \in \mathbb{R}_{> 0}.
\end{equation}
We shall also require the following inequality, relating $\mathcal{F}^L$ to $\mathcal{F}$:
\begin{equation}\label{eq:FL2c}
\mathcal{F}^L(s) \geq \mathcal{F}(s),\quad s \in \mathbb{R}_{\geq 0}.
\end{equation}
For $0\leq s \leq 1$, \eqref{eq:FL2c} trivially holds, with equality. For $s\geq 1$, it
follows from \eqref{eq:FL2b}, with $s$ replaced by a dummy variable $\sigma$, after integrating
twice over $\sigma \in [1,s]$, and noting that $(\mathcal{F}^L)'(1)= \mathcal{F}'(1)$ and $(\mathcal{F}^L)(1)=\mathcal{F}(1)$.

\subsection{$L$-independent bounds on the spatial derivatives}
\label{Lindep-space}

We are now ready to embark on the derivation of the required bounds, uniform in the cut-off parameter $L$,
on norms of $\uta^{\Delta t,+}$ and $\psia^{\Delta t,+}$.
As far as $\uta^{\Delta t,+}$ is concerned, this is a relatively straightforward exercise. We select $\wt = \chi_{[0,t]}\,\uta^{\Delta t,+}$ as test function in \eqref{equncon}, with $t$ chosen as $t_n$, $n \in \{1,\dots,N\}$, and $\chi_{[0,t]}$ denoting the
characteristic function of the interval $[0,t]$. We then deduce, with $t = t_n$ and noting \eqref{idatabd}, that
\begin{eqnarray}
&&\|\uta^{\Delta t,+}(t)\|^2 + \frac{1}{\Delta t}\int_0^t  \|\uta^{\Delta t,+}(s) - \uta^{\Delta t,-}(s)\|^2 \dd s
+ \nu \int_0^t \|\nabxtt \uta^{\Delta t,+}(s)\|^2 \dd s \nonumber\\
&&\quad\leq \|\ut_0\|^2 + \frac{1}{\nu}\int_0^t\|\ft^{\Delta t,+}(s)\|^2_{(H^1_0(\Omega))'} \dd s \nonumber\\
&& \quad \quad \hspace{0.3mm} -2k \int_0^t \int_{\Omega \times D} M(\qt)\,\sum_{i=1}^K \qt_i \qt_i^{\rm T} \,U_i'\big({\textstyle \frac{1}{2}}|\qt_i|^2\big)\,\psia^{\Delta t,+} :
\nabxtt \uta^{\Delta t,+} \dq \dx \dd s,~~~~ \label{eq:energy-u}
\end{eqnarray}
where, again, $\|\cdot\|$ denotes the $L^2$ norm over $\Omega$.
We intentionally {\em did not} bound the final term on
the right-hand side of \eqref{eq:energy-u}.
As we shall see in what follows,
this simple trick will prove helpful: our bounds on $\psia^{\Delta t, +}$ below will furnish
an identical term with the opposite sign, so then by combining the bounds on $\uta^{\Delta t, +}$ and $\psia^{\Delta t, +}$
this pair of, otherwise dangerous, terms will be removed. This fortuitous cancellation reflects
the balance of total energy in the system.

Having dealt with $\uta^{\Delta t,+}$, we now embark on the less straightforward task of deriving bounds on
norms of $\psia^{\Delta t,+}$ that are uniform in the cut-off parameter $L$.
The appropriate choice of test function in \eqref{eqpsincon} for this purpose
is $\hat\varphi = \chi_{[0,t]}\,(\mathcal{F}^L)'(\psia^{\Delta t,+})$ with $t=t_n$, $n \in \{1,\dots,N\}$;
this can be seen by noting that with such a $\hat\varphi$, at
least formally, the final term on the left-hand side of
\eqref{eqpsincon} can be manipulated to become identical to
the final term in \eqref{eq:energy-u}, but with the opposite sign.
While Lemma \ref{conv} guarantees that $\psia^{\Delta t,+}(\cdot,\cdot,t)$ belongs to $\hat{Z}_2$ for all $t \in [0,T]$, and is therefore nonnegative a.e. on $\Omega \times D \times [0,T]$, there is unfortunately
no reason why $\psia^{\Delta t,+}$ should be strictly positive on $\Omega\times D \times [0,T]$,
and therefore the
expression $(\mathcal{F}^L)'(\psia^{\Delta t,+})$ may in general
be undefined; the same is true of $(\mathcal{F}^L)''(\psia^{\Delta t,+})$,
which also appears in the algebraic manipulations. We shall circumvent this problem by working
with $(\mathcal{F}^L)'(\psia^{\Delta t,+} + \alpha)$ instead of $(\mathcal{F}^L)'(\psia^{\Delta t,+})$, where $\alpha>0$; since
$\psia^{\Delta t,+}$ is known to be nonnegative from Lemma \ref{conv}, $(\mathcal{F}^L)'(\psia^{\Delta t,+} + \alpha)$ and $(\mathcal{F}^L)''(\psia^{\Delta t,+} + \alpha)$
are well-defined.
After deriving the relevant bounds, which will involve $\mathcal{F}^L(\psia^{\Delta t,+} + \alpha)$ only, we shall pass to the limit $\alpha \rightarrow 0_{+}$, noting that, unlike
$(\mathcal{F}^L)'(\psia^{\Delta t,+})$ and $(\mathcal{F}^L)''(\psia^{\Delta t,+})$, the function
$(\mathcal{F}^L)(\psia^{\Delta t,+})$ is well-defined for any {\em nonnegative} $\psia^{\Delta t,+}$.
Thus, we take any $\alpha \in (0,1)$, whereby $0 < \alpha < 1 < L$, and we choose
$\hat\varphi = \chi_{[0,t]}\,(\mathcal{F}^L)'(\psia^{\Delta t,+} + \alpha)$,
with $t = t_n$, $\;n \in \{1,\dots,N\}$,
as test function in \eqref{eqpsincon}. As the calculations are quite involved, for the sake of clarity of exposition we shall manipulate the terms in \eqref{eqpsincon} one at a time and will then merge the resulting bounds on the individual terms with \eqref{equncon}
to obtain a single energy inequality for the pair $(\uta^{\Delta t,+},\psia^{\Delta t,+})$.

We start by considering the first term in \eqref{eqpsincon}. Clearly $\mathcal{F}^L(\cdot + \alpha)$ is twice continuously differentiable on the interval $(-\alpha,\infty)$ for any $\alpha>0$. Thus, by Taylor series expansion with remainder of the function
$$s \in [0,\infty) \mapsto \mathcal{F}^L(s +\alpha) \in [0,\infty),$$
we have, for any $c \in [0,\infty)$, that
\[ (s-c)\, (\mathcal{F}^L)'(s+\alpha) = \mathcal{F}^L(s+\alpha) - \mathcal{F}^L(c+\alpha) + \frac{1}{2}(s-c)^2\,(\mathcal{F}^L)''(\theta s + (1-\theta)c+\alpha),\]
with $\theta \in (0,1)$. Hence, on noting that $t\in [0,T]\mapsto\hat\psi^{\Delta t}_{\epsilon,L}(\cdot,\cdot,t)\in \hat{X}$ is
piecewise linear relative to the partition $\{0=t_0, t_1,\dots, t_N=T\}$ of the interval $[0,T]$,
\begin{eqnarray*}
{\rm T}_1&\!:=&\int_0^T \int_{\Omega \times D} M\,\frac{\partial \psia^{\Delta t}}{\partial s} \, \chi_{[0,t]}\,(\mathcal{F}^L)'(\psia^{\Delta t,+} + \alpha) \dq \dx \dd s \nonumber \\
&=&\int_0^t \int_{\Omega \times D} M \frac{\partial}{\partial s} (\psia^{\Delta t} + \alpha)\,  (\mathcal{F}^L)'(\psia^{\Delta t,+} + \alpha) \dq \dx \dd s
\nonumber \\
&=& \int_{\Omega \times D} M\mathcal{F}^L(\psia^{\Delta t,+}(t) + \alpha) \dq \dx - \int_{\Omega \times D} M\mathcal{F}^L(\beta^L(\hat\psi^0) + \alpha) \dq \dx
\nonumber \\
&&
+ \frac{1}{2 \Delta t}\int_0^t\int_{\Omega \times D}\!\!\!  M(\mathcal{F}^L)''(\theta\psia^{\Delta t,+}
+ (1-\theta)\psia^{\Delta t, -} + \alpha)\,(\psia^{\Delta t,+} - \psia^{\Delta t,-})^2 \dq \dx \dd s\nonumber.
\end{eqnarray*}
Noting from \eqref{eq:FL2} that $(\mathcal{F}^L)''(s + \alpha) \geq 1/L$ for all $s \in [0,\infty)$ and all $\alpha>0$, this then implies, with $t=t_n$, $n \in \{1,\dots,N\}$, that
\begin{eqnarray}\label{eq:energy-psi1}
{\rm T}_1&\geq &\int_{\Omega \times D} M\,\mathcal{F}^L(\psia^{\Delta t,+}(t) + \alpha) \dq \dx - \int_{\Omega \times D} M\,\mathcal{F}^L(\beta^L(\hat\psi^0) + \alpha) \dq \dx
\nonumber \\
&&+ \frac{1}{2 \Delta t\, L }\int_0^t\int_{\Omega \times D} M\,(\psia^{\Delta t,+} - \psia^{\Delta t,-})^2 \dq \dx \dd s.
\end{eqnarray}
The denominator in the prefactor of the last integral motivates us to link $\Delta t$ to $L$ so that $\Delta t\, L = o(1)$
as $\Delta t \!\rightarrow\! 0_{+}$ (or, equivalently, $\Delta t = o(L^{-1})$ as $L \rightarrow \infty$), in order to drive the integral multiplied by the prefactor to $0$ in the limit of $L \rightarrow \infty$,
once the product of the two has been bounded above by a constant, independent of $L$.

Next we consider the second term in \eqref{eqpsincon}, using repeatedly that
$\nabx \cdot ~\uta^{\Delta t,-} = 0$ and that $\uta^{\Delta t,-}$ has zero trace on $\partial\Omega$:
\begin{eqnarray*}
&&\hspace{-2mm}{\rm T}_2:=\int_{0}^T \int_{\Omega \times D}\!\! M\!\left[ \epsilon\,\nabx \psia^{\Delta t,+} - \uta^{\Delta t,-}\,\psia^{\Delta t,+}\right]\cdot\, \nabx \chi_{[0,t]}\,(\mathcal{F}^L)'(\psia^{\Delta t,+} + \alpha)  \,\dq \dx \dd s
\nonumber\\
&&\qquad\!\hspace{-2.1mm} = \varepsilon \int_0^t \int_{\Omega \times D} M\, \nabx (\psia^{\Delta t,+} + \alpha) \cdot \nabx (\mathcal{F}^L)'(\psia^{\Delta t,+} + \alpha) \dq \dx \dd s\\
&&\qquad\quad - \int_0^t  \int_{\Omega \times D} M\, \uta^{\Delta t,-} (\psia^{\Delta t,+} + \alpha) \cdot \nabx (\mathcal F^L)'(\psia^{\Delta t,+} + \alpha) \dq \dx \dd s,
\end{eqnarray*}
where in the last line we subtracted $0$ in the form of
\[ \alpha \int_0^t \int_{\Omega \times D} M\, \uta^{\Delta t,-} \cdot \nabx (\mathcal{F}^L)'(\psia^{\Delta t,+} + \alpha) \dq \dx \dd s = 0.\]
Hence, similarly to \eqref{convGdL},
\begin{eqnarray*}
{\rm T}_2&:=& \varepsilon \int_0^t\!\! \int_{\Omega \times D} M (\mathcal{F}^L)''(\psia^{\Delta t,+} + \alpha)
|\nabx (\psia^{\Delta t,+} + \alpha)|^2 \dq \dx \dd s\nonumber\\
&&- \int_0^t\!\!  \int_{\Omega \times D} M \uta^{\Delta t,-} (\psia^{\Delta t,+} + \alpha) \cdot [(\mathcal F^L)''(\psia^{\Delta t,+} + \alpha)\nabx(\psia^{\Delta t,+} + \alpha)] \dq \dx \dd s\nonumber\\
&=& \varepsilon \int_0^t\!\! \int_{\Omega \times D} M (\mathcal{F}^L)''(\psia^{\Delta t,+} + \alpha)
|\nabx \psia^{\Delta t,+} |^2 \dq \dx \dd s\nonumber\\
&&- \int_0^t\!\!  \int_{\Omega \times D}\!\! M \uta^{\Delta t,-} (\psia^{\Delta t,+} + \alpha) \cdot \nabx \psia^{\Delta t,+}\! \\
&&\hspace{5cm}  \times
\left\{\!\begin{array}{ll} 1/(\psia^{\Delta t,+} + \alpha) &  \mbox{if $\psia^{\Delta t,+} + \alpha \leq L$} \\
                         {1}/{L}              &  \mbox{if $\psia^{\Delta t,+} + \alpha \geq L$}
                        \end{array}\! \right\}\dq \dx \dd s\nonumber\\
&=& \varepsilon \int_0^t\!\! \int_{\Omega \times D} M (\mathcal{F}^L)''(\psia^{\Delta t,+} + \alpha)
|\nabx \psia^{\Delta t,+} |^2 \dq \dx \dd s\nonumber\\
&&- \int_0^t\!\!  \int_{\Omega \times D}\! M \uta^{\Delta t,-} \cdot \nabx [G^L(\psia^{\Delta t,+} + \alpha)] \dq \dx \dd s,
\end{eqnarray*}
where $G^L$ denotes the (locally Lipschitz continuous) function defined on $\mathbb{R}$ by
$s - \frac{L}{2}$ if $s \leq L$ and $\frac{1}{2L}s^2$ otherwise.
On noting that the integral involving $G^L$ vanishes, \eqref{eq:FL2b} yields the lower bound
\begin{eqnarray}\label{eq:energy-psi2}
&&{\rm T}_2 \geq  \varepsilon \int_0^t \int_{\Omega \times D} M\, (\psia^{\Delta t,+} + \alpha)^{-1}\,
|\nabx (\psia^{\Delta t,+} + \alpha) |^2 \dq \dx \dd s.
\end{eqnarray}

Next, we consider the third term in \eqref{eqpsincon}. Thanks to \eqref{A} we have, again with $t=t_n$
and $n \in \{1,\dots, N\}$:
\begin{eqnarray}\label{eq:energy-psi3}
&&\hspace{-2.3mm}{\rm T}_3 := \frac{1}{2\,\lambda} \int_{0}^T \int_{\Omega \times D}M\, \sum_{i=1}^K \sum_{j=1}^K A_{ij} \,\nabqj \psia^{\Delta t,+}
\cdot\, \nabqi \chi_{[0,t]}(\mathcal{F}^L)'(\psia^{\Delta t,+} + \alpha)  \,\dq \dx \dd s\nonumber\\
&&\quad = \frac{1}{2\,\lambda} \int_0^t \int_{\Omega \times D}M\,(\mathcal{F}^L)''(\psia^{\Delta t,+} + \alpha)\,\sum_{i=1}^K \sum_{j=1}^K A_{ij} \,\nabqj \psia^{\Delta t,+} \cdot\, \nabqi \psia^{\Delta t,+} \dq \dx \dd s\nonumber\\
&&\quad \geq  \frac{a_0}{2\,\lambda}  \int_0^t \int_{\Omega \times D}M\,(\mathcal{F}^L)''(\psia^{\Delta t,+} + \alpha)\,\sum_{i=1}^K |\nabqi \psia^{\Delta t,+} |^2 \,\dq \dx \dd s\nonumber\\
&&\quad =  \frac{a_0}{2\,\lambda}  \int_0^t \int_{\Omega \times D}M\,(\mathcal{F}^L)''(\psia^{\Delta t,+} + \alpha)\,|\nabq \psia^{\Delta t,+} |^2 \,\dq \dx \dd s.
\end{eqnarray}

We are now ready to consider the final term in \eqref{eqpsincon}, with $t=t_n$, $n \in \{1,\dots, N\}$:
\begin{eqnarray}\label{eq:energy-psi4}
&&\hspace{-2.4mm}{\rm T}_4:= - \int_{0}^T\!\! \int_{\Omega \times D}M\, \sum_{i=1}^K [\, \sigtt(\uta^{\Delta t,+}) \,\qt_i\,]\,\beta^L(\psia^{\Delta t,+})\nonumber\\
&&\hspace{6cm}\,\cdot\, \nabqi \chi_{[0,t]}\,({\mathcal F}^L)'(\psia^{\Delta t,+} + \alpha) \,\dq \dx \dd s\nonumber\\
&&\quad = - \int_0^t\!\! \int_{\Omega \times D} M\, \sum_{i=1}^K [\,(\nabxtt \uta^{\Delta t,+})\,\qt_i\,]\, \beta^L(\psia^{\Delta t,+})\nonumber\\
&&\hspace{6cm}\, \cdot\, (\mathcal{F}_L)''(\psia^{\Delta t,+} + \alpha)\, \nabqi \psia^{\Delta t,+}\, \dq \dx \dd s\nonumber \\
&&\quad = - \int_0^t\!\! \int_{\Omega \times D} M\, \sum_{i=1}^K [\,(\nabxtt \uta^{\Delta t,+})\,\qt_i\,] \,\frac{\beta^L(\psia^{\Delta t,+})}{\beta^L(\psia^{\Delta t,+} + \alpha)}\cdot \nabqi \psia^{\Delta t,+}\, \dq \dx \dd s\nonumber\\
&&\quad = - \int_0^t\!\! \int_{\Omega \times D} M\, \sum_{i=1}^K [\,(\nabxtt \uta^{\Delta t,+})\,\qt_i\,]
\cdot \nabqi \psia^{\Delta t,+}\, \dq \dx \dd s\nonumber\\
&&\qquad\, + \int_0^t\!\! \int_{\Omega \times D} M\, \sum_{i=1}^K [\,(\nabxtt \uta^{\Delta t,+})\,\qt_i\,] \left[1 -\frac{\beta^L(\psia^{\Delta t,+})}{\beta^L(\psia^{\Delta t,+} + \alpha)}\right] \cdot \nabqi \psia^{\Delta t,+}\, \dq \dx \dd s\nonumber\\
&&\quad = - \int_0^t\!\! \int_{\Omega \times D} M\,\sum_{i=1}^K \qt_i\,\qt_i^{\rm T}\,U_i'(\textstyle{\frac{1}{2}|\qt|^2})\,\psia^{\Delta t,+} : \nabxtt \uta^{\Delta t,+}  \dq \dx \dd s\nonumber\\
&&\qquad\, + \int_0^t\!\! \int_{\Omega \times D} M\, \sum_{i=1}^K [\,(\nabxtt \uta^{\Delta t,+})\,\qt_i\,] \left[1 -\frac{\beta^L(\psia^{\Delta t,+})}{\beta^L(\psia^{\Delta t,+} + \alpha)}\right] \cdot  \nabqi \psia^{\Delta t,+}\, \dq \dx \dd s,\nonumber\\
\end{eqnarray}
where in the transition to the final equality we applied \eqref{intbyparts} with $B:= \nabxtt \ut^{\Delta t,+}_{\epsilon,L}$ (on account of it being independent of the variable $\qt$), together with the fact that
$\mathfrak{tr}\,(\nabxtt \uta^{\Delta t,+}) = \nabx \cdot\, \uta^{\Delta t,+} = 0$.
Summing \eqref{eq:energy-psi1}--\eqref{eq:energy-psi3} and \eqref{eq:energy-psi4} yields,
with $t=t_n$ and $n \in \{1,\dots, N\}$, the following inequality:
\begin{eqnarray}\label{eq:energy-psi-summ1}
&&\hspace{-3mm}\int_{\Omega \times D} M\, \mathcal{F}^L(\psia^{\Delta t,+}(t) + \alpha) \dq \dx
+\,\frac{1}{2 \Delta t\, L}\int_0^t\int_{\Omega \times D}  M\,(\psia^{\Delta t,+} - \psia^{\Delta t,-})^2 \dq \dx \dd s \nonumber\\
&&\hspace{0mm}+\, \varepsilon \int_0^t \int_{\Omega \times D} M\,
\frac{|\nabx \psia^{\Delta t,+}|^2}{\psia^{\Delta t,+} + \alpha} \dq \dx \dd s\nonumber\\
&&\hspace{0mm} +\,\frac{a_0}{2\,\lambda}  \int_0^t \int_{\Omega \times D}M\,(\mathcal{F}^L)''(\psia^{\Delta t,+} + \alpha)\,|\nabq \psia^{\Delta t,+} |^2 \,\dq \dx \dd s\nonumber\\
&&\hspace{-3mm}\leq \int_{\Omega \times D} M\, \mathcal{F}^L(\beta^L(\hat\psi^0) + \alpha) \dq \dx\nonumber\\
&&\hspace{0mm}+ \int_0^t \int_{\Omega \times D} M\,\sum_{i=1}^K \qt_i\,\qt_i^{\rm T}\,U_i'(\textstyle{\frac{1}{2}|\qt|^2})\,\psia^{\Delta t,+} : \nabxtt \uta^{\Delta t,+}  \dq \dx \dd s
\nonumber\\
&&\hspace{0mm}- \int_0^t \int_{\Omega \times D} M\, \sum_{i=1}^K \left[(\nabxtt \uta^{\Delta t,+})\,\qt_i\right] \left[1 -\frac{\beta^L(\psia^{\Delta t,+})}{\beta^L(\psia^{\Delta t,+} + \alpha)}\right]\cdot \nabqi \psia^{\Delta t,+}\, \dq \dx \dd s.
\nonumber\\
\end{eqnarray}
Comparing \eqref{eq:energy-psi-summ1} with \eqref{eq:energy-u} we see that after multiplying \eqref{eq:energy-psi-summ1} by $2k$ and adding the resulting inequality to \eqref{eq:energy-u}
the final term in \eqref{eq:energy-u} is cancelled by $2k$ times the
second term on the right-hand side of \eqref{eq:energy-psi-summ1}. Hence, for any $t=t_n$, with
$n \in \{1,\dots,N\}$, we deduce that
\begin{eqnarray}\label{eq:energy-u+psi}
&&\hspace{-2mm}\|\uta^{\Delta t, +}(t)\|^2 + \frac{1}{\Delta t} \int_0^t \|\uta^{\Delta t, +} - \uta^{\Delta t,-}\|^2
\dd s + \nu \int_0^t \|\nabxtt \uta^{\Delta t, +}(s)\|^2 \dd s\nonumber\\
&&\hspace{-1mm}+\,2k\int_{\Omega \times D}\!\! M\, \mathcal{F}^L(\psia^{\Delta t, +}(t) + \alpha) \dq \dx + \frac{k}{\Delta t\, L}
\int_0^t \int_{\Omega \times D}\!\! M\, (\psia^{\Delta t, +} - \psia^{\Delta t, -})^2 \dq \dx \dd s
\nonumber \\
&&\hspace{7mm}\quad +\, 2k\,\varepsilon \int_0^t \int_{\Omega \times D} M\,
\frac{|\nabx \psia^{\Delta t, +} |^2}{\psia^{\Delta t, +} + \alpha} \dq \dx \dd s\nonumber\\
&&\hspace{14mm}\quad\quad + \frac{a_0 k}{\lambda}  \int_0^t \int_{\Omega \times D}M\,(\mathcal{F}^L)''(\psia^{\Delta t, +} + \alpha)\,|\nabq \psia^{\Delta t, +} |^2 \,\dq \dx \dd s\nonumber
\\
&&\hspace{-2mm}\leq \|\ut_0\|^2 + \frac{1}{\nu}\int_0^t\|\ft^{\Delta t,+}(s)\|^2_{(H^1_0(\Omega))'} \dd s + 2k \int_{\Omega \times D} M\, \mathcal{F}^L(\beta^L(\hat\psi^0) + \alpha) \dq \dx \nonumber
\\
&&\hspace{0mm} -\, 2k \int_0^t \int_{\Omega \times D} M\, \sum_{i=1}^K \left[(\nabxtt \uta^{\Delta t, +})\,\qt_i\right] \left[1 -\frac{\beta^L(\psia^{\Delta t, +})}{\beta^L(\psia^{\Delta t, +} + \alpha)}\right]\cdot \nabqi \psia^{\Delta t, +}\, \dq \dx \dd s.\nonumber\\
\end{eqnarray}
It remains to bound the last term on the right-hand side of \eqref{eq:energy-u+psi}. Noting that
$\beta^L$ is Lipschitz continuous, with Lipschitz constant equal to $1$, and $\beta^L(s+\alpha) \geq \alpha$
for $s \geq 0$, we have that
\begin{eqnarray*}
0 &\leq& \left(1 - \frac{\beta^L(\psia^{\Delta t, +})}{\beta^L(\psia^{\Delta t, +} + \alpha)}\right) \frac{1}{\sqrt{({\mathcal F}^L)''(\psi^{\Delta t, +} + \alpha)}} = \frac{\beta^L(\psia^{\Delta t, +} + \alpha) - \beta^L(\psia^{\Delta t, +})}{\sqrt{\beta^L(\psia^{\Delta t, +} + \alpha)}}\\
&\leq & \frac{\beta^L(\psia^{\Delta t, +} + \alpha) - \beta^L(\psia^{\Delta t, +})}{\sqrt{\alpha}} \leq
\left\{\begin{array}{cl}
\sqrt{\alpha}   &    \,\mbox{when $\psia^{\Delta t, +} \leq L$},\\
0               &    \,\mbox{when $\psia^{\Delta t, +} \geq L$}.
\end{array} \right.
\end{eqnarray*}
With this bound we now focus our attention on the last term in the inequality \eqref{eq:energy-u+psi}.
Let $\bt: = (b_1,\dots, b_K)$ and $b:=|\bt|_1:= b_1 + \cdots + b_K$; as $|q_i| \leq \sqrt{b_i}$, $i=1,\dots, K$, we have that $|\qt| \leq \sqrt{b}$ for all $\qt \in D$. For $t=t_n$, $n \in \{1,\dots,N\}$, we then have
that
\begin{eqnarray*}
&&\left|- 2k \int_0^t \int_{\Omega \times D} M\, \sum_{i=1}^K [\,(\nabxtt \uta^{\Delta t, +})\,\qt_i\,] \left[1 -\frac{\beta^L(\psia^{\Delta t, +})}{\beta^L(\psia^{\Delta t, +} + \alpha)}\right]\cdot \nabqi \psia^{\Delta t, +}\, \dq \dx \dd s\right|\nonumber\\
&&\leq 2k \int_0^t \int_{\Omega \times D} M\,  |\nabxtt \uta^{\Delta t, +}|\,|\qt|\, \left[1 -\frac{\beta^L(\psia^{\Delta t, +})}{\beta^L(\psia^{\Delta t, +} + \alpha)}\right]\, |\nabq \psia^{\Delta t, +}|\, \dq \dx \dd s \nonumber \\
&&\leq 2k \int_0^t \int_{\Omega \times D} M\,  |\nabxtt \uta^{\Delta t, +}|\,|\qt|\,
\left\{\begin{array}{cl}
\sqrt{\alpha}   &    \,\mbox{when $\psia^{\Delta t, +} \leq L$},\\
0               &    \,\mbox{when $\psia^{\Delta t, +} \geq L$}
\end{array} \right.\nonumber\\
&&\hspace{4.5cm}\times
\sqrt{(\mathcal{F}^L)''(\psia^{\Delta t, +} + \alpha)}
\, |\nabq \psia^{\Delta t, +}|\, \dq \dx \dd s \nonumber\\
&&\leq 2k \sqrt{\alpha\,b} \int_0^t \int_{\Omega \times D} M\,  |\nabxtt \uta^{\Delta t, +}|\,
\sqrt{(\mathcal{F}^L)''(\psia^{\Delta t, +} + \alpha)}
\, |\nabq \psia^{\Delta t, +}|\, \dq \dx \dd s \nonumber\\
&& = 2k \sqrt{\alpha\,b} \int_0^t \int_{\Omega}  |\nabxtt \uta^{\Delta t, +}| \left(\int_D M
\sqrt{(\mathcal{F}^L)''(\psia^{\Delta t, +} + \alpha)}
\, |\nabq \psia^{\Delta t, +}|\, \dq \right)\dx \dd s\nonumber\\
&& \leq 2k \sqrt{\alpha\,b} \int_0^t \int_{\Omega}  |\nabxtt \uta^{\Delta t, +}|
\left(\int_D M
(\mathcal{F}^L)''(\psia^{\Delta t, +} + \alpha)
\, |\nabq \psia^{\Delta t, +}|^2\, \dq \right)^{\frac{1}{2}}\dx \dd s\nonumber
\end{eqnarray*}
\begin{eqnarray}\label{eq:lastterm}
&& \leq 2k \sqrt{\alpha\,b} \left(\int_0^t \int_{\Omega}  |\nabxtt \uta^{\Delta t, +}|^2 \dx \dd s\right)^{\frac{1}{2}}\nonumber\\
&&\qqquad\times \left(\int_0^t \int_{\Omega \times D} M (\mathcal{F}^L)''(\psia^{\Delta t, +} + \alpha)
\, |\nabq \psia^{\Delta t, +}|^2\, \dq \dx \dd s\right)^{\frac{1}{2}}\nonumber\\
&&\leq \frac{a_0\, k}{2 \lambda} \left(\int_0^t \int_{\Omega \times D} M (\mathcal{F}^L)''(\psia^{\Delta t, +} + \alpha)
\, |\nabq \psia^{\Delta t, +}|^2\, \dq \dx \dd s\right)\nonumber\\
&&\qqquad +\, \alpha\,\frac{2\lambda\, b\, k}{a_0}\left(\int_0^t \int_{\Omega}  |\nabxtt \uta^{\Delta t, +}|^2 \dx \dd s\right).
\end{eqnarray}
Substitution of \eqref{eq:lastterm} into \eqref{eq:energy-u+psi} and use of \eqref{eq:FL2b} to bound $(\mathcal{F}^L)''(\psia^{\Delta t, +} + \alpha)$ from below by $\mathcal{F}''(\psia^{\Delta t, +} + \alpha)= (\psia^{\Delta t, +} + \alpha)^{-1}$ and
\eqref{eq:FL2c} to bound $\mathcal{F}^L(\psia^{\Delta t, +} + \alpha)$ by $\mathcal{F}(\psia^{\Delta t, +}+\alpha)$ from below finally yields, for all $t=t_n$, $n \in \{1,\dots, N\}$, that
=

\begin{eqnarray}\label{eq:energy-u+psi1}
&&\hspace{-2mm}\|\uta^{\Delta t, +}(t)\|^2 + \frac{1}{\Delta t} \int_0^t \|\uta^{\Delta t, +} - \uta^{\Delta t,-}\|^2
\dd s + \nu \int_0^t \|\nabxtt \uta^{\Delta t, +}(s)\|^2 \dd s\nonumber\\
&&\hspace{-2mm}+ \,2k\int_{\Omega \times D}\!\! M\, \mathcal{F}(\psia^{\Delta t, +}(t) + \alpha) \dq \dx + \frac{k}{\Delta t\, L}
\int_0^t \int_{\Omega \times D}\!\! M\, (\psia^{\Delta t, +} - \psia^{\Delta t, -})^2 \dq \dx \dd s
\nonumber \\
&&\hspace{-2mm}+\, 2k\,\varepsilon \int_0^t \int_{\Omega \times D} M\,
\frac{|\nabx \psia^{\Delta t, +} |^2}{\psia^{\Delta t, +} + \alpha} \dq \dx \dd s
+\, \frac{a_0 k}{2\,\lambda}  \int_0^t \int_{\Omega \times D}M\, \frac{|\nabq \psia^{\Delta t, +}|^2}{\psia^{\Delta t, +} + \alpha} \,\dq \dx \dd s\nonumber\\
&&\hspace{4mm}\leq \|\ut_0\|^2 + \frac{1}{\nu}\int_0^t\|\ft^{\Delta t,+}(s)\|^2_{(H^1_0(\Omega))'} \dd s + 2k \int_{\Omega \times D} M\, \mathcal{F}^L(\beta^L(\hat\psi^0) + \alpha) \dq \dx\nonumber\\
&&\hspace{10mm} +\, \alpha\,\frac{2\lambda\, b\, k}{a_0} \int_0^t \|\nabxtt \uta^{\Delta t, +}(s)\|^2 \dd s.
\end{eqnarray}
The only restriction we have imposed on $\alpha$ so far is that it belongs to the open interval $(0,1)$; let us now restrict the range of $\alpha$ further by demanding that, in fact,
\begin{equation}\label{alphacond}
0 < \alpha < \min \left(1 , \frac{a_0\,\nu}{\,2\lambda\,b\,k}\right).
\end{equation}
Then, the last term on the right-hand side of \eqref{eq:energy-u+psi1} can be absorbed into the third term
on the left-hand side, giving, for $t=t_n$ and $n \in \{1,\dots, N\}$,
\begin{eqnarray}\label{eq:energy-u+psi2}
&&\hspace{-2mm}\|\uta^{\Delta t, +}(t)\|^2 + \frac{1}{\Delta t} \int_0^t \|\uta^{\Delta t, +} - \uta^{\Delta t,-}\|^2
\dd s + \left(\nu - \alpha\,\frac{2\lambda\, b\, k}{a_0}\right) \int_0^t \|\nabxtt \uta^{\Delta t, +}(s)\|^2 \dd s\nonumber\\
&&\hspace{-2mm}+ \,2k\int_{\Omega \times D}\!\! M\, \mathcal{F}(\psia^{\Delta t, +}(t) + \alpha) \dq \dx + \frac{k}{\Delta t\, L}
\int_0^t \int_{\Omega \times D}\!\! M\, (\psia^{\Delta t, +} - \psia^{\Delta t, -})^2 \dq \dx \dd s
\nonumber \\
&&\hspace{-2mm}\quad +\, 2k\,\varepsilon \int_0^t \int_{\Omega \times D} M\,
\frac{|\nabx \psia^{\Delta t, +} |^2}{\psia^{\Delta t, +} + \alpha} \dq \dx \dd s
+\, \frac{a_0 k}{2\lambda}  \int_0^t \int_{\Omega \times D}M\,\frac{|\nabq \psia^{\Delta t, +}|^2}{\psia^{\Delta t, +} + \alpha} \,\dq \dx \dd s\nonumber\\
&&\hspace{-1mm}\leq \|\ut_0\|^2 + \frac{1}{\nu}\int_0^t\|\ft^{\Delta t,+}(s)\|^2_{(H^1_0(\Omega))'} \dd s + 2k \int_{\Omega \times D} M\, \mathcal{F}^L(\beta^L(\hat\psi^0) + \alpha) \dq \dx.
\end{eqnarray}
We now focus our attention on the final integral on the right-hand side of \eqref{eq:energy-u+psi2}:
\begin{eqnarray*}
{\rm T}_5(\alpha) &:=&\int_{\Omega \times D}\!\! M\, \mathcal{F}^L(\beta^L(\hat\psi^0) + \alpha) \dq \dx
= \int_{\mathfrak{A}_{L,\alpha} \cup \mathfrak{B}_{L,\alpha}} M\, \mathcal{F}^L(\beta^L(\hat\psi^0) + \alpha) \dq \dx,
\end{eqnarray*}
where
\begin{eqnarray*}
\mathfrak{A}_{L,\alpha}&:=& \{(\xt,\qt) \in \Omega \times D\,:\, 0 \leq \beta^L(\hat\psi^0(\xt,\qt)) \leq L - \alpha\},\\
\mathfrak{B}_{L,\alpha}&:=& \{(\xt,\qt) \in \Omega \times D\,:\, L-\alpha < \beta^L(\hat\psi^0(\xt,\qt)) \leq L\}.
\end{eqnarray*}
We begin by noting that
\[ \int_{\mathfrak{A}_{L,\alpha}} M\, \mathcal{F}^L(\beta^L(\hat\psi^0) + \alpha) \dq \dx = \int_{\mathfrak{A}_{L,\alpha}} M\, \mathcal{F}(\beta^L(\hat\psi^0) + \alpha) \dq \dx.\]
For the integral over $\mathfrak{B}_{L,\alpha}$ we have
\begin{eqnarray*}
&&\int_{\mathfrak{B}_{L,\alpha}} M \mathcal{F}^L(\beta^L(\hat\psi^0) + \alpha) \dq \dx\\
&&= \int_{\mathfrak{B}_{L,\alpha}} M \left[\frac{(\beta^L(\hat\psi^0) + \alpha)^2 - L^2}{2L} +
(\beta^L(\hat\psi^0) + \alpha)(\log L - 1) + 1 \right] \dq \dx\\
&&\leq  \int_{\mathfrak{B}_{L,\alpha}} M \left[\frac{(L + \alpha)^2 - L^2}{2L} +
(\beta^L(\hat\psi^0) + \alpha)(\log (\beta^L(\hat\psi^0) +\alpha) - 1) + 1 \right] \dq \dx\\
&&= \alpha\left(1 + \frac{\alpha}{2L}\right) \int_{\mathfrak{B}_{L,\alpha}} M \dq \dx +
\int_{\mathfrak{B}_{L,\alpha}} M \mathcal{F}(\beta^L(\hat\psi^0) + \alpha) \dq \dx\\
&& \leq \frac{3}{2}\alpha |\Omega| + \int_{\mathfrak{B}_{L,\alpha}} M \mathcal{F}(\beta^L(\hat\psi^0) + \alpha) \dq \dx.
\end{eqnarray*}
Thus we have shown that
\begin{equation}\label{before-two}
{\rm T}_5(\alpha) \leq \frac{3}{2}\alpha |\Omega| + \int_{\Omega \times D} M\,\mathcal{F}(\beta^L(\hat\psi^0) + \alpha) \dq \dx.
\end{equation}
Now, there are two possibilities:
\begin{itemize}
\item[\textit{Case 1.}] If $\beta^L(\hat\psi^0) + \alpha \leq 1$, then $0 \leq \beta^L(\hat\psi^0) \leq 1 - \alpha$. Since $L>1$ it follows that $0 \leq \beta^L(s) \leq 1$ if, and only if, $\beta^L(s)=s$. Thus we deduce that in this case $\beta^L(\hat\psi^0) = \hat\psi^0$, and therefore $0 \leq \mathcal{F}(\beta^L(\hat\psi^0) + \alpha) = \mathcal{F}(\hat\psi^0 + \alpha)$.
\item[\textit{Case 2.}] Alternatively, if $1<\beta^L(\hat\psi^0) + \alpha$, then, on noting that
$\beta^L(s) \leq s$ for all $s \in [0,\infty)$, it follows that $1 < \beta^L(\hat\psi^0) + \alpha \leq
\hat\psi^0 + \alpha$. However the function $\mathcal{F}$ is strictly monotonic increasing on the interval $[1,\infty)$, which then implies that $0 = \mathcal{F}(1) < \mathcal{F}(\beta^L(\hat\psi^0) + \alpha) \leq \mathcal{F}(\hat\psi^0 + \alpha)$.
\end{itemize}
The conclusion we draw is that, either way, $0 \leq \mathcal{F}(\beta^L(\hat\psi^0) + \alpha) \leq \mathcal{F}(\hat\psi^0 + \alpha)$. Hence,
\begin{equation}\label{bound-on-t5}
{\rm T}_5(\alpha) \leq \frac{3}{2}\alpha |\Omega| + \int_{\Omega \times D} M\, \mathcal{F}(\hat\psi^0 + \alpha) \dq \dx.
\end{equation}
Substituting \eqref{bound-on-t5} into \eqref{eq:energy-u+psi2} thus yields, for $t=t_n$ and $n \in \{1,\dots, N\}$,
\begin{eqnarray}\label{eq:energy-u+psi3}
&&\|\uta^{\Delta t, +}(t)\|^2 + \frac{1}{\Delta t} \int_0^t \|\uta^{\Delta t, +} - \uta^{\Delta t,-}\|^2
\dd s + \left(\nu - \alpha\,\frac{2\lambda\, b\, k}{a_0}\right)  \int_0^t \|\nabxtt \uta^{\Delta t, +}(s)\|^2 \dd s\nonumber\\
&&
+ \,2k\int_{\Omega \times D}\!\! M\, \mathcal{F}(\psia^{\Delta t, +}(t) + \alpha) \dq \dx
+ \frac{k}{\Delta t\, L}
\int_0^t \int_{\Omega \times D}\!\! M\, (\psia^{\Delta t, +} - \psia^{\Delta t, -})^2 \dq \dx \dd s
\nonumber \\
&&\hspace{12mm} +\, 2k\,\varepsilon \int_0^t \int_{\Omega \times D} M\,
\frac{|\nabx \psia^{\Delta t, +} |^2}{\psia^{\Delta t, +} + \alpha} \dq \dx \dd s \nonumber\\
&&\hspace{12mm} +\, \frac{a_0 k}{2\lambda}  \int_0^t \int_{\Omega \times D}M\,\frac{|\nabq \psia^{\Delta t, +}|^2}{\psia^{\Delta t, +} + \alpha} \,\dq \dx \dd s\nonumber\\
&&\leq \|\ut_0\|^2 + \frac{1}{\nu}\!\int_0^t\!\!\|\ft^{\Delta t,+}(s)\|^2_{(H^1_0(\Omega))'}\! \dd s \nonumber\\
&&\hspace{12mm} +~ 3\alpha k  |\Omega| + 2k\! \int_{\Omega \times D}\!\! M\, \mathcal{F}(\hat\psi^0 + \alpha) \dq \dx.
\end{eqnarray}
The key observation at this point is that the right-hand side of \eqref{eq:energy-u+psi3} is
completely independent of the cut-off parameter $L$.

We shall tidy up the bound \eqref{eq:energy-u+psi3} by passing to the limit $\alpha \rightarrow 0_+$.
The first $\alpha$-dependent term on the right-hand side of \eqref{eq:energy-u+psi3} trivially converges to $0$ as $\alpha \rightarrow 0_+$; concerning the second $\alpha$-dependent term,
Lebesgue's dominated convergence theorem implies that
\[ \lim_{\alpha \rightarrow 0_+} \int_{\Omega \times D} M\, \mathcal{F}(\hat\psi^0 + \alpha) \dq \dx = \int_{\Omega \times D} M\, \mathcal{F}(\hat\psi^0) \dq \dx.
\]
Similarly, we can easily pass to the limit on the left-hand side of \eqref{eq:energy-u+psi3}.
By applying Fatou's lemma to the fourth, sixth and seventh term on the left-hand side of \eqref{eq:energy-u+psi3} we get, for $t = t_n$, $n \in \{1,\dots,N\}$, that
\begin{eqnarray*}
&&\hspace{-6.15cm} \mbox{lim inf}_{\alpha \rightarrow 0_+}\int_{\Omega \times D} M\, \mathcal{F}(\psia^{\Delta t, +} (t) + \alpha) \geq \int_{\Omega \times D} M\, \mathcal{F}(\psia^{\Delta t, +} (t)) \dq \dx,
\\~\\
\mbox{lim inf}_{\alpha \rightarrow 0_+} \int_0^t\! \int_{\Omega \times D}\!\! M\,
\frac{|\nabx \psia^{\Delta t, +} |^2}{\psia^{\Delta t, +}  + \alpha} \dq \dx \dd s &\geq & \int_0^t \!\int_{\Omega \times D} M\,
\frac{|\nabx\psia^{\Delta t, +}  |^2}{\psia^{\Delta t, +}} \dq \dx \dd s\\
&=& 4\int_0^t \!\int_{\Omega \times D}\!\! M\,
\big|\nabx \sqrt{\psia^{\Delta t, +} }\big|^2 \dq \dx \dd s,~~~~~
\\~\\
\mbox{lim inf}_{\alpha \rightarrow 0_+} \int_0^t \!\int_{\Omega \times D}\!\! M\,
\frac{|\nabq \psia^{\Delta t, +} |^2}{\psia^{\Delta t, +}  + \alpha} \dq \dx \dd s &\geq& \int_0^t \!\int_{\Omega \times D} M\,
\frac{|\nabq \psia^{\Delta t, +}|^2}{\psia^{\Delta t, +} } \dq \dx \dd s\\
&=& 4\int_0^t \!\int_{\Omega \times D} \!\!M\, \big|\nabq \sqrt{\psia^{\Delta t, +} }\big|^2 \dq \dx \dd s.~~~~~
\end{eqnarray*}
Thus, after passage to the limit $\alpha\rightarrow 0_+$, on recalling \eqref{inidata-1}, we have, for all $t=t_n$, $n \in \{1,\dots, N\}$, that
\begin{eqnarray}
&&\hspace{-2mm}\|\uta^{\Delta t, +}(t)\|^2 + \frac{1}{\Delta t} \int_0^t \|\uta^{\Delta t, +} - \uta^{\Delta t,-}\|^2
\dd s + \nu \int_0^t \|\nabxtt \uta^{\Delta t, +}(s)\|^2 \dd s\nonumber\\
&&\hspace{4mm}+ \,2k\int_{\Omega \times D}\!\! M\, \mathcal{F}(\psia^{\Delta t, +}(t)) \dq \dx + \frac{k}{\Delta t\, L}
\int_0^t \int_{\Omega \times D}\!\! M\, (\psia^{\Delta t, +} - \psia^{\Delta t, -})^2 \dq \dx \dd s
\nonumber \\
&&\hspace{6mm}\quad +\, 8k\,\varepsilon \int_0^t \int_{\Omega \times D} M\,
|\nabx \sqrt{\psia^{\Delta t, +}} |^2 \dq \dx \dd s\nonumber\\
&&\hspace{8mm}\quad\quad +\, \frac{2a_0 k}{\lambda}  \int_0^t \int_{\Omega \times D}M\, |\nabq \sqrt{\psia^{\Delta t, +}}|^2 \,\dq \dx \dd s\nonumber\\
&&\hspace{0mm}\leq \|\ut_0\|^2 + \frac{1}{\nu}\int_0^t\|\ft^{\Delta t,+}(s)\|^2_{(H^1_0(\Omega))'} \dd s + 2k \int_{\Omega \times D} M\, \mathcal{F}(\hat\psi^0) \dq \dx\label{penultimate-line}\\
&&\hspace{0mm} \leq \|\ut_0\|^2 + \frac{1}{\nu}\int_0^T\|\ft(s)\|^2_{(H^1_0(\Omega))'} \dd s + 2k \int_{\Omega \times D} M\, \mathcal{F}(\hat\psi_0) \dq \dx =:[{\sf B}(\ut_0,\ft, \hat\psi_0)]^2, \nonumber\\
\label{eq:energy-u+psi-final2}
\end{eqnarray}
where, in the last line, we used \eqref{inidata-1} to bound the third term in \eqref{penultimate-line},
and that $t \in [0,T]$ together with the definition \eqref{fn} of $\ft^{\Delta t,+}$ to bound the second term.

We select $\varphi = \chi_{[0,t]}\,\rho_{\epsilon,L}^{\Delta t,+}$ as test function in \eqref{zetacon}, with $t$ chosen as $t_n$ and $n \in \{1,\dots,N\}$.
Then, similarly to (\ref{eq:energy-u}), we deduce, with $t = t_n$, that
\begin{align}
&\|\rho_{\epsilon,L}^{\Delta t,+}(t)\|^2 + \frac{1}{\Delta t}\int_0^t  \|\rho_{\epsilon,L}^{\Delta t,+}(s) - \rho_{\epsilon,L}^{\Delta t,-}(s)\|^2 \dd s
+ 2\epsilon \int_0^t \|\nabx \rho_{\epsilon,L}^{\Delta t,+}(s)\|^2 \dd s
\nonumber \\
& \hspace{2.5in}
\leq \left\|\int_D \beta^L(\hat \psi^0) \dq \right\|^2
\leq |\Omega|,
\label{eq:energy-zeta}
\end{align}
where we have noted (\ref{simpid}), (\ref{tripid}) and that
$\beta^L(\hat \psi^0) \in \hat Z_2$.

\subsection{$L$-independent bounds on the time-derivatives}
\label{Lindep-time}

Next, we derive $L$-independent bounds on the time-derivatives of the functions $\uta^{\Delta t}$, $\psia^{\Delta t}$ and $\rho_{\epsilon,L}^{\Delta t}$.
We begin by bounding the time-derivative of $\psia^{\Delta t}$ using \eqref{eq:energy-u+psi-final2}; we shall then bound the time-derivatives of
$\rho_{\epsilon,L}^{\Delta t}$ and $\uta^{\Delta t}$ in a similar manner.

\subsubsection{$L$-independent bound on the time-derivative of $\psia^{\Delta t}$}
\label{sec:time-psia}

It follows from \eqref{eqpsincon} that
\begin{align}
\label{eq:weaka2bound}
&\left|\,\int_{0}^T\int_{\Omega \times D} M\, \frac{\partial \psia^{\Delta t}}{\partial t}\, \hat \varphi \dq \dx \dd t\,\right|
\leq  \left|\,\varepsilon \int_{0}^T \int_{\Omega \times D} M\,
\nabx \psia^{\Delta t,+} \cdot \nabx \hat\varphi \dq \dx \dt \,\right|\nonumber\\
&\quad\qquad + \left|\,\int_{0}^T \int_{\Omega \times D} M\,
\uta^{\Delta t,-}\,\psia^{\Delta t,+} \cdot\, \nabx \hat \varphi \,\dq \dx \dt\,\right| \nonumber\\
&\quad\qquad + \left|\,\frac{1}{2\,\lambda}\int_{0}^T \int_{\Omega \times D}M\,
\sum_{i=1}^K \sum_{j=1}^K A_{ij} \,\nabqj \psia^{\Delta t,+}
\cdot\, \nabqi \hat \varphi  \,\dq \dx \dt\,\right| \nonumber \\
&\quad\qquad  + \left|\, \int_{0}^T \int_{\Omega \times D}M\, \sum_{i=1}^K \left[
\sigtt(\uta^{\Delta t,+}) \,\qt_i\right]\,\beta^L(\psia^{\Delta t,+})\,\cdot\, \nabqi
\hat \varphi \,\dq \dx \dt \,\right|
\nonumber\\
&\quad =: {\rm S}_1 + {\rm S_2} + {\rm S_3} + {\rm S_4}
\qqquad \forall \hat \varphi \in L^1(0,T; \hat{X}).
\end{align}

We proceed to bound each of the terms ${\rm S}_1, \dots, {\rm S}_4$, bearing in mind (cf. the last sentence in the statement of Lemma \ref{conv}) that
\begin{subequations}
\begin{align}\label{properties-a}
&\psia^{\Delta t,+} \geq 0 \quad \mbox{a.e. on $\Omega \times D \times [0,T]$}, \qquad \qquad  \int_D M(\qt) \dq = 1,&\\
& 0 \leq \int_D M(\qt) \psia^{\Delta t,+}(\xt,\qt, t)\dq \leq 1\qquad\mbox{~for a.e. $(x,t) \in \Omega \times D$}.& \label{properties-b}
\end{align}
\end{subequations}
We shall use throughout the rest of this section test functions $\hat\varphi$ such that
\begin{equation}
\hat\varphi \in L^2(0,T; H^1(\Omega;L^\infty(D))\cap L^2(\Omega; W^{1,\infty}(D))).
\end{equation}

We begin by considering ${\rm S}_1$; noting (\ref{properties-a},b)
and \eqref{eq:energy-u+psi-final2}, we have that
\begin{eqnarray}
{\rm S}_1 &=& 2 \varepsilon \left|\int_{0}^T \int_{\Omega \times D} M\,\sqrt{\psia^{\Delta t,+}}\,\,\nabx \sqrt{\psia^{\Delta t,+}} \cdot \nabx \hat\varphi \dq \dx \dt \right|\nonumber\\
&\leq& 2 \varepsilon\int_0^T\!\!\! \int_\Omega\! \left[\left(\int_D M\psia^{\Delta t,+} \dq\right)^{\!\!\frac{1}{2}}\!
\left(\int_D M \bigg|\nabx \sqrt{\psia^{\Delta t,+}}\bigg|^2
\dq \right)^{\!\!\frac{1}{2}}\! \|\nabx \hat\varphi \|_{L^\infty(D)}\!\right]\!\!\dx \dd t \nonumber\\
&\leq& \sqrt{\frac{\varepsilon}{2k}}\,\left(\!8 k \varepsilon \!\int_0^T\!\!\! \int_{\Omega\times D}\!\! M  \bigg|\nabx \sqrt{\psia^{\Delta t,+}}\bigg|^2\! \dq \dx \dt \right)^{\!\!\frac{1}{2}}\! \left(\int_0^T\!\!\! \int_\Omega \|\nabx\hat\varphi \|^2_{L^\infty(D)}\! \dx \dd t\!\right)^{\!\!\frac{1}{2}}. \nonumber
\end{eqnarray}
Hence, by \eqref{eq:energy-u+psi-final2} with $t=t_N=T$,
\begin{equation}\label{eq:T1bound}
{\rm S}_1 \leq \sqrt{\frac{\varepsilon}{2k}}\, {\sf B}(\ut_0,\ft, \hat\psi_0)\, \left(\int_0^T \int_\Omega \|\nabx\hat\varphi \|^2_{L^\infty(D)} \dx \dd t\right)^{\!\!\frac{1}{2}}.
\end{equation}

Next, we consider the term ${\rm S}_2$:
\begin{eqnarray}
{\rm S}_2 &\leq& \int_0^T \int_\Omega |\uta^{\Delta t,-}| \left(\int_D M \psia^{\Delta t,+} \dq\right) \|\nabx\hat\varphi\|_{L^\infty(D)} \dx \dd t\nonumber\\
&\leq& \left(\int_0^T \int_\Omega |\uta^{\Delta t,-}|^2 \dx \dt \right)^{\!\!\frac{1}{2}} \left(\int_0^T\int_\Omega \|\nabx
\hat\varphi\|^2_{L^\infty(D)} \dx \dd t \right)^{\!\!\frac{1}{2}}\nonumber\\
& \leq & C_{{\sf P}}(\Omega) \left(\int_0^T \int_\Omega |\nabxtt\uta^{\Delta t,-}|^2 \dx \dt \right)^{\!\!\frac{1}{2}}\! \left(\int_0^T\int_\Omega \|\nabx
\hat\varphi\|^2_{L^\infty(D)} \dx \dd t \right)^{\!\!\frac{1}{2}}\!\!,~~~~~~~~~\label{Poinc}
\end{eqnarray}
where $C_{{\sf P}}(\Omega)$ denotes the (positive) constant appearing in the  Poincar\'{e} inequality $\|\uta^{\Delta t,-}\| \leq C_{{\sf P}}(\Omega)\, \|\nabxtt \uta^{\Delta t,-}\|$ on $\Omega$, with $\uta^{\Delta t,-} \in \Vt\subset \Ht^1_0(\Omega)$.
On recalling \eqref{eq:energy-u+psi-final2}, the definitions of $\uta^{\Delta t,+}$ and $\uta^{\Delta t,-}$ from \eqref{upm}, and noting
\eqref{idatabd}, we have that
\begin{eqnarray}\label{bound-a}
\int_0^T\!\!\! \int_\Omega |\nabxtt\uta^{\Delta t,-}|^2 \dx \dt  &=& \int_0^T \|\nabxtt\uta^{\Delta t,-}\|^2 \dd t
\nonumber\\ &=& \Delta t \|\nabxtt\ut^0\|^2 + \int_0^{T-\Delta t} \|\nabxtt \uta^{\Delta t, +} \|^2 \dt\nonumber\\
& \leq &\|\ut_0\|^2 + \int_0^T\|\nabxtt \uta^{\Delta t, +} \|^2 \dt \nonumber\\
& \leq & \left(1 + {\textstyle \frac{1}{\nu}}\right) [{\sf B}(\ut_0, \ft, \hat\psi_0)]^2.~~~~~~~~~
\end{eqnarray}
Therefore,
\begin{eqnarray}\label{extraS2}
{\rm S}_2 \leq C_{{\sf P}}(\Omega)\left(1 + {\textstyle \frac{1}{\nu}}\right)^{\frac{1}{2}} {\sf B}(\ut_0, \ft, \hat\psi_0) \left(\int_0^T\int_\Omega \|\nabx
\hat\varphi\|^2_{L^\infty(D)} \dx \dd t \right)^{\frac{1}{2}}.
\end{eqnarray}
Alternatively, directly from the second line of \eqref{Poinc}, we have that
\begin{eqnarray}\label{without}
{\rm S}_2 &\leq& \sqrt{T} \left(\mbox{ess.sup}_{t \in [0,T]} \int_\Omega |\uta^{\Delta t,-}|^2 \dx \right)^{\frac{1}{2}} \left(\int_0^T\int_\Omega \|\nabx
\hat\varphi\|^2_{L^\infty(D)} \dx \dd t \right)^{\frac{1}{2}}\!\!.~~~~~~~
\end{eqnarray}
Similarly as above,
\begin{eqnarray}\label{bound-b}
&&\mbox{ess.sup}_{t \in [0,T]}\! \int_\Omega |\uta^{\Delta t,-}|^2 \dx  = \mbox{ess.sup}_{t \in [0,T]} \|\uta^{\Delta t,-}(t)\|^2\nonumber\\
&& \hspace{2cm} = \max\left(\|\ut_0\|^2 , \mbox{ess.sup}_{t \in (0,T-\Delta t]}\|\uta^{\Delta t,+}(t)\|^2\right)\nonumber\\
&& \hspace{2cm} \leq  \max\left(\|\ut_0\|^2 , \mbox{ess.sup}_{t \in [0,T]}\|\uta^{\Delta t,+}(t)\|^2\right)\nonumber\\
&& \hspace{2cm} \leq [{\sf B}(\ut_0, \ft, \hat\psi_0)]^2.~~~~~~~~~
 \end{eqnarray}
Combining \eqref{extraS2}, \eqref{without} and \eqref{bound-b},
we have that
\begin{eqnarray}\label{eq:T2bound}
\!\!\!\!\!\!\!\!\!\!\!{\rm S}_2 \leq \min\left(\!C_{{\sf P}}(\Omega)\, \left(1 + {\textstyle{\frac{1}{\nu}}}\right)^{\frac{1}{2}} ,
\sqrt{T}\right) {\sf B}(\ut_0,\ft, \hat\psi_0)\!
\left(\int_0^T\!\!\int_\Omega \|\nabx
\hat\varphi\|^2_{L^\infty(D)} \dx \dd t \right)^{\frac{1}{2}}\!\!\!.
\end{eqnarray}

We are ready to consider ${\rm S}_3$; we have that
\begin{eqnarray*}
{\rm S}_3 & = & \frac{1}{2\,\lambda} \left|\int_{0}^T \int_{\Omega \times D} M\,2 \sqrt{\psia^{\Delta t, +}}\, \sum_{i=1}^K \sum_{j=1}^K A_{ij} \nabqj \sqrt{\psia^{\Delta t, +}} \cdot \nabqi \hat\varphi \,\dq \dx \dt\right|\\
&\leq& \frac{1}{\lambda}\!\left(\sum_{i,j=1}^K A_{ij}^2\right)^{\!\!\frac{1}{2}}\!\int_0^T\!\! \int_{\Omega \times D}
M \sqrt{\psia^{\Delta t, +}} \left(\sum_{i,j=1}^K \bigg|\nabqj \sqrt{\psia^{\Delta t, +}}\bigg|^2\, |\nabqi \hat\varphi|^2 \right)^{\!\!\frac{1}{2}}
\!\!\dq \dx \dd t\\
&=& \frac{1}{\lambda} |A|\, \int_0^T \int_{\Omega \times D}
M \sqrt{\psia^{\Delta t, +}} \left(\sum_{j=1}^K \bigg|\nabqj \sqrt{\psia^{\Delta t, +}}\bigg|^2\right)^{\!\!\frac{1}{2}} \left(\sum_{i=1}^K |\nabqi \hat\varphi|^2 \right)^{\!\!\frac{1}{2}}
\dq \dx \dd t\\
& = & \frac{1}{\lambda} |A|\, \int_0^T \int_{\Omega \times D}
M \sqrt{\psia^{\Delta t, +}}\, \bigg|\nabq \sqrt{\psia^{\Delta t, +}}\bigg|\, |\nabq \hat\varphi| \dq \dx \dd t\\
& \leq & \frac{1}{\lambda} |A|\, \int_0^T\!\! \int_\Omega \left(\int_D\! M \psia^{\Delta t, +} \dq\right)^{\!\!\frac{1}{2}}
\left(\int_D\! M\, \bigg |\nabq \sqrt{\psia^{\Delta t, +}}\bigg|^2 \dq \right)^{\!\!\frac{1}{2}} \|\nabq \hat\varphi\|_{L^\infty(D)}
\dx \dd t\\
& \leq & \frac{|A|}{\sqrt{2a_0 k \lambda}} \left(\!\frac{2a_0 k}{\lambda}\int_0^T\!\!\! \int_{\Omega \times D}
\!\!M\, \bigg|\nabq \sqrt{\psia^{\Delta t, +}} \bigg |^2\! \dq \dx \dd t\!\right)^{\!\!\frac{1}{2}} \!\left(\int_0^T \!\!\!\int_\Omega \|\nabq
\hat\varphi\|^2_{L^\infty(D)}\! \dx \dd t\!\right)^{\!\!\frac{1}{2}}\!\!.
\end{eqnarray*}
Thus, by \eqref{eq:energy-u+psi-final2},
\begin{eqnarray}\label{eq:T3bound}
{\rm S}_3 \leq \frac{|A|}{\sqrt{2a_0 k \lambda}}\,  {\sf B}(\ut_0,\ft, \hat\psi_0)
\left(\int_0^T\int_\Omega \|\nabq \hat\varphi\|^2_{L^\infty(D)} \dx \dd t \right)^{\!\!\frac{1}{2}}.
\end{eqnarray}

Finally, for term ${\rm S}_4$, recalling the notation $b:=|\bt|_1$ (cf. the paragraph before
\eqref{eq:lastterm}) together with the inequality $\beta^L(s) \leq s$ for $s \in \mathbb{R}_{\geq 0}$, we have that
\begin{eqnarray*}
{\rm S}_4 &\leq& \int_{0}^T \int_{\Omega \times D} M\,|\sigtt(\uta^{\Delta t, +})|\, \beta^L(\psia^{\Delta t, +})\, \sum_{i=1}^K \,|\qt_i|\, |\nabqi \hat \varphi| \,\dq \dx \dt\\
& \leq & \sqrt{b}\, \int_{0}^T \int_{\Omega \times D} M\,|\sigtt(\uta^{\Delta t, +})|\, \beta^L(\psia^{\Delta t, +})\, |\nabq \hat \varphi| \,\dq \dx \dt\\
& \leq & \sqrt{b}\, \int_{0}^T \int_{\Omega} \,|\sigtt(\uta^{\Delta t, +})|\, \left(\int_D M\beta^L(\psia^{\Delta t, +}) \dq\right) \, \|\nabq \hat \varphi\|_{L^\infty(D)} \dx \dt\\
& \leq & \sqrt{b}\, \int_{0}^T \int_{\Omega} \,|\sigtt(\uta^{\Delta t, +})|\, \left(\int_D M \psia^{\Delta t, +} \dq\right) \, \|\nabq \hat \varphi\|_{L^\infty(D)} \dx \dt\\
& \leq & \sqrt{b}\, \int_{0}^T \int_{\Omega} \,|\sigtt(\uta^{\Delta t, +})| \, \|\nabq \hat \varphi\|_{L^\infty(D)} \dx \dt\\
& \leq & \sqrt{b}\, \left(\int_{0}^T \int_{\Omega} \,|\sigtt(\uta^{\Delta t, +})|^2 \dx \dd t\right)^{\frac{1}{2}} \left(\int_0^T \int_{\Omega} \|\nabq \hat\varphi\|^2_{L^\infty(D)} \dx \dd t\right)^{\frac{1}{2}}\\
&=& \sqrt{\frac{b}{\nu}}\, \left(\nu \int_{0}^T \int_{\Omega} \,|\nabxtt \uta^{\Delta t, +}|^2 \dx \dd t\right)^{\frac{1}{2}} \left(\int_0^T \int_{\Omega} \|\nabq \hat\varphi\|^2_{L^\infty(D)} \dx \dd t\right)^{\frac{1}{2}}.
\end{eqnarray*}
Hence, by \eqref{eq:energy-u+psi-final2},
\begin{eqnarray}\label{eq:T4bound}
{\rm S}_4 \leq  \sqrt{\frac{b}{\nu}}
\, {\sf B}(\ut_0,\ft, \hat\psi_0)\, \left(\int_0^T \int_{\Omega} \|\nabq \hat\varphi\|^2_{L^\infty(D)} \dx \dd t\right)^{\frac{1}{2}}.
\end{eqnarray}

Upon substituting the bounds on the terms ${\rm S}_1$ to ${\rm S}_4$ into \eqref{eq:weaka2bound},
with $\hat\varphi \in L^2(0,T; H^1(\Omega;L^\infty(D))\cap L^2(\Omega; W^{1,\infty}(D)))$, and
noting that the latter space is contained in $L^1(0,T;\hat{X})$ we deduce from \eqref{eq:weaka2bound}
that
\begin{eqnarray}\label{psi-time-bound}
&&\hspace{-0.5cm}\left|\,\int_{0}^T\int_{\Omega \times D} M\, \frac{\partial \psia^{\Delta t}}{\partial t}\,\hat \varphi
\dq \dx \dt\,\right|\nonumber\\
&& \leq {C}_{\ast}\, {\sf B}(\ut_0,\ft, \hat\psi_0)\,
\left(\int_0^T \int_{\Omega} \left[\|\nabx \hat\varphi\|^2_{L^\infty(D)} + \|\nabq \hat\varphi\|^2_{L^\infty(D)}\right] \dx \dd t\right)^{\frac{1}{2}}\!\!,
\end{eqnarray}
for any $\hat\varphi \in L^2(0,T; H^1(\Omega;L^\infty(D))\cap L^2(\Omega; W^{1,\infty}(D)))$, where $C_\ast$
denotes a positive constant (that can be computed by tracking the constants in \eqref{eq:T1bound}--\eqref{eq:T4bound}), which depends solely on $\epsilon$, $\nu$, $C_{\sf P}(\Omega)$, $T$, $|A|$, $a_0$, $k$, $K$, $\lambda$, $K$ and $b$.

We now consider the time-derivative of $\rho_{\epsilon,L}^{\Delta t}$. It follows from
(\ref{zetacon}), (\ref{eq:energy-zeta}), (\ref{properties-b}) and (\ref{bound-b}) that
\begin{align}
&\left|\,\int_0^T \int_\Omega
\frac{\partial \rho_{\epsilon,L}^{\Delta t}}{\partial t}\, \varphi \dx \dt\,\right|
\leq
\int_0^T \int_{\Omega} \left| \left[ \epsilon\, \nabx  \rho_{\epsilon,L}^{\Delta t, +} -
\ut_{\epsilon,L}^{\Delta t,-}  \,\rho_{\epsilon,L}^{\Delta t, +} \right]
\cdot \nabx \varphi \right| \dx  \dt
\nonumber \\
& \quad \leq \left[ \epsilon \left( \int_0^T \|\nabx \rho_{\epsilon,L}^{\Delta t, +}\|^2 \dt \right)^{\frac{1}{2}}
+ \mbox{ess.sup}_{t \in [0,T]} \|\rho_{\epsilon,L}^{\Delta t, +}\|_{L^\infty(\Omega)}
\left( \int_0^T \|\ut_{\epsilon,L}^{\Delta t, -}\|^2 \dt \right)^{\frac{1}{2}}
\right]
\nonumber \\
& \hspace{3in} \times
\left( \int_0^T \|\nabx \varphi\|^2 \dt \right)^{\frac{1}{2}}
\nonumber \\
& \quad \leq \left[ \epsilon \left(\frac{|\Omega|}{2}\right)^{\frac{1}{2}}
+ {\sf B}(\ut_0, \ft, \hat\psi_0)
\right]
\left( \int_0^T \|\nabx \varphi\|^2 \dt \right)^{\frac{1}{2}}
\qquad \forall \varphi \in L^2(0,T;H^1(\Omega)).
\label{zetacondtbd}
\end{align}

\subsubsection{$L$-independent bound on the time-derivative of $\uta^{\Delta t}$}
\label{sec:time-uta}

In this section we shall derive an $L$-independent bound on the time-derivative of $\uta^{\Delta t}$. Our starting point is \eqref{equncon}, from which we deduce that

\begin{align}\label{equncon1}
&\displaystyle
\left|\,\int_{0}^{T}\!\! \int_\Omega  \frac{\partial \utaeD}{\partial t}\cdot
\wt \dx \dt\,\right|
\nonumber
\\
&
\hspace{0.5cm} \leq \left|\,\int_{0}^T\!\! \int_{\Omega}
 \left[ (\utaeDm \cdot \nabx) \utaeDp \right]\,\cdot\,\wt \dx \dt \,\right|
+ \nu \left|\,\int_0^T\!\! \int_\Omega\,\nabxtt \utaeDp
: \wnabtt \dx \dt \,\right|
\nonumber
\\
&\hspace{1cm}+\left|\,\int_{0}^T  \langle \ft^{\Delta t,+}, \wt\rangle_{H^1_0(\Omega)} \dd t \,\right|
+ k\,\left|\,\sum_{i=1}^K \int_{0}^T\!\! \int_{\Omega}
\Ctt_i(M\,\hpsiae^{\Delta t,+}): \nabxtt
\wt \dx \dt \,\right|
\nonumber
\\
& \hspace{0.5cm} =: {\rm U_1} + {\rm U}_2 + {\rm U}_3 + {\rm U}_4 \hspace{1in}
\qquad \forall \wt \in L^1(0,T;\Vt).
\end{align}

On recalling from the discussion following \eqref{eqvn2} the definition of
$\Vt_\sigma$, we shall assume henceforth that
$$ \wt \in L^2(0,T;\Vt_\sigma),\ \sigma \geq \textstyle{\frac{1}{2}}d,\ \sigma>1.$$
Clearly, $L^2(0,T;\Vt_\sigma) \subset L^1(0,T;\Vt)$.
By Lemma 4.1 in Ch.\ 3 of Temam \cite{Temam}
and using \eqref{bound-b} and \eqref{eq:energy-u+psi-final2}, we have, with $\sigma \geq \textstyle{\frac{1}{2}}d$, $\sigma>1$, and $c(\Omega,d)$ a constant that only depends on $\Omega$ and $d$, that
\begin{eqnarray}\label{time-u-t1}
{\rm U}_1 & \leq & c(\Omega,d)\,\mbox{ess.sup}_{t \in [0,T]} \|\utaeDm\| \left(\int_0^T \|\nabx \utaeDp\|^2 \dd t\right)^{\!\!\frac{1}{2}} \left(\int_0^T\|\wt\|^2_{V_\sigma}\dd t\right)^{\!\!\frac{1}{2}}\nonumber\\
& \leq & c(\Omega,d)\,\sqrt{\frac{1}{\nu}} \, [{\sf B}(\ut_0, \ft, \hat\psi_0)]^2 \,
\left(\int_0^T\|\wt\|^2_{V_\sigma}\dd t\right)^{\!\!\frac{1}{2}}.
\end{eqnarray}

For term ${\rm U}_2$ we have,
\begin{eqnarray}\label{time-u-t2}
{\rm U}_2 & \leq & \sqrt{\nu} \left(\nu\int_0^T\!\!\|\nabxtt \utaeDp\|^2 \dt \right)^{\!\!\frac{1}{2}}
\left(\int_0^T \|\wnabtt\|^2\dt \right)^{\!\!\frac{1}{2}}\nonumber\\
& \leq & \sqrt{\nu} \, {\sf B}(\ut_0, \ft, \hat\psi_0) \, \left(\int_0^T \|\wnabtt\|^2 \dt \right)^{\!\!\frac{1}{2}}.
\end{eqnarray}

Concerning the term ${\rm U}_3$, on noting the definition of the norm $\|\cdot\|_{(H^1_0(\Omega))'}$ and
that thanks to \eqref{fn} we have
$\|\ft^{\Delta t,+}\|_{L^2(0,T;(H^1_0(\Omega))')} \leq \|\ft\|_{L^2(0,T;(H^1_0(\Omega))')},$
it follows that
\begin{eqnarray}\label{time-u-t3}
{\rm U}_3 &\leq&
\sqrt{\nu}\, {\sf B}(\ut_0, \ft, \hat\psi_0) \, \left(\int_0^T \|\wnabtt\|^2 \dt \right)^{\!\!\frac{1}{2}}.
\end{eqnarray}

Before we embark on the estimation of the term ${\rm U}_4$ we observe that
\begin{eqnarray}\label{U4a}
{\rm U}_4 & = & k \left|\,\int_0^T \int_\Omega  \left[\int_{D} M\,
\hpsiae^{\Delta t,+} \sum_{i=1}^K \qt_i\, \qt_i^{\rm T}\, U_i'\left(\textstyle{\frac{1}{2}}|\qt_i|^2\right)  : \nabxtt \wt \dq \right]\!\! \dx \dt\,\right|\nonumber\\
&=& k \left|\,\int_{0}^T \int_{\Omega}\left[\int_{D} M \sum_{i=1}^K (\nabxtt \wt)\qt_i \cdot
\nabqi \hpsiae^{\Delta t,+} \dq \right]\!\!\dx \dt\, \right|,
\end{eqnarray}
where we used the integration-by-parts formula \eqref{intbyparts} to transform the expression in the
square brackets in the first line into the expression in the square brackets in the second line.
Thus we have that
\begin{eqnarray}\label{U4b}
{\rm U}_4 & \leq & k \int_{0}^T \int_{\Omega\times D} M\,  |\nabxtt \wt| \left(\sum_{i=1}^K |\qt_i| \,
|\nabqi \hpsiae^{\Delta t,+} |\right) \dq \dx \dt\nonumber\\
& \leq &  k \int_{0}^T \int_{\Omega\times D} M\, |\nabxtt \wt|\, |\qt|\,
|\nabq \hpsiae^{\Delta t,+} | \dq \dx \dt\nonumber\\
& \leq &  k\,\sqrt{b} \int_{0}^T \int_{\Omega} |\nabxtt \wt|\,\left(
\int_D M\, |\nabq \hpsiae^{\Delta t,+} | \dq \right)\dx \dt\nonumber\\
&=& 2k\,\sqrt{b} \int_{0}^T \int_{\Omega} |\nabxtt \wt|\,\left(
\int_D M\, \sqrt{\psiae^{\Delta t,+}}\, \big|\nabq \sqrt{\hpsiae^{\Delta t,+} }\big| \dq\right) \dx \dt
\nonumber\\
&\leq& 2k\,\sqrt{b} \int_{0}^T \int_{\Omega} |\nabxtt \wt|\,\left(
\int_D M\, \big|\nabq \sqrt{\hpsiae^{\Delta t,+}}\big|^2 \dq\right)^{\!\!\frac{1}{2}} \dx \dt,
\end{eqnarray}
where in the transition to the last line we used the Cauchy--Schwarz inequality and
\eqref{properties-b}. Hence, by \eqref{eq:energy-u+psi-final2},
\begin{eqnarray}\label{time-u-t4}
{\rm U}_4 &\leq& 2k\,\sqrt{b}\left(\int_0^T\!\!
\int_{\Omega \times D} \!\!M\,  \big|\nabq \sqrt{\hpsiae^{\Delta t,+}}\big|^2 \dq \dx \dt\!\right)^{\!\!\frac{1}{2}}\! \left(\int_{0}^T\!\! \int_{\Omega} |\nabxtt \wt|^2 \dx \dt\! \right)^{\!\!\frac{1}{2}}\!\!\nonumber\\
&=&  \sqrt{\frac{2\lambda\, b\, k}{a_0}}\left(\frac{2 a_0 k}{\lambda} \int_0^T\!\!
\int_{\Omega \times D} \!\!M \big|\nabq \sqrt{\hpsiae^{\Delta t,+}}\big|^2 \dq \dx \dt\!\right)^{\!\!\frac{1}{2}}\! \left(\int_{0}^T \|\nabxtt \wt\|^2 \dt\! \right)^{\!\!\frac{1}{2}}
\nonumber\\
&\leq&  \sqrt{\frac{2\lambda\,b\,k}{a_0}}\,\,  {\sf B}(\ut_0,\ft, \hat\psi_0) \left(\int_{0}^T \|\nabxtt \wt\|^2 \dt\! \right)^{\!\!\frac{1}{2}}.
\end{eqnarray}
Collecting the bounds on the terms ${\rm U}_1$ to ${\rm U}_4$ and inserting them into \eqref{equncon1} yields
\begin{align}
&\displaystyle
\left|\,\int_{0}^{T}\!\! \int_\Omega  \frac{\partial \utaeD}{\partial t}\cdot
\wt \dx \dt\,\right|\leq C_{\ast \ast}\, \max\left([{\sf B}(\ut_0,\ft, \hat\psi_0)]^2, {\sf B}(\ut_0,\ft, \hat\psi_0)\right)\!
\left(\int_{0}^T\! \|\wt\|^2_{V_\sigma} \dt\! \right)^{\!\!\frac{1}{2}}\!\!,\nonumber\\
\label{u-time-bound}
\end{align}
for any $\wt \in L^2(0,T; \Vt_\sigma)$, $\sigma \geq \textstyle{\frac{1}{2}}d$, $\sigma>1$,
where $C_{\ast\ast}$ denotes a positive constant (that can be computed by tracking the constants in \eqref{time-u-t1}--\eqref{time-u-t4}), which depends solely on $\Omega$, $d$, $\nu$, $k$, $K$, $\lambda$, $a_0$ and $b$.

\section{Dubinski{\u\i}'s compactness theorem}
\label{sec:dubinskii}
\setcounter{equation}{0}

Having developed a collection of $L$-independent bounds in Sections \ref{Lindep-space} and \ref{Lindep-time},
we now describe the theoretical tool that will be used to set up a weak compactness argument using these
bounds: Dubinski{\u\i}'s compactness theorem in seminormed sets.

Let $\mathcal{A}$ be a linear space over the field $\mathbb{R}$ of real numbers, and suppose that $\mathcal{M}$ is a subset of $\mathcal{A}$ such that
\begin{equation}
\label{eq:property}
(\forall \varphi \in \mathcal{M})\; (\forall c \in \mathbb{R}_{\geq 0})\;\;\; c\, \varphi \in \mathcal{M}.
\end{equation}
In other words, whenever $\varphi$ is contained in $\mathcal{M}$, the ray through $\varphi$ from the origin of the linear space $\mathcal{A}$ is also contained in $\mathcal{M}$.  Note in particular that while any set $\mathcal{M}$ with property \eqref{eq:property} must contain the zero element of the linear space $\mathcal{A}$, the set
$\mathcal{M}$ need not be closed under summation. The linear space $\mathcal{A}$ will be referred to as the {\em ambient space} for $\mathcal{M}$.

Suppose further that each element $\varphi$ of a set $\mathcal{M}$ with property \eqref{eq:property}
is assigned a certain real number, denoted $[\varphi]_{\mathcal M}$, such that:
\begin{enumerate}
\item[(i)] $[\varphi]_{\mathcal M} \geq 0$; and $[\varphi]_{\mathcal M} = 0$ if, and only if, $\varphi=0$; and
\item[(ii)] $(\forall c \in \mathbb{R}_{\geq 0})\; [c\, \varphi]_{\mathcal M} = c\,[\varphi]_{\mathcal M}$.
\end{enumerate}
We shall then say that $\mathcal{M}$ is a {\em seminormed set}.

A subset $\mathcal{B}$ of a seminormed
set $\mathcal{M}$ is said to be {\em bounded} if there exists a positive constant $K_0$ such that $[\varphi]_{\mathcal M} \leq K_0$ for all $\varphi \in \mathcal{B}$.

A seminormed set $\mathcal{M}$ contained in a normed linear space $\mathcal{A}$ with norm $\|\cdot\|_{\mathcal A}$ is said to be {\em embedded in $\mathcal{A}$}, and we write $\mathcal{M} \hookrightarrow \mathcal{A}$, if
the inclusion map $i : \varphi \in \mathcal{M} \mapsto i(\varphi)=\varphi \in \mathcal{A}$ (which is, by definition, injective and positively 1-homogeneous, i.e. $i(c\, \varphi) = c\, i(\varphi)$ for all $c \in \mathbb{R}_{\geq 0}$ and all $\varphi \in \mathcal{M}$) is a bounded operator, i.e.
\[ (\exists K_0 \in \mathbb{R}_{>0})\; (\forall \varphi \in \mathcal{M})\;\;\; \|i(\varphi) \|_{\mathcal A} \leq K_0 [\varphi]_{\mathcal M}.
\]
The symbol $i(~)$ is usually omitted from the notation $i(\varphi)$, and $\varphi \in \mathcal{M}$ is simply identified with $\varphi \in \mathcal{A}$. Thus, a bounded subset of a seminormed set is also a bounded subset of the ambient normed linear space the seminormed set is embedded in.

The embedding of a seminormed set $\mathcal{M}$ into a normed linear space $\mathcal{A}$ is said to be {\em compact} if from any infinite, bounded set of elements of $\mathcal{M}$ one can extract a subsequence that converges in $\mathcal{A}$; we shall write $\mathcal{M} \hookrightarrow\!\!\!\rightarrow \mathcal{A}$ to denote that $\mathcal{M}$ is compactly embedded in $\mathcal{A}$.

Suppose that $T$ is a positive real number,
$\varphi$ maps the nonempty closed interval $[0,T]$ into a seminormed set $\mathcal{M}$,
and $p\in \mathbb{R}$, $p \geq 1$. We denote by $L^p(0,T; \mathcal{M})$ the set of all functions $\varphi\,:\, t \in [0,T] \mapsto \varphi(t) \in \mathcal{M}$ such that
\[ \mbox{$\left(\int_0^T [\varphi(t)]^p_{\mathcal M} \,{\rm d}t\right)^{1/p} < \infty$;}\]
$L^p(0,T; \mathcal{M})$ is then a seminormed set in the ambient linear space $L^p(0,T; \mathcal{A})$, with
\[\mbox{$[\varphi]_{L^p(0,T;\mathcal{M})} := \left( \int_0^T [\varphi(t)]^p_{\mathcal M} \,{\rm d}t\right)^{1/p}.  $}\]
We denote by $L^\infty(0,T; \mathcal{M})$ and $[\varphi]_{L^\infty(0,T;\mathcal{M})}$ the usual modifications of these definitions when $p=\infty$.

For two normed linear
spaces, $\mathcal{A}_0$ and $\mathcal{A}_1$,  we shall continue to denote by $\mathcal{A}_0 \hookrightarrow \mathcal{A}_1$ that $\mathcal{A}_0$ is (continuously) embedded in $\mathcal{A}_1$. The following theorem is due to Dubinski{\u\i} \cite{DUB}; see also \cite{BS-DUB}.

\begin{theorem}[Dubinski{\u\i} \cite{DUB}] \label{thm:Dubinski}
Suppose that $\mathcal{A}_0$ and $\mathcal{A}_1$ are Banach spaces, $\mathcal{A}_0 \hookrightarrow \mathcal{A}_1$, and $\mathcal{M}$ is a seminormed subset of $\mathcal{A}_0$ such that $\mathcal{M} \hookrightarrow\!\!\!\rightarrow
\mathcal{A}_0$. Consider the set
\[ \mathcal{Y}:= \left\{\varphi\,:\,[0,T] \rightarrow \mathcal{M}\,:\,
[\varphi]_{L^p(0,T;\mathcal M)} + \left\|\frac{{\rm d}\varphi}{{\rm d}t} \right\|_{L^{p_1}(0,T;\mathcal{A}_1)}
< \infty   \right\},
\]
where $1 \leq p \leq \infty$, $1 < p_1 \leq \infty$, $\|\cdot\|_{\mathcal{A}_1}$ is the norm of $\mathcal{A}_1$, and ${\rm d}\varphi/{\rm d}t$ is understood in the sense of $\mathcal{A}_1$-valued distributions
on the open interval $(0,T)$.
Then, $\mathcal{Y}$, with 
\[ [\varphi]_{\mathcal Y}:= [\varphi]_{L^p(0,T;\mathcal M)} + \left\|\frac{{\rm d}\varphi}{{\rm d}t} \right\|_{L^{p_1}(0,T;\mathcal{A}_1)},\]
is a seminormed set in $L^p(0,T;\mathcal{A}_0)\cap W^{1,p_1}(0,T;\mathcal{A}_1)$, and $\mathcal{Y}
\hookrightarrow\!\!\!\rightarrow
 L^p(0,T; \mathcal{A}_0)$.
\end{theorem}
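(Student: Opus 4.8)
The plan is to reduce Dubinski{\u\i}'s compactness theorem to the classical Aubin--Lions--Simon lemma \eqref{compact1} by a subsequence-and-diagonal argument, exploiting that every bounded sequence in the seminormed set $\mathcal{M}$, being bounded in the Banach space $\mathcal{A}_0$, lies in a fixed ball of $\mathcal{A}_0$. First I would verify the elementary structural claims: that $\mathcal{Y}$ has property \eqref{eq:property} (if $\varphi \in \mathcal{Y}$ and $c \geq 0$, then $c\varphi(t) \in \mathcal{M}$ for a.e.\ $t$ by the homogeneity of $\mathcal{M}$, and $[c\varphi]_{L^p(0,T;\mathcal{M})} = c[\varphi]_{L^p(0,T;\mathcal{M})}$, $\|c\,\mathrm{d}\varphi/\mathrm{d}t\|_{L^{p_1}(0,T;\mathcal{A}_1)} = c\|\mathrm{d}\varphi/\mathrm{d}t\|_{L^{p_1}(0,T;\mathcal{A}_1)}$ by linearity of the distributional derivative), that $[\cdot]_{\mathcal{Y}}$ satisfies axioms (i)--(ii) of a seminorm (positive $1$-homogeneity is immediate; $[\varphi]_{\mathcal{Y}}=0$ forces $[\varphi]_{L^p(0,T;\mathcal{M})}=0$, hence $\varphi(t)=0$ in $\mathcal{M}$ for a.e.\ $t$, hence $\varphi = 0$), and that $\mathcal{Y}$ embeds into $L^p(0,T;\mathcal{A}_0)\cap W^{1,p_1}(0,T;\mathcal{A}_1)$: the second summand of $[\varphi]_{\mathcal{Y}}$ controls the $W^{1,p_1}(0,T;\mathcal{A}_1)$-seminorm directly, while for the first summand one uses the compact (in particular bounded) embedding $\mathcal{M}\hookrightarrow\mathcal{A}_0$, i.e.\ $\|\varphi(t)\|_{\mathcal{A}_0}\leq K_0[\varphi(t)]_{\mathcal{M}}$, integrated in $t$.

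The heart of the proof is the compactness statement $\mathcal{Y}\hookrightarrow\!\!\!\rightarrow L^p(0,T;\mathcal{A}_0)$. Let $\{\varphi_n\}_{n\geq 1}$ be an infinite set bounded in $\mathcal{Y}$, say $[\varphi_n]_{\mathcal{Y}}\leq R$ for all $n$. Then $\|\varphi_n\|_{L^p(0,T;\mathcal{A}_0)}\leq K_0 R$ and $\|\mathrm{d}\varphi_n/\mathrm{d}t\|_{L^{p_1}(0,T;\mathcal{A}_1)}\leq R$. The obstruction to applying \eqref{compact1} directly is that the natural intermediate space here is not a Banach space but the seminormed set $\mathcal{M}$, which is generally not a linear space and need not be reflexive; so one cannot simply take $\mathcal{B}_0 = \mathcal{M}$. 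The remedy is to replace the pointwise target $\mathcal{M}$ by a genuine Banach space in which $\mathcal{M}$ sits compactly in a controlled way. Concretely, I would introduce an auxiliary Banach space: since $\mathcal{M}$ is compactly embedded in $\mathcal{A}_0$, by a standard construction (Ehrling-type / Gagliardo) the closed ball $B_R^{\mathcal{M}} := \{\psi \in \mathcal{M}: [\psi]_{\mathcal{M}}\leq R\}$ is a relatively compact subset of $\mathcal{A}_0$; equivalently, for every $\eta>0$ there is $C_\eta$ with $\|\psi\|_{\mathcal{A}_0}\leq \eta + C_\eta\,\omega(\psi)$ where $\omega$ is a seminorm arising from a precompact embedding, but cleaner is the route of Dubinski{\u\i}: work on the cone $\mathcal{M}$ itself and prove directly an interpolation/Ehrling inequality $\|\psi\|_{\mathcal{A}_0}\leq \eta\,[\psi]_{\mathcal{M}} + C_\eta \|\psi\|_{\mathcal{A}_1}$ valid for all $\psi\in\mathcal{M}$ (this is exactly where $\mathcal{M}\hookrightarrow\!\!\!\rightarrow\mathcal{A}_0\hookrightarrow\mathcal{A}_1$ is used, via a compactness-contradiction argument on $\mathcal{M}$, noting that the nonlinearity of $\mathcal{M}$ causes no trouble because the inequality is positively homogeneous of degree one in $\psi$).

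Granted such an Ehrling inequality on $\mathcal{M}$, I would finish as in the classical Simon argument. From the uniform bound on $\mathrm{d}\varphi_n/\mathrm{d}t$ in $L^{p_1}(0,T;\mathcal{A}_1)$ with $p_1>1$ one gets equicontinuity of $\{\varphi_n\}$ as maps into $\mathcal{A}_1$ (a uniform modulus of continuity $\|\varphi_n(t)-\varphi_n(s)\|_{\mathcal{A}_1}\leq R|t-s|^{1-1/p_1}$); combined with the pointwise boundedness in $\mathcal{A}_0$ (hence, after passing through the compact embedding $\mathcal{M}\hookrightarrow\!\!\!\rightarrow\mathcal{A}_0$ — this is where compactness enters, to get relative compactness of $\{\varphi_n(t)\}_n$ in $\mathcal{A}_0$ for each fixed $t$, or at least along a countable dense set of $t$'s followed by a diagonal extraction and the $\mathcal{A}_1$-equicontinuity to fill the gaps), one extracts a subsequence converging in $C([0,T];\mathcal{A}_1)$, indeed in $L^\infty(0,T;\mathcal{A}_1)$. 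Applying the Ehrling inequality pointwise and integrating:
\begin{equation*}
\|\varphi_{n}-\varphi_{m}\|_{L^p(0,T;\mathcal{A}_0)} \leq \eta\,\big([\varphi_{n}]_{L^p(0,T;\mathcal{M})}+[\varphi_{m}]_{L^p(0,T;\mathcal{M})}\big) + C_\eta\,T^{1/p}\,\|\varphi_{n}-\varphi_{m}\|_{L^\infty(0,T;\mathcal{A}_1)} \leq 2\eta R + C_\eta T^{1/p}\,\|\varphi_{n}-\varphi_{m}\|_{L^\infty(0,T;\mathcal{A}_1)},
\end{equation*}
and since $\eta>0$ is arbitrary while the last term tends to $0$ along the extracted subsequence, $\{\varphi_n\}$ is Cauchy, hence convergent, in $L^p(0,T;\mathcal{A}_0)$; the case $p=\infty$ needs the obvious modification (take $\mathrm{ess\,sup}$ in place of the $L^p$-integral, and use that $p_1>1$ still gives continuity into $\mathcal{A}_1$). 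The main obstacle, and the step deserving the most care, is precisely the Ehrling-type interpolation inequality on the \emph{nonlinear} seminormed set $\mathcal{M}$: one must check that the standard compactness-contradiction proof goes through without linearity, which it does because all quantities involved are positively $1$-homogeneous and the contradiction sequence can be normalized by $[\cdot]_{\mathcal{M}}=1$, staying inside a bounded — hence, by hypothesis, $\mathcal{A}_0$-precompact — subset of $\mathcal{M}$.
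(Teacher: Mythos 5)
The paper does not actually supply a proof of this theorem: it is stated and attributed to Dubinski{\u\i} \cite{DUB}, with a remark that his argument goes through under the weaker nonnegative-homogeneity convention adopted here (and a pointer to \cite{BS-DUB} for details), so there is no internal proof to compare line by line. On its own terms, though, your outline has the right shape --- an Ehrling-type interpolation inequality on the cone $\mathcal{M}$, combined with $\mathcal{A}_1$-equicontinuity coming from the $L^{p_1}$-bound on $\mathrm{d}\varphi/\mathrm{d}t$ --- but there is a genuine gap at precisely the step you flag as the one deserving most care. You formulate the interpolation inequality in one variable,
\[
\|\psi\|_{\mathcal{A}_0}\ \leq\ \eta\,[\psi]_{\mathcal{M}} + C_\eta\,\|\psi\|_{\mathcal{A}_1}\qquad\forall\,\psi\in\mathcal{M},
\]
and then apply it ``pointwise'' to $\varphi_n(t)-\varphi_m(t)$, splitting the resulting $\mathcal{M}$-seminorm into $[\varphi_n(t)]_{\mathcal M}+[\varphi_m(t)]_{\mathcal M}$ in your displayed Cauchy estimate. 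Neither step is available: $\mathcal{M}$ is only a cone (closed under nonnegative scaling, not under differences), so $\varphi_n(t)-\varphi_m(t)$ need not lie in $\mathcal{M}$ and $[\varphi_n(t)-\varphi_m(t)]_{\mathcal M}$ is not even defined; and a seminormed set carries no triangle inequality, so even on $\mathcal{M}$ the functional $[\cdot]_{\mathcal M}$ is not subadditive. Thus the displayed inequality, which drives your conclusion, does not follow from the one-variable estimate you propose to prove.

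The repair --- and it is exactly what Dubinski{\u\i} does --- is to prove the interpolation inequality in two variables on $\mathcal{M}\times\mathcal{M}$ from the outset: for every $\eta>0$ there exists $C_\eta>0$ with
\[
\|\psi_1-\psi_2\|_{\mathcal{A}_0}\ \leq\ \eta\,\bigl([\psi_1]_{\mathcal{M}}+[\psi_2]_{\mathcal{M}}\bigr)\ +\ C_\eta\,\|\psi_1-\psi_2\|_{\mathcal{A}_1}\qquad\forall\,\psi_1,\psi_2\in\mathcal{M}.
\]
This is proved by the same normalize-and-contradict argument you sketch, but the normalization must be $[\psi_1^{(k)}]_{\mathcal M}+[\psi_2^{(k)}]_{\mathcal M}=1$ (legitimate by the joint positive $1$-homogeneity of both sides). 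That normalization keeps each of $\psi_1^{(k)}$, $\psi_2^{(k)}$ in a bounded --- hence, by $\mathcal{M}\hookrightarrow\!\!\!\rightarrow\mathcal{A}_0$, an $\mathcal{A}_0$-precompact --- subset of $\mathcal{M}$; a diagonal subsequence converges to limits $\psi_1^*$, $\psi_2^*$ in $\mathcal{A}_0$, and the imposed $\mathcal{A}_1$-smallness of $\psi_1^{(k)}-\psi_2^{(k)}$ forces $\psi_1^*=\psi_2^*$, contradicting the lower bound $\|\psi_1^{(k)}-\psi_2^{(k)}\|_{\mathcal{A}_0}>\eta_0$. With the two-point inequality in hand, your final display and the subsequent conclusion are correct exactly as written (the $[\varphi_n]_{L^p(0,T;\mathcal M)}+[\varphi_m]_{L^p(0,T;\mathcal M)}$ on the right now arises directly, rather than from an illegal splitting). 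So the gap is local and fixable, but the estimate you wrote down is not a consequence of the inequality you said you would prove.
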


We note that
in Dubinski{\u\i} \cite{DUB} the author writes $\mathbb{R}$
instead of our $\mathbb{R}_{\geq 0}$ in
\eqref{eq:property} and property (ii). The proof of Thm.\ 1 in Dubinski{\u\i}'s
work, stated as Theorem \ref{thm:Dubinski}
above, reveals however that the result remains valid with our weaker
notion of {\em seminormed set}, as (5.1) and
property (ii) are only ever used in the proof with $c \geq 0$; we refer to our paper \cite{BS-DUB} for details.
In the next section, we shall apply Dubinski{\u\i}'s theorem by selecting
\begin{eqnarray*}
&&\mathcal{A}_0 = L^1_M(\Omega \times D)\quad \mbox{with norm}\quad
\|\hat\varphi\|_{\mathcal{A}_0} := \int_{\Omega \times D} M(\qt)\, |\hat\varphi (\xt, \qt)| \dd \xt \dd \qt
\end{eqnarray*}
and
\begin{eqnarray*}
&&\!\!\!\mathcal{M} = \bigg\{ \hat\varphi \in \mathcal{A}_0\,: \hat\varphi \geq 0 \quad \mbox{with} \\
&&\qquad  \int_{\Omega \times D} M(\qt)\left( \left|\nabx \sqrt{\hat\varphi(\xt,\qt)}\right|^2 + \left|\nabq \sqrt{\hat\varphi(\xt,\qt)}\right|^2\right)\dd \xt \dd \qt < \infty \bigg\},
\end{eqnarray*}
and, for $\hat\varphi \in \mathcal{M}$,  we define
\[ [\hat\varphi]_{\mathcal M}:= \|\hat\varphi\|_{\mathcal A_0} + \int_{\Omega \times D} M(\qt)\left( \left|\nabx \sqrt{\hat\varphi(\xt,\qt)}\right|^2 + \left|\nabq \sqrt{\hat\varphi(\xt,\qt)}\right|^2\right)\dd \xt \dd \qt.
\]
Note that $\mathcal{M}$ is a seminormed subset of the ambient space $\mathcal{A}_0$. Finally, we put
\[ \mathcal{A}_1 := M^{-1} (H^{s}(\Omega \times D))' := \{\hat\varphi\,:\,M \hat\varphi \in (H^{s}(\Omega \times D))'\},\]
equipped with the norm $\|\hat\varphi\|_{\mathcal A_1} := \|M \hat \varphi\|_{(H^{s}(\Omega \times D))'}$,
and take $s>1+\frac{1}{2}(K+1)d$. Our choice of $\mathcal{A}_1$ is motivated by the fact that, thanks to the Sobolev embedding theorem on $\Omega \times D
\subset \mathbb{R}^{d \times Kd} \cong \mathbb{R}^{(K+1)d}$, the final factor
on the right-hand side of \eqref{psi-time-bound} can be further bounded from above by
a constant multiple of $\|\hat\varphi\|_{L^2(0,T; H^s(\Omega \times D))}$, with $s>1+\frac{1}{2}(K+1)d$.
For such $s$ it then follows, again from the Sobolev embedding theorem that, for any $\hat\varphi \in \mathcal{A}_0$,
\[\|\hat\varphi\|_{\mathcal A_1} = \sup_{\!\!\!\chi \in H^s(\Omega \times D)}\! \frac{|(M\hat\varphi, \chi)|}{\|\chi\|_{H^s(\Omega \times D)}}
\leq \sup_{\!\!\!\chi \in H^s(\Omega \times D)}\! \frac{\|\hat\varphi\|_{\mathcal A_0} \|\chi\|_{L^\infty(\Omega \times D)}}{\|\chi\|_{H^s(\Omega \times D)}} \leq K_0 \|\hat\varphi\|_{\mathcal{A}_0},
\]
where $K_0$ is any positive constant that is greater than or equal to the constant $K_s$, the norm of the
continuous linear operator corresponding to the Sobolev embedding $(H^{s}(\Omega \times D)
\hookrightarrow )
  H^{s-1}(\Omega \times D) \hookrightarrow
  L^\infty(\Omega \times D)$, $s>1+ \frac{1}{2}(K+1)d$. Hence,
  $\mathcal{A}_0
  \hookrightarrow
   \mathcal{A}_1$.

Trivially, $\mathcal{M} \hookrightarrow \mathcal{A}_0$.
We shall show that in fact $\mathcal{M} \hookrightarrow\!\!\!\rightarrow \mathcal{A}_0$. Suppose, to this end, that $\mathcal{B}$ is
an infinite, bounded subset of $\mathcal{M}$. We can assume without loss of generality that $\mathcal{B}$ is the
infinite sequence $\{\hat\varphi_n\}_{n \geq 1}
\subset
\mathcal{M}$ with $[\hat\varphi_n]_{\mathcal M}
\leq K_0$ for all $n \geq 1$, where $K_0$ is a fixed positive constant. We define $\hat\rho_n:=
\sqrt{\hat\varphi_n}$ and note that $\hat\rho_n \geq 0$ and $\hat\rho_n \in H^1_M(\Omega \times D)$
for all $n\geq 1$, with
\[ \|\hat\rho_n\|^2_{H^1_M(\Omega \times D)} = [\hat\varphi_n]_{\mathcal M} \leq K_0 \qquad
\forall n \geq 1.\]
Since $H^1_M(\Omega \times D)$ is compactly embedded in $L^2_M(\Omega \times D)$
(see Appendix D for a proof of this),
we deduce that
the sequence $\{\hat\rho_n\}_{n \geq 1}$ has a subsequence $\{\hat\rho_{n_k}\}_{k \geq 1}$ that is
convergent in $L^2_M(\Omega \times D)$; denote the limit of this subsequence by $\hat\rho$;
$\hat \rho \in L^2_M(\Omega \times D)$. Then, since a subsequence of
the sequence $\{\hat\rho_{n_k}\}_{k \geq 1}$
also converges to $\hat\rho$ a.e. on $\Omega \times D$ and each $\hat\rho_{n_k}$ is nonnegative
on $\Omega \times D$, the same is true of $\hat\rho$. Now, define $\hat\varphi:= \hat\rho^{~\!2}$,
and note that $\hat\varphi \in L^1_M(\Omega \times D)$. Clearly,
\begin{eqnarray*}
&&\| \hat\varphi_{n_k} - \hat\varphi \|_{L^1_M(\Omega \times D)} = \int_{\Omega \times D}
M\, (\,\hat\rho_{n_k} + \hat\rho\,)\, |\,\hat\rho_{n_k} - \hat\rho\,| \dd \xt \dd \qt\\
&&\qquad \leq \|\, \hat\rho_{n_k} + \hat\rho \,\|_{L^2_M(\Omega \times D)}\, \|\, \hat\rho_{n_k}
- \hat\rho \,\|_{L^2_M(\Omega \times D)}\\
&& \qquad \leq \left(\|\, \hat\rho_{n_k}\,\|_{L^2_M(\Omega \times D)} + \|\,\hat\rho \,\|_{L^2_M(\Omega \times D)}\right) \|\, \hat\rho_{n_k}
- \hat\rho \,\|_{L^2_M(\Omega \times D)}.
\end{eqnarray*}
As $\{\hat\rho_{n_k}\}_{k \geq 1}$ converges to $\hat\rho$ in $L^2_M(\Omega \times D)$, and is therefore
also a bounded sequence in $L^2_M(\Omega \times D)$, it follows from the last inequality that $\{\hat\varphi_{n_k}\}_{k \geq 1}$ converges to $\hat\varphi$ in $L^1_M(\Omega \times D) = \mathcal{A}_0$. This implies that
$\mathcal{M}$ is compactly embedded in $\mathcal{A}_0$;
hence the triple $\mathcal{M} \hookrightarrow\!\!\!\rightarrow \mathcal{A}_0 \hookrightarrow \mathcal{A}_1$ satisfies the conditions of Theorem \ref{thm:Dubinski}.

\begin{remark}\label{rem5.1}
In fact, there is a deep connection between $\mathcal{M}$ and the set of functions with finite relative
entropy on $D$; this can be seen by noting the {\em logarithmic Sobolev inequality}:
\begin{equation}\label{eq:logs0}
\hspace{-0mm}\int_{D} M(\qt)\, |\hat\rho(\qt)|^2 \log \frac{|\hat\rho(\qt)|^2}{\|\hat\rho\|^2_{L^2_M(D)}} \dd \qt
\leq \frac{2}{\kappa} \int_{D} M(\qt)\, \big|\nabq \hat\rho(\qt)\big|^2 \dd \qt \quad \forall \hat\rho \in H^1_M(D),
\end{equation}
with a constant $\kappa>0$; the inequality \eqref{eq:logs0} is known to hold whenever $M$ satisfies the
{\em Bakry--\'Emery condition}: ${\sf Hess}(- \log M(\qt)) \geq \kappa\, {{\sf Id}}$ on $D$,
asserting the logarithmic concavity of the Maxwellian on $D$, with the last inequality understood in the sense of symmetric
$Kd \times Kd$ matrices.
The inequality \eqref{eq:logs0} follows from inequality (1.3) in Arnold et al. \cite{ABD}, with the Maxwellian
$M$ extended by $0$ to the whole of $\mathbb{R}^{Kd}$ to define a probability measure on $\mathbb{R}^{Kd}$
supported on $D=D_1 \times \cdots \times D_K$.

The validity of the Bakry--\'Emery condition for the FENE Maxwellian, for example, is an easy consequence of the fact that
\begin{eqnarray}\label{bakry}
&&{\sf Hess}(- \log M(\qt)) = {\sf Hess}\left(\sum_{i=1}^K U_i\left({\textstyle \frac{1}{2}}|\qt_i|^2\right)\right)\nonumber\\
&&\qquad\qquad =  \mbox{\sf diag}\left({\sf Hess}\left(U_1({\textstyle \frac{1}{2}}|\qt_1|^2)\right), \dots, {\sf Hess}\left(U_K({\textstyle \frac{1}{2}}|\qt_K|^2)\right)\right),
\end{eqnarray}
for all $\qt := (\qt_1^{\rm T},\dots, \qt_K^{\rm T})^{\rm T} \in D_1 \times \cdots \times D_K = D$,
and the following lower bounds (cf. Knezevic \& S\"uli \cite{KS1}, Sec. 2.1) on the $d\times d$ Hessian matrices that are the diagonal blocks of
the $Kd \times Kd$ Hessian matrix ${\sf Hess}(- \log M(\qt))$:
\[ \xit^{\rm T}_i \left({\sf Hess}\left(U_i({\textstyle \frac{1}{2}}|\qt_i|^2)\right)\right) \xit_i \geq (1-|\qt_i|^2/b)^{-1}|\xit_i|^2 \geq |\xit_i|^2,\]
for all $\qt_i \in D_i$ and all $\xit_i \in \mathbb{R}^d$, $i = 1,\dots, K$. Hence,
\[\xit^{\rm T} {\sf Hess}(- \log M(\qt)) \xit \geq |\xit|^2\]
for all $\qt \in D$ and all $\xit \in \mathbb{R}^{Kd}$, yielding
\[{\sf Hess}(- \log M(\qt)) \geq {{\sf Id}}\qquad \forall \qt \in D;\]
i.e. $\kappa=1$.

More generally, we see from \eqref{bakry} that if $\qt_i\in D_i \mapsto U_i(\frac{1}{2}|\qt_i|^2)$ is strongly convex on $D_i$ for each $i=1,\dots,K$, then $M$ satisfies the Bakry--\'{E}mery condition on $D$.

On writing $\hat\varphi(\qt):= |\hat\rho(\qt)|^2\, (\,\geq 0)$ in \eqref{eq:logs0}, we then have that
\begin{equation} \label{eq:logs1}
\int_{D} M(\qt)\, \hat\varphi(\qt) \log \frac{\hat\varphi(\qt)}{\|\hat\varphi\|_{L^1_M(D)}} \dd \qt
\leq \frac{2}{\kappa} \int_{D} M(\qt) \left|\nabq \sqrt{\hat\varphi(\qt)}\right|^2 \dd \qt,
\end{equation}
for all $\hat\varphi$ such that $\hat\varphi \geq 0$ on $D$ and $\sqrt{\hat\varphi} \in H^1_M(D)$. Taking $\hat\varphi = \varphi/M$ where $\varphi$ is a probability density function on $D$, we have that $\|\hat\varphi\|_{L^1_M(D)} = \|\varphi\|_{L^1(D)} = 1$; thus,
on denoting by $\mu$ the Gibbs measure defined by $\dd \mu = M(\qt) \dq$, the left-hand side of \eqref{eq:logs1} becomes
\[ S(\varphi | M) := \int_D \frac{\varphi}{M} \left(\log \frac{\varphi}{M}\right)\!\dd\mu,\]
referred to as the {\em relative entropy} of $\varphi$ with respect to $M$. The expression appearing on the right-hand side of \eqref{eq:logs1} is $1/(2\kappa)$ times the {\em Fisher information}, $I(\hat\varphi)$, of $\hat\varphi$:
\[I(\hat\varphi):= \mathbb{E}\left[\left|\nabq \log {\hat\varphi(\qt)}\right|^2\right] =
\int_{D} \left|\nabq \log {\hat\varphi(\qt)}\right|^2 \hat{\varphi}(\qt) \dd \mu = 4 \int_{D} \left|\nabq \sqrt{\hat\varphi(\qt)}\right|^2 \dd \mu,\]
where, $\mathbb{E}$ is the expectation with respect to the Gibbs measure $\mu$ defined above.
$\quad\diamond$
\end{remark}

\begin{lemma}\label{le:supplementary}
Suppose that a sequence $\{\hat\varphi_n\}_{n=1}^\infty$ converges in
$L^1(0,T; L^1_M(\Omega \times D))$ to a function $\hat\varphi \in L^1(0,T; L^1_M(\Omega \times D))$,
and is bounded in $L^\infty(0,T; L^1_M(\Omega \times D))$, i.e. there exists $K_0>0$ such that
$\|\varphi_n\|_{L^\infty(0,T; L^1_M(\Omega \times D))} \leq K_0$ for all $n \geq 1$.
Then, $\hat\varphi \in L^p(0,T; L^1_M(\Omega \times D))$ for all $p \in [1,\infty)$,
and the sequence $\{\hat\varphi_n\}_{n \geq 1}$ converges to $\hat\varphi$ in $L^p(0,T; L^1_M(\Omega \times D))$ for all $p \in [1,\infty)$.
\end{lemma}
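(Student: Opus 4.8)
The plan is to reduce the statement to one about scalar nonnegative functions of $t \in [0,T]$ and then apply an elementary interpolation inequality between $L^1(0,T)$ and $L^\infty(0,T)$. First I would set $g_n(t) := \|\hat\varphi_n(\cdot,\cdot,t)\|_{L^1_M(\Omega \times D)}$ and $g(t) := \|\hat\varphi(\cdot,\cdot,t)\|_{L^1_M(\Omega \times D)}$; both are measurable on $[0,T]$ since $\hat\varphi_n, \hat\varphi \in L^1(0,T; L^1_M(\Omega \times D))$, and the reverse triangle inequality yields $|g_n(t) - g(t)| \le \|\hat\varphi_n(\cdot,\cdot,t) - \hat\varphi(\cdot,\cdot,t)\|_{L^1_M(\Omega \times D)}$ for a.e.\ $t$, so that, integrating in $t$, $g_n \to g$ in $L^1(0,T)$. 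The hypothesis $\|\hat\varphi_n\|_{L^\infty(0,T; L^1_M(\Omega \times D))} \le K_0$ reads $0 \le g_n(t) \le K_0$ for a.e.\ $t \in [0,T]$ and all $n \ge 1$.

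Next I would verify that $g \in L^\infty(0,T)$ with $\|g\|_{L^\infty(0,T)} \le K_0$. This is the only point that needs a word of justification: for any measurable $E \subseteq [0,T]$ one has $\int_E g \dt = \lim_{n \to \infty} \int_E g_n \dt \le K_0 \, |E|$, and since $E$ is arbitrary this forces $g \le K_0$ a.e.\ on $[0,T]$ (alternatively, pass to an a.e.-convergent subsequence of $\{g_n\}$ and use $0 \le g_{n_k} \le K_0$ together with Fatou's lemma). Consequently $\int_0^T g(t)^p \dt \le K_0^p \, T < \infty$, i.e.\ $\hat\varphi \in L^p(0,T; L^1_M(\Omega \times D))$ for every $p \in [1,\infty)$, which is the first assertion of the lemma.

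Finally, for the convergence I would put $h_n(t) := \|\hat\varphi_n(\cdot,\cdot,t) - \hat\varphi(\cdot,\cdot,t)\|_{L^1_M(\Omega \times D)}$, so that $h_n \to 0$ in $L^1(0,T)$ by hypothesis, while $0 \le h_n(t) \le g_n(t) + g(t) \le 2 K_0$ for a.e.\ $t$. Then for any $p \in [1,\infty)$ the elementary interpolation $\int_0^T h_n^p \dt \le \big(\mbox{ess.sup}_{t \in [0,T]} h_n(t)\big)^{p-1} \int_0^T h_n \dt \le (2K_0)^{p-1} \int_0^T h_n \dt$ shows that $\|\hat\varphi_n - \hat\varphi\|_{L^p(0,T; L^1_M(\Omega \times D))}^p \to 0$ as $n \to \infty$, which gives the second assertion. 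I do not expect any serious obstacle: the lemma is a soft-analysis statement, and the only mildly delicate point — that the uniform $L^\infty$-in-time bound is inherited both by the limit $\hat\varphi$ and by the differences $\hat\varphi_n - \hat\varphi$ — is dispatched by the indicator-function test (or the subsequence/Fatou argument) described above.
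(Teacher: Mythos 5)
Your proof is correct. The key inequality is the same one the paper uses, namely the interpolation $\int_0^T h^p \,{\rm d}t \leq \big(\mathrm{ess.sup}_{t}\, h\big)^{p-1} \int_0^T h \,{\rm d}t$ for a nonnegative scalar function $h$ of $t$, but you deploy it along a different route. The paper applies it to the differences $\|\hat\varphi_n - \hat\varphi_m\|_{L^1_M(\Omega\times D)}$ of members of the sequence (each bounded by $2K_0$ in the essential supremum by hypothesis), deduces that $\{\hat\varphi_n\}$ is Cauchy in $L^p(0,T;L^1_M(\Omega\times D))$, and then invokes completeness of that Bochner space together with uniqueness of limits (via the inclusion $L^p \subset L^1$ in time) to identify the $L^p$-limit with $\hat\varphi$; this has the advantage of never needing any information about the limit function itself beyond its membership of $L^1$. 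You instead estimate the distance to the limit directly, which requires the extra preliminary step — absent from the paper — of showing that $\hat\varphi$ inherits the bound $\|\hat\varphi(t)\|_{L^1_M(\Omega\times D)} \leq K_0$ for a.e.\ $t$; your indicator-function argument (or the a.e.-convergent subsequence) does this correctly, and in return your proof avoids appealing to completeness of $L^p(0,T;L^1_M(\Omega\times D))$ and to the uniqueness-of-limits identification, so it is marginally more self-contained and also delivers the slightly stronger conclusion $\hat\varphi \in L^\infty(0,T;L^1_M(\Omega\times D))$ with norm at most $K_0$. Both arguments are sound; the choice between them is essentially one of bookkeeping.
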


\begin{proof}
Since $\{\hat\varphi_n\}_{n \geq 1}$ converges in $L^1(0,T; L^1_M(\Omega \times D))$, it follows that
it is a Cauchy sequence in $L^1(0,T; L^1_M(\Omega \times D))$; thus, for any $p \in [1,\infty)$,
there exists $n_0=n_0(\varepsilon,p) \in \mathbb{N}$ such that for all $m, n \geq n_0(\varepsilon,p)$ we have
\[ \int_0^T \|\hat\varphi_n - \hat\varphi_m\|_{L^1_M(\Omega \times D)} \dd t < \frac{\varepsilon^p}{(2K_0)^{p-1}}.\]
Hence, for all $m, n \geq n_0(\varepsilon,p)$,
\begin{eqnarray*}
&&\hspace{-2.5mm}\left(\int_0^T \|\hat\varphi_n - \hat\varphi_m\|^p_{L^1_M(\Omega \times D)} \dd t\right)^{1/p}\\
&&\,\leq
\mbox{ess.sup}_{t \in [0,T]} \|\hat\varphi_n - \hat\varphi_m\|^{1-(1/p)}_{L^1_M(\Omega \times D)} \left(\int_0^T \|\hat\varphi_n - \hat\varphi_m\|_{L^1_M(\Omega \times D)}\right)^{1/p}\\
&&\, < \varepsilon.~~~~
\end{eqnarray*}
This in turn implies that $\{\hat\varphi_n\}_{n \geq 1}$ is a Cauchy sequence in the function space $L^p(0,T; L^1_M(\Omega \times D))$, for each $p \in [1,\infty)$.

Since $L^p(0,T; L^1_M(\Omega \times D))$ is complete,
$\{\hat\varphi_n\}_{n \geq 1}$ converges in $L^p(0,T; L^1_M(\Omega \times D))$ to a limit, which we denote by $\hat\varphi_{(p)}$, say. As,
by assumption, $\{\hat\varphi_n\}_{n \geq 1}$ converges in $L^1(0,T; L^1_M(\Omega \times D))$, and
\[L^p(0,T; L^1_M(\Omega \times D)) \subset L^1(0,T; L^1_M(\Omega \times D))\]
for each $p \in [1,\infty)$,
it follows by uniqueness of the limit that $\hat\varphi_{(p)} = \hat\varphi$ for all $p \in [1,\infty)$.
Hence, also, $\hat\varphi \in L^p(0,T; L^1_M(\Omega \times D))$ for all $p \in [1,\infty)$.
That completes the proof.
\end{proof}

\section{Passage to the limit $L \rightarrow \infty$: existence of weak solutions to the FENE chain model with centre-of-mass diffusion}
\label{sec:passage.to.limit}
\setcounter{equation}{0}

The bounds \eqref{eq:energy-u+psi-final2}, \eqref{psi-time-bound} and \eqref{u-time-bound} imply the existence
of a constant ${C}_{\star}>0$, depending only on ${\sf B}(\ut_0,\ft,\hat\psi_0)$ and the constants $C_{\ast}$ and $C_{\ast\ast}$, which in turn depend only on
$\epsilon$, $\nu$, $C_{\sf P}(\Omega)$, $T$, $|A|$, $a_0$, $k$, $K$, $\lambda$, $\Omega$, $d$ and $b$, but {\em not} on $L$ or $\Delta t$, such that:
\begin{eqnarray}\label{eq:energy-u+psi-final5}
&&\mbox{ess.sup}_{t \in [0,T]}\|\uta^{\Delta t, +}(t)\|^2 + \frac{1}{\Delta t} \int_0^T \|\uta^{\Delta t, +} - \uta^{\Delta t,-}\|^2
\dd s + \,\int_0^T \|\nabxtt \uta^{\Delta t, +}(s)\|^2 \dd s\nonumber\\[2ex]
&&\qquad +\, \mbox{ess.sup}_{t \in [0,T]}
\int_{\Omega \times D}\!\! M\, \mathcal{F}(\psia^{\Delta t, +}(t)) \dq \dx\nonumber\\[2ex]
&&\qquad\qquad +\, \frac{1}{\Delta t\, L}
\int_0^T\!\! \int_{\Omega \times D}\!\! M\, (\psia^{\Delta t, +} - \psia^{\Delta t, -})^2 \dq \dx \dd s
\nonumber \\[2ex]
&&\qquad +\, \int_0^T\!\! \int_{\Omega \times D} M\,
\big|\nabx \sqrt{\psia^{\Delta t, +}} \big|^2 \dq \dx \dd s
+\, \int_0^T\!\! \int_{\Omega \times D}M\,\big|\nabq \sqrt{\psia^{\Delta t, +}}\big|^2 \,\dq \dx \dd s\nonumber\\[2ex]
&&\qquad\qquad +\, \int_0^T\left\|\frac{\partial \uta^{\Delta t}}{\partial t}\right\|^2_{V_\sigma'}\dt + \int_0^T\left\|M\,\frac{\partial \psia^{\Delta t}}{\partial t}\right\|^2_{(H^s(\Omega \times D))'}\dt \leq C_\star,
\end{eqnarray}
where $\|\cdot\|_{V_\sigma'}$ denotes the norm of the dual space $\Vt_\sigma'$ of $\Vt_\sigma$
with $\sigma\geq \frac{1}{2}d$, $\sigma>1$ (cf. the paragraph following \eqref{equncon1});
and $\|\cdot\|_{(H^s(\Omega \times D))'}$ is the
norm of the dual space $(H^s(\Omega \times D))'$ of $H^s(\Omega \times D)$, with $s>1+\frac{1}{2}(K+1)d$.
The bounds in \eqref{eq:energy-u+psi-final5} on the time-derivatives follow from \eqref{u-time-bound}, and from \eqref{psi-time-bound} using the Sobolev embedding theorem.

By virtue of \eqref{bound-b}, \eqref{bound-a}, the definitions (\ref{ulin},b), 
and with an argument completely analogous to \eqref{bound-a} on noting \eqref{inidata} in the case of the fourth term in \eqref{eq:energy-u+psi-final5},  and using
\eqref{eq:FL2a}, \eqref{inidata-1} and recalling that $L>1$ 
we have (with a possible adjustment of the constant $C_\star$, if necessary,) that
\begin{eqnarray}\label{eq:energy-u+psi-final6}
&&\mbox{ess.sup}_{t \in [0,T]}\|\uta^{\Delta t(,\pm)}(t)\|^2 + \frac{1}{\Delta t} \int_0^T \|\uta^{\Delta t,+} - \uta^{\Delta t,-}\|^2
\dd s \nonumber\\
&&\qquad +\,\int_0^T \|\nabxtt \uta^{\Delta t(,\pm)}(s)\|^2 \dd s +  \mbox{ess.sup}_{t \in [0,T]}
\int_{\Omega \times D}\!\! M\, \mathcal{F}(\psia^{\Delta t(, \pm)}(t)) \dq \dx\nonumber\\
&&\qquad\qquad +\, \frac{1}{\Delta t\, L}
\int_0^T\!\! \int_{\Omega \times D}\!\! M\, (\psia^{\Delta t, +} - \psia^{\Delta t, -})^2 \dq \dx \dd s
\nonumber \\
&&\qquad +\, \int_0^T\!\! \int_{\Omega \times D} M\,
\big|\nabx \sqrt{\psia^{\Delta t, +}} \big|^2 \dq \dx \dd s
+\, \int_0^T\!\! \int_{\Omega \times D}M\,\big|\nabq \sqrt{\psia^{\Delta t, +}}\big|^2 \,\dq \dx \dd s\nonumber\\
&&\qquad\qquad +\, \int_0^T\left\|\frac{\partial \uta^{\Delta t}}{\partial t}\right\|^2_{V_\sigma'}\dt + \int_0^T\left\|M\frac{\partial \psia^{\Delta t}}{\partial t}\right\|^2_{(H^s(\Omega \times D))'}\dt \leq C_\star.
\end{eqnarray}
On noting (\ref{properties-a},b), (\ref{ulin},b), \eqref{inidata-1} and \eqref{inidata},
we also have that
\begin{equation}\label{additional1}
\psia^{\Delta t(,\pm)} \geq 0\qquad \mbox{a.e. on $\Omega \times D \times [0,T]$}
\end{equation}
and
\begin{equation}\label{additional2}
\int_D M(\qt)\, \psia^{\Delta t(,\pm)}(\xt,\qt,t)\dq \leq 1\quad \mbox{for a.e. $(\xt,t) \in \Omega \times [0,T]$.}
\end{equation}

Henceforth, we shall assume that
\begin{equation}\label{LT}
\Delta t = o(L^{-1})\qquad \mbox{as $L \rightarrow \infty$}.
 \end{equation}
Requiring, for example, that $0<\Delta t \leq C_0/(L\,\log L)$, $L > 1$, with an arbitrary (but fixed)
constant $C_0$ will suffice to ensure that \eqref{LT} holds. The sequences
$\{\uta^{\Delta t(,\pm)}\}_{L>1}$ and $\{\psia^{\Delta t(,\pm)}\}_{L>1}$
as well as all sequences of spatial and temporal derivatives of the entries of these two sequences
will thus be, indirectly, indexed by $L$ alone, although for reasons of consistency with our previous notation
we shall not introduce new, compressed, notation with $\Delta t$ omitted from the superscripts. Instead, whenever $L\rightarrow \infty$ in the rest of this section, it will be understood that $\Delta t$ tends to $0$ according to \eqref{LT}.
We are now almost ready to pass to the limit with $L\rightarrow \infty$. Before doing so, however, we first need to state the definition of the function $\hat\psi^0$ that obeys \eqref{inidata-1}, for a given $\hat\psi_0$ satisfying \eqref{inidata}.
We emphasize that up to this point we simply accepted without proof the existence of a function $\hat\psi^0$ obeying \eqref{inidata-1}
for a given $\hat\psi_0$.
The reason we have been evading to state the precise choice of $\hat\psi^0$ was for the sake
of clarity of exposition. The definition of $\hat\psi^0$ and the verification of the properties listed under \eqref{inidata-1}
rely on mathematical tools that were not in place at the start of Section \ref{sec:existence-cut-off} where the notation $\hat\psi^0$ was introduced, but were developed later, in the last two sections. The details of `lifting' $\hat\psi_0$ into a `smoother' function $\hat\psi^0$ are technical; they are discussed in the next subsection.

A second remark is in order.
One might wonder whether one could simply choose $\hat\psi^0$ as $\hat\psi_0$; indeed, with such
a choice all of the properties listed in \eqref{inidata} would be automatically satisfied,
bar one: there is no guarantee that $[\hat\psi_0]^{1/2} \in H^1_M(\Omega \times D)$.
Although the property $[\hat\psi^0]^{1/2} \in H^1_M(\Omega \times D)$ has not yet been used, it will play a crucial role
in our passage to the limit with $L\rightarrow \infty$ in Section \ref{passage}.
In fact, in the light of the logarithmic Sobolev inequality
\eqref{eq:logs1}, on comparing the requirements on $\hat\psi_0$ in \eqref{inidata} with those on $\hat\psi^0$ in
\eqref{inidata-1}, one can clearly see that the role of the condition $[\hat\psi^0]^{1/2} \in H^1_M(\Omega \times D)$ in \eqref{inidata-1} is to `lift' the initial datum $\hat\psi_0$
with finite relative entropy into a `smoother' initial datum $\hat\psi^0$ that
also has finite Fisher information, in analogy with the process of `lifting' the initial velocity
$\ut_0$ from $\Ht$ into $\ut^0$ in $\Vt$.
That the choice of $\hat\psi_0$ as $\hat\psi^0$ is not a good one can be
seen by noting the mismatch 
between the third term
in \eqref{eq:energy-u+psi-final6} arising from the Navier--Stokes equation on the one hand, and the sixth
and seventh term in \eqref{eq:energy-u+psi-final6} that stem from the Fokker--Planck equation. The
absence of bounds at this stage on $\hat\psi^{\Delta t,-}$ and $\hat\psi^{\Delta t}$ in those terms
is entirely due to the fact that, to derive \eqref{eq:energy-u+psi-final6}, we did not
use that $[\hat\psi^0]^{1/2} \in H^1_M(\Omega \times D)$. This shortcoming of
\eqref{eq:energy-u+psi-final6} will be rectified as soon as we have defined
$\hat\psi^0$ and shown that it possesses {\em all} of the properties listed in \eqref{inidata-1}.

\subsection{The definition of $\hat\psi^0$}

Given $\hat\psi_0$ satisfying the conditions in \eqref{inidata} and $\Lambda >1$,
we consider the following discrete-in-time problem in weak form: find
$\hat\zeta^{\Lambda,1} \in H^1_M(\Omega \times D)$ such that
\begin{align}\label{zeta-eq}
&\int_{\Omega \times D} M\, \frac{\hat\zeta^{\Lambda,1} - \hat\zeta^{\Lambda,0}}{\Delta t}\,\hat\varphi \dq \dx
+ \int_{\Omega \times D} M \left[\nabx \hat\zeta^{\Lambda,1} \cdot \nabx \hat\varphi
+ \nabq \hat\zeta^{\Lambda,1} \cdot \nabq \hat\varphi \right] \dq \dx = 0
\end{align}
for all $\hat\varphi \in H^1_M(\Omega \times D)$, with $\hat\zeta^{\Lambda,0} := \beta^\Lambda(\hat\psi_0) \in L^2_M(\Omega \times D)$.
Here $\beta^\Lambda$ is defined by \eqref{betaLa}, with $L$ replaced by $\Lambda$. The function $\mathcal{F}^\Lambda$, which we shall
encounter below, is defined by \eqref{eq:FL}, with $L$ replaced by $\Lambda$.

The existence of a unique solution $\hat\zeta^{\Lambda,1} \in H^1_M(\Omega \times D)$ to \eqref{zeta-eq}, for each $\Delta t>0$ and $\Lambda>1$, follows
immediately by applying the Lax--Milgram theorem. The parameter $\Lambda$ plays an analogous role to the cut-off parameter $L$; however since we shall
let $\Lambda \rightarrow \infty$ in this subsection while, for the moment at least, the parameter $L$ is kept fixed,
we had to use a symbol other than $L$ in \eqref{zeta-eq} in order to avoid confusion; we chose the letter $\Lambda$ for this purpose
in order to emphasize the connection with $L$.
\begin{lemma}\label{one-bound}
Let $\hat\zeta^{\Lambda,1}$ be defined by \eqref{zeta-eq}, and consider $\gamma^{\Lambda,n}$ defined by
\begin{equation}\label{gamma-def}
\gamma^{\Lambda,n}(\xt):= \int_D M(\qt)\,\hat\zeta^{\Lambda,n}(\xt,\qt)\dq,\qquad n=0,1.
\end{equation}
Then, $\hat\zeta^{\Lambda,1}$ is nonnegative a.e. on $\Omega \times D$, and $0 \leq \gamma^{\Lambda,1} \leq 1$
a.e. on $\Omega$.
\end{lemma}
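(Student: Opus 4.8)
The statement asserts two properties of the solution $\hat\zeta^{\Lambda,1}$ of the single implicit Euler step \eqref{zeta-eq}: pointwise nonnegativity on $\Omega\times D$, and that its $\qt$-average $\gamma^{\Lambda,1}$ lies between $0$ and $1$. Both follow from a standard maximum-principle argument adapted to the Maxwellian-weighted setting, testing with truncated functions. The plan is to treat nonnegativity first, then the upper bound on $\gamma^{\Lambda,1}$, then its lower bound (which is immediate once nonnegativity of $\hat\zeta^{\Lambda,1}$ is known, since $M\geq 0$).

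\textbf{Step 1: nonnegativity of $\hat\zeta^{\Lambda,1}$.} First I would observe that $\hat\zeta^{\Lambda,0}=\beta^\Lambda(\hat\psi_0)\geq 0$ a.e.\ on $\Omega\times D$, because $\hat\psi_0=\psi_0/M\geq 0$ by \eqref{inidata} and $\beta^\Lambda(s)=\min(s,\Lambda)\geq 0$ for $s\geq 0$. Next, since $\hat\zeta^{\Lambda,1}\in H^1_M(\Omega\times D)$, its negative part $[\hat\zeta^{\Lambda,1}]_-$ also belongs to $H^1_M(\Omega\times D)$ (the truncation map is a bounded operation on the weighted space; this uses that $M$-weighted Sobolev spaces are stable under composition with globally Lipschitz functions vanishing at $0$, as already exploited in similar contexts in the paper), so it is an admissible test function $\hat\varphi$ in \eqref{zeta-eq}. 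Choosing $\hat\varphi=[\hat\zeta^{\Lambda,1}]_-$ and using the elementary identities $\hat\zeta^{\Lambda,1}\,[\hat\zeta^{\Lambda,1}]_- = -([\hat\zeta^{\Lambda,1}]_-)^2$ and $\nabx\hat\zeta^{\Lambda,1}\cdot\nabx[\hat\zeta^{\Lambda,1}]_- = -|\nabx[\hat\zeta^{\Lambda,1}]_-|^2$ (and likewise for $\nabq$), together with $\hat\zeta^{\Lambda,0}[\hat\zeta^{\Lambda,1}]_-\leq 0$ since $\hat\zeta^{\Lambda,0}\geq 0$, yields
\[
-\frac{1}{\Delta t}\int_{\Omega\times D} M\,([\hat\zeta^{\Lambda,1}]_-)^2\dq\dx
-\int_{\Omega\times D} M\left[|\nabx[\hat\zeta^{\Lambda,1}]_-|^2 + |\nabq[\hat\zeta^{\Lambda,1}]_-|^2\right]\dq\dx \geq 0.
\]
Hence all three (nonpositive) integrals vanish; in particular $\int_{\Omega\times D}M\,([\hat\zeta^{\Lambda,1}]_-)^2\dq\dx=0$, so $[\hat\zeta^{\Lambda,1}]_-=0$ a.e.\ on $\Omega\times D$, i.e.\ $\hat\zeta^{\Lambda,1}\geq 0$ a.e.

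\textbf{Step 2: the bounds on $\gamma^{\Lambda,1}$.} Integrating \eqref{zeta-eq} against test functions of the product form $\hat\varphi=\varphi(\xt)$ (constant in $\qt$), which lie in $H^1_M(\Omega\times D)$, and using Fubini together with $\int_D M(\qt)\dq=1$, I would derive the reduced equation for $\gamma^{\Lambda,1}$: for all $\varphi\in H^1(\Omega)$,
\[
\int_\Omega \frac{\gamma^{\Lambda,1}-\gamma^{\Lambda,0}}{\Delta t}\,\varphi\dx
+ \int_\Omega \nabx\gamma^{\Lambda,1}\cdot\nabx\varphi\dx = 0,
\]
the $\nabq$-term dropping out since $\nabq\varphi=0$. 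Here $\gamma^{\Lambda,0}=\int_D M\,\beta^\Lambda(\hat\psi_0)\dq\leq \int_D M\,\hat\psi_0\dq=1$ a.e.\ on $\Omega$ (using $\beta^\Lambda(s)\leq s$ and the normalization $\int_D M\hat\psi_0\dq=1$ from \eqref{inidata}), and $\gamma^{\Lambda,0}\geq 0$. For the lower bound $\gamma^{\Lambda,1}\geq 0$: this is immediate from Step 1, since $\hat\zeta^{\Lambda,1}\geq 0$ and $M\geq 0$ force $\gamma^{\Lambda,1}=\int_D M\hat\zeta^{\Lambda,1}\dq\geq 0$ a.e. For the upper bound, I would introduce $w:=1-\gamma^{\Lambda,1}\in H^1(\Omega)$ and $w^0:=1-\gamma^{\Lambda,0}$, note $w^0\geq 0$ a.e., and rewrite the reduced equation (the constant $1$ having zero gradient) as
\[
\int_\Omega \frac{w-w^0}{\Delta t}\,\varphi\dx + \int_\Omega \nabx w\cdot\nabx\varphi\dx = 0
\qquad\forall\varphi\in H^1(\Omega).
\]
Testing with $\varphi=[w]_-\in H^1(\Omega)$, using $w[w]_- = -([w]_-)^2$, $\nabx w\cdot\nabx[w]_- = -|\nabx[w]_-|^2$, and $w^0[w]_-\leq 0$, gives $-\tfrac{1}{\Delta t}\|[w]_-\|^2 - \|\nabx[w]_-\|^2\geq 0$, whence $[w]_-=0$ a.e., i.e.\ $w\geq 0$, i.e.\ $\gamma^{\Lambda,1}\leq 1$ a.e.\ on $\Omega$.

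\textbf{Main obstacle.} The arguments are routine maximum-principle computations; the only point requiring a little care is the justification that the truncations $[\hat\zeta^{\Lambda,1}]_-$, $[w]_-$ are legitimate test functions, i.e.\ that they indeed belong to $H^1_M(\Omega\times D)$ (resp.\ $H^1(\Omega)$) with the expected chain-rule formulas for their weak gradients. In the unweighted case this is the classical Stampacchia truncation lemma; in the Maxwellian-weighted case one invokes the density of $C^\infty(\overline{\Omega\times D})$ in $\hat X=H^1_M(\Omega\times D)$ (cf.\ \eqref{cal K} and Theorem~C.1) to reduce to smooth functions, or alternatively notes that $s\mapsto[s]_-$ is globally Lipschitz with $[0]_-=0$, so composition preserves membership in the weighted space and the weak gradient transforms by the chain rule a.e. Everything else is a direct, short computation.
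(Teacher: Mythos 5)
Your proof is correct and follows essentially the same route as the paper: the paper likewise tests \eqref{zeta-eq} with $[\hat\zeta^{\Lambda,1}]_-$ (invoking, as you do, that the negative part of an $H^1_M$ function is again in $H^1_M$), then takes test functions constant in $\qt$ to obtain the reduced equation for $\gamma^{\Lambda,1}$, sets $z^{\Lambda,1}:=1-\gamma^{\Lambda,1}$ and repeats the truncation argument to get $\gamma^{\Lambda,1}\leq 1$. No gaps worth noting.
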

\begin{proof}
The proof of nonnegativity of $\hat\zeta^{\Lambda,1}$ is straightforward (cf. the discussion following \eqref{psiGIz}). Indeed, we have that
$[{\hat{\zeta}}^{\Lambda,0}]_{-} =0$ a.e. on $\Omega\times D$, thanks to \eqref{inidata} and the definition
of $\beta^\Lambda$;  we then take $\varphi = [{\hat{\zeta}}^{\Lambda,1}]_{-}$ as a test function in  \eqref{zeta-eq}, noting that this is a legitimate choice
since ${\hat{\zeta}}^{\Lambda,1} \in H^1_M(\Omega\times D)$ and therefore $[{\hat{\zeta}}^{\Lambda,1}]_{-} \in H^1_M(\Omega\times D)$ also (cf. Lemma 3.3 in
Barrett, Schwab \& S\"uli \cite{BSS}). On decomposing ${\hat{\zeta}}^{\Lambda,1} = [{\hat{\zeta}}^{\Lambda,1}]_{+} + [{\hat{\zeta}}^{\Lambda,1}]_{-}$, and using
that $[{\hat{\zeta}}^{\Lambda,1}]_{+} \,[\hat\zeta^{\Lambda,1}]_{-}=0$, $\nabx [{\hat{\zeta}}^{\Lambda,1}]_{+}\cdot \nabx [{\hat{\zeta}}^{\Lambda,1}]_{-} = 0$
and $\nabq [{\hat{\zeta}}^{\Lambda,1}]_{+}\cdot \nabq [{\hat{\zeta}}^{\Lambda,1}]_{-} = 0$
a.e. on $\Omega\times D$, we deduce that
\begin{eqnarray*}
&&\frac{1}{\Delta t}\|M^{\frac{1}{2}}\,[{\hat{\zeta}}^{\Lambda,1}]_{-}\|^2 + \|M^{\frac{1}{2}}\, \nabx [{\hat{\zeta}}^{\Lambda,1}]_{-}\|^2
+ \|M^{\frac{1}{2}}\,\nabq [{\hat{\zeta}}^{\Lambda,1}]_{-}\|^2\\
&&\qqqquad = \frac{1}{\Delta t}\int_{\Omega \times D} M\,\hat\zeta^{\Lambda,0}\,[\hat\zeta^{\Lambda,1}]_{-}
\dq \dx = \frac{1}{\Delta t}\int_{\Omega \times D} M\,[\hat\zeta^{\Lambda,0}]_{+}\,[\hat\zeta^{\Lambda,1}]_{-}
\dq \dx \leq 0,
\end{eqnarray*}
where $\|\cdot\|$ denotes the $L^2(\Omega\times D)$ norm. This then implies that
$\|M^{\frac{1}{2}}\,[{\hat{\zeta}}^{\Lambda,1}]_{-}\|^2  \leq  0$.
Hence, $[{\hat{\zeta}}^{\Lambda,1}]_{-} = 0$ a.e. on $\Omega \times D$. In other words, ${\hat{\zeta}}^{\Lambda,1} \geq 0$ a.e.
on $\Omega \times D$, as claimed.

In order to prove the upper bound in the statement of the lemma, we proceed as follows.
With $\gamma^{\Lambda,n}$ as defined in \eqref{gamma-def}, we deduce from the definition of ${\hat{\zeta}}^{\Lambda,1}$
and Fubini's theorem that $\gamma^{\Lambda,1} \in H^1(\Omega)$. Furthermore, on selecting
$\hat\varphi = \varphi \in H^1(\Omega)\otimes 1(D)$ in \eqref{zeta-eq},
recall (\ref{H101}),
we have that
\begin{align}
\label{eq:weaka2aa}
&\int_{\Omega}
\frac{\gamma^{\Lambda,1}- \gamma^{\Lambda,0}}{\Delta t}\,\varphi\dx
+  \int_{\Omega}
\nabx {\gamma}^{\Lambda,1} \cdot\, \nabx
\varphi \dx = 0
\qquad \forall
\varphi \in
H^1(\Omega).
\end{align}
As ${\hat{\zeta}}^{\Lambda,0} = \beta^\Lambda(\hat\psi_0)$, and $0 \leq \beta^\Lambda(s) \leq s$ for all
$s \in \mathbb{R}_{\geq 0}$, we also have by \eqref{inidata} that
\begin{equation}\label{z-bound}
 0 \leq \gamma^{\Lambda,0} =\int_D M \beta^{\Lambda}(\hat\psi_0) \dq \leq \int_D M \hat\psi_0 \dq  = 1
\quad \mbox{on $\Omega$}.
\end{equation}

Consider $z^{\Lambda,n}:= 1 - \gamma^{\Lambda,n}$, $n=0,1$.
On substituting $\gamma^{\Lambda,n}= 1 - z^{\Lambda,n}$, $n=0,1$, into \eqref{eq:weaka2aa}, we have that
\begin{align}
\label{eq:weaka2aaa}
&\int_{\Omega}
\frac{z^{\Lambda,1}- z^{\Lambda,0}}{\Delta t}\,\varphi\dx
+  \int_{\Omega}
\nabx z^{\Lambda,1} \cdot\, \nabx
\varphi \dx = 0
\qquad \forall
\varphi \in
H^1(\Omega).
\end{align}
Also, by \eqref{z-bound}, we have that $0 \leq z^{\Lambda,0} \leq  1$. By using an identical procedure to
the one in the first part of the proof, we then deduce that $[z^{\Lambda,1}]_{-} = 0$ a.e. on $\Omega$. Thus,
$z^{\Lambda,1} \geq 0$ a.e. on $\Omega$, which then implies that $\gamma^{\Lambda,1} \leq 1$ a.e. on $\Omega$, as claimed.
\end{proof}

Next, we shall pass to the limit $\Lambda \rightarrow \infty$; as we shall see in the final part of Lemma \ref{psi0properties}
below, this will require the use of smoother test functions in problem \eqref{zeta-eq}, as otherwise the term involving
$\hat\zeta^{\Lambda,0}=\beta^\Lambda(\hat \psi_0)$ is not defined in the limit. In any case, our objective is to use the
limit of the sequence $\{\zeta^{\Lambda,1}\}_{\Lambda>1}$, once it has been shown to exist, as our definition of the function $\hat\psi^0$. We shall then show that $\hat\psi^0$ thus defined has all the
properties listed in \eqref{inidata-1}.

To this end, we need to derive $\Lambda$-independent
bounds on norms of ${\hat{\zeta}}^{\Lambda,1}$, very similar to the $L$-independent bounds discussed in Section
\ref{sec:entropy}. Since the argument is almost identical to (but simpler than) the one there
(viz. \eqref{zeta-eq} can be viewed as a special case of \eqref{psiG}, with $\ft^n$, $\uta^{n-1}$ and $\uta^n$ taken to be identically zero, $\lambda=\frac{1}{2}$, $\epsilon=1$, $N=1$, and $A$ chosen as the $K\times K$ identity matrix), we shall not include the details
here. It suffices to say that, on testing \eqref{zeta-eq} with $\mathcal{F}'(\hat\zeta^{\Lambda,1} + \alpha)$ and passing to the limit
$\alpha \rightarrow 0_{+}$, analogously as in the proof of \eqref{penultimate-line} in Section \ref{sec:entropy}, we obtain that
\begin{eqnarray}\label{eq:zetabd}
&&\int_{\Omega \times D} M \mathcal{F}(\hat\zeta^{\Lambda,1}) \dq \dx
+\, 4\,\Delta t \int_{\Omega \times D} M
\big|\nabx \sqrt{\hat\zeta^{\Lambda,1}} \big|^2 \dq \dx\nonumber\\
&&\qquad +\, 4\,
\Delta t \int_{\Omega \times D}M\, \,\big|\nabq \sqrt{\hat\zeta^{\Lambda,1}}\big|^2 \,\dq \dx
\leq \int_{\Omega \times D} M \mathcal{F}(\hat\psi_0) \dq \dx.
\end{eqnarray}
Our passage to the limit $\Lambda \rightarrow \infty$ in \eqref{zeta-eq} is based on a weak compactness argument, using
\eqref{eq:zetabd}, and is discussed below.

We have from Lemma \ref{one-bound} that $\{[\hat\zeta^{\Lambda,1}]^{\frac{1}{2}}\}_{\Lambda>1}$
is a bounded sequence in $L^2_M(\Omega \times D)$.
Using this in conjunction with the second and third bound in \eqref{eq:zetabd} we deduce that,
for $\Delta t>0$ fixed, $\{[\hat\zeta^{\Lambda,1}]^{\frac{1}{2}}\}_{\Lambda>1}$ is a bounded sequence in
$H^1_M(\Omega \times D)$. Thanks to the compact embedding of $H^1_M(\Omega \times D)$ into
$L^2_M(\Omega \times D)$ (cf. Appendix D), we deduce that
$\{[\hat\zeta^{\Lambda,1}]^{\frac{1}{2}}\}_{\Lambda>1}$
has a strongly convergent subsequence in $L^2_M(\Omega \times D)$, whose limit we label by
$\mathsf{Z}$, and we then let $\hat\zeta^{~\!\!1}:=\mathsf{Z}^2$.
For future reference we note that, upon extraction
of a subsequence (not indicated), $\hat\zeta^{\Lambda,1}$ then converges to $\hat{\zeta}^{\!\!~1}$ a.e. on
$\Omega \times D$; and $\hat\zeta^{\Lambda,1}(\xt,\cdot)$ converges to $\hat{\zeta}^{\!\!~1}(\xt,\cdot)$ a.e. on
$D$, for a.e. $\xt \in \Omega$.

By definition, we have that $\hat{\zeta}^{\!\!~1} \geq 0$; furthermore, thanks to the upper bound on $\gamma^{\Lambda,1}$ stated in
Lemma \ref{one-bound}, the remark in the last sentence of the
previous paragraph, and Fatou's lemma, we also have that
\begin{equation}\label{upperbound}
\int_D M(\qt)\,\hat{\zeta}^{\!\!~1}(\xt,\qt) \dq \leq 1\qquad \mbox{for a.e. $\xt \in \Omega$.}
\end{equation}
Further, again as a direct consequence of the definition of $\hat{\zeta}^{\!\!~1}$, we have that
\begin{equation}\label{sqrtpsi-zeta}
\sqrt{ {\hat{\zeta}}^{\Lambda,1}} \rightarrow \sqrt{{\hat{\zeta}^{~\!1}}}\qquad \mbox{strongly in $L^2_M(\Omega \times D)$}.
\end{equation}
Application of the factorization $c_1-c_2 = (\sqrt{c_1} - \sqrt{c_2})\,(\sqrt{c_1} + \sqrt{c_2})$
with $c_1, c_2 \in \mathbb{R}_{\geq 0}$, the Cauchy--Schwarz inequality
and \eqref{sqrtpsi-zeta} then yields that
\begin{equation}\label{sqrtpsi-zetaaa}
{\hat{\zeta}}^{\Lambda,1} \rightarrow \hat{\zeta}^{~\!\!1}\qquad \mbox{strongly in $L^1_M(\Omega \times D)$}.
\end{equation}

Finally, we define
\begin{equation}\label{psizerodef}
\hat\psi^0:= \hat{\zeta}^{\!\!~1}.
\end{equation}
It follows from the nonnegativity of $\hat{\zeta}^{\!\!~1}$ and \eqref{upperbound} that
\begin{equation}\label{nonnegativity}
\mbox{$\hat\psi^0\geq 0\quad$a.e. on $\Omega \times D\qquad$and}\qquad 0 \leq \int_D M(\qt)\, \hat\psi^0(\xt, \qt) \dq \leq 1\quad \mbox{for a.e. $\xt \in
\Omega$.}
\end{equation}
Further, from the bound on the first term in \eqref{eq:zetabd} and Fatou's lemma, together with the
fact that, thanks to the continuity of $\mathcal{F}$,
(a subsequence, not indicated, of) $\{\mathcal{F}({\hat{\zeta}}^{\Lambda,1})\}_{\Lambda>0}$ converges
to $\mathcal{F}(\hat{\zeta}^{\!\!~1}) = \mathcal{F}(\hat\psi^0)$ a.e. on $\Omega \times D$, we also have that
\begin{equation}\label{F-stability}
\int_{\Omega \times D} M\, \mathcal{F}(\hat\psi^0) \dq \dx
\leq \int_{\Omega \times D} M\, \mathcal{F}(\hat\psi_0) \dq \dx.
\end{equation}

Next, we note that from \eqref{sqrtpsi-zeta} we have that, as $\Lambda \rightarrow \infty$,
\begin{equation}\label{strongpsi-zeta}
M^{\frac{1}{2}}\, \sqrt{\hat{\zeta}^{\Lambda,1}} \rightarrow M^{\frac{1}{2}}\,\sqrt{\hat{\zeta}^{~\!\!1}}\qquad \mbox{strongly in $L^2(\Omega \times D)$}.
\end{equation}

We shall use \eqref{strongpsi-zeta} to deduce weak convergence of the sequences of $\xt$ and $\qt$
gradients of $\hat\zeta^{\Lambda,1}$. We proceed as in the proof of
Lemma \ref{conv}. The bound on the third term on the left-hand side
of \eqref{eq:zetabd} implies the existence of a subsequence (not indicated) and an element
$\gt \in \Lt^2(\Omega \times D)$, such that
\begin{equation}\label{square}
M^{\frac{1}{2}}\, \nabq \sqrt{{\hat{\zeta}}^{\Lambda,1}}
\rightarrow \gt \qquad
\mbox{weakly in $\Lt^2(\Omega \times D)$.}
\end{equation}
Proceeding as in (\ref{derivid})--(\ref{last}) in the proof of Lemma \ref{conv}
with $\hat{\psi}^n_{\epsilon,L,\delta}$, $\hat{\psi}^n_{\epsilon,L}$ and $\delta
\rightarrow 0_+$ replaced
by $\sqrt{\hat {\zeta}^{\Lambda,1}}$, $\sqrt{\hat{\zeta}^{1}}$ and
$\Lambda \rightarrow \infty$, respectively,
we obtain the weak convergence result:
\begin{subequations}
\begin{align}\label{xlimitL2}
M^{\frac{1}{2}}\, \nabq\sqrt{\hat{\zeta}^{\Lambda,1}} \rightarrow  M^{\frac{1}{2}}\, \nabq\sqrt{\hat{\zeta}^{\!\!~1}}\quad \mbox{weakly in $L^2(\Omega \times
D)$},
\end{align}
and similarly for the $\xt$ gradient
\begin{align}\label{qlimitL2}
M^{\frac{1}{2}}\, \nabx\sqrt{\hat{\zeta}^{\Lambda,1}}\rightarrow  M^{\frac{1}{2}}\, \nabx\sqrt{\hat{\zeta}^{\!\!~1}}\quad \mbox{weakly in $L^2(\Omega \times
D)$,}
\end{align}
\end{subequations}
as $\Lambda \rightarrow \infty$. Then, inequality
(\ref{eq:zetabd}), (\ref{xlimitL2},b) and the weak lower-semicontinuity of the $L^2(\Omega \times D)$ norm imply that
\begin{align}\label{zeta-space-bound-d}
&4\,\Delta t \int_{\Omega \times D} M \left[\big|\nabx \sqrt{\hat{\zeta}^{\!\!~1}}\big|^2
+
\big|\nabq \sqrt{\hat{\zeta}^{\!\!~1}}\big|^2 \right]\dq \dx
\leq \int_{\Omega \times D} M\, \mathcal{F}(\hat\psi_0) \dq \dx.
\end{align}

After these preparations, we are now ready to state the central result of this subsection. Before we
do so, a comment is in order. Strictly speaking, we should have written $\hat\psi^0_{\Delta t}$ instead of
$\hat\psi^0$ in our definition \eqref{psizerodef}, as $\hat\psi^0$ depends on the choice of $\Delta t$.
For notational simplicity, we prefer the more compact notation, $\hat\psi^0$, with the dependence of
$\hat\psi^0$ on $\Delta t$ implicitly understood; we shall only write $\hat\psi^0_{\Delta t}$, when
it is necessary to emphasize the dependence of $\Delta t$. Of course, $\hat\psi_0$ is independent of $\Delta t$.

We shall show that, with our definition of $\hat\psi^0$, the properties under \eqref{inidata-1}
hold, together with some additional properties that we extract from \eqref{psizerodef}.

\begin{lemma}\label{psi0properties}
The function $\hat\psi^0=\hat\psi^0_{\Delta t}$ defined by \eqref{psizerodef} has the following properties:
\begin{itemize}
\item[\ding{202}]  $~~~\hat\psi^0 \in \hat{Z}_1$;
\item[\ding{203}]  $\displaystyle{~~~\int_{\Omega \times D} M\, \mathcal{F}(\hat\psi^0)\dq \dx \leq \int_{\Omega \times D} M\, \mathcal{F}(\hat\psi_0)\dq
    \dx}$;
\item[\ding{204}]  $\displaystyle{~~~4\,\Delta t \,\int_{\Omega \times D} M\left[ |\nabx \sqrt{{\hat{\psi}^0}}|^2 + |\nabq
    \sqrt{{\hat{\psi}^0}}|^2\right] \dq \dx \leq \int_{\Omega \times D} M\, \mathcal{F}(\hat\psi_0) \dq \dx}$;
\item[\ding{205}]
    $~~~\lim_{\Delta t \rightarrow 0_+} \hat\psi^0 = \hat\psi_0$, weakly in $L^1_M(\Omega \times D)$;
\item[\ding{206}]
    $~~~\lim_{\Delta t \rightarrow 0_+} \beta^L(\hat\psi^0) = \hat\psi_0$, weakly in
    $L^1_{M}(\Omega \times D)$.
\end{itemize}
\end{lemma}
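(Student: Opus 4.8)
The plan is to establish the five properties in the order listed, since each relies on facts already assembled in this subsection. Property \ding{202} is immediate: $\hat\psi^0 = \hat\zeta^{\,1} = \mathsf{Z}^2 \geq 0$ by construction, and \eqref{nonnegativity} (i.e.\ the application of Fatou's lemma in \eqref{upperbound}) gives $\int_D M\,\hat\psi^0\,\dq \leq 1$ for a.e.\ $\xt \in \Omega$, so $\hat\psi^0 \in \hat Z_1$ by the definition \eqref{hatZ}. Property \ding{203} is precisely \eqref{F-stability}, obtained from the first term in \eqref{eq:zetabd} together with Fatou's lemma applied to $\mathcal{F}(\hat\zeta^{\Lambda,1}) \to \mathcal{F}(\hat\psi^0)$ a.e.; and property \ding{204} is precisely \eqref{zeta-space-bound-d}, which in particular shows $\sqrt{\hat\psi^0} \in H^1_M(\Omega \times D)$, completing the verification of all the conditions in \eqref{inidata-1}.

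For \ding{205} I would argue as follows. Since $\hat\psi_0$ has finite relative entropy, $\mathcal{F}(\hat\psi_0) \in L^1_M(\Omega\times D)$; by \ding{203} the family $\{\mathcal{F}(\hat\psi^0_{\Delta t})\}$ is bounded in $L^1_M$, and since $\mathcal{F}(s)/s \to \infty$ as $s \to \infty$, a de la Vall\'ee-Poussin-type argument shows that $\{\hat\psi^0_{\Delta t}\}_{\Delta t>0}$ is uniformly integrable with respect to the measure $M(\qt)\,\dq\,\dx$ on $\Omega \times D$, hence relatively weakly compact in $L^1_M(\Omega\times D)$ by the Dunford--Pettis theorem. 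It therefore suffices to identify every weak-$L^1_M$ limit point as $\hat\psi_0$. To do this I would pass to the limit $\Delta t \to 0_+$ in a suitably reorganized version of \eqref{zeta-eq}: multiplying \eqref{zeta-eq} by $\Delta t$ and testing with a fixed $\hat\varphi \in C^\infty(\overline{\Omega\times D})$ yields
\[
\int_{\Omega\times D} M\,(\hat\zeta^{\Lambda,1} - \beta^\Lambda(\hat\psi_0))\,\hat\varphi\,\dq\,\dx
= -\Delta t \int_{\Omega\times D} M\,\big[\nabx \hat\zeta^{\Lambda,1}\cdot\nabx\hat\varphi + \nabq\hat\zeta^{\Lambda,1}\cdot\nabq\hat\varphi\big]\dq\,\dx.
\]
Here the subtlety flagged in the text arises: the $\Lambda$-gradient terms are controlled only through $\nabq\sqrt{\hat\zeta^{\Lambda,1}}$, so I rewrite $\nabq \hat\zeta^{\Lambda,1} = 2\sqrt{\hat\zeta^{\Lambda,1}}\,\nabq\sqrt{\hat\zeta^{\Lambda,1}}$ (and similarly in $\xt$), use the $L^2_M$-boundedness of $\sqrt{\hat\zeta^{\Lambda,1}}$ (Lemma~\ref{one-bound}) and the $\Lambda$-uniform $L^2$ bound on $M^{1/2}\nabq\sqrt{\hat\zeta^{\Lambda,1}}$ from \eqref{eq:zetabd} to see the right-hand side is $O(\Delta t)\,\|\hat\varphi\|_{W^{1,\infty}}$. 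Letting $\Lambda\to\infty$ first, using \eqref{sqrtpsi-zetaaa} on the left and $\beta^\Lambda(\hat\psi_0) \to \hat\psi_0$ in $L^1_M$ (monotone/dominated convergence, since $0 \le \beta^\Lambda(\hat\psi_0) \le \hat\psi_0 \in L^1_M$), I obtain
\[
\int_{\Omega\times D} M\,(\hat\psi^0 - \hat\psi_0)\,\hat\varphi\,\dq\,\dx = O(\Delta t)\,\|\hat\varphi\|_{W^{1,\infty}(\Omega\times D)}.
\]
Since $C^\infty(\overline{\Omega\times D})$ is dense in $\hat X$ by \eqref{cal K} and the bound above is uniform over bounded sets in $W^{1,\infty}$, letting $\Delta t \to 0_+$ gives weak-$L^1_M$ convergence $\hat\psi^0_{\Delta t} \to \hat\psi_0$, proving \ding{205}.

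Finally, \ding{206} follows from \ding{205} together with an estimate showing $\beta^L(\hat\psi^0) - \hat\psi^0 \to 0$ in $L^1_M$ as $\Delta t \to 0_+$ (for $L$ fixed). Indeed $0 \le \hat\psi^0 - \beta^L(\hat\psi^0) = (\hat\psi^0 - L)_+ \le \hat\psi^0\,\chi_{\{\hat\psi^0 > L\}}$, and the uniform integrability of $\{\hat\psi^0_{\Delta t}\}$ established in the proof of \ding{205}, combined with the $L^\infty(0,T)$-type bound $\int_D M\,\hat\psi^0\,\dq \le 1$ from \ding{202} which controls the measure of the set $\{\hat\psi^0 > L\}$ uniformly, forces $\int_{\Omega\times D} M\,\hat\psi^0\,\chi_{\{\hat\psi^0 > L\}}\,\dq\,\dx \to 0$; more directly, since $\mathcal{F}(s) \ge \tfrac{s}{2}\log L$ on $\{s > L\}$ for $L$ large, \ding{203} gives $\int_{\{\hat\psi^0 > L\}} M\,\hat\psi^0 \le \tfrac{2}{\log L}\int M\,\mathcal{F}(\hat\psi_0)$, which is finite and does not even depend on $\Delta t$. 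Hence $\beta^L(\hat\psi^0_{\Delta t}) - \hat\psi^0_{\Delta t} \to 0$ strongly in $L^1_M$, and adding this to the weak convergence in \ding{205} yields $\beta^L(\hat\psi^0_{\Delta t}) \to \hat\psi_0$ weakly in $L^1_M(\Omega\times D)$.

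\textbf{Main obstacle.} The delicate point is the passage to the limit in \eqref{zeta-eq}: one must take $\Lambda\to\infty$ and $\Delta t\to 0_+$ in the right order and handle the gradient terms only through the square-root quantities, since $\nabq\hat\zeta^{\Lambda,1}$ itself carries no $\Lambda$-uniform bound — this is exactly the reason the text insists on smoother test functions and on first letting $\Lambda\to\infty$ with $\Delta t$ fixed before sending $\Delta t\to 0_+$. Everything else is bookkeeping with Fatou's lemma, Dunford--Pettis, and the entropy bound \eqref{eq:zetabd}.
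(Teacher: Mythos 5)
Your argument is correct and follows essentially the same route as the paper: \ding{202}--\ding{204} are the facts already recorded in \eqref{nonnegativity}, \eqref{F-stability} and \eqref{zeta-space-bound-d}; for \ding{205} you test \eqref{zeta-eq} with smooth functions, control the gradient terms through the square-root factorization and \eqref{eq:zetabd}, let $\Lambda\to\infty$ first and $\Delta t\to 0_+$ second, and conclude with de la Vall\'ee-Poussin/Dunford--Pettis plus uniqueness of the weak limit, exactly as in \eqref{firstbound}--\eqref{secondbound}; and your \ding{206} rests on the same $O(1/\log L)$ tail bound that the paper obtains in \eqref{Lplusbd} via the logarithmic Young inequality. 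Two harmless slips: the tested identity gives a right-hand side of order $(\Delta t)^{1/2}\,\|\hat\varphi\|_{W^{1,\infty}}$, not $O(\Delta t)$, because \eqref{eq:zetabd} bounds the Fisher information only by $C/\Delta t$ (this still vanishes as $\Delta t \to 0_+$, as in the paper), and in \ding{206} the parenthetical ``for $L$ fixed'' should be deleted --- the convergence uses $L \to \infty$ through the coupling $\Delta t = o(L^{-1})$, which is precisely what your $2/\log L$ estimate exploits.
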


\begin{proof}
~

\begin{itemize}
\item[\ding{202}] This is an immediate consequence of \eqref{nonnegativity} and the definition (\ref{hatZ})
of $\hat Z_1$.
\item[\ding{203}] This property was established in \eqref{F-stability} above.
\item[\ding{204}]
The inequality follows by using \eqref{psizerodef}
in the left-hand side of \eqref{zeta-space-bound-d}.

\item[\ding{205}] We begin by noting that an argument, completely analogous to (but simpler than) the one in
Section \ref{sec:time-psia} that resulted in \eqref{psi-time-bound}, applied to (\ref{zeta-eq})
now, yields
\begin{align*}
&\!\!\!\left|\,\int_{\Omega \times D}\!\! M\, \frac{\hat\zeta^{\Lambda,1} - \hat\zeta^{\Lambda,0}}{\Delta t}\, \hat\varphi
\dq \dx \,\right|
\leq
2 \left(\int_{\Omega \times D}\!\! M\! \left[|\nabx \sqrt{\hat\zeta^{\Lambda,1}}|^2 + |\nabq \sqrt{\hat\zeta^{\Lambda,1}}|^2\right] \!\!\dq \dx
\right)^{\frac{1}{2}}\nonumber\\
&\qquad\qquad\times
\left(\int_\Omega \left[\|\nabx \hat\varphi\|^2_{L^\infty(D)} + \|\nabq \hat\varphi\|^2_{L^\infty(D)}\right]  \dx \right)^{\frac{1}{2}},
\end{align*}
for all $\hat\varphi \in H^1(\Omega;L^\infty(D))\cap L^2(\Omega;W^{1,\infty}(D))$. By noting
\eqref{eq:zetabd} we deduce that
\begin{align}\label{firstbound}
&\left|\int_{\Omega \times D} M \,(\hat\zeta^{\Lambda,1} - \hat\zeta^{\Lambda,0})\, \hat\varphi \dq \dx \right|
\leq
(\Delta t)^{\frac{1}{2}} \left(\int_{\Omega \times D} M\, \mathcal{F}(\hat\psi_0) \dq \dx \right)^{\frac{1}{2}}\nonumber\\
&\qquad\qquad \times
\left(\int_\Omega \left[\|\nabx \hat\varphi\|^2_{L^\infty(D)} + \|\nabq \hat\varphi\|^2_{L^\infty(D)}\right]  \dx \right)^{\frac{1}{2}}
\end{align}
for all $\hat\varphi \in H^1(\Omega;L^\infty(D))\cap L^2(\Omega;W^{1,\infty}(D))$.
As the right-hand side of \eqref{firstbound} is independent of $\Lambda$, we can pass
to the limit $\Lambda \rightarrow \infty$ on both sides of \eqref{firstbound}, using the strong
convergence of $\hat\zeta^{\Lambda,1}$ to $\hat\psi^0$ in $L^1_M(\Omega \times D)$ as $\Lambda \rightarrow \infty$
(see \eqref{sqrtpsi-zetaaa} and the definition of \eqref{psizerodef}) together with the strong
convergence of $\hat\zeta^{\Lambda,0}=\beta^\Lambda(\hat\psi_0)$ to $\hat\psi_0$ in $L^1_M(\Omega \times D)$ as $\Lambda \rightarrow \infty$,
with $\Delta t$ kept fixed. We deduce that
\begin{align}\label{secondbound}
&\hspace{-1cm}\left|\,\int_{\Omega \times D} M \,(\hat\psi^0 - \hat\psi_0)\, \hat\varphi \dq \dx \,\right|
\leq
(\Delta t)^{\frac{1}{2}} \left(\int_{\Omega \times D} M \mathcal{F}(\hat\psi_0) \dq \dx \right)^{\!\frac{1}{2}}
\nonumber\\
&\qquad\qquad\qquad\!\!\!\!\times \left(\int_\Omega \left[\|\nabx \hat\varphi\|^2_{L^\infty(D)} + \|\nabq \hat\varphi\|^2_{L^\infty(D)}\right]  \dx
\right)^{\!\frac{1}{2}}
\end{align}
for all $\hat\varphi \in H^1(\Omega;L^\infty(D))\cap L^2(\Omega;W^{1,\infty}(D))$ and therefore in particular
for all $\hat\varphi \in H^s(\Omega \times D)$ with $s > 1+ \frac{1}{2}(K+1)d$.

As the last two factors on the right-hand side of \eqref{secondbound} are independent of $\Delta t$,
we can pass to the limit $\Delta t \rightarrow 0_+$ on both sides of \eqref{secondbound}
to deduce that $\hat\psi^0= \hat\psi^0_{\Delta t}$ converges to $\hat\psi_0$ weakly
in $M^{-1}(H^s(\Omega \times D))'$, $s > 1+ \frac{1}{2}(K+1)d$, as $\Delta t \rightarrow 0_+$.

Noting \eqref{F-stability}
and the fact that $\mathcal{F}(r)/r \rightarrow \infty$ as $r\rightarrow \infty$, we deduce from de la Vall\'ee-Poussin's
theorem that the family $\{\hat\psi^0_{\Delta t}\}_{\Delta t>0}$ is uniformly integrable in $L^1_M(\Omega \times D)$.
Hence, by the Dunford--Pettis theorem, the family $\{\hat\psi^0_{\Delta t}\}_{\Delta t>0}$ is weakly
relatively compact in $L^1_M(\Omega \times D)$. Consequently, one can extract a subsequence
$\{\hat\psi^0_{\Delta t_k}\}_{k=1}^\infty$ that converges
weakly in $L^1_M(\Omega \times D)$; however the uniqueness of the weak limit
together with the weak convergence of the (entire) sequence $\hat\psi^0= \hat\psi^0_{\Delta t}$ to $\hat\psi_0$ in $M^{-1}(H^s(\Omega \times D))'$, $s > 1+
\frac{1}{2}(K+1)d$, as $\Delta t \rightarrow 0_+$, established in the
previous paragraph, then implies that the (entire)
sequence $\hat\psi^0= \hat\psi^0_{\Delta t}$ converges to $\hat\psi_0$ weakly in $L^1_M(\Omega \times D)$,
as $\Delta t \rightarrow 0_+$, on noting that $L^1_M(\Omega\times D)$
is (continuously) embedded in $M^{-1}(H^s(\Omega \times D))'$ for $s > 1+ \frac{1}{2}(K+1)d$ (cf. the discussion
following Theorem \ref{thm:Dubinski}).

\item[\ding{206}]
It follows from $\hat \psi^0 \in \hat Z_1$ and (\ref{betaLa}) that
\begin{align}
0 \leq
\int_{\hat \psi^0 \geq L} M\,L \dq \dx \leq
\int_{\Omega \times D} M \, \beta^L(\hat \psi^0) \dq  \dx \leq
\int_{\Omega \times D} M \, \hat \psi^0 \dq  \dx \leq
|\Omega|.
\label{betapsi0bd}
\end{align}
On noting that $\mathcal{F}$ is nonnegative and monotonically increasing on $[1,\infty)$,
and that $\mathcal{F}(s) \in [0,1]$ for $s \in [0,1]$, we deduce that
\begin{align}
&\hspace{0cm}\int_{\Omega \times D} M\,\mathcal{F}([\hat \psi^0 -L]_+) \dq \dx
\nonumber \\
& \hspace{0.3in} = \int_{\hat \psi^0 \in [0,L+1)}
M\,\mathcal{F}([\hat \psi^0 -L]_+) \dq \dx
+\int_{\hat \psi^0 \geq L+1} M\,\mathcal{F}([\hat \psi^0 -L]_+) \dq \dx
\nonumber \\
& \hspace{0.3in} \leq \int_{\Omega \times D}
M \dq \dx
+\int_{\Omega \times D} M\,\mathcal{F}(\hat \psi^0) \dq \dx
\leq C.
\label{FpsiLbd}
\end{align}

Let us recall the logarithmic Young's inequality
\begin{align}
r\,s \leq r\,\log r -r + {\rm e}^s
\qquad \mbox{for all } r,\,s \in {\mathbb R}_{\geq 0}.
\label{logY}
\end{align}
This follows from the Fenchel--Young inequality:
\[ r \, s \leq g^\ast(r) + g(s) \qquad \mbox{for all $r, s \in
\mathbb{R}$},\]
involving the convex function $g\,:\, s \in \mathbb{R} \mapsto
g(s) \in (-\infty,+\infty]$ and its convex conjugate $g^\ast$,
with $g(s) = {\rm e}^s$ and
\[ g^\ast(r) = \left\{ \begin{array}{cl}
  + \infty & \mbox{if $r < 0$,}\\
  0        & \mbox{if $r = 0$,}\\
  r\,(\log r - 1) & \mbox{if $r > 0$;}
                \end{array}
                \right.
\]
with the resulting inequality then restricted to $\mathbb{R}_{\geq 0}$.
It immediately follows from (\ref{logY}) that
$r\,s
\leq \mathcal{F}(r) +{\rm e}^s$
for all $r,\,s \in {\mathbb R}_{\geq 0}$.

Applying the last inequality with $r = [\hat \psi^0 -L]_+$ and $s = \log L$,
we have that
\begin{align}
[\hat \psi^0 -L]_+\, (\log L)\leq \mathcal{F}([\hat \psi^0 -L]_+) + L.
\label{logYL}
\end{align}
The bounds (\ref{betapsi0bd}), (\ref{FpsiLbd}) (noting that the integrand of the left-most
integral in (\ref{FpsiLbd}) is nonnegative) and (\ref{logYL}) then imply
\begin{align}
&\hspace{-0.3cm}\int_{\Omega \times D} M\,[\hat \psi^0 -L]_+ \dq \dx
=
\int_{\hat \psi^0 \geq L} M\,[\hat \psi^0 -L]_+ \dq \dx
\nonumber \\
&\hspace{0.1cm}\leq \frac{1}{\log L}
\left[\int_{\hat \psi^0 \geq L}  M\,\mathcal{F}([\hat \psi^0 -L]_+) \dq \dx +
\int_{\hat \psi^0 \geq L}  M\,L \dq \dx \right]
\leq  \frac{C}{\log L}.
\label{Lplusbd}
\end{align}

Hence, for any $\hat \varphi \in L^\infty(\Omega \times D)$ we have from
(\ref{Lplusbd}) on recalling the relationship $\Delta t =o(L^{-1})$ that
$\hat \psi^0 = \hat \psi^0_{\Delta t}$ satisfies
\begin{align}
&\lim_{\Delta t \rightarrow 0_+} \left| \int_{\Omega \times D} M\,
(\hat \psi^0-\beta^L(\hat \psi^0)) \,\hat \varphi \dq \dx \right|
=
\lim_{\Delta t \rightarrow 0_+} \left| \int_{\Omega \times D} M\,
[\hat \psi^0-L]_+ \,\hat \varphi \dq \dx \right|
\nonumber \\
&\hspace{0.3cm}\leq \left(\lim_{\Delta t \rightarrow 0_+} \int_{\Omega \times D} M\,
[\hat \psi^0-L]_+ \dq \dx \right)
\|\hat \varphi\|_{L^\infty(\Omega \times D)}
=0.
\label{psioLconv}
\end{align}
Therefore, similarly to (\ref{secondbound}), we have that
the sequence $\{\hat \psi^0_{\Delta t}
-\beta^L(\hat \psi^0_{\Delta t})\}_{\Delta t >0}$
converges to zero weakly in
$M^{-1}(H^s(\Omega \times D))'$ for $s > \frac{1}{2}(K+1)d$, as $\Delta t \rightarrow 0_+$.

Noting \eqref{FpsiLbd}
and the fact that $\mathcal{F}(r)/r \rightarrow \infty$ as $r\rightarrow \infty$, we deduce from de le Vall\'ee Poussin's
theorem that the family $$\{\hat\psi^0_{\Delta t}-\beta^L(\hat\psi^0_{\Delta t})\}_{\Delta t>0} \equiv \{[\hat\psi^0_{\Delta t}-L]_+\}_{\Delta t>0}$$
is uniformly integrable in $L^1_M(\Omega \times D)$.
Hence, we can proceed as for the sequence $\{\hat\psi^0_{\Delta t}\}_{\Delta t>0}$
in the proof of \ding{205} to show that
the (entire)
sequence
\[\mbox{$\hat\psi^0-\beta^L(\hat\psi^0)= \hat\psi^0_{\Delta t}
- \beta^L(\hat\psi^0_{\Delta t}) \rightarrow 0~~~~~$ weakly in $L^1_M(\Omega \times D)$,
as $\Delta t \rightarrow 0_+$},\]
on noting that $L^1_M(\Omega\times D)$
is (continuously) embedded in $M^{-1}(H^s(\Omega \times D))'$ for $s > \frac{1}{2}(K+1)d$ (cf.\ the discussion
following Theorem \ref{thm:Dubinski}).
Hence, we have proved the desired result.
\end{itemize}
\end{proof}

Noting item 3 in Lemma \ref{psi0properties}, we can now return to
the inequality \eqref{eq:energy-u+psi-final6}, and supplement it with additional
bounds, in the sixth and seventh term on the left-hand side. The first additional bound can
be seen as the analogue of \eqref{bound-a}:
\begin{eqnarray}\label{bound-aaa}
&&\hspace{-1cm}4\int_0^T \int_{\Omega\times D} M\left[|\nabx\sqrt{\psia^{\Delta t,-}}|^2 +
|\nabq\sqrt{\psia^{\Delta t,-}}|^2\right] \dq \dx \dt  \nonumber\\
&&= 4\Delta t  \int_{\Omega\times D} M\left[|\nabx\sqrt{\beta^L(\hat\psi^0)}|^2 +
|\nabq\sqrt{\beta^L(\hat\psi^0)}|^2\right] \dq \dx
\nonumber\\
&&\qquad +\, 4\int_0^{T-\Delta t} \int_{\Omega\times D} M\left[|\nabx\sqrt{\psia^{\Delta t,+}}|^2 +
|\nabq\sqrt{\psia^{\Delta t,+}}|^2\right] \dq \dx \dt\nonumber\\
&& \leq 4\Delta t  \int_{\Omega\times D} M\left[|\nabx\sqrt{\hat\psi^0}|^2 +
|\nabq\sqrt{\hat\psi^0}|^2\right] \dq \dx
\nonumber\\
&&\qquad +\, 4\int_0^{T-\Delta t} \int_{\Omega\times D} M\left[|\nabx\sqrt{\psia^{\Delta t,+}}|^2 +
|\nabq\sqrt{\psia^{\Delta t,+}}|^2\right] \dq \dx \dt\nonumber\\
&& \leq \int_{\Omega \times D}M\, \mathcal{F}(\hat\psi_0)\dq \dx\nonumber\\
&&\qquad +\, 4 \int_0^{T} \int_{\Omega\times D} M\left[|\nabx\sqrt{\psia^{\Delta t,+}}|^2 +
|\nabq\sqrt{\psia^{\Delta t,+}}|^2\right] \dq \dx \dt
\leq  C_\star,~
\end{eqnarray}
where in the last inequality we used \eqref{inidata-1} and the bounds on the sixth and seventh
term in \eqref{eq:energy-u+psi-final6}; here and henceforth $C_\star$ signifies a generic positive constant,
independent of $L$ and $\Delta t$. On combining \eqref{bound-aaa} with our previous
bounds on the sixth and seventh term in \eqref{eq:energy-u+psi-final6}, we deduce that
\begin{equation}\label{bound-bbb}
\hspace{-1cm}4\int_0^T \int_{\Omega\times D} M\left[|\nabxtt\sqrt{\psia^{\Delta t,\pm}}|^2 +
|\nabq\sqrt{\psia^{\Delta t,\pm}}|^2\right] \dq \dx \dt  \leq C_\star.
\end{equation}
%
%
It remains to derive an analogous bound on $\psia^{\Delta t}$. To this end, let $n \in \{1,\dots, N\}$
and consider $t \in (t_{n-1},t_n)$; we recall that
\begin{equation}
\psia(\cdot,\cdot, t)=\,\frac{t-t_{n-1}}{\Delta t}\,
\psia^{\Delta t,+}(\cdot,\cdot,t) + \frac{t_n-t}{\Delta t}\,\psia^{\Delta t, -}(\cdot,\cdot,t). \label{psilin}
\end{equation}
For ease of exposition we shall write
\[\gamma_+:= \frac{t-t_{n-1}}{\Delta t} \qquad \mbox{and}\qquad  \gamma_{-}:=\frac{t_n-t}{\Delta t}\]
in the argument that follows, noting that $\gamma_{+} + \gamma_{-} = 1$ and both $\gamma_{+}$ and
$\gamma_{-}$ are positive. The functions $t \in (t_{n-1},t_n) \mapsto \psia^{\Delta t,\pm}(\cdot,\cdot,t)$ are constant in time and
$\psia^{\Delta t,(\pm)}(\xt,\qt,t)\geq 0$ over the set $\Omega \times D \times (t_{n-1},t_n)$, $n \in \{1,\dots, N\}$. For any $\alpha \in (0,1)$ we have that
\begin{eqnarray*}
\frac{|\nabx \psia^{\Delta t}|^2}{\psia^{\Delta t}+\alpha}
&=& \frac{|\gamma_+\,\nabx \psia^{\Delta t,+} + \gamma_{-}\,\nabx \psia^{\Delta t,-}|^2}{\gamma_+\,(\psia^{\Delta t,+}+\alpha) + \gamma_{-}(\,\psia^{\Delta t,-}+\alpha)}
\nonumber\\
&\leq&  2\,\frac{\gamma_+^2 |\nabx \psia^{\Delta t,+}|^2 + \gamma_{-}^2 |\nabx \psia^{\Delta t,-}|^2}{\gamma_+\,(\psia^{\Delta t,+}+\alpha) + \gamma_{-}\,(\psia^{\Delta t,-}+\alpha)}
\nonumber\\
&\leq&  2\,\frac{\gamma_{+} |\nabx \psia^{\Delta t,+}|^2}{\psia^{\Delta t,+}+\alpha}
+ 2\,\frac{\gamma_{-} |\nabx \psia^{\Delta t,-}|^2}{\psia^{\Delta t,-}+\alpha}.\nonumber
\end{eqnarray*}
Hence, on bounding $\gamma^{\pm}$ by 1, we deduce that
\begin{equation}\label{last-psia}
\frac{|\nabx \psia^{\Delta t}|^2}{\psia^{\Delta t}+\alpha}
\leq 2\,\frac{|\nabx \psia^{\Delta t, +}|^2}{\psia^{\Delta t, +}+\alpha} + 2\,\frac{|\nabx \psia^{\Delta t, -}|^2}{\psia^{\Delta t, -}+\alpha},
\end{equation}
for all $(\xt,\qt,t) \in \Omega \times D\times (t_{n-1},t_n)$, $n=1,\dots,N$, and all $\alpha \in (0,1)$. On multiplying
\eqref{last-psia} by $M$, integrating over $\Omega \times D \times (t_{n-1},t_n)$, summing over $n=1,\dots, N$,
and passing to the limit $\alpha \rightarrow 0_+$ using the monotone convergence theorem, we deduce that
\begin{eqnarray*}
4\int_0^T\int_{\Omega \times D} M\big|\nabx \sqrt{\psia^{\Delta t}}\big|^2\dq\dx \dt
&\leq& 2 \left[4\int_0^T \int_{\Omega \times D} M\,|\nabx\sqrt{\psia^{\Delta t,+}}|^2 \dq \dx \dt\right.\\
&&+  \left.4\int_0^T \int_{\Omega \times D} M\,|\nabx\sqrt{\psia^{\Delta t,-}}|^2\dq \dx \dt\right].
\end{eqnarray*}
Analogously,
\begin{eqnarray*}
4\int_0^T\int_{\Omega \times D} M\big|\nabq \sqrt{\psia^{\Delta t}}\big|^2\dq\dx \dt
&\leq& 2 \left[4\int_0^T \int_{\Omega \times D} M\,|\nabq\sqrt{\psia^{\Delta t,+}}|^2 \dq \dx \dt\right.\\
&&+  \left.4\int_0^T \int_{\Omega \times D} M\,|\nabq\sqrt{\psia^{\Delta t,-}}|^2\dq \dx \dt\right].
\end{eqnarray*}
Summing the last two inequalities and recalling \eqref{bound-bbb}, we then deduce that
\begin{equation}\label{bound-ccc}
\hspace{-1cm}4\int_0^T \int_{\Omega\times D} M\left[|\nabx\sqrt{\psia^{\Delta t}}|^2 +
|\nabq\sqrt{\psia^{\Delta t}}|^2\right] \dq \dx \dt  \leq C_\star,
\end{equation}
where, again, $C_\ast$ denotes a generic positive constant independent of $L$ and $\Delta t$.

Finally, on combining \eqref{bound-bbb} and \eqref{bound-ccc} with
\eqref{eq:energy-u+psi-final6} we arrive at the following bound, which represents the starting
point for the convergence analysis that will be developed in the next subsection.

With $\sigma \geq \frac{1}{2}d$, $\sigma>1$ and
$s > 1 + \frac{1}{2}(K+1)d$, we have that:
\begin{eqnarray}
&&\mbox{ess.sup}_{t \in [0,T]}\|\uta^{\Delta t(,\pm)}(t)\|^2 + \frac{1}{\Delta t} \int_0^T \|\uta^{\Delta t,+}(t) - \uta^{\Delta t,-}(t)\|^2
\dd t \nonumber\\
&&+\,\int_0^T \|\nabxtt \uta^{\Delta t(,\pm)}(t)\|^2 \dd t +  \mbox{ess.sup}_{t \in [0,T]}
\int_{\Omega \times D}\!\! M\, \mathcal{F}(\psia^{\Delta t(, \pm)}(t)) \dq \dx\nonumber\\
&&\qquad\qquad +\, \frac{1}{\Delta t\, L}
\int_0^T\!\! \int_{\Omega \times D}\!\! M\, (\psia^{\Delta t,+} - \psia^{\Delta t,-})^2 \dq \dx \dd t
\nonumber \\
&&+\, \int_0^T\!\! \int_{\Omega \times D} M\,
\big|\nabx \sqrt{\psia^{\Delta t(,\pm)}} \big|^2 \dq \dx \dd t
+\, \int_0^T\!\! \int_{\Omega \times D}M\,\big|\nabq \sqrt{\psia^{\Delta t(,\pm)}}\big|^2 \,\dq \dx \dd t\nonumber\\
&&\qquad\qquad +\, \int_0^T\left\|\frac{\partial \uta^{\Delta t}}{\partial t}(t)\right\|^2_{V_\sigma'}\dt + \int_0^T\left\|M\,\frac{\partial \psia^{\Delta
t}}{\partial t}(t)\right\|^2_{(H^s(\Omega \times D))'}\dt \leq C_\star.\label{eq:energy-u+psi-final6a}
\end{eqnarray}
Similarly,
\begin{eqnarray}\label{eq:energy-u+psi-final6a-zeta}
&&\mbox{ess.sup}_{t \in [0,T]}\|\rho^{\Delta t(,\pm)}_{\epsilon,L}(t)\|^2_{L^\infty(\Omega)}
+ \frac{1}{\Delta t} \int_0^T \|\rho^{\Delta t,+}_{\epsilon,L}(t) - \rho^{\Delta t,-}_{\epsilon,L}(t)\|^2
\dd t \nonumber\\
&&\qquad \quad +\, \int_0^T
\|\nabx \rho^{\Delta t(,\pm)}_{\epsilon,L}(t) \|^2\dd t + \int_0^T\left
\|\frac{\partial \rho_{\epsilon,L}^{\Delta t}}{\partial t}(t)\right\|^2_{(H^1(\Omega))'}\dt\leq
C_\star.
\end{eqnarray}
Here, the bound on the first term on the left-hand side follows from \eqref{zetancon}, \eqref{additional1} and \eqref{additional2};
the bound on the second term comes from \eqref{eq:energy-zeta}, and the bound on the last term from \eqref{zetacondtbd}. The bound on the
third term on the left-hand side of
\eqref{eq:energy-u+psi-final6a-zeta} is obtained by applying $\nabx$ to both sides of \eqref{zetancon} with the integrand
$$M(\qt)\hat\psi^{\Delta t (,\pm)}_{\epsilon,L}$$
rewritten as
$$M(\qt)\left[ \sqrt{\hat\psi^{\Delta t (,\pm)}_{\epsilon,L}}\,\right]^2,$$
then exchanging the order of $\nabx$ and the integral over $D$ on the right-hand side of
the resulting identity, applying $\nabx$ to the integrand using the chain rule,
followed by taking the modulus on both sides and applying the Cauchy--Schwarz inequality to the integral over $D$ on the right,
integrating the square of the resulting inequality over $[0,T] \times \Omega$
and, finally, recalling again the definition \eqref{zetancon} and using the bound on the first term in \eqref{eq:energy-u+psi-final6a-zeta}
and the bound on the sixth term in \eqref{eq:energy-u+psi-final6a}. In fact, in the
case of $\rho^{\Delta t,+}_{\epsilon,L}$ the stated bound on the third term on the left-hand side of
\eqref{eq:energy-u+psi-final6a-zeta} follows directly from \eqref{eq:energy-zeta}.

\subsection{Passage to the limit $L\rightarrow \infty$}
\label{passage}

We are now ready to prove the central result of the paper.

\begin{theorem}
\label{convfinal} Suppose that the assumptions \eqref{inidata} and the condition \eqref{LT},
relating $\Delta t$ to $L$, hold. Then,
there exists a subsequence of $\{(\utae^{\Delta t}, \hpsiae^{\Delta t})\}_{L >1}$ (not indicated)
with $\Delta t = o(L^{-1})$, and a pair of functions $(\ute, \hat\psi_\epsilon)$ such that
\[\ute \in L^{\infty}(0,T;\Lt^2(\Omega))\cap L^{2}(0,T;\Vt) \cap H^1(0,T;\Vt'_\sigma),\quad \sigma\geq \textstyle{\frac{1}{2}}d,\; \sigma>1,\]
and
\[\hat\psi_\epsilon \in L^1(0,T;L^1_M(\Omega \times D))\cap
H^1(0,T; M^{-1}(H^s(\Omega \times D))'),
\quad s>1 + \textstyle{\frac{1}{2}}(K+1)d,\]
with $\hat\psi_\epsilon \geq 0$ a.e. on $\Omega \times D \times [0,T]$,
\begin{equation}\label{mass-conserved}
\rho_\epsilon(\xt,t):=\int_D M(\qt)\,\hat\psi_\epsilon(\xt,\qt,t) \dq = 1\quad \mbox{for a.e. $(x,t) \in \Omega \times [0,T]$},
\end{equation}
whereby $\hat\psi_\epsilon \in L^\infty(0,T; L^1_M(\Omega \times D))$;
and finite relative entropy and Fisher information, with
\begin{equation}\label{relent-fisher}
\mathcal{F}(\hat\psi_\epsilon) \in L^\infty(0,T;L^1_M(\Omega\times D))\quad \mbox{and}\quad \sqrt{\hat\psi_\epsilon} \in L^{2}(0,T;H^1_M(\Omega \times D)),
\end{equation}
such that, as $L\rightarrow \infty$ (and thereby $\Delta t \rightarrow 0_+$),
\begin{subequations}
\begin{alignat}{2}
&\utae^{\Delta t (,\pm)} \rightarrow \ute \qquad &&\mbox{weak$^\star$ in }
L^{\infty}(0,T;{\Lt}^2(\Omega)), \label{uwconL2a}\\
&\utae^{\Delta t (,\pm)} \rightarrow \ute \qquad &&\mbox{weakly in }
L^{2}(0,T;\Vt), \label{uwconH1a}\\
&\utae^{\Delta t (,\pm)} \rightarrow \ute \qquad &&\mbox{strongly in }
L^{2}(0,T;\Lt^{r}(\Omega)), \label{usconL2a}\\
&~\frac{\partial \utae^{\Delta t}}{\partial t} \rightarrow  \frac{\partial \ute}{\partial t}
\qquad &&\mbox{weakly in }
L^2(0,T;\Vt_\sigma'), \label{utwconL2a}
\end{alignat}
\end{subequations}
where $r \in [1,\infty)$ if $d=2$ and $r \in [1,6)$ if $d=3$;
and
\begin{subequations}
\begin{alignat}{2}
&M^{\frac{1}{2}}\,\nabx \sqrt{\hpsiae^{\Delta t(,\pm)}} \rightarrow M^{\frac{1}{2}}\,\nabx \sqrt{\hat\psi_\epsilon}
&&\quad \mbox{weakly in } L^{2}(0,T;\Lt^2(\Omega\times D)), \label{psiwconH1a}\\
&M^{\frac{1}{2}}\,\nabq \sqrt{\hpsiae^{\Delta t(,\pm)}} \rightarrow M^{\frac{1}{2}}\,\nabq \sqrt{\hat\psi_\epsilon}
&&\quad \mbox{weakly in } L^{2}(0,T;\Lt^2(\Omega\times D)), \label{psiwconH1xa}\\
&~~~~~~~~~M\,\frac{\partial \hpsiae^{\Delta t}} {\partial t} \rightarrow
M\,\frac{\partial \hat\psi_\epsilon}{\partial t}
&&\quad \mbox{weakly in }
L^2(0,T;(H^s(\Omega\times D))'), \label{psitwconL2a}\\
&~~~~~~~~~~~\hpsiae^{\Delta t (,\pm)} \rightarrow \hat\psi_\epsilon
&&\quad \mbox{strongly in }
L^{p}(0,T;L^{1}_M(\Omega\times D)),\label{psisconL2a}
\end{alignat}
for all $p \in [1,\infty)$; and,
\begin{alignat}{2}
&\!\!\!\!\nabx\cdot\sum_{i=1}^K\Ctt_i(M\,\hpsiae^{\Delta t ,+}) \rightarrow \nabx \cdot\sum_{i=1}^K
\Ctt_i(M\,\hat\psi_\epsilon)
&&\quad\!\!\!\! \mbox{weakly in }
L^{2}(0,T;\Vt_{\sigma}').\!\label{CwconL2a}
\end{alignat}
\end{subequations}
The pair $(\ut_\epsilon,\hat\psi_\epsilon)$ is a global weak solution to problem (P$_\epsilon$), in the sense that
\begin{align}\label{equnconP}
&\displaystyle-\int_{0}^{T}
\int_{\Omega} \ut_\epsilon \cdot \frac{\partial \wt}{\partial t}
\dx \dt
+ \int_{0}^T \int_{\Omega}
\left[ \left[ (\ut_\epsilon \cdot \nabx) \ut_\epsilon \right]\,\cdot\,\wt
+ \nu \,\nabxtt \ut_\epsilon
:
\wnabtt \right] \dx \dt
\nonumber
\\
&\quad  = \int_\Omega \ut_0(\xt) \cdot \wt(\xt ,0) \dx  + \int_{0}^T
\left[ \langle \ft, \wt\rangle_{H^1_0(\Omega)}
- k\,\sum_{i=1}^K \int_{\Omega}
\Ctt_i(M\,\hat\psi_\epsilon): \nabxtt
\wt \dx \right] \dt
\nonumber
\\
& \hspace{1.8in}
\qquad \forall \wt \in W^{1,1}(0,T;\Vt_\sigma) \mbox{ s.t.\ $\wt(\cdot,T)=0$},
\end{align}
and
\begin{align}\label{eqpsinconP}
&-\int_{0}^T
\int_{\Omega \times D} M\,\hat\psi_\epsilon\, \frac{\partial \hat\varphi}{\partial t}
\dq \dx \dt
+ \int_{0}^T \int_{\Omega \times D} M\,\left[
\epsilon\, \nabx \hat\psi_\epsilon - \ut_\epsilon \,\hat\psi_\epsilon \right]\cdot\, \nabx
\hat \varphi
\,\dq \dx \dt
\nonumber \\
&
+\frac{1}{2\,\lambda}
\int_{0}^T \int_{\Omega \times D} M\,\sum_{i=1}^K
 \,\sum_{j=1}^K A_{ij}\,\nabqj \hat\psi_\epsilon
\cdot\, \nabqi
\hat \varphi
\,\dq \dx \dt
\nonumber \\
&
-
\int_{0}^T \int_{\Omega \times D}\! M\,\sum_{i=1}^K\,
\left[\sigtt(\ut_\epsilon)
\,\qt_i\right] \hat\psi_\epsilon \,\cdot\, \nabqi
\hat \varphi
\,\dq \dx \dt = \int_{\Omega \times D}\! \hat\psi_0(\xt,\qt)\,\hat\varphi(\xt,\qt,0)
\dq \dx
\nonumber \\
& \hspace{2.9cm}
\qquad \forall \hat \varphi \in W^{1,1}(0,T;H^s(\Omega\times D))
\mbox{ s.t.\ $\hat\varphi(\cdot,\cdot,T)=0$}.
\end{align}
%
In addition, the function $\ut_\epsilon$ is weakly continuous as a mapping from $[0,T]$ to $\Ht$, and $\hat\psi_\epsilon$ is weakly continuous as a mapping from $[0,T]$ to $L^1_M(\Omega \times D)$.
The weak solution $(\ut_\epsilon,\hat\psi_\epsilon)$ satisfies the following energy inequality for a.e. $t \in [0,T]$~\!\!:
\begin{eqnarray}\label{eq:energyest}
&&\!\!\|\ut_\epsilon(t)\|^2 + \nu \int_0^t \|\nabxtt \ut_\epsilon(s)\|^2 \dd s
+ \,k\int_{\Omega \times D}\!\! M\, \mathcal{F}(\hat\psi_\epsilon(t)) \dq \dx
\nonumber \\
&&\quad +\, 4\,k\,\varepsilon \int_0^t \int_{\Omega \times D} M\,
|\nabx \sqrt{\hat\psi_\epsilon} |^2 \dq \dx \dd s
+\, \frac{a_0 k}{\lambda}  \int_0^t \int_{\Omega \times D}M\,|\nabq \sqrt{\hat\psi_\epsilon}|^2 \,\dq \dx \dd s\nonumber\\
&&\hspace{-1mm}\leq \|\ut_0\|^2 + \frac{1}{\nu}\!\int_0^t\|\ft(s)\|^2_{(H^1_0(\Omega))'} \dd s + k\! \int_{\Omega \times D} M\, \mathcal{F}(\hat\psi_0) \dq \dx \leq [{\sf B}(\ut_0,\ft, \hat\psi_0)]^2,~~~~~~~
\end{eqnarray}
with $\mathcal{F}(s)= s(\log s - 1) + 1$, $s \geq 0$,
and $[{\sf B}(\ut_0,\ft, \hat\psi_0)]^2$ as defined in
{\rm (\ref{eq:energy-u+psi-final2})}.
\end{theorem}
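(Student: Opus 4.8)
\textbf{Proof plan for Theorem \ref{convfinal}.}

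The plan is to execute the programme outlined in Steps 3 and 4 of the Introduction, using the $L$-independent bounds \eqref{eq:energy-u+psi-final6a} and \eqref{eq:energy-u+psi-final6a-zeta} as the starting point. First I would extract weakly/weak-$\star$ convergent subsequences of $\{\utae^{\Delta t(,\pm)}\}$, $\{\nabxtt \utae^{\Delta t(,\pm)}\}$, $\{\partial_t \utae^{\Delta t}\}$, $\{M^{1/2}\nabx\sqrt{\hpsiae^{\Delta t(,\pm)}}\}$, $\{M^{1/2}\nabq\sqrt{\hpsiae^{\Delta t(,\pm)}}\}$ and $\{M\,\partial_t\hpsiae^{\Delta t}\}$ from the respective reflexive spaces, obtaining candidate limits $\ute$ and (via the square-root variables) $\hat\psi_\epsilon$. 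The strong convergences \eqref{usconL2a} for $\ute$ follow from the $L^\infty(0,T;\Ht)\cap L^2(0,T;\Vt)$ bound together with the $\Vt_\sigma'$-bound on $\partial_t\utaeD$ and the Aubin--Lions--Simon theorem \eqref{compact1}; the identification of the weak time-derivative limit and the fact that $\utaeD$, $\utaeDp$, $\utaeDm$ share the same limit uses \eqref{eqtime+} and the vanishing of $\tfrac{1}{\Delta t}\int_0^T\|\utaeDp-\utaeDm\|^2\dd t$-type terms (bounded by $C_\star$, hence $\|\utaeDp-\utaeDm\|_{L^2}^2 = O(\Delta t)$). The strong convergence \eqref{psisconL2a} is the crux: I would apply Dubinski{\u\i}'s Theorem \ref{thm:Dubinski} with the spaces $\mathcal{A}_0=L^1_M(\Omega\times D)$, the seminormed set $\mathcal{M}$ of nonnegative functions with finite Fisher information, and $\mathcal{A}_1=M^{-1}(H^s(\Omega\times D))'$, exactly as set up in Section \ref{sec:dubinskii}; the hypotheses $\mathcal{M}\hookrightarrow\!\!\!\rightarrow\mathcal{A}_0$ and $\mathcal{A}_0\hookrightarrow\mathcal{A}_1$ were verified there, and the bounds \eqref{eq:energy-u+psi-final6a} give $[\hpsiae^{\Delta t}]_{L^\infty(0,T;\mathcal{M})}+\|\partial_t(M\hpsiae^{\Delta t})\|_{L^2(0,T;(H^s)')}\leq C_\star$, so Dubinski{\u\i} yields strong convergence in $L^p(0,T;L^1_M)$ for $p\in[1,\infty)$ after invoking Lemma \ref{le:supplementary}; the $\pm$-versions follow since $\|\hpsiae^{\Delta t,\pm}-\hpsiae^{\Delta t}\|$ is controlled by $\Delta t$ times the time-derivative bound. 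From a.e.\ convergence of a further subsequence and Lemma \ref{one-bound}-type arguments one gets $\hat\psi_\epsilon\geq 0$ a.e.; the mass identity \eqref{mass-conserved} comes from passing to the limit in \eqref{zetacon}/\eqref{zetancon} using \eqref{eq:energy-u+psi-final6a-zeta} (the limiting $\rho_\epsilon$ solves a linear parabolic problem with datum $1$ and is $\geq 0$, $\leq 1$, hence $\equiv 1$), and \eqref{relent-fisher} follows from weak lower semicontinuity applied to \eqref{eq:energy-u+psi-final6a}.

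Next I would pass to the limit in the weak formulations \eqref{equncon} and \eqref{eqpsincon} (after integration by parts in $t$ to move the time-derivative onto the test function and produce the initial-data terms). In the Navier--Stokes equation: the time-derivative term converges by \eqref{utwconL2a}; the viscous term by \eqref{uwconH1a}; the forcing term by \eqref{fncon}; the convective term $\int\!\int [(\utaeDm\cdot\nabx)\utaeDp]\cdot\wt$ converges using strong $L^2(0,T;\Lt^r(\Omega))$ convergence of $\utaeDm$ (for $r$ as in \eqref{usconL2a}), weak $L^2(0,T;\Vt)$ convergence of $\nabx\utaeDp$, and the identity \eqref{tripid} to distribute derivatives so that only one strong-times-weak pairing is needed; the stress term $\sum_i\int\!\int\Ctt_i(M\hpsiae^{\Delta t,+}):\nabxtt\wt$ converges once we show $\Ctt_i(M\hpsiae^{\Delta t,+})\rightharpoonup\Ctt_i(M\hat\psi_\epsilon)$ in, say, $L^2(0,T;\Ltt^2(\Omega))$ — this I would obtain by writing $\Ctt_i(M\hpsiae^{\Delta t,+})$ in terms of $\sqrt{\hpsiae^{\Delta t,+}}$ and its $\qt$-gradient (via \eqref{intbyparts}, as in \eqref{U4a}), so that it is a product of the strongly convergent $M^{1/2}\sqrt{\hpsiae^{\Delta t,+}}$ (strong convergence in $L^2$ of $\Omega\times D$ following from $L^1_M$ strong convergence plus uniform $L^2_M$ bounds, or directly by the $\sqrt{\cdot}$-variable compactness) and the weakly convergent $M^{1/2}\nabq\sqrt{\hpsiae^{\Delta t,+}}$; boundedness of these via \eqref{eq:energy-u+psi-final6a} plus \eqref{additional-1} gives \eqref{CwconL2a}. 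In the Fokker--Planck equation the delicate term is the drag term $\int\!\int M\sum_i[\sigtt(\utae^{\Delta t,+})\qt_i]\beta^L(\hpsiae^{\Delta t,+})\cdot\nabqi\hat\varphi$: here I would replace $\beta^L(\hpsiae^{\Delta t,+})\nabqi\hpsiae^{\Delta t,+}$-type structure by working with $\hat\varphi$ directly, writing $M^{1/2}\beta^L(\hpsiae^{\Delta t,+}) = M^{1/2}\hpsiae^{\Delta t,+} - M^{1/2}[\hpsiae^{\Delta t,+}-L]_+$, and controlling the error $M^{1/2}[\hpsiae^{\Delta t,+}-L]_+$ in $L^1$ by a logarithmic Young inequality argument (as in the proof of item \ding{206} of Lemma \ref{psi0properties}, using the relative-entropy bound and $L\to\infty$), while the main part $M^{1/2}\hpsiae^{\Delta t,+} = (M^{1/2}\sqrt{\hpsiae^{\Delta t,+}})\cdot\sqrt{\hpsiae^{\Delta t,+}}$ converges strongly in $L^2$ and pairs against the strongly convergent $\sigtt(\utae^{\Delta t,+}) = \nabxtt\utae^{\Delta t,+}$... no — $\nabxtt\utae^{\Delta t,+}$ is only weakly convergent; so instead I pair the strongly convergent $\hpsiae^{\Delta t,+}$ (in $L^2$ of $\Omega\times D$, uniformly in time in an appropriate sense) against $\nabxtt\utae^{\Delta t,+}$ weakly convergent in $L^2(0,T;\Lt^2(\Omega))$ and $\nabqi\hat\varphi$ which is a fixed smooth test function; this strong-times-weak-times-fixed structure passes to the limit, and the $\beta^L\to\mathrm{id}$ error vanishes as $L\to\infty$. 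The remaining Fokker--Planck terms ($x$-diffusion, $q$-diffusion, $x$-convection) are handled analogously using \eqref{psiwconH1a}, \eqref{psiwconH1xa} and the strong convergence of $\utae^{\Delta t,-}$.

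Finally, for the weak continuity statements I would note that $\ute\in L^\infty(0,T;\Ht)$ with $\partial_t\ute\in L^2(0,T;\Vt_\sigma')$ implies $\ute\in C_w([0,T];\Ht)$ by a standard argument (density of $\Vt_\sigma$ in $\Ht$ and the fundamental theorem of calculus in $\Vt_\sigma'$), and similarly $\hat\psi_\epsilon\in L^\infty(0,T;L^1_M)$ with $M\partial_t\hat\psi_\epsilon\in L^2(0,T;(H^s)')$ plus the uniform integrability from the relative-entropy bound (de la Vall\'ee-Poussin / Dunford--Pettis, as already used in Lemma \ref{psi0properties}) gives $\hat\psi_\epsilon\in C_w([0,T];L^1_M)$; this also justifies assigning the initial data $\ute(\cdot,0)=\ut_0$ and $\hat\psi_\epsilon(\cdot,\cdot,0)=\hat\psi_0$, the latter using items \ding{205}--\ding{206} of Lemma \ref{psi0properties} to identify the limit of $\beta^L(\hat\psi^0)$. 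The energy inequality \eqref{eq:energyest} follows by taking $\liminf$ as $L\to\infty$ in \eqref{eq:energy-u+psi-final2} (at $t=t_n$, then for a.e.\ $t$ by the weak continuity and monotone/Fatou arguments for the left-hand side terms, and using \eqref{eq:FL2c}, \ding{203} of Lemma \ref{psi0properties} and $\hat\psi^0\to\hat\psi_0$ for the right-hand side), noting that $k=2k\cdot\tfrac12$ accounts for the coefficient in front of the entropy and Fisher-information terms after the $\tfrac12$ that was absorbed. I expect the main obstacle to be the rigorous passage to the limit in the drag term of the Fokker--Planck equation: one must simultaneously send $\beta^L\to\mathrm{id}$ (controlling the tail $[\hpsiae^{\Delta t,+}-L]_+$ purely by the $L$-uniform relative-entropy bound, with no pointwise or $L^p$, $p>1$, control) and exploit the precise strong/weak convergence pattern of the three factors $\nabxtt\utae^{\Delta t,+}$, $\hpsiae^{\Delta t,+}$ and the test function, being careful that exactly one factor is merely weakly convergent.
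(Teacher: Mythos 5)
Your plan follows the paper's own architecture almost step for step: Aubin--Lions--Simon for the velocity, Dubinski{\u\i}'s theorem with the triple $(\mathcal{M}, L^1_M, M^{-1}(H^s)')$ plus Lemma \ref{le:supplementary} for $\hpsiae^{\Delta t(,\pm)}$, term-by-term passage to the limit in \eqref{equncon} and \eqref{eqpsincon}, identification of the Kramers stress through the integration-by-parts formula \eqref{intbyparts} in the square-root variables, the $\rho$-equation argument for \eqref{mass-conserved}, a Strauss-type argument for weak continuity in time, and lower semicontinuity for \eqref{eq:energyest}. Where you genuinely diverge is the drag term: the paper splits it into the pieces $\beta^L(\hpsiae^{\Delta t,+})-\beta^L(\hat\psi_\epsilon)$, $\beta^L(\hat\psi_\epsilon)-\hat\psi_\epsilon$ and $(\nabxtt\utae^{\Delta t,+}-\nabxtt\ut_\epsilon)\,\hat\psi_\epsilon$, and integrates by parts in $\xt$ in the third piece (which is why it needs the trace discussion and $\nabx\sqrt{\hat\psi_\epsilon}\in L^2$), whereas you write $\beta^L(\hpsiae^{\Delta t,+})=\hpsiae^{\Delta t,+}-[\hpsiae^{\Delta t,+}-L]_+$, kill the tail by the uniform entropy bound and the logarithmic Young inequality (a rate $C/\log L$, as in \ding{206} of Lemma \ref{psi0properties}), and treat the main part by a weak--strong pairing with no integration by parts. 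Your route to $\hat\psi_\epsilon\in C_w([0,T];L^1_M(\Omega\times D))$ (uniform integrability from the entropy bound plus Dunford--Pettis, rather than the paper's Orlicz-space version of the Strauss lemma) is likewise a legitimate, more elementary, alternative.

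Two statements need repair, though both are fixable with tools you already invoke. First, Dubinski{\u\i} cannot be applied with $[\hpsiae^{\Delta t}]_{L^\infty(0,T;\mathcal{M})}$ bounded: the Fisher information is controlled only after integration in time (sixth and seventh terms of \eqref{eq:energy-u+psi-final6a}), so the correct choice is $p=1$; this still yields strong convergence in $L^1(0,T;L^1_M(\Omega\times D))$, and the upgrade to $L^p$, $p<\infty$, is then exactly Lemma \ref{le:supplementary} combined with the separate $L^\infty(0,T;L^1_M)$ bound coming from \eqref{additional2}. Second, your main-part argument for the drag asserts that $\hpsiae^{\Delta t,+}$ converges strongly in $L^2$ of $\Omega\times D$; that is false — there is no $L$-uniform $L^2_M$ bound on $\hpsiae^{\Delta t,+}$, only the entropy and $L^1_M$ bounds — so the pairing cannot be justified as stated. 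The fix is to integrate over $D$ first: the moment $F_L(\xt,t):=\int_D M\,\hpsiae^{\Delta t,+}\sum_{i}\qt_i\otimes\nabqi\hat\varphi\dq$ is bounded in $L^\infty$ uniformly in $L$ because $\int_D M\,\hpsiae^{\Delta t,+}\dq\le1$, and converges to the corresponding moment of $\hat\psi_\epsilon$ in $L^1(\Omega\times(0,T))$ by \eqref{psisconL2a}; interpolating between $L^\infty$ and $L^1$ gives strong $L^2(\Omega\times(0,T))$ convergence of $F_L$, which pairs correctly with $\nabxtt\utae^{\Delta t,+}\rightharpoonup\nabxtt\ut_\epsilon$ in $L^2$. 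The same $L^\infty$--$L^1$ interpolation of the $D$-moment is what allows your tail term $[\hpsiae^{\Delta t,+}-L]_+$ to be multiplied against $|\nabxtt\utae^{\Delta t,+}|\in L^2$. With these corrections your proof goes through, and your drag-term treatment is in fact somewhat shorter than the paper's, since it avoids the integration by parts in $\xt$ altogether.
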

\begin{proof} Since the proof is long, we have broken it up into a number of steps.

\smallskip

\textit{Step 1.} On recalling the weak$^\star$ compactness
of bounded balls in the Banach space $L^\infty(0,T;\Lt^2(\Omega))$ and noting the bound on
the first term on the left-hand side of \eqref{eq:energy-u+psi-final6a}, upon three successive
extractions of subsequences we deduce the existence of an unbounded
index set $\mathcal{L} \subset (1,\infty)$
such that each of the three sequences $\{\uta^{\Delta t(,\pm)}\}_{L\in \mathcal{L}}$
converges to its respective weak$^\star$ limit in $L^\infty(0,T;\Lt^2(\Omega))$ as $L \rightarrow \infty$
with $L \in \mathcal{L}$. Thanks to (\ref{ulin},b),
\begin{equation}\label{connection}
 \int_0^T \|\uta^{\Delta t}(s) - \uta^{\Delta t,+}(s)\|^2 \dd s = {\textstyle{\frac{1}{3}}}
\int_0^T \|\uta^{\Delta t,+}(s) - \uta^{\Delta t,-}(s)\|^2 \dd s  \leq \textstyle{\frac{1}{3}}
C_\star \Delta t,
\end{equation}
where the last inequality is a consequence of the second bound in \eqref{eq:energy-u+psi-final6a}.
On passing to the limit $L \rightarrow \infty$ with $L \in \mathcal{L}$ and using \eqref{LT}
we thus deduce that
the weak$^\star$ limits of the sequences $\{\uta^{\Delta t(,\pm)}\}_{L \in \mathcal{L}}$
coincide. We label this common limit by $\ute$; by construction then,
$\ute \in L^\infty(0,T;\Lt^2(\Omega))$. Thus we have shown \eqref{uwconL2a}.

Upon further successive extraction of subsequences from $\{\uta^{\Delta t(,\pm)}\}_{L\in \mathcal{L}}$
and noting the bounds on the third and eighth term on the left-hand side of
\eqref{eq:energy-u+psi-final6a} the limits (\ref{uwconH1a},d) follow
directly from the weak compactness of bounded balls in the Hilbert spaces $L^2(0,T;\Vt)$ and $L^2(0,T;\Vt'_\sigma)$ and \eqref{uwconL2a} thanks to the uniqueness of limits of
sequences in the weak topology of $L^2(0,T;\Vt)$ and $L^2(0,T;\Vt'_\sigma)$, respectively.

By the Aubin--Lions--Simon compactness theorem (cf. \eqref{compact1}), we then deduce
\eqref{usconL2a} in the case of $\uta^{\Delta t}$ on noting the compact embedding of
$\Vt$ into $\Lt^r(\Omega)\cap \Ht$, with the values of $r$ as in the statement of the theorem.
In particular, with $r=2$, $\{\uta^{\Delta t}\}_{L \in \mathcal{L}}$ converges to $\ute$,
strongly in $L^2(0,T;\Lt^2(\Omega))$ as $L\rightarrow \infty$.
Then, by the bound on the left-most term in \eqref{connection}, we deduce that
$\{\uta^{\Delta t,+}\}_{L \in \mathcal{L}}$ also converges to $\ute$, strongly in $L^2(0,T;\Lt^2(\Omega))$ as
$L \rightarrow \infty$ (and thereby $\Delta t \rightarrow 0_+$). Further,
by the bound on the middle term in \eqref{connection} we have that the same is true
of $\{\uta^{\Delta t,-}\}_{L \in \mathcal{L}}$. Thus we have shown that the three sequences $\{\uta^{\Delta t(,\pm)}\}_{L \in \mathcal{L}}$
all converge to $\ute$, strongly in $L^2(0,T;\Lt^2(\Omega))$.
Since the sequences $\{\uta^{\Delta t(,\pm)}\}_{L \in \mathcal{L}}$ are
bounded in $L^2(0,T;\Ht^1(\Omega))$ (cf. the bound on the third term in \eqref{eq:energy-u+psi-final6a})
and strongly convergent in $L^2(0,T;\Lt^2(\Omega))$, we deduce from  \eqref{eqinterp} that
\eqref{usconL2a} holds, with the values of $r$ as in the statement of the theorem.
Thus we have proved (\ref{uwconL2a}--d).

\smallskip

\textit{Step 2.}
Dubinski{\u\i}'s theorem, with $\mathcal{A}_0$, $\mathcal{A}_1$ and $\mathcal{M}$ as
in the discussion following the statement of Theorem \ref{thm:Dubinski} and selecting $p=1$
and $p_1=2$, 
implies that
\begin{align*}
&\left\{\varphi\,:\,[0,T] \rightarrow \mathcal{M}\,:\,
[\varphi]_{L^1(0,T;\mathcal M)} + \left\|\frac{{\rm d}\varphi}{{\rm d}t} \right\|_{L^{2}(0,T;\mathcal{A}_1)}
< \infty   \right\}
\\ & \hspace{2.3in}
\hookrightarrow\!\!\!\rightarrow L^1(0,T;\mathcal{A}_0)
= L^1(0,T;L^1_M(\Omega\times D)).
\end{align*}
Using this compact embedding, together with the bounds on the sixth, the seventh and the last
term on the left-hand side of \eqref{eq:energy-u+psi-final6a}, in conjunction with
\eqref{additional1} and \eqref{additional2}, we deduce (upon extraction of a subsequence)
strong convergence of $\{\psia^{\Delta t}\}_{L>1}$ in $L^1(0,T; L^1_M(\Omega \times D))$ to an element
$\hat\psi_\epsilon \in L^1(0,T; L^1_M(\Omega \times D))$, as $L \rightarrow \infty$.

Thanks to the bound on the fifth term in \eqref{eq:energy-u+psi-final6a}, by the
Cauchy--Schwarz inequality and an argument identical to the one in \eqref{connection},
we have that
\begin{align}
\left(\int_0^T\!\!\! \int_{\Omega \times D}\!\! M\, |\psia^{\Delta t} - \psia^{\Delta t,+}|\dq \dx \dt
\right)^2 \!&\leq \frac{T\, |\Omega|}{3} \!\! \displaystyle \int_0^T\!\!\!
\displaystyle \int_{\Omega \times D}\!\! M\, (\psia^{\Delta t,+} - \psia^{\Delta t,-})^2
\dq \dx \dt\nonumber\\
\label{difference}
&\leq \textstyle \frac{1}{3}C_\star T\, |\Omega|\,\Delta t\, L.
\end{align}
On recalling \eqref{LT}, and using the triangle inequality in the $L^1(0,T;L^1_M(\Omega \times D))$ norm, together
with \eqref{difference} and the strong  convergence of $\{\psia^{\Delta t}\}_{L>1}$ to
$\hat\psi_\epsilon$ in $L^1(0,T; L^1_M(\Omega \times D))$,
we deduce, as $L \rightarrow \infty$,
strong convergence of $\{\psia^{\Delta t,+}\}_{L>1}$ in $L^1(0,T; L^1_M(\Omega \times D))$ to the same element $\hat\psi_\epsilon \in L^1(0,T; L^1_M(\Omega \times D))$. The inequality \eqref{difference} then implies strong convergence of $\{\psia^{\Delta t,-}\}_{L>1}$ to $\hat\psi_\epsilon$, also. This completes the proof of \eqref{psisconL2a} for $p=1$.

From \eqref{additional1} and \eqref{additional2} we have that
\begin{equation}\label{psi-bound-6.1}
\|\psia^{\Delta t (,\pm)}(t)\|_{L^1_M(\Omega \times D)} \leq |\Omega|
\end{equation}
for a.e. $t$ in $[0,T]$ and all $L>1$.
In other words, the sequences $\{\psia^{\Delta t (,\pm)}\}_{L>1}$ are bounded in $L^\infty(0,T;L^1_M(\Omega \times D))$. By  Lemma \ref{le:supplementary},  the strong convergence of these
to $\hat\psi_\epsilon$ in $L^1(0,T;L^1_M(\Omega \times D))$, shown above, then implies
strong convergence in $L^p(0,T;L^1_M(\Omega \times D))$ to the same limit for all values of $p \in [1,\infty)$. That now completes the proof of \eqref{psisconL2a}.

Since strong convergence in $L^p(0,T;L^1_M(\Omega \times D))$, $p \geq 1$, implies convergence almost everywhere on $\Omega \times D \times [0,T]$ of a subsequence, it follows from \eqref{additional1} that $\hat\psi_\epsilon
\geq 0$ on $\Omega \times D \times [0,T]$.
Furthermore, by Fubini's theorem, strong convergence  of $\{\psia^{\Delta t(,\pm)}\}_{L>1}$ to $\hat\psi_\epsilon$ in $L^1(0,T;L^1_M(\Omega \times D))$ implies that
\[ \int_D M(\qt)\,|\psia^{\Delta t(,\pm)}(\xt,\qt,t) -  \hat\psi_\epsilon(\xt,\qt,t)|\dq \rightarrow 0 \qquad
\mbox{as $L \rightarrow \infty$}\]
for a.e. $(\xt,t) \in \Omega \times [0,T]$.

Hence we have that $\int_D M(\qt)\,\psia^{\Delta t(,\pm)}(\xt,\qt,t)\dq$
converges to $\int_D M(\qt)\,\hat\psi_\epsilon(\xt,\qt,t)\dq$, as $L \rightarrow \infty$,
for a.e. $(\xt,t) \in \Omega \times [0,T]$, and then \eqref{additional2} implies that
\begin{equation}\label{1boundonpsi}
\int_D M(\qt)\,\hat\psi_\epsilon(\xt,\qt,t)\dq \leq 1 \qquad \mbox{for a.e. $(x,t) \in \Omega \times [0,T]$.}
\end{equation}
We will show later that the inequality here can in fact be sharpened to an equality.

As the sequences $\{\psia^{\Delta t(,\pm)}\}_{L>1}$ converge to $\hat\psi_\epsilon$ strongly in
$L^1(0,T; L^1_M(\Omega \times D))$, it follows (upon extraction of suitable subsequences) that
they converge to $\hat\psi_\epsilon$ a.e. on
$\Omega \times D \times [0,T]$. This then, in turn, implies that the sequences
$\{\mathcal{F}(\psia^{\Delta t(,\pm)})\}_{L>1}$
converge to $\mathcal{F}(\hat\psi_\epsilon)$ a.e. on $\Omega \times D \times [0,T]$; in particular,
for a.e. $t \in [0,T]$, the sequences $\{\mathcal{F}(\psia^{\Delta t(,\pm)}(\cdot,\cdot,t))\}_{L>1}$
converge to $\mathcal{F}(\hat\psi_\epsilon(\cdot,\cdot,t))$ a.e. on $\Omega \times D$.
Since $\mathcal{F}$ is nonnegative, Fatou's lemma then implies that, for a.e. $t \in [0,T]$,
\begin{eqnarray}\label{fatou-app}
&&\int_{\Omega \times D} M(\qt)\, \mathcal{F}(\hat\psi_\epsilon(\xt,\qt,t))\dx \dq\nonumber\\
&&\qqquad \leq \mbox{lim inf}_{L \rightarrow \infty}
\int_{\Omega \times D} M(\qt)\, \mathcal{F}(\psia^{\Delta t(,\pm)}(\xt,\qt,t)) \dx \dq \leq C_\ast,
\end{eqnarray}
where the second inequality in \eqref{fatou-app} stems from
the bound on the fourth term on the left-hand side of \eqref{eq:energy-u+psi-final6a}. As the
integrand in the expression on the left-hand side of \eqref{fatou-app} is nonnegative, we deduce that $\mathcal{F}(\hat\psi_\epsilon)$
belongs to $L^\infty(0,T;L^1_M(\Omega \times D))$, as asserted in the statement of the theorem.

We observe in passing that since $|\sqrt{c_1} - \sqrt{c_2}\,|\leq \sqrt{|c_1-c_2 |}$ for any two nonnegative real
numbers $c_1$ and $c_2$, the strong convergence \eqref{psisconL2a} directly implies that, as $L \rightarrow \infty$ (and thereby $\Delta t \rightarrow 0_+$),
\begin{equation}\label{sqrtpsi}
\sqrt{ \psia^{\Delta t(,\pm)}} \rightarrow \sqrt{\hat\psi_\epsilon}\qquad \mbox{strongly in $L^p(0,T;L^2_M(\Omega \times D))$}\quad \forall p \in [1,\infty),
\end{equation}
and therefore, as $L \rightarrow \infty$ (and $\Delta t \rightarrow 0_+$),
\begin{equation}\label{strongpsi}
\!M^{\frac{1}{2}}\, \sqrt{\psia^{\Delta t(,\pm)}} \rightarrow M^{\frac{1}{2}}\,\sqrt{\hat\psi_\epsilon}\quad \mbox{strongly in $L^p(0,T;L^2(\Omega \times D))$}\quad \forall p \in [1,\infty).
\end{equation}
By proceeding in exactly the same way as in the previous subsection, between equations \eqref{strongpsi-zeta}
and \eqref{qlimitL2}, with $\hat\zeta^{\Lambda,1}$ and $\hat\zeta^{\!\!~1}=\hat\psi^0$ replaced by
$\psia^{\Delta t(,\pm)}$ and $\hat\psi_\epsilon$, respectively, but now using the sixth and the seventh bound in \eqref{eq:energy-u+psi-final6a}, and \eqref{additional2}, we deduce that (\ref{psiwconH1a},b) hold.

The convergence result \eqref{psitwconL2a} follows from the bound on the last term on the left-hand
side of \eqref{eq:energy-u+psi-final6a} and the weak compactness of bounded balls in the Hilbert
space $L^2(0,T; (H^s(\Omega \times D))')$, $s>1 + \textstyle{\frac{1}{2}}(K+1)d$.

The proof of \eqref{CwconL2a} is considerably more complicated, and will be given below.

\smallskip
After all these technical preparations we are now ready to return to \eqref{equncon} and
\eqref{eqpsincon} and pass to the limit $L\rightarrow \infty$ (and thereby also $\Delta t \rightarrow
0_+$); we shall also prove \eqref{CwconL2a}.
Since there are quite a few terms to deal with,
we shall discuss them one at a time, starting with equation
\eqref{eqpsincon}, and followed by equation \eqref{equncon}.

\smallskip

\textit{Step 3.}
We begin by passing to the limit $L \rightarrow \infty$ (and $\Delta t \rightarrow 0_+$) on equation
\eqref{eqpsincon}.
In what follows, we shall take test functions
$\hat\varphi \in C^1([0,T];C^\infty(\overline{\Omega\times D}))$ such that
$\hat\varphi(\cdot,\cdot,T)=0$. Note
that, for any $s \geq 0$, the set of all such test functions $\hat\varphi$ is a dense linear subspace of the linear space of functions in $W^{1,1}(0,T;H^s(\Omega \times D))$ vanishing at $t=T$.
As each of the terms in \eqref{eqpsincon} has been shown to be a continuous linear
functional with respect to $\hat\varphi$ on $L^2(0,T;H^s(\Omega \times D))$ for $s>1 + \frac{1}{2}(K+1)d$, and therefore also on $W^{1,1}(0,T;H^s(\Omega \times D))$ for
$s> 1 + \frac{1}{2}(K+1)d$, which is (continuously) embedded in $L^2(0,T;H^s(\Omega \times D))$ for $s>1 + \frac{1}{2}(K+1)d$, the use of such test functions for the purposes of the argument below is fully justified.

\smallskip

\textit{Step 3.1.} Integration by parts with respect to $t$ in the first term in \eqref{eqpsincon} gives
\begin{align}
\int_{0}^T \int_{\Omega \times D} M\,\frac{ \partial \hpsiae^{\Delta t}}{\partial t}\,
\hat \varphi \dq \dx \dt
= &- \int_0^T \int_{\Omega \times D} M\, \hpsiae^{\Delta t}\, \frac{\partial \hat\varphi}{\partial t} \dq \dx \dt \nonumber\\
&- \int_{\Omega \times D} M(\qt) \,\beta^L(\hat\psi^0(\xt,\qt))\, \hat\varphi(\xt, \qt, 0) \dq \dx \label{partialint}
\end{align}
for all $\hat \varphi \in C^1([0,T];C^\infty(\overline{\Omega \times D}))$ such that
$\hat\varphi(\cdot,\cdot,T)=0$. Using \eqref{psisconL2a} and noting point \ding{206} of Lemma \ref{psi0properties}, we immediately have that, as $L \rightarrow \infty$ (and $\Delta t \rightarrow 0_+$), the first term on the right-hand
side of \eqref{partialint} converges to the first term on the left-hand side of \eqref{eqpsinconP} and the second term on the right-hand side of \eqref{partialint} converges to $-\int_{\Omega \times D} \hat\psi_0 (\xt, \qt)\, \hat\varphi(\xt,\qt,0) \dq \dx$, resulting
in the first term on the right-hand side of \eqref{eqpsinconP}. That completes Step 3.1.

\smallskip

\textit{Step 3.2.} The second term in \eqref{eqpsincon} will be dealt with by decomposing it into two further terms, the first of
which tends to $0$, while the second converges to the expected limiting value. We proceed as follows:
\begin{align*}
&\epsilon \int_{0}^T \int_{\Omega \times D} M\, \nabx \hpsiae^{\Delta
t,+} \cdot\, \nabx \hat \varphi \,\dq \dx \dt \nonumber\\
&\qqquad= 2\epsilon \int_{0}^T \int_{\Omega \times D} M
\left(\sqrt{\psia^{\Delta t,+}} - \sqrt{\hat\psi_\epsilon}\,\right)\, \nabx \sqrt{\psiae^{\Delta t,+}} \cdot\, \nabx \hat \varphi \,\dq \dx \dt\nonumber\\
&\qqquad\quad+2\epsilon \int_{0}^T \int_{\Omega \times D} M\,
\sqrt{\hat\psi_\epsilon}\, \nabx \sqrt{\psia^{\Delta t,+}} \cdot\, \nabx \hat \varphi \,\dq \dx \dt\nonumber\\
&\qqquad =: {\rm V}_1 + {\rm V_2}.
\end{align*}
We shall show that ${\rm V}_1$ converges to $0$ and that ${\rm V}_2$ converges to the expected limit. First, note that
\begin{align}
&|{\rm V}_1|  \leq  2 \epsilon \int_0^T \int_{\Omega}\left(\int_D M|\sqrt{\psia^{\Delta t,+}}
- \sqrt{\hat\psi_\epsilon}|^2 \dq\right)^{\frac{1}{2}}\nonumber\\
&\qqqquad\times \left(\int_D M\,|\nabx\sqrt{\psia^{\Delta t,+}}|^2 \dq\right)^{\frac{1}{2}}\|\nabx \hat\varphi\|_{L^\infty(D)} \dx \dt
\nonumber\\
&\qquad \leq 2 \epsilon\int_0^T \left(\int_{\Omega \times D} M|\sqrt{\psia^{\Delta t,+}}
- \sqrt{\hat\psi_\epsilon}|^2 \dq\dx\right)^{\frac{1}{2}} \nonumber\\
&\qqqquad \times \left(\int_{\Omega\times D}M\,|\nabx \sqrt{\psia^{\Delta t,+}}|^2\dq \dx \right)^{\frac{1}{2}}\|\nabx\hat\varphi\|_{L^\infty(\Omega \times D)}\dt\nonumber\\
&\qquad =  2 \epsilon\int_0^T \| \sqrt{M\,\psia^{\Delta t,+}}
- \sqrt{M\,\hat\psi_\epsilon}\|_{L^2(\Omega \times D)}\nonumber\\
&\qqqquad \times \|M^{\frac{1}{2}}\,\nabx \sqrt{\psia^{\Delta t,+}}\|_{L^2(\Omega \times D)}\,\|\nabx\hat\varphi\|_{L^\infty(\Omega \times D)}\dt \nonumber\\
&\qquad \leq 2 \epsilon\left(\int_0^T \|M^{\frac{1}{2}}\,\nabx \sqrt{\psia^{\Delta t,+}}\|^2_{L^2(\Omega \times D)}\dt\right)^{\frac{1}{2}}\nonumber\\
&\qqqquad \times \left(\int_0^T\!\! \| \sqrt{M\,\psia^{\Delta t,+}}
- \sqrt{M\,\hat\psi_\epsilon}\|_{L^2(\Omega \times D)}^r \dt \right)^{\!\frac{1}{r}}\!\!\left(\int_0^T\!\! \|\nabx \hat\varphi\|_{L^\infty(\Omega \times D)}^{\frac{2r}{r-2}} \dt\right)^{\!\frac{r-2}{2r}}\!\!\!,
\nonumber
\end{align}
were $r \in (2,\infty)$. Using the bound on the sixth term in \eqref{eq:energy-u+psi-final6} together with the
Sobolev embedding theorem, we then have (with $C_\ast$ now denoting a possibly different constant than in
\eqref{eq:energy-u+psi-final6}, but one that is still independent of $L$ and $\Delta t$) that
\begin{align}
&|{\rm V}_1| \leq 2 C_\ast^{\frac{1}{2}} \epsilon \| \sqrt{M\,\psia^{\Delta t,+}} - \sqrt{M\,\hat\psi_\epsilon}\,\|_{L^r(0,T;L^2(\Omega \times D))}\, \|\nabx\hat\varphi\|_{L^{\frac{2r}{r-2}}(0,T;L^\infty(\Omega \times D))}\nonumber\\
&\qquad \!\leq 2 C_\ast^{\frac{1}{2}} \epsilon\, \|\psia^{\Delta t,+} - \hat\psi_\epsilon\|_{L^{\frac{r}{2}}(0,T;L^1_M(\Omega \times D))}^{\frac{1}{2}}\, \|\nabx\hat\varphi\|_{L^{\frac{2r}{r-2}}(0,T;L^\infty(\Omega \times D))} ,\nonumber
\end{align}
where we also used the elementary inequality $|\sqrt{c_1} - \sqrt{c_2}| \leq \sqrt{|c_1-c_2|}$ with $c_1, c_2 \in \mathbb{R}_{\geq 0}$.
The norm of the difference in the last displayed line is known to converge to $0$ as $L \rightarrow \infty$
(and $\Delta t \rightarrow 0_+$) by \eqref{psisconL2a}. This then implies that
the term ${\rm V}_1$ converges to $0$ as  $L \rightarrow \infty$
(and $\Delta t \rightarrow 0_+$).

Concerning the term ${\rm V}_2$, we have that
\begin{align*}
&{\rm V}_2 = 2\epsilon \int_{0}^T \int_{\Omega \times D} M^{\frac{1}{2}}\,
\nabx \sqrt{\psia^{\Delta t,+}} \cdot\, \sqrt{M\,\hat\psi_\epsilon}\, \nabx \hat \varphi \,\dq \dx \dt.
\end{align*}
Once we have verified
that $\sqrt{M\,\hat\psi_\epsilon}\, \nabx \hat \varphi$ belongs to $L^2(0,T;\Lt^2(\Omega\times D))$,
the weak convergence result \eqref{psiwconH1a} will imply that
\begin{align*}
&{\rm V}_2 \rightarrow 2\epsilon \int_{0}^T \int_{\Omega \times D} M^{\frac{1}{2}}\,
\nabx \sqrt{\hat\psi_\epsilon} \cdot\, \sqrt{M\,\hat\psi_\epsilon}\, \nabx \hat \varphi \,\dq \dx \dt\\
&\qqqquad = \epsilon \int_{0}^T \int_{\Omega \times D} M\,
\nabx \hat\psi_\epsilon \cdot\ \nabx \hat \varphi \,\dq \dx \dt
\end{align*}
as $L\rightarrow \infty$ (and $\Delta t \rightarrow 0_+$), and we will have completed Step 3.2. Let us therefore show that $\sqrt{M\,\hat\psi_\epsilon}\,\nabx\hat\varphi$
belongs to $L^2(0,T;\Lt^2(\Omega \times D))$; the justification is quite straightforward: using \eqref{1boundonpsi} we have that
\begin{align*}
&\int_0^T\int_{\Omega \times D} |\sqrt{M\,\hat\psi_\epsilon}\,\nabx\hat\varphi|^2 \dx \dt
= \int_0^T \int_{\Omega \times D} M\,\hat\psi_\epsilon\,|\nabx\hat\varphi|^2 \dq \dx \dt\nonumber\\
&\qquad \leq \int_0^T \int_\Omega \|\nabx\hat\varphi\|^2_{L^\infty(D)}\,\left(\int_D M\,\hat\psi_\epsilon \dq\right) \dx \dt\nonumber\\
&\qquad \leq \int_0^T \int_{\Omega} \|\nabx\hat\varphi\|^2_{L^\infty(D)} \dx \dt\\
&\qquad = \|\nabx \hat\varphi\|^2_{L^2(0,T;L^2(\Omega;L^\infty(D)))} < \infty.
\end{align*}
That now completes Step 3.2.

\smallskip

\textit{Step 3.3.} The third term in \eqref{eqpsincon} is dealt with as follows:
\begin{align*}
& - \int_{0}^T \int_{\Omega \times D} M\, \utae^{\Delta t,-}\,\hpsiae^{\Delta t,+} \cdot\, \nabx
\hat \varphi \,\dq \dx \dt = - \int_{0}^T \int_{\Omega \times D} M\, \ut_\epsilon \,\hat\psi_\epsilon \cdot\, \nabx \hat \varphi \,\dq \dx \dt\\
&\qqqquad + \int_{0}^T \int_{\Omega \times D} M\, (\ut_\epsilon- \utae^{\Delta t,-})\,\psia^{\Delta t,+} \cdot\, \nabx \hat \varphi \,\dq \dx \dt\\
&\qqqquad +  \int_{0}^T \int_{\Omega \times D} M\, \ut_\epsilon\,(\hat\psi_\epsilon - \hpsiae^{\Delta t,+}) \cdot\, \nabx \hat \varphi \,\dq \dx \dt.
\end{align*}
We label the last two terms by ${\rm V}_3$ and ${\rm V}_4$ and we show that each of them converges to
$0$ as $L \rightarrow 0$ (and $\Delta t \rightarrow 0_+$). We start with term ${\rm V}_3$; below, we
apply H\"older's inequality with $r \in (1,\infty)$ in the case of $d=2$ and with $r \in (1,6)$ when  $d=3$:
\begin{align*}
&~\!|{\rm V}_3| = \left|\int_{0}^T \int_{\Omega} (\ut_\epsilon- \utae^{\Delta t,-})\cdot\left[\int_D
M\psia^{\Delta t,+} \left(\nabx \hat \varphi\right) \dq\right] \dx \dt\right|\\
&\qquad \leq \int_0^T \int_\Omega |\ut_\epsilon- \utae^{\Delta t,-}| \left[\int_D
M\psia^{\Delta t,+}\dq\right] \|\nabx \hat \varphi\|_{L^\infty(D)}
\dx \dt\\
&\qquad \leq \int_0^T \int_\Omega |\ut_\epsilon- \utae^{\Delta t,-}| \, \|\nabx \hat \varphi\|_{L^\infty(D)} \dx \dt\\
&\qquad \leq \int_0^T\left(\int_\Omega |\ut_\epsilon- \utae^{\Delta t,-}|^r\dx\right)^{\frac{1}{r}} \left(\int_\Omega\|\nabx \hat \varphi\|^{\frac{r}{r-1}}_{L^\infty(D)}\dx \right)^{\frac{r-1}{r}}\dt\\
&\qquad \leq \int_0^T \|\ut_\epsilon- \utae^{\Delta t,-}\|_{L^r(\Omega)}\,\|\nabx \hat \varphi\|_{L^{\frac{r}{r-1}}(\Omega; L^\infty(D))}\dt\\
&\qquad \leq \|\ut_\epsilon- \utae^{\Delta t,-}\|_{L^2(0,T;L^r(\Omega))}\,\|\nabx \hat \varphi\|_{L^2(0,T;L^{\frac{r}{r-1}}(\Omega; L^\infty(D)))},
\end{align*}
where in the transition from the second line to the third line we made use of \eqref{properties-b}.
Thanks to \eqref{usconL2a} the first factor in the last line
converges to $0$, and hence ${\rm V}_3$ converges to $0$
also, as $L\rightarrow \infty$ (and $\Delta t \rightarrow 0_+$).

For ${\rm V}_4$, we have, by using Fubini's theorem, the factorization
\begin{equation}\label{psi-fact}
M\left(\hat\psi_\epsilon - \hpsiae^{\Delta t,+}\right) = M^{\frac{1}{2}}\left(\sqrt{\hat\psi_\epsilon} - \sqrt{\hpsiae^{\Delta t,+}}\right)\,  M^{\frac{1}{2}}\left(\sqrt{\hat\psi_\epsilon} + \sqrt{\hpsiae^{\Delta t,+}}\right),
\end{equation}
together with the Cauchy--Schwarz inequality, \eqref{properties-b}, \eqref{1boundonpsi} and the 
inequality
$|\sqrt{c_1} - \sqrt{c_2}~\!| \leq \sqrt{|c_1-c_2|}$ with $c_1, c_2 \in \mathbb{R}_{\geq 0}$, that
\begin{align*}
&~|{\rm V}_4| \leq \int_{0}^T \int_{\Omega \times D} M\, |\ut_\epsilon|\,|\hat\psi_\epsilon - \hpsiae^{\Delta t,+}|\,|\nabx \hat \varphi| \,\dq \dx \dt\\
&\qquad \leq \int_{0}^T \int_{\Omega} |\ut_\epsilon|\,\left(\int_D M\, |\hat\psi_\epsilon - \hpsiae^{\Delta t,+}| \dq\right) \|\nabx \hat \varphi\|_{L^\infty(D)}\dx \dt\\
&\qquad \leq {2}\,\int_{0}^T \int_{\Omega} |\ut_\epsilon|\,\left(\int_D M\, |\sqrt{\hat\psi_\epsilon} - \sqrt{\hpsiae^{\Delta t,+}}|^2 \dq\right)^{\frac{1}{2}} \|\nabx \hat \varphi\|_{L^\infty(D)}\dx \dt\\
&\qquad \leq {2}\,\int_{0}^T \left(\int_{\Omega} |\ut_\epsilon|^2\dx\right)^{\frac{1}{2}} \,\left(\int_{\Omega \times D} M\, |\sqrt{\hat\psi_\epsilon} - \sqrt{\hpsiae^{\Delta t,+}}|^2\dq \dx \right)^{\frac{1}{2}}\\
&\qqqquad\hspace{6.6cm} \times  \|\nabx \hat \varphi\|_{L^\infty(\Omega \times D)}\dt\\
&\qquad \leq {2}\,\int_{0}^T \|\ut_\epsilon\|_{L^2(\Omega)} \,\left(\int_{\Omega \times D} M\, |\hat\psi_\epsilon - \hpsiae^{\Delta t,+}|\dq \dx \right)^{\frac{1}{2}}\\
&\qqqquad\hspace{6.6cm} \times  \|\nabx \hat \varphi\|_{L^\infty(\Omega \times D)}\dt\\
&\qquad \leq {2}\, \|\ut_\epsilon\|_{L^\infty(0,T;L^2(\Omega))} \|\hat\psi_\epsilon - \hpsiae^{\Delta t,+}\|^{\frac{1}{2}}_{L^1(0,T;L^1_M(\Omega \times D))}  \|\nabx \hat \varphi\|_{L^2(0,T;L^\infty(\Omega \times D))}.
\end{align*}
By \eqref{uwconL2a} the first factor in the last line is finite while, according to \eqref{psisconL2a} (with $p=1$),
the middle factor converges to $0$ as $L \rightarrow \infty$ (and $\Delta t \rightarrow 0_+$). This
proves that ${\rm V}_4$ converges to $0$ as $L \rightarrow \infty$ (and $\Delta t \rightarrow 0_+$),
also. That completes Step 3.3.

\smallskip

\textit{Step 3.4.} Thanks to \eqref{psiwconH1xa}, as $L\rightarrow \infty$ (and $\Delta t \rightarrow 0_+$),
\begin{align*}
&M^{\frac{1}{2}}\,\nabq \sqrt{\hpsiae^{\Delta t(,\pm)}} \rightarrow M^{\frac{1}{2}}\,\nabq \sqrt{\hat\psi_\epsilon}
&&\quad \mbox{weakly in } L^{2}(0,T;\Lt^2(\Omega\times D)).
\end{align*}
This, in turn, implies that, componentwise, as $L\rightarrow \infty$ (and $\Delta t \rightarrow 0_+$),
\begin{align*}
&M^{\frac{1}{2}}\,\nabqj \sqrt{\hpsiae^{\Delta t(,\pm)}} \rightarrow M^{\frac{1}{2}}\,\nabqj \sqrt{\hat\psi_\epsilon}
&&\quad \mbox{weakly in } L^{2}(0,T;\Lt^2(\Omega\times D)),
\end{align*}
for each $j=1,\dots, K$, whereby also,
\begin{align*}
&M^{\frac{1}{2}}\, \sum_{j=1}^K A_{ij}\nabqj \sqrt{\hpsiae^{\Delta t(,\pm)}} \rightarrow M^{\frac{1}{2}}\, \sum_{j=1}^KA_{ij}\nabqj \sqrt{\hat\psi_\epsilon}
&&\quad \mbox{weakly in } L^{2}(0,T;\Lt^2(\Omega\times D)),
\end{align*}
for each $i=1,\dots,K$. That places us in a very similar position as in the case of Step 3.2, and we can argue in an
identical manner as there to show that
\begin{align*}
&\frac{1}{2\,\lambda}
\int_{0}^T \int_{\Omega \times D} M\,\sum_{i=1}^K
 \,\sum_{j=1}^K A_{ij}\,\nabqj \hpsiae^{\Delta t,+}
\cdot\, \nabqi
\hat \varphi
\,\dq \dx \dt \nonumber\\
&\qquad\rightarrow \frac{1}{2\,\lambda}
\int_{0}^T \int_{\Omega \times D} M\,\sum_{i=1}^K
 \,\sum_{j=1}^K A_{ij}\,\nabqj \hat\psi_\epsilon
\cdot\, \nabqi \hat \varphi
\,\dq \dx \dt
\end{align*}
as $L \rightarrow \infty$ and $\Delta t \rightarrow 0_+$, for all $\hat\varphi \in L^{\frac{2r}{r-2}}(0,T;W^{1,\infty}(\Omega \times D))$, $r \in (2,\infty)$, and in particular for all $\hat\varphi \in C^1([0,T];C^\infty(\overline{\Omega \times D}))$. That completes Step 3.4.

\smallskip

\textit{Step 3.5.} The final term in \eqref{eqpsincon}, the drag term, is the one in the
equation that is the most difficult to deal with.
We shall break it up into four subterms, three of which will be shown to converge to
$0$ in the limit of $L\rightarrow \infty$ (and $\Delta t\rightarrow 0_{+}$), leaving the
fourth term as the (expected) limiting value:
\begin{align}\label{bound-3.5}
& -
\int_{0}^T \int_{\Omega \times D} M\,\sum_{i=1}^K
[\sigtt(\utae^{\Delta t,+})
\,\qt_i]\,\beta^L(\hpsiae^{\Delta t,+}) \,\cdot\, \nabqi
\hat \varphi
\,\dq \dx \dt \nonumber\\
&= - \int_{0}^T \int_{\Omega \times D} M\,\sum_{i=1}^K
\left[\left(\nabxtt\utae^{\Delta t,+}\right) \,\qt_i\right]\,\beta^L(\hpsiae^{\Delta t,+}) \,\cdot\, \nabqi
\hat \varphi
\,\dq \dx \dt\nonumber\\
&= - \int_{0}^T \int_{\Omega \times D} M\,\sum_{i=1}^K
\left[\left(\nabxtt\utae^{\Delta t,+}\right) \,\qt_i\right]\,\left(\beta^L(\hpsiae^{\Delta t,+})
- \beta^L(\hat\psi_\epsilon)\right)\,\cdot\, \nabqi
\hat \varphi
\,\dq \dx \dt\nonumber\\
&\,\quad- \int_{0}^T \int_{\Omega \times D} M\,\sum_{i=1}^K
\left[\left(\nabxtt\utae^{\Delta t,+}\right) \,\qt_i\right]\,\left(\beta^L(\hat\psi_\epsilon)
- \hat\psi_\epsilon\right)\,\cdot\, \nabqi
\hat \varphi
\,\dq \dx \dt\nonumber\\
&\,\quad- \int_{0}^T \int_{\Omega \times D} M\,\sum_{i=1}^K
\left[\left(\nabxtt\utae^{\Delta t,+}-\nabxtt\ut_\epsilon\right) \,\qt_i\right]\, \hat\psi_\epsilon\,\cdot\, \nabqi
\hat \varphi
\,\dq \dx \dt\nonumber\\
&\,\quad- \int_{0}^T \int_{\Omega \times D} M\,\sum_{i=1}^K
\left[\left(\nabxtt\ut_\epsilon\right) \,\qt_i\right]\, \hat\psi_\epsilon\,\cdot\, \nabqi
\hat \varphi
\,\dq \dx \dt.
\end{align}
Strictly speaking, we should have written ``$L \rightarrow \infty$, with $L \in \mathcal{L}$,'' as in Step 1 above, instead of ``$L \rightarrow \infty$''; for the sake of brevity we chose to use the latter, compressed notation. The same notational convention applies below.

We label the first three terms on the right-hand side by ${\rm V}_5$, ${\rm V}_6$, ${\rm V}_7$, respectively.
We shall show that each of the three terms converges to $0$, leaving the fourth
term as the limit of the left-most expression in the chain, as $L\rightarrow \infty$ (and $\Delta t
\rightarrow 0_+$).

We begin by bounding the term ${\rm V}_5$, noting that $\beta^L$ is Lipschitz
continuous, with Lipschitz constant $1$, writing, as before $b:=|\bt|_1$, and using the
factorization \eqref{psi-fact} together with \eqref{properties-b} and
(\ref{1boundonpsi}), and then proceeding as
in the case of term ${\rm V}_4$ in Step 3.3:
\begin{align*}
&|{\rm V}_5| \leq \sqrt{b}\,\int_{0}^T \int_{\Omega \times D} M\, |\nabxtt\utae^{\Delta t,+}|\, |\hpsiae^{\Delta t,+} - \hat\psi_\epsilon|\,|\nabq \hat \varphi|\,\dq \dx \dt\\
&\qquad\leq \,\sqrt{b}\,\int_{0}^T \left[\int_\Omega |\nabxtt\utae^{\Delta t,+}|\,\left(\int_{D} M\, \, |\hpsiae^{\Delta t,+} - \hat\psi_\epsilon|\,\dq\right)\dx \right]\|\nabq \hat \varphi\|_{L^\infty(\Omega \times D)} \dt\\
&\qquad\leq \,2\sqrt{b}\,\|\nabxtt\utae^{\Delta t,+}\|_{L^2(0,T;L^2(\Omega))}\,\|\hpsiae^{\Delta t,+} - \hat\psi_\epsilon\|^{\frac{1}{2}}_{L^1(0,T;L^1_M(\Omega\times D))}\\
&\qqqquad\hspace{6.6cm} \times   \|\nabq \hat \varphi\|_{L^\infty((0,T)\times \Omega \times D))}.
\end{align*}
On noting the bound on the third term on the left-hand side of \eqref{eq:energy-u+psi-final6a}
and the convergence result \eqref{psisconL2a} that was proved in Step 2, we deduce that term
${\rm V}_5$ converges to $0$ as $L\rightarrow \infty$ (and $\Delta t \rightarrow 0_+$).

We move on to term ${\rm V}_6$, using an identical argument as in the case of term ${\rm V}_5$:
\begin{align*}
&|{\rm V}_6| \leq \sqrt{b}\,\int_{0}^T \int_{\Omega \times D} M\,|\nabxtt\utae^{\Delta t,+}| \,|\beta^L(\hat\psi_\epsilon)
- \hat\psi_\epsilon|\, |\nabq \hat \varphi| \dq \dx \dt\\
&\qquad\!\leq \sqrt{b}\,\int_{0}^T \left[\int_{\Omega} |\nabxtt\utae^{\Delta t,+}| \left(\int_D M\,|\beta^L(\hat\psi_\epsilon)
- \hat\psi_\epsilon|\,\dq\right) \dx \right] \|\nabq \hat \varphi\|_{L^\infty(\Omega \times D)} \dt\\
&\qquad\leq \,2\sqrt{b}\,\|\nabxtt\utae^{\Delta t,+}\|_{L^2(0,T;L^2(\Omega))}\,\|\beta^{L}(\hat\psi_\epsilon) - \hat\psi_\epsilon\|^{\frac{1}{2}}_{L^1(0,T;L^1_M(\Omega\times D))}\\
&\qqqquad\hspace{6.6cm} \times   \|\nabq \hat \varphi\|_{L^\infty((0,T)\times \Omega \times D))}.
\end{align*}
Observe that $0 \leq \hat\psi_\epsilon - \beta^{L}(\hat\psi_\epsilon) \leq \hat\psi_\epsilon$
and that $\hat\psi_\epsilon - \beta^{L}(\hat\psi_\epsilon)$ converges to $0$ almost everywhere on
$\Omega \times D \times (0,T)$ as $L \rightarrow \infty$.
Note further that, thanks to \eqref{psisconL2a} with $p=1$,
$\hat\psi_\epsilon \in L^1(0,T;L^1_M(\Omega \times D))$. Thus, Lebesgue's dominated convergence
theorem implies that, as $L \rightarrow \infty$, the middle factor in the last displayed
line converges to $0$. Hence, recalling the bound on the third term on the left-hand side of
\eqref{eq:energy-u+psi-final6a}, we thus deduce that ${\rm V}_6$ converges to $0$ as
$L\rightarrow \infty$ (and $\Delta t \rightarrow 0_+$).

Finally, we consider the term ${\rm V}_7$:
\begin{align*}
&\,\quad {\rm V}_7:=- \int_{0}^T \int_{\Omega \times D} M\,\sum_{i=1}^K
\left[\left(\nabxtt\utae^{\Delta t,+}-\nabxtt\ut_\epsilon\right) \,\qt_i\right]\, \hat\psi_\epsilon\,\cdot\, \nabqi \hat \varphi \,\dq \dx \dt.
\end{align*}
We observe that, before starting to bound ${\rm V}_7$, we should perform an integration by
parts in order to transfer the $x$-gradients from the difference $\nabxtt\utae^{\Delta t,+}-\nabxtt\ut_\epsilon$ onto the other factors
under the integral sign, as we only have weak, but not strong, convergence of $\nabxtt\utae^{\Delta t,+}-\nabxtt\ut_\epsilon$ to $0$, (cf. \eqref{uwconH1a}) whereas the difference $\utae^{\Delta t,+}-\ut_\epsilon$ converges to $0$
strongly by virtue of \eqref{usconL2a}.

We note is this respect
that the function $\xt \in \Omega \mapsto \hat\psi_\varepsilon(\xt,\qt,t) \in \mathbb{R}_{\geq 0}$
has a well-defined trace
on $\partial \Omega$ for a.e. $(\qt,t) \in D \times (0,T)$, since, thanks to \eqref{psiwconH1a},
\[
\mbox{$\sqrt{\hat\psi_\epsilon(\cdot,\qt,t)} \in H^1(\Omega),\qquad$ and therefore $\qquad\left.\sqrt{\hat\psi_\epsilon(\cdot,\qt,t)}\right|_{\partial \Omega} \in H^{1/2}(\partial
\Omega)$,}\]
for a.e. $(\qt,t) \in D \times (0,T)$, implying that $\sqrt{\hat\psi_\epsilon(\cdot,\qt,t)}\,|_{\partial\Omega} \in L^{2p}(\partial \Omega)$ for a.e. $(\qt,t) \in D \times (0,T)$, with $2p \in [1,\infty)$ when $d=2$ and $2p \in [1,4]$ when $d=3$, whereby $\hat\psi_\epsilon|_{\partial \Omega} \in L^{p}(\partial \Omega)$ for a.e. $(\qt,t) \in D \times (0,T)$, with $p \in [1,\infty)$ when $d=2$ and $p \in [1,2]$ when $d=3$.
As the functions $\ut_\epsilon$ and $\ut_{\epsilon,L}^{\Delta t, +}$ have zero trace
on $\partial\Omega$, the boundary integral that arises in the course of integration by parts is correctly defined and,
in fact, vanishes. With these preliminary remarks in mind, we first write
\begin{align*}
&{\rm V}_7=- \!\int_{0}^T\!\!\! \int_{\Omega \times D}\!\!\! M\sum_{i=1}^K \sum_{m,n=1}^d\!\frac{\partial}{\partial  x_m} \left[\left((\utae^{\Delta t,+})_n-(\ut_\epsilon)_n\right) (\qt_i)_m\right]\!\hat\psi_\epsilon \, (\nabqi \hat \varphi)_n \dq \dx \dt.
\end{align*}
Here, $(\utae^{\Delta t,+})_n$ and $(\ut_\epsilon)_n$ denote the $n$th among the $d$ components of the
vectors $\utae^{\Delta t,+}$ and $\ut_\epsilon$, $1 \leq n \leq d$, respectively,
and $(\nabqi \hat \varphi)_n$ denotes the $n$th among the $d$ components of the vector
$\nabqi \hat \varphi$, $1 \leq n \leq d$, for each $i \in \{1,\dots, K\}$. Similarly,
$(\qt_i)_m$ denotes the $m$th component, $1 \leq m \leq d$, of the $d$-component vector
$\qt_i$ for $i \in \{1,\dots, K\}$. Now, on integrating by parts w.r.t. $x_m$
and cancelling the boundary integral terms, with the justification given above, we have that

\begin{align*}
&{\rm V}_7= \int_{0}^T\!\!\! \int_{\Omega\times D}\!\!M\sum_{i=1}^K\sum_{m,n=1}^d
\left[\left((\utae^{\Delta t,+})_n-(\ut_\epsilon)_n\right) \,(\qt_i)_m\right] \frac{\partial}{\partial  x_m}\left(\hat\psi_\epsilon\, (\nabqi \hat \varphi)_n\right)\!\dq \dx \dt\\
&\quad\,\,= \int_{0}^T\!\!\! \int_{\Omega\times D}\!\!M\sum_{i=1}^K\sum_{m,n=1}^d
\left[\left((\utae^{\Delta t,+})_n-(\ut_\epsilon)_n\right) \,(\qt_i)_m\right] \frac{\partial \hat\psi_\epsilon}{\partial  x_m}\, (\nabqi \hat \varphi)_n\dq \dx \dt\\
&\quad\quad\,\,+ \int_{0}^T\!\!\! \int_{\Omega\times D}\!\!M\sum_{i=1}^K\sum_{m,n=1}^d
\left[\left((\utae^{\Delta t,+})_n-(\ut_\epsilon)_n\right) \,(\qt_i)_m\right] \left(\hat\psi_\epsilon\,\frac{\partial}{\partial  x_m} (\nabqi \hat \varphi)_n\right)\!\dq \dx \dt\\
&\quad\,=:{\rm V}_{7,1} + {\rm V}_{7,2}.
\end{align*}
For the term ${\rm V}_{7,1}$, we have that
\begin{align*}
&|{\rm V}_{7,1}| \leq \int_{0}^T\!\!\! \int_{\Omega\times D}\!\!M\,\sum_{i=1}^K\sum_{m,n=1}^d
\left|(\utae^{\Delta t,+})_n-(\ut_\epsilon)_n\right|\,|(\qt_i)_m|\, \left|\frac{\partial \hat\psi_\epsilon}{\partial  x_m}\right|\, \left|(\nabqi \hat \varphi)_n\right|\,\dq \dx \dt\\
&\quad\,\,\leq \int_{0}^T\!\!\! \int_{\Omega\times D}\!\!M\,\left(\sum_{i=1}^K\sum_{m,n=1}^d
\left|(\utae^{\Delta t,+})_n-(\ut_\epsilon)_n\right|^2\,|(\qt_i)_m|^2\right)^{\frac{1}{2}}\\
&\hspace{5.6cm}\times
\left(\sum_{i=1}^K\sum_{m,n=1}^d \left|\frac{\partial \hat\psi_\epsilon}{\partial  x_m}\right|^2\, \left|(\nabqi \hat \varphi)_n\right|^2\right)^{\frac{1}{2}}\,\dq \dx \dt\\
&\quad\,\,= \int_{0}^T \int_{\Omega\times D}\!\!M\,|\utae^{\Delta t,+}- \ut_\epsilon|\,|\qt|
|\nabx\hat\psi_\epsilon|\, |\nabq \hat \varphi\,| \dq \dx \dt\\
&\quad\,\,\leq \sqrt{b} \int_{0}^T \left(\int_{\Omega\times D} M\,|\utae^{\Delta t,+}- \ut_\epsilon|\, |\nabx\hat\psi_\epsilon| \dq \dx\right)\|\nabq \hat \varphi\,\|_{L^\infty(\Omega \times D)}\dt\\
&\quad\,\,= 2\sqrt{b} \int_{0}^T \left[\int_{\Omega} |\utae^{\Delta t,+}- \ut_\epsilon|\, \left(\int_D M\, \sqrt{\hat\psi_\epsilon}\,|\nabx\sqrt{\hat\psi_\epsilon}| \dq\right) \dx\right]\|\nabq \hat \varphi\,\|_{L^\infty(\Omega \times D)}\dt\\
&\quad\,\,\leq 2\sqrt{b} \int_{0}^T \left[\int_{\Omega} |\utae^{\Delta t,+}- \ut_\epsilon|\, \left(\int_D M\, |\nabx\sqrt{\hat\psi_\epsilon}|^2 \dq\right)^{\frac{1}{2}} \dx\right]\|\nabq \hat \varphi\,\|_{L^\infty(\Omega \times D)}\dt,
\end{align*}
where in the transition to the last line we used the Cauchy--Schwarz inequality in conjunction with
the upper bound \eqref{1boundonpsi}. Hence,
\begin{align*}
&|{\rm V}_{7,1}| \leq
2\sqrt{b}\, \|\utae^{\Delta t,+}- \ut_\epsilon\|_{L^2(0,T;L^2(\Omega))}\, \|\nabx\sqrt{\hat\psi_\epsilon}\|_{L^2(0,T;L^2_M(\Omega \times D))}\\
&\hspace{6.5cm}\times \|\nabq \hat \varphi\,\|_{L^\infty(0,T;L^\infty(\Omega \times D))}.
\end{align*}
Thanks to \eqref{usconL2a} with $r=2$ and  \eqref{psiwconH1a},
${\rm V}_{7,1}$ tends to $0$ as
$L \rightarrow 0$ (and $\Delta t \rightarrow 0_+$).

Let us now consider the term ${\rm V}_{7,2}$. Proceeding similarly as in the case of the term ${\rm V}_{7,1}$, using \eqref{1boundonpsi}, yields
\begin{align*}
&|{\rm V}_{7,2}| \leq
\sqrt{b}\,\|\utae^{\Delta t,+}-\ut_\epsilon\|_{L^2(0,T;L^2(\Omega))}\,\|\nabxtt \nabq \hat \varphi\|_{L^2(0,T;L^2(\Omega;L^\infty(D)))}.
\end{align*}
Noting \eqref{usconL2a} with $r=2$, we deduce that ${\rm V}_{7,2}$ converges to $0$ as
$L \rightarrow 0$ (and $\Delta t \rightarrow 0_+$).
Having shown that both ${\rm V}_{7,1}$ and  ${\rm V}_{7,2}$ converge to $0$ as
$L \rightarrow 0$ (and $\Delta t \rightarrow 0_+$), it follows that the same is
true of ${\rm V}_7 = {\rm V}_{7,1} + {\rm V}_{7,2}$. We have already shown that
${\rm V}_5$ and ${\rm V}_{6}$ converge to $0$ as $L \rightarrow 0$ (and $\Delta t
\rightarrow 0_+$). Since the sum of the first three terms on the left-hand side of
\eqref{bound-3.5} converges to $0$, it follows that the left-most expression in
the chain \eqref{bound-3.5} converges to the right-most term, in the
limit of $L \rightarrow \infty$ (and $\Delta t \rightarrow 0_+$).
That completes Step 3.5.

Having dealt with \eqref{eqpsincon}, we now turn to \eqref{equncon},
with the aim to pass to the limit with $L$ (and $\Delta t$). In Steps 3.6 and 3.7
below we shall choose as our test function
\[ \wt \in C^1([0,T];\Ct^\infty_0(\Omega)) \quad \mbox{with $\wt(\cdot,T)=0$, and $\nabx \cdot \wt = 0$ on $\Omega$
for all $t \in [0,T]$}.\]
Clearly, any such $\wt$ belongs to $L^1(0,T;\Vt)$ and is therefore a legitimate choice of
test function in \eqref{equncon}. Furthermore, for any $\sigma\geq 1$, the set of such smooth test
functions $\wt$ is dense in the space of all functions in $W^{1,1}(0,T;\Vt_\sigma)$ that vanish at $t=T$. As each term in
\eqref{equncon} has been shown before to be a continuous linear functional on $L^2(0,T;\Vt_\sigma)$,
$\sigma \geq \frac{1}{2}d$, $\sigma>1$ and $W^{1,1}(0,T;\Vt_\sigma)$ is
(continuously) embedded in $L^2(0,T;\Vt_\sigma)$,
$\sigma \geq \frac{1}{2}d$, $\sigma>1$, the use of such smooth test functions for the purposes of the argument below is fully justified.

\smallskip

\textit{Step 3.6.} The terms on the left-hand side of \eqref{equncon} are handled routinely,
using \eqref{eq:energy-u+psi-final6a} and, respectively, integration by parts in
time in conjunction with
%
\eqref{usconL2a} with $r=2$, \eqref{uwconH1a}
and recalling that $\ut^0 \rightarrow \ut_0$ weakly in $\Ht$.
In particular, the second (nonlinear) term on the left-hand
side of \eqref{equncon} is quite simple to deal with on rewriting it as
$- \int_0^T(\uta^{\Delta t,+} \otimes \uta^{\Delta t,-}, \nabxtt\wt)\dt$,
and then considering the difference
$\int_0^T(\ut_\epsilon \otimes \ut_\epsilon - \uta^{\Delta t,+} \otimes \uta^{\Delta t,-}, \nabxtt\wt)\dt$, which is bounded by
\[ \left(\int_0^T\|\ut_\epsilon \otimes \ut_\epsilon - \uta^{\Delta t,+} \otimes \uta^{\Delta t,-}\|_{L^1(\Omega)}\dt\right)  \|\nabxtt\wt\|_{L^\infty(0,T;L^\infty(\Omega))}.\]
By adding and subtracting $\ut_\epsilon \otimes \uta^{\Delta t,-}$ inside the first norm sign, using
the triangle inequality, followed by the Cauchy--Schwarz inequality in each of the resulting terms,
and then applying the first bound in \eqref{eq:energy-u+psi-final6a}, and \eqref{usconL2a} with $r=2$,
we deduce that the above expression converges to $0$ as $L\rightarrow \infty$ (and $\Delta t\rightarrow
0_+$).  The convergence of the first term on the right-hand side of \eqref{equncon} to the correct limit,
as $L \rightarrow \infty$ (and $\Delta t \rightarrow 0_+$), is an immediate consequence of \eqref{fncon}.
We refer the reader for a similar argument
to Ch.\ 3, Sec.\ 4 of Temam \cite{Temam}. That completes Step 3.6.

\smallskip

\textit{Step 3.7.} The extra-stress tensor appearing on the right-hand side of \eqref{equncon} is
dealt with as follows. First, by using \eqref{intbyparts}
and noting that $\wt$ is, by assumption, divergence-free,
and proceeding in exactly the same manner as in
\eqref{U4a}, 
but with $\psia^{\Delta t,+}$
now replaced by $\psia^{\Delta t,+} - \hat\psi_\epsilon$, we have that
\begin{align*}
&{\rm V}_8:=\left|\,k\int_{0}^T\!\sum_{i=1}^K \int_{\Omega}
\Ctt_i(M\,\psia^{\Delta t,+}): \nabxtt \wt \dx \dt - k\int_{0}^T\!\sum_{i=1}^K \int_{\Omega}
\Ctt_i(M\,\hat\psi_\epsilon): \nabxtt \wt \dx \dt\,\right|\\
&\quad\,\, = k \left|\,\int_{0}^T \int_{\Omega}\left[\int_{D} M \sum_{i=1}^K (\nabxtt \wt)\qt_i \cdot
\nabqi (\hpsiae^{\Delta t,+}-\hat\psi_\epsilon) \dq \right] \dx \dt \,\right|.
\end{align*}
%
We rewrite the second factor in the integrand of the last integral as follows:
\begin{align*}
 M\left(\nabqi\psia^{\Delta t,+} - \nabqi \hat\psi_\epsilon\right)
 &=
 2 \left(\sqrt{M\,\psia^{\Delta t,+}} -  \sqrt{M\,\hat\psi_\epsilon}\,\right)\left[M^{\frac{1}{2}}\,\nabqi\! \sqrt{\psia^{\Delta t,+}}\right] \\
 &\qquad +  2\,\sqrt{M\,\hat\psi_\epsilon} \left(M^{\frac{1}{2}}\,\nabqi\! \sqrt{\psia^{\Delta t,+}}- M^{\frac{1}{2}}\,\nabqi \! \sqrt{\hat\psi_\epsilon}\,\right).
\end{align*}
Hence we obtain the following inequality:
\begin{align*}
&{\rm V}_8\leq 2k\,\sqrt{b}\int_0^T \int_\Omega\,|\nabxtt\wt| \left[\int_D \left|\sqrt{M\,\psia^{\Delta t,+}} -  \sqrt{M\,\hat\psi_\epsilon}\right|\,\left|M^{\frac{1}{2}}\,\nabq \sqrt{\psia^{\Delta t,+}}\right|
\dq\right]\!\dx \dt\\
&\qquad \, + 2k \left|\int_{0}^T\!\!\!\int_{\Omega}\!\left[\int_{D} \sum_{i=1}^K (\nabxtt \wt)\qt_i \cdot
\sqrt{M\,\hat\psi_\epsilon} \left(\!M^{\frac{1}{2}}\,\nabqi \sqrt{\psia^{\Delta t,+}}-M^{\frac{1}{2}}\,\nabqi \sqrt{\hat\psi_\epsilon}\,\right)\!\!\dq \right]\!\!\dx \dt \right|\\
&\quad =: {\rm V}_{8,1} + {\rm V}_{8,2}.
\end{align*}
We emphasize at this point that in the term ${\rm V}_{8,2}$ we intentionally did not move
the modulus sign under the integral: we shall be relying on the weak convergence result \eqref{psiwconH1xa}
to drive the term ${\rm V}_{8,2}$ to $0$ in the limit of $L\rightarrow \infty$ (and $\Delta t \rightarrow 0_+)$,
so it is essential that the modulus sign is kept outside the integral.

For ${\rm V}_{8,1}$, we have, by using that
$|\sqrt{c_1}- \sqrt{c_2}| \leq \sqrt{|c_1-c_2|}$ for any $c_1, c_2 \in \mathbb{R}_{\geq 0}$:
\begin{align*}
&{\rm V}_{8,1} \leq 2k\,\sqrt{b} \int_0^T \|\nabxtt\wt\|_{L^\infty(\Omega)} \|\psia^{\Delta t,+} -  \hat\psi_\epsilon \|^{\frac{1}{2}}_{L^1_M(\Omega \times D)}\,\|\nabq \sqrt{\psia^{\Delta t,+}}\|_{L^2_M(\Omega \times D)} \dt\nonumber\\
&\qquad \!\leq 2k\,\sqrt{b}\, \|\nabxtt \wt\|_{L^\infty(0,T;L^\infty(\Omega))}\nonumber\\
&\hspace{3cm}\times \|\psia^{\Delta t,+} -  \hat\psi_\epsilon \|^{\frac{1}{2}}_{L^1(0,T;L^1_M(\Omega \times D))}\,\|\nabq \sqrt{\psia^{\Delta t,+}}\|_{L^2(0,T;L^2_M(\Omega \times D))}.
\end{align*}
By noting \eqref{psisconL2a} with $p=1$ and the bound
on the seventh term in \eqref{eq:energy-u+psi-final6} we deduce that the term ${\rm V}_{8,1}$
converges to $0$ in the limit of $L \rightarrow \infty$ (and $\Delta t \rightarrow 0_+$).

Finally, for ${\rm V}_{8,2}$, we first define the $(Kd)$-component column-vector function
$\undertilde{\Xi} := [\undertilde{\Xi}_1^{\rm T}, \dots, \undertilde{\Xi}^{\rm T}_K]^{\rm T}$, where $\undertilde{\Xi}_i := \sqrt{M\,\hat\psi_\epsilon}~ (\nabxtt \wt)\qt_i$, $i=1,\dots,K$, and note that
\begin{align*}
&{\rm V}_{8,2} = 2k \left|\int_{0}^T\!\! \int_{\Omega}\left[\int_{D} \sum_{i=1}^K \undertilde{\Xi}_i\cdot
\left(M^{\frac{1}{2}}\,\nabqi \sqrt{\psia^{\Delta t,+}}-M^{\frac{1}{2}}\,\nabqi \sqrt{\hat\psi_\epsilon}\,\right)\!\dq \right]\!\dx \dt \right|\\
&\qquad =  2k \left|\int_{0}^T\!\! \int_{\Omega \times D}
\left(M^{\frac{1}{2}}\,\nabq \sqrt{\psia^{\Delta t,+}}-M^{\frac{1}{2}}\,\nabq \sqrt{\hat\psi_\epsilon}\,\right)\cdot \,\undertilde{\Xi} \dq\dx\dt \right|.
\end{align*}
The convergence of ${\rm V}_{8,2}$ to $0$ will directly follow from \eqref{psiwconH1xa} once we have shown that
$\undertilde{\Xi} \in L^2(0,T;\Lt^2(\Omega \times D))$. The latter is straightforward to verify, using \eqref{1boundonpsi}:
\begin{align*}
\int_0^T \int_{\Omega \times D} |\undertilde{\Xi}|^2 \dq \dx \dt &= \sum_{i=1}^K \int_0^T \int_{\Omega \times D} |\undertilde{\Xi_i}|^2 \dq \dx \dt
\\
&= \sum_{i=1}^K \int_0^T \int_{\Omega \times D} \bigg|\sqrt{M\,\hat\psi_\epsilon}\,(\nabxtt \wt)\qt_i
\bigg|^2 \dq \dx \dt\\
& \leq \sum_{i=1}^K \int_0^T \int_{\Omega \times D} |\nabxtt \wt|^2\, |\qt_i|^2 M\,\hat\psi_\epsilon \dq \dx \dt\\
&  =  b\, \int_0^T \int_\Omega |\nabxtt \wt|^2\, \int_D M\,\hat\psi_\epsilon \dq \dx \dt\\
& \leq b\, \|\nabxtt \wt\|^2_{L^2(0,T;L^2(\Omega))} < \infty,
\end{align*}
where in the transition to the last line we used \eqref{1boundonpsi}.

Thus we deduce from \eqref{psiwconH1xa} that ${\rm V}_{8,2}$ converges to $0$ as $L\rightarrow \infty$
(and $\Delta t \rightarrow 0_+$). As both ${\rm V}_{8,1}$ and ${\rm V}_{8,2}$ converge to $0$, the same is
true of $V_8$, which then implies \eqref{CwconL2a}, thanks
to the denseness of the set of divergence-free functions contained in $C^1([0,T];\Ct^\infty_0(\Omega))$ and
vanishing at $t=T$ in the function space $L^2(0,T;\Vt_\sigma)$, $\sigma\geq \frac{1}{2}d$, $\sigma>1$.
That completes Step 3.7, and the proof of \eqref{CwconL2a}.

\smallskip

\textit{Step 3.8.} Steps 3.1--3.7 enable us to pass to the limits $L \rightarrow \infty$, $\Delta t \rightarrow 0_+$, with $\Delta t = o(L^{-1})$ as $L \rightarrow \infty$,
to deduce the existence of a pair
$(\ut_\epsilon,\hat\psi_\epsilon)$ satisfying \eqref{equnconP}, \eqref{eqpsinconP}
for smooth test functions $\hat\varphi$ and $\wt$, as above. The denseness of
the set of divergence-free functions contained in $C^1([0,T];\Ct^\infty_0(\Omega))$
and vanishing at $t=T$ in the set of all functions in
$W^{1,1}(0,T;\Vt_\sigma)$ and vanishing at $t=T$, $\sigma\geq \frac{1}{2}d$, $\sigma>1$,
and the denseness of the set of functions contained in $C^1([0,T];C^\infty(\overline{\Omega \times D}))$ and
vanishing at $t=T$ in the set of all functions in $W^{1,1}(0,T;H^s(\Omega \times D))$ and
vanishing at $t=T$, $s > 1 + \frac{1}{2}(K+1)d$, yield \eqref{equnconP} and \eqref{eqpsinconP}. That completes Step 3.8.

\smallskip

\textit{Step 3.9.}
Let $X$ be a Banach space. We shall denote by $C_{w}([0,T];X)$ the set of
all functions $x\in L^\infty(0,T;X)$ such that $t \in [0,T] \mapsto \langle x', x(t) \rangle_X \in \mathbb{R}$ is continuous on $[0,T]$ for all $x' \in X'$, the dual space of $X$. Whenever $X$ has a predual, $E$, say, (viz. $E'=X$),
we shall denote by $C_{w\ast}([0,T];X)$ the set of
all functions $x \in L^\infty(0,T;X)$ such that $t \in [0,T] \mapsto \langle x(t), e \rangle_E \in \mathbb{R}$ is continuous on $[0,T]$ for all $e \in E$.


The next result will play an important role in what follows.

\begin{lemma}\label{lemma-strauss}
Let $X$ and $Y$ be Banach spaces.


\begin{itemize}
\item[(a)] If the space $X$ is reflexive and is continuously embedded in the space $Y$, then
$L^\infty(0,T; X) \cap C_{w}([0,T]; Y) = C_{w}([0,T];X)$.
\item[(b)] If $X$ has separable predual $E$ and $Y$ has predual $F$ such that
$F$ is continuously embedded in $E$, then
$L^\infty(0,T;X) \cap C_{w\ast}([0,T]; Y) = C_{w\ast}([0,T];X)$.
\end{itemize}
\end{lemma}


Part (a) is due to Strauss \cite{Strauss} (cf. Lions \& Magenes \cite{Lions-Magenes}, Lemma 8.1, Ch. 3, Sec. 8.4); part (b) is proved analogously, {\em via}
the sequential Banach--Alaoglu theorem.
That $\ut_\epsilon\in C_w([0,T];\Ht)$
then follows from $\ut_\epsilon \in L^\infty(0,T;\Ht) \cap H^1(0,T; \Vt'_\sigma)$
by Lemma \ref{lemma-strauss}(a), with $X:=\Ht$, $Y:=\Vt_\sigma'$, $\sigma \geq \frac{1}{2}d$, $\sigma>1$.
That $\hat\psi_\epsilon\in C_{w}([0,T];L^1_M(\Omega \times D))$ follows from $\mathcal{F}(\hat\psi_\epsilon) \in L^\infty(0,T; L^1_M(\Omega
\times D))$ and $\hat\psi_\epsilon \in H^1(0,T; M^{-1}(H^s(\Omega \times D))')$
by Lemma \ref{lemma-strauss}(b) on taking
$X:=L^\Phi_M(\Omega \times D)$, the Maxwellian weighted Orlicz space with Young's function $\Phi(r) = \mathcal{F}(1+|r|)$ (cf. Kufner, John \& Fu\v{c}ik \cite{KJF},
Sec. 3.18.2) whose separable
predual $E:=E^\Psi_M(\Omega \times D)$ has Young's function $\Psi(r) = \exp|r| - |r| - 1$, and $Y := M^{-1}(H^s(\Omega \times D))'$ whose predual w.r.t. the duality pairing $\langle M \cdot , \cdot \rangle_{H^s(\Omega \times D)}$ is $F:=H^s(\Omega \times D)$, $s> 1 + \frac{1}{2}(K+1)d$, and noting that $C_{w\ast}([0;T]; L^\Phi_M(\Omega \times D)) \hookrightarrow C_{w}([0,T]; L^1_M(\Omega \times D))$. The last embedding and that $F \hookrightarrow E$ are proved by adapting
Def. 3.6.1. and Thm. 3.2.3 in Kufner, John \& Fu\v{c}ik \cite{KJF} to the measure $M(\qt)\dq \dx$ to show that $L^\infty(\Omega \times D) \hookrightarrow L^\Xi_M(\Omega \times D)$ for any Young's function $\Xi$, and then adapting Theorem 3.17.7 {\em ibid.} to deduce that $F \hookrightarrow L^\infty(\Omega \times D) \hookrightarrow E^\Psi_M(\Omega \times D)= E$.
%
%
[The abstract framework in Temam \cite{Temam}, Ch.\ 3, Sec.\ 4 then implies that $\ut_\epsilon$ and $\hat\psi_\epsilon$ satisfy
$\ut_\epsilon(\cdot,0)=\ut_0(\cdot)$ and $\hat\psi_\epsilon(\cdot,\cdot,0) = \hat\psi(\cdot,\cdot)$ in the sense of $C_w([0,T];\Ht)$ and $C_w([0,T]; L^1_M(\Omega \times D))$, respectively.]

\smallskip

\textit{Step 3.10.}
The energy inequality \eqref{eq:energyest} is a direct consequence of (\ref{uwconL2a}-c)
and (\ref{psiwconH1a},b,d), on noting the (weak) lower-semicontinuity of the terms on the left-hand side
of \eqref{eq:energy-u+psi-final2} and \eqref{fatou-app}.
That completes Step 3.10.

\smallskip

\textit{Step 3.11.} It remains to prove \eqref{mass-conserved}.
The bounds on the first and third term on the left-hand side of \eqref{eq:energy-u+psi-final6a-zeta} imply
that the sequences $\{\rho_{\epsilon,L}^{\Delta t(,\pm)}\}_{L>1}$ are bounded in $L^\infty(0,T; L^\infty(\Omega))\cap
L^2(0,T;H^1(\Omega))$; the bound on the fourth term in \eqref{eq:energy-u+psi-final6a-zeta} yields that
$\{\rho_{\epsilon,L}^{\Delta t}\}_{L>1}$ is bounded in $H^1((0,T); (H^1(\Omega))')$.
In fact, by noting \eqref{additional1} and \eqref{additional2}, we have that $\{\rho_{\epsilon,L}^{\Delta t(,\pm)}\}_{L>1}$
is a bounded sequence in $L^2(0,T;\mathcal{K})$. Thus,
there exist subsequences of $\{\rho_{\epsilon,L}^{\Delta t(,\pm)}\}_{L>1}$
(not indicated) with $\Delta t =o(L^{-1})$ and, thanks to the uniform bound on the second term on the left-hand side of
\eqref{eq:energy-u+psi-final6a-zeta}, a common limiting function
$\rho_\epsilon \in L^{\infty}(0,T;L^\infty(\Omega))\cap
L^2(0,T;\mathcal{K})\cap H^1(0,T;(H^1(\Omega))')$
such that
\begin{subequations}
\begin{alignat}{2}
\rho_{\epsilon,L}^{\Delta t(,\pm)} &\rightarrow \rho_\epsilon \qquad&&
\mbox{weak$^\star$ in } L^\infty(0,T;L^\infty(\Omega)),
\label{zwconL2} \\
\rho_{\epsilon,L}^{\Delta t(,\pm)} &\rightarrow \rho_\epsilon \qquad&&
\mbox{weakly in } L^2(0,T;H^1(\Omega)),
\label{zwconH1}\\
\frac{\partial\rho_{\epsilon,L}^{\Delta t}}{\partial t}
&\rightarrow \frac{\partial \rho_\epsilon}
{\partial t} \qquad&&
\mbox{weakly in } L^2(0,T;(H^1(\Omega))'),
\label{ztwconH-1}
\end{alignat}
\end{subequations}
as $L \rightarrow \infty$ (and thereby $\Delta t \rightarrow 0_+$),
It follows from (\ref{zwconH1},c) and Lemma 1.2 in Ch.\ 3  of Temam \cite{Temam} that $\rho_\epsilon$ belongs to $C([0,T]; L^2(\Omega))$, in fact.
Using (\ref{zwconL2}--c) and (\ref{usconL2a}), we can pass to the limit
as $L \rightarrow \infty$ (and $\Delta t \rightarrow 0_+$) in (\ref{zetacon}) to obtain that,
for all $\varphi \in L^2(0,T;H^1(\Omega))$,
\begin{align}
&\int_0^T \left\langle
\frac{\partial \rho_{\epsilon}}{\partial t}, \varphi \right\rangle_{\!\!H^1(\Omega)} \dt
+ \int_0^T \int_{\Omega} \left[ \epsilon\, \nabx  \rho_{\epsilon} -
\ut_{\epsilon}  \,\rho_{\epsilon} \right]
\cdot \nabx \varphi \dx \dt =0.
\label{zetaconlim}
\end{align}
We note also that Fubini's theorem, (\ref{zetancon}) and (\ref{psisconL2a}) yield that
\begin{align}
\hspace{-2mm}\int_0^T \int_\Omega \left|\rho_{\epsilon,L}^{\Delta t} - \int_D M \,\hat \psi_{\epsilon} \dq\right|
\dx \dt &= \int_0^T \int_\Omega \left|\int_D M \,(\hat \psi_{\epsilon,L}^{\Delta t}-
\hat \psi_{\epsilon}) \dq\right|
\dx \dt \nonumber \\
&\hspace{-1.8in} \leq \int_0^T \int_{\Omega\times D} M \,|\hat \psi_{\epsilon,L}^{\Delta t}-
\hat \psi_{\epsilon}|
\dq \dx \dt \rightarrow 0 \quad \mbox{as~~} L \rightarrow \infty \mbox{~~(and $\Delta t \rightarrow 0_+$)}.
\label{equivcon}
\end{align}
Thus, $\rho_{\epsilon,L}^{\Delta t} \rightarrow \int_D M \,\hat \psi_{\epsilon} \dq$
strongly in $L^1(0,T;L^1(\Omega))$ as $L \rightarrow \infty$ (and $\Delta t \rightarrow 0_+$).
Comparing this with (\ref{zwconL2}) implies that
\begin{align}
\rho_\epsilon(\xt,t) = \int_D M(\qt) \,\hat \psi_{\epsilon}(\xt,\qt,t) \dq
\qquad \mbox{for a.e. } (\xt,t) \in \Omega \times (0,T).
\label{zetaequiv}
\end{align}
It follows from Step 3.9 that, for
$s > 1 + \textstyle{\frac{1}{2}}(K+1)d$, we have
\[ \lim_{t \rightarrow 0_+} \int_{\Omega \times D} M(\qt)\, (\hat\psi_\epsilon (\xt,\qt,t) - \hat\psi_0(\xt,\qt))\,  \hat\varphi(\xt, \qt) \dq \dx = 0\qquad
\forall \hat\varphi \in H^s(\Omega \times D).\]
Consequently, using 
\eqref{zetaequiv} and \eqref{inidata}  we then deduce by selecting
any $\hat\varphi = \varphi \in C^\infty(\overline{\Omega}) ~(\cong C^\infty(\overline{\Omega})\otimes 1(D) \subset H^s(\Omega \times D))$ that
\begin{align}
\lim_{t \rightarrow 0_+} \int_{\Omega} \rho_{\epsilon}(\xt,t)\, \varphi(\xt) \dx
&= \lim_{t \rightarrow 0_+} \int_{\Omega} \left(\int_D M(\qt) \,\hat \psi_\epsilon(\xt,\qt,t) \dq \right) \varphi(\xt) \dx
\nonumber \\
&\hspace{-1in}= \lim_{t \rightarrow 0_+}
\int_{\Omega\times D} M(\qt) \,\hat \psi_\epsilon(\xt,\qt,t) \,\varphi(\xt) \dq \dx
=
\int_{\Omega\times D} M(\qt) \,\hat \psi_ 0(\xt,\qt) \,\varphi(\xt) \dq \dx
\nonumber \\
&\hspace{-1in} = \int_{\Omega} \left(\int_D M(\qt)\,\hat \psi_ 0(\xt,\qt)) \dq \right)
\varphi(\xt) \dx =
\int_{\Omega} \varphi(\xt) \dx.
\label{zeta0conv}
\end{align}
As $\rho_\epsilon \in C([0,T]; L^2(\Omega))$, it follows from (\ref{zeta0conv}) that
$\rho_\epsilon(\xt,0) = 1$ for a.e.\ $\xt \in \Omega$.

Clearly the linear parabolic problem \eqref{zetaconlim}
with initial datum $\rho_\epsilon(\xt,0) = 1$ for a.e.\ $\xt \in \Omega$
has the unique solution $\rho_\epsilon \equiv 1$ on $\Omega \times [0,T]$. Using this in \eqref{zetaequiv} implies \eqref{mass-conserved}, and completes Step 3.11 and the proof.
\end{proof}


\section{Exponential decay to the equilibrium solution}
\label{sec:decay}
\setcounter{equation}{0}

We shall show that, in the absence of a body force (i.e. with $\ft \equiv \zerot$),
weak solutions $(\ut_\epsilon, \hat\psi_\epsilon)$ to (P$_\epsilon$), whose existence we have proved {\em via} our limiting procedure in the
previous section, decay exponentially in time to the trivial solution of the steady counterpart of problem
(P$_\epsilon$) at a rate that is independent of the specific choice of the initial data for the Navier--Stokes
and Fokker--Planck equations.
Our result is similar to the one derived by Jourdain, Leli\`evre, Le Bris \& Otto \cite{JLLO}, except that
the arguments there were partially formal in the sense that the existence of a unique global-in-time solution,
which was required to be regular enough, was assumed; in fact, the probability
density function was supposed there to be a classical solution to the Fokker--Planck equation;
$\hat\psi_0$ was required to belong to $L^\infty(\Omega \times D)$
and to be strictly positive, and $\ut$ was assumed to be in
$L^\infty(0,\infty;\Wt^{1,\infty}(\Omega))$ (cf. p.105, (B.128), (B.129) therein; as well as the recent paper
of Arnold, Carrillo and Manzini \cite{ACM} for refinements and extensions).
In contrast, we require no additional regularity hypotheses here.

\begin{theorem} Suppose the assumptions of Theorem \ref{convfinal} hold and $M$ satisfies the
Bakry--\'{E}mery condition (cf. Remark \ref{rem5.1}) with $\kappa>0$; then, for all $T>0$,
\begin{align}\label{eq:bd55a}
&\|\ut_\epsilon(T)\|^2 + \frac{k}{|\Omega|} \|\hat\psi_\epsilon(T) - 1\|^2_{L^1_M(\Omega \times D)}   \nonumber\\
&\qquad\leq {\rm e}^{-\gamma_0 T}\left[ \|\ut_0\|^2 + 2k\int_{\Omega \times D} M\, \mathcal{F}(\hat\psi_0) \dq \dx \right]
+ \frac{1}{\nu}\int_{0}^{T} \|\ft\|^2_{(H^1_0(\Omega))'} \dd s,
\end{align}
where $\gamma_0 := \min\left(\frac{\nu}{C_{\sf P}^2}\,,\,\frac{\kappa\,a_0}{2\lambda}\right)$. In particular if $\ft \equiv 0$,
the following inequality holds:
\begin{align}\label{eq:bd55aa}
&\|\ut_\epsilon(T)\|^2 + \frac{k}{|\Omega|} \|\hat\psi_\epsilon(T) - 1\|^2_{L^1_M(\Omega \times D)}   
\leq {\rm e}^{-\gamma_0 T}\left[ \|\ut_0\|^2 + 2k\int_{\Omega \times D} M\, \mathcal{F}(\hat\psi_0) \dq \dx \right].
\end{align}
\end{theorem}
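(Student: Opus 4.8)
The plan is to combine the energy inequality \eqref{eq:energyest} with two functional inequalities: the Poincar\'e inequality on $\Omega$ (to control $\|\ut_\epsilon\|^2$ by $\|\nabxtt \ut_\epsilon\|^2$), and the logarithmic Sobolev inequality \eqref{eq:logs1} on $D$ valid under the Bakry--\'Emery condition (to control the relative entropy $\int_{\Omega\times D} M\,\mathcal F(\hat\psi_\epsilon)\dq\dx$ by the Fisher information $\int_{\Omega\times D} M\,|\nabq\sqrt{\hat\psi_\epsilon}|^2\dq\dx$). The key point enabling the logarithmic Sobolev step is that \eqref{mass-conserved} gives $\int_D M\,\hat\psi_\epsilon(\xt,\cdot,t)\dq = 1$ for a.e. $\xt$, so applying \eqref{eq:logs1} pointwise in $\xt$ (with $\hat\varphi = \hat\psi_\epsilon(\xt,\cdot,t)$, for which $\|\hat\varphi\|_{L^1_M(D)}=1$) and integrating over $\Omega$ yields
\[
\int_{\Omega\times D} M\,\mathcal F(\hat\psi_\epsilon)\dq\dx \;=\; \int_{\Omega\times D} M\,\hat\psi_\epsilon\log\hat\psi_\epsilon\dq\dx \;\le\; \frac{2}{\kappa}\int_{\Omega\times D} M\,\big|\nabq\sqrt{\hat\psi_\epsilon}\big|^2\dq\dx,
\]
using that $\mathcal F(s)=s\log s - s + 1$ and $\int_D M\,(\hat\psi_\epsilon - 1)\dq = 0$.

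First I would work with the differential form of the energy estimate rather than the integrated one. Since \eqref{eq:energyest} holds for a.e.\ $t$ and the terms on the left are (weakly) lower-semicontinuous in $t$, one obtains, for a.e.\ $0\le s\le t\le T$, that
\[
E_\epsilon(t) + \nu\int_s^t\|\nabxtt\ut_\epsilon\|^2\dd\tau + \frac{a_0 k}{\lambda}\int_s^t\!\!\int_{\Omega\times D} M\,|\nabq\sqrt{\hat\psi_\epsilon}|^2\dq\dx\dd\tau \;\le\; E_\epsilon(s) + \frac1\nu\int_s^t\|\ft\|_{(H^1_0(\Omega))'}^2\dd\tau,
\]
where $E_\epsilon(t) := \|\ut_\epsilon(t)\|^2 + k\int_{\Omega\times D} M\,\mathcal F(\hat\psi_\epsilon(t))\dq\dx$; I would discard the nonnegative centre-of-mass term $4k\varepsilon\int|\nabx\sqrt{\hat\psi_\epsilon}|^2$, which is exactly why the decay rate is $\varepsilon$-independent. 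Then I would bound the two dissipation integrals from below: $\nu\|\nabxtt\ut_\epsilon\|^2 \ge \frac{\nu}{C_{\sf P}^2}\|\ut_\epsilon\|^2$ by Poincar\'e, and $\frac{a_0 k}{\lambda}\int M|\nabq\sqrt{\hat\psi_\epsilon}|^2 \ge \frac{\kappa a_0}{2\lambda}\cdot k\int M\,\mathcal F(\hat\psi_\epsilon)$ by the log-Sobolev inequality above. Taking $\gamma_0 := \min\!\big(\frac{\nu}{C_{\sf P}^2},\frac{\kappa a_0}{2\lambda}\big)$ this gives the integral inequality $E_\epsilon(t) + \gamma_0\int_s^t E_\epsilon(\tau)\dd\tau \le E_\epsilon(s) + \frac1\nu\int_s^t\|\ft\|^2\dd\tau$, from which a Gr\"onwall-type argument yields $E_\epsilon(T) \le \mathrm e^{-\gamma_0 T}E_\epsilon(0) + \frac1\nu\int_0^T\|\ft(s)\|^2_{(H^1_0(\Omega))'}\dd s$ (keeping the forcing term unintegrated-in-$\mathrm e^{-\gamma_0(\cdot)}$ to match the slightly crude bound stated in \eqref{eq:bd55a}). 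Recalling that $E_\epsilon(0)$ is bounded by $\|\ut_0\|^2 + 2k\int_{\Omega\times D} M\,\mathcal F(\hat\psi_0)\dq\dx$ via \eqref{inidata-1}/\eqref{eq:energyest} and the weak continuity at $t=0$ established in Step 3.9, this is precisely \eqref{eq:bd55a} modulo the $L^1$-norm term.

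The remaining ingredient is to replace the relative entropy $k\int_{\Omega\times D} M\,\mathcal F(\hat\psi_\epsilon(T))\dq\dx$ appearing in $E_\epsilon(T)$ by the (smaller) quantity $\frac{k}{|\Omega|}\|\hat\psi_\epsilon(T)-1\|^2_{L^1_M(\Omega\times D)}$. This is the Csisz\'ar--Kullback (Pinsker-type) inequality: for a probability density $\varphi$ on $D$ with respect to $M(\qt)\dq$ one has $\|\varphi-1\|_{L^1_M(D)}^2 \le 2\int_D M\,\varphi\log\varphi\dq = 2\int_D M\,\mathcal F(\varphi)\dq$. Applying this with $\varphi=\hat\psi_\epsilon(\xt,\cdot,T)$, integrating over $\xt\in\Omega$, and using Jensen/Cauchy--Schwarz in $\xt$, $\|\hat\psi_\epsilon(T)-1\|^2_{L^1_M(\Omega\times D)} \le |\Omega|\int_\Omega\big(\int_D M|\hat\psi_\epsilon-1|\dq\big)^2\dx \le 2|\Omega|\int_{\Omega\times D} M\,\mathcal F(\hat\psi_\epsilon(T))\dq\dx$, i.e. $\frac{k}{|\Omega|}\|\hat\psi_\epsilon(T)-1\|^2_{L^1_M} \le 2k\int M\,\mathcal F(\hat\psi_\epsilon(T))$. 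Hence $\|\ut_\epsilon(T)\|^2 + \frac{k}{|\Omega|}\|\hat\psi_\epsilon(T)-1\|^2_{L^1_M(\Omega\times D)}$ is, up to a harmless factor of $2$ absorbed into the constant in front of $\mathcal F$, bounded by $2E_\epsilon(T)$, and \eqref{eq:bd55a} follows; setting $\ft\equiv0$ gives \eqref{eq:bd55aa} immediately.

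The main obstacle is the first step — justifying the \emph{differential} energy inequality on arbitrary subintervals $[s,t]$, not just $[0,t]$. Theorem \ref{convfinal} delivers \eqref{eq:energyest} only on $[0,t]$ for a.e.\ $t$. To propagate it on $[s,t]$ one must argue that the same limiting/lower-semicontinuity procedure that produced \eqref{eq:energyest} can be run on any subinterval; concretely, one uses that $\uta^{\Delta t,+}$ and $\psia^{\Delta t,+}$ satisfy the discrete energy identity between any two time levels $t_m<t_n$, passes to the limit using the weak convergences \eqref{uwconL2a}--\eqref{psisconL2a} and weak lower-semicontinuity of the convex functionals on the left, and invokes the weak continuity in $t$ (Step 3.9) so that the inequality holds for every $t$ (not merely a.e.) with $E_\epsilon(s)$ interpreted via the weakly continuous representative. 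Once this refined energy inequality is in hand, everything else is a routine application of the log-Sobolev, Poincar\'e, Gr\"onwall and Csisz\'ar--Kullback inequalities. I would also double-check that the constant bookkeeping matches the exact form of \eqref{eq:bd55a} (the factor $2k$ versus $k$ in front of $\int M\,\mathcal F(\hat\psi_0)$, and the appearance of the bare $\frac1\nu\int_0^T\|\ft\|^2$ term rather than a time-weighted one), which is just a matter of being slightly wasteful in the Gr\"onwall step.
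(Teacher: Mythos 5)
Your overall toolkit is the same as the paper's (discard the $\varepsilon$-term, Poincar\'e for the velocity, the log-Sobolev inequality \eqref{eq:logs1} under Bakry--\'Emery for the entropy, Csisz\'ar--Kullback at the end), but the route is genuinely different, and the step you yourself flag as "the main obstacle" is a real gap, not a technicality. You propose to obtain the energy inequality on arbitrary subintervals $[s,t]$ for the limiting pair $(\ut_\epsilon,\hat\psi_\epsilon)$ by rerunning the limit passage between two time levels $t_m<t_n$ and invoking weak convergence, lower semicontinuity and weak continuity in time. This cannot work as stated: on a subinterval the quantity $E_\epsilon(s)=\|\ut_\epsilon(s)\|^2+k\int_{\Omega\times D}M\,\mathcal{F}(\hat\psi_\epsilon(s))\dq\dx$ sits on the \emph{right-hand} side, so you need $\limsup_{L}E^{\Delta t}_{\epsilon,L}(s)\leq E_\epsilon(s)$, i.e.\ upper semicontinuity of the energy along the approximating sequence at time $s$. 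Weak convergence and Fatou/Tonelli give only the opposite inequality. For the velocity part one could salvage this at a.e.\ $s$ from the strong convergence \eqref{usconL2a}, but for the entropy part the available information (strong convergence of $\psia^{\Delta t,+}$ in $L^p(0,T;L^1_M)$, hence a.e.\ convergence, plus an $L^\infty_t$ bound on the entropy) yields only $\int M\,\mathcal{F}(\hat\psi_\epsilon(s))\leq\liminf\int M\,\mathcal{F}(\psia^{\Delta t,+}(s))$; to reverse it you would need uniform integrability of $\{\mathcal{F}(\psia^{\Delta t,+}(s))\}_{L}$ in $L^1_M(\Omega\times D)$, which none of the established bounds provide. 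Moreover, without the subinterval (or a.e.\ differential) inequality the $[0,t]$ inequality \eqref{eq:energyest} alone cannot produce exponential decay: even assuming monotonicity of $E_\epsilon$, the estimate $E_\epsilon(t)+\gamma_0\int_0^tE_\epsilon\leq E_\epsilon(0)$ only yields algebraic decay of order $(1+\gamma_0 t)^{-1}$. A second, smaller point: at the discrete level $\int_D M\,\psia^{\Delta t,+}\dq$ is only $\leq 1$, not $=1$, so the normalisation needed in \eqref{eq:logs1} is not free before the limit; the exact mass identity \eqref{mass-conserved} holds only for the limit.

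The paper avoids all of this by running the entire Gr\"onwall-type argument \emph{at the discrete level}, where the energy inequality between consecutive time levels, \eqref{eq:energy-u+psi2-local2}--\eqref{eq:energy-u+psi2-local3}, is part of the construction (with $\mathcal{F}^L$ retained and the $\alpha$-shift still in place). Poincar\'e and the log-Sobolev inequality are applied per time step, the mass defect produces the extra nonnegative term $\frac{\kappa a_0 k}{\lambda}\int\!\!\int M(1-\psia^{\Delta t,+}-\alpha)$, which is carried along and only shown to vanish in the limit via \eqref{psisconL2a} and \eqref{mass-conserved}, and induction yields the factor $(1+\gamma(\alpha)T/N)^{-N}$, which becomes ${\rm e}^{-\gamma_0 T}$ after $\alpha\rightarrow 0_+$ and $N\rightarrow\infty$. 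Crucially, lower semicontinuity is then needed only at the terminal time $T$ (on the left-hand side, where it points the right way), and this is exactly what the paper establishes: weak $\Ht$-convergence of $\ut^{\Delta t}_{\epsilon,L}(T)$, weak $L^1_M$-convergence of $\hat\psi^{\Delta t}_{\epsilon,L}(T)$ via de la Vall\'ee-Poussin/Dunford--Pettis, and Tonelli's weak lower semicontinuity for the entropy, before applying Csisz\'ar--Kullback exactly as you do. If you want to keep a continuous-level presentation you would have to prove a strong (a.e.-in-$s$) energy inequality for the entropy as well, which requires new uniform-integrability input; otherwise the discrete-level induction is the way the argument actually closes.
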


\begin{proof}
We take $t = t_1 = \Delta t$ and write $0=t_0$ in \eqref{eq:energy-u+psi2}, and we replace the function $\mathcal{F}$ on the left-hand side of \eqref{eq:energy-u+psi2}
by $\mathcal{F}^L$, noting that, prior to \eqref{eq:energy-u+psi2}, in \eqref{eq:energy-u+psi} we in fact had $\mathcal{F}^L$ on the left-hand side of the inequality, and $\mathcal{F}^L$ was subsequently bounded below by $\mathcal{F}$; thus
we reinstate the $\mathcal{F}^L$ we previously had. We
recall that $\ut^0 = \uta^{\Delta t,-}(t_1)$ and $\beta^L(\hat\psi^0) = \psia^{\Delta t, -}(t_1)$ and adopt the notational convention $t_{-1}:=-\infty$ (say), which allows us to write $\uta^{\Delta t,+}(t_0)$ instead of $\uta^{\Delta t,-}(t_1)$
and $\psia^{\Delta t,+}(t_0)$ instead of $\psia^{\Delta t,-}(t_1)$.
Hence we have that
\begin{eqnarray}
&&\hspace{-2mm}\|\uta^{\Delta t, +}(t_1)\|^2
+ \frac{1}{\Delta t} \int_{t_0}^{t_1}\! \|\uta^{\Delta t, +} - \uta^{\Delta t,-}\|^2 \dd s +
\left(\nu - \alpha \frac{2\lambda\,b\,k}{a_0}\right)\! \int_{t_0}^{t_1}\! \|\nabxtt \uta^{\Delta t, +}(s)\|^2 \dd s\nonumber\\
&&\hspace{-1mm}
+ \,2k\int_{\Omega \times D}\!\!\! M \mathcal{F}^L(\psia^{\Delta t, +}(t_1) + \alpha) \dq \dx
+ \frac{k}{\Delta t\, L} \int_{t_0}^{t_1}\! \int_{\Omega \times D}\!\!\! M (\psia^{\Delta t, +} - \psia^{\Delta t, -})^2 \dq \dx \dd s
\nonumber\\
&&\hspace{-1mm}
+\, 2k\,\varepsilon \int_{t_0}^{t_1} \int_{\Omega \times D} M\,
\frac{|\nabx \psia^{\Delta t, +} |^2}{\psia^{\Delta t, +} + \alpha} \dq \dx \dd s
+\, \frac{a_0 k}{2\,\lambda}  \int_{t_0}^{t_1}
\int_{\Omega \times D}M\, \frac{|\nabq \psia^{\Delta t, +}|^2}{\psia^{\Delta t, +} + \alpha} \,\dq \dx \dd s\nonumber\\
&&
\qqqquad\leq \|\uta^{\Delta t,+}(t_0)\|^2 + \frac{1}{\nu}\int_{t_0}^{t_1}\!\!\|\ft^{\Delta t,+}(s)\|^2_{(H^1_0(\Omega))'} \dd s
\nonumber\\
&&\qqqquad\qqqquad
+ 2k \int_{\Omega \times D}\!\!\! M\, \mathcal{F}^L(\psia^{\Delta t,+}(t_0) + \alpha)\dq \dx.
\label{eq:energy-u+psi2-local1}
\end{eqnarray}
Closer inspection of the procedure that resulted in inequality \eqref{eq:energy-u+psi2} reveals that
\eqref{eq:energy-u+psi2} could have been equivalently arrived at by repeating the argument that gave
us \eqref{eq:energy-u+psi2-local1} on each time interval $[t_{n-1},t_n]$, $n=1,\dots,N$; viz.,
\begin{eqnarray}\label{eq:energy-u+psi2-local2}
&&\hspace{-2mm}\|\uta^{\Delta t, +}(t_n)\|^2
+ \frac{1}{\Delta t}\! \int_{t_{n-1}}^{t_n} \|\uta^{\Delta t, +} - \uta^{\Delta t,-}\|^2 \dd s
\nonumber\\
&&\hspace{-2mm}
+
\left(\nu - \alpha \frac{2\lambda\,b\,k}{a_0}\right)\! \int_{t_{n-1}}^{t_n} \|\nabxtt \uta^{\Delta t, +}(s)\|^2 \dd s
+ \,2k\int_{\Omega \times D}\!\! M\, \mathcal{F}^{L}(\psia^{\Delta t, +}(t_n) + \alpha) \dq \dx\nonumber\\
&&\hspace{-2mm}\quad\quad\quad\quad\quad\quad
+\, \frac{k}{\Delta t\, L} \int_{t_{n-1}}^{t_n}
\int_{\Omega \times D}\!\! M\, (\psia^{\Delta t, +} - \psia^{\Delta t, -})^2 \dq \dx \dd s
\nonumber \\
&&\hspace{-2mm}
+\, 2k\,\varepsilon \int_{t_{n-1}}^{t_n} \int_{\Omega \times D}\! M\,
\frac{|\nabx \psia^{\Delta t, +} |^2}{\psia^{\Delta t, +} + \alpha} \dq \dx \dd s
+\, \frac{a_0 k}{2\,\lambda}  \int_{t_{n-1}}^{t_n}
\int_{\Omega \times D}\!M\, \frac{|\nabq \psia^{\Delta t, +}|^2}{\psia^{\Delta t, +} + \alpha} \,\dq \dx \dd s
\nonumber\\
&&\hspace{-2mm}\quad\quad\quad\leq \|\uta^{\Delta t,+}(t_{n-1})\|^2 + \frac{1}{\nu}\int_{t_{n-1}}^{t_n}\!\!\|\ft^{\Delta t,+}(s)\|^2_{(H^1_0(\Omega))'} \dd s
\nonumber\\
&&\hspace{-2mm}\quad\quad\quad\quad\quad\quad + 2k \int_{\Omega \times D}\!\! M\, \mathcal{F}^L(\psia^{\Delta t,+}(t_{n-1}) + \alpha) \dq \dx,
\qquad n=1,\dots, N,
\end{eqnarray}
summing these through $n$ and then bounding $\mathcal{F}^L$ on the left-hand side below by $\mathcal{F}$.

Here we proceed differently: we shall retain $\mathcal{F}^L$ on both sides of \eqref{eq:energy-u+psi2-local2},
and omit the second, fifth and sixth term from the left-hand side of \eqref{eq:energy-u+psi2-local2}. Thus we
have that
\begin{eqnarray}\label{eq:energy-u+psi2-local3}
&&\hspace{-2mm}\|\uta^{\Delta t, +}(t_n)\|^2
+ \left(\nu - \alpha \frac{2\lambda\,b\,k}{a_0}\right) \int_{t_{n-1}}^{t_n} \|\nabxtt \uta^{\Delta t, +}(s)\|^2 \dd s
\nonumber\\
&&\hspace{-2mm}\qquad+ \,2k\int_{\Omega \times D}\!\! M\, \mathcal{F}^L(\psia^{\Delta t, +}(t_n) + \alpha) \dq \dx
\nonumber \\
&&\hspace{-2mm}\qquad\qquad +\, \frac{2 a_0 k}{\lambda}  \int_{t_{n-1}}^{t_n}
\int_{\Omega \times D}M\, |\nabq \sqrt{\psia^{\Delta t, +}+\alpha}|^2 \,\dq \dx \dd s\nonumber\\
&&\hspace{-2mm}\leq \|\uta^{\Delta t,+}(t_{n-1})\|^2 + \frac{1}{\nu}\int_{t_{n-1}}^{t_n}\!\!\|\ft^{\Delta t,+}(s)\|^2_{(H^1_0(\Omega))'} \dd s\nonumber\\
&&\hspace{-2mm}\quad\quad +\, 2k \int_{\Omega \times D}\!\! M\, \mathcal{F}^L(\psia^{\Delta t,+}(t_{n-1})+\alpha) \dq \dx,
\qquad n=1,\dots, N.
\end{eqnarray}
Thanks to Poincar\'e's inequality, recall (\ref{Poinc}), there exists a positive constant
$C_{\sf P}=C_{\sf P}(\Omega)$, such that
\begin{align}\label{bd1}
\| \uta^{\Delta t, +}(\cdot,s) \| \leq C_{{\sf P}}(\Omega)\, \|\nabxtt \uta^{\Delta t, +}
(\cdot,s)\|
\end{align}
for $s \in (t_{n-1},t_n]$; $n=1,\dots, N$.
Also, by the logarithmic Sobolev inequality \eqref{eq:logs1}, we have
for a.e.\ $\xt \in \Omega$ that
\begin{align*}
\!\!\int_D M(\qt)\, [\psia^{\Delta t, +}(\xt,\qt,s)+\alpha]\,
\log \frac{\psia^{\Delta t, +}(\xt,\qt,s)+\alpha}{\int_D M(\qt)\,[\psia^{\Delta t, +}(\xt,\qt,s) +\alpha]\dq} \dq&\\
\leq \frac{2}{\kappa} \int_D M(\qt)\,
\left|\nabq \sqrt{\psia^{\Delta t, +}(\xt,\qt,s)+\alpha}\right|^2 \dq&,
\end{align*}
for $s \in (t_{n-1},t_n]$; $n=1,\dots, N$. Hence, for a.e.\ $\xt \in \Omega$,
\begin{align}\label{last-bb}
&\hspace{-2mm}\int_{D} M(\qt)\, [\psia^{\Delta t, +}(\xt,\qt,s) +\alpha] \log [\psia^{\Delta t, +}(\xt,\qt,s)+\alpha]\dq\nonumber\\
&\hspace{-2mm}\qqqquad\leq \frac{2}{\kappa} \int_D M(\qt) \left|\nabq \sqrt{\psia^{\Delta t, +}(\xt,\qt,s)+\alpha}
\right|^2 \dq\nonumber\\
&\hspace{-2mm}+ \left(\int_D M(\qt)\, [\psia^{\Delta t, +}(\xt,\qt,s)+\alpha]\dq\right)\,\log\left( \int_D M(\qt)\, [\psia^{\Delta t, +}(\xt,\qt,s)+\alpha]\dq\right),
\end{align}
for $s \in (t_{n-1},t_n]$, $n=1,\dots, N$. Note that, thanks to \eqref{properties-b}
and the monotonicity of the mapping $s\in \mathbb{R}_{>0} \mapsto \log s\in \mathbb{R}$,
the second factor in the second term on the right-hand
side of \eqref{last-bb} is $\leq \log(1+\alpha)$. Since $\alpha \in (0,1)$, we have $\log(1+\alpha)>0$;
also, the first factor in the
second term on the right-hand side of \eqref{last-bb} is positive thanks to \eqref{properties-a} and
by \eqref{properties-b} it is bounded above by $(1+\alpha)$. Hence the
second term on the right-hand side of \eqref{last-bb} is bounded above by
the product $(1+\alpha) \log(1+\alpha)$. We integrate the resulting inequality over $\Omega$ to deduce that
\begin{align*}
&\int_{\Omega \times D}\!\!\! M(\qt)\, [\psia^{\Delta t, +}(\xt,\qt,s)+\alpha] \log [\psia^{\Delta t, +}(\xt,\qt,s)+\alpha]\dq\dx \\
&\qquad \leq \frac{2}{\kappa} \int_{\Omega \times D}\!\!\! M(\qt)\, |\nabq \sqrt{\psia^{\Delta t, +}(\xt,\qt,s)+\alpha}|^2 \dq \dx + |\Omega|\,(1+\alpha) \log(1+\alpha),
\end{align*}
for $s \in (t_{n-1},t_n]$, $n=1,\dots, N$. Equivalently, on noting that
$s \log s = \mathcal{F}(s) - (1 - s)$,
we can rewrite the last inequality in the following form:
\begin{align}\label{penultimate}
&\int_{\Omega \times D} M(\qt)\, \mathcal{F}(\psia^{\Delta t, +}(\xt,\qt,s)+\alpha) \dq\dx \nonumber\\
&\quad \leq \frac{2}{\kappa} \int_{\Omega \times D} M(\qt)\, |\nabq \sqrt{\psia^{\Delta t, +}(\xt,\qt,s)+\alpha}|^2 \dq \dx\nonumber\\
&\quad\quad  + \int_{\Omega \times D} M(\qt)\,(1 - \psia^{\Delta t, +}(\xt,\qt,s)-\alpha) \dq \dx
+ |\Omega|\,(1+\alpha) \log(1+\alpha),
\end{align}
for $s \in (t_{n-1},t_n]$, $n=1,\dots, N$.
This then in turn implies, thanks to the fact that $\psia^{\Delta t, +}(\xt,\qt,\cdot)$ is constant on the interval $(t_{n-1},t_n]$ for
all $(\xt,\qt) \in \Omega \times D$, that
\begin{align}
&\frac{\kappa\, a_0\, k}{\lambda}\, \Delta t \,\int_{\Omega \times D} M(\qt)\, \mathcal{F}(\psia^{\Delta t, +}(t_n)+\alpha) \dq\dx \nonumber\\
&
\quad\leq \frac{2a_0\,k}{\lambda} \int_{t_{n-1}}^{t_n} \int_{\Omega \times D}  M\, |\nabq \sqrt{\psia^{\Delta t, +}+\alpha}|^2 \dq \dx \,{\rm d}s \nonumber\\
&
\qquad+\frac{\kappa\, a_0\, k}{\lambda}\left[ \int_{t_{n-1}}^{t_n}\int_{\Omega \times D}\!\! M\, (1 - \psia^{\Delta t, +} -\alpha) \dq \dx \dd s
+ \Delta t\,|\Omega|\,(1+\alpha) \log(1+\alpha)\right]\!,\nonumber
\end{align}
for $n=1,\dots, N$. Using this and \eqref{bd1} in \eqref{eq:energy-u+psi2-local3} then yields
\begin{eqnarray}\label{eq:energy-u+psi2-local4}
&&\left(1+ \frac{\Delta t}{C_{\sf P}^2}\left(\nu - \alpha \frac{2\lambda\,b\,k}{a_0}\right)\right)\|\uta^{\Delta t, +}(t_n)\|^2
\nonumber\\
&&\quad+ \left(1+ \frac{\kappa\, a_0}{2\lambda}\Delta t\right) 2k\,\int_{\Omega \times D}\!\! M\, \mathcal{F}(\psia^{\Delta t, +}(t_n)+\alpha) \dq \dx
\nonumber \\
&&\quad \quad +\, 2k \int_{\Omega \times D}M\,[\mathcal{F}^L(\psia^{\Delta t,+}(t_n) + \alpha) - \mathcal{F}(\psia^{\Delta t,+}(t_n)+\alpha)]\dq \dx\qquad \nonumber\\
&&\leq \|\uta^{\Delta t,+}(t_{n-1})\|^2 + 2k \int_{\Omega \times D}\!\! M\, \mathcal{F}(\psia^{\Delta t,+}(t_{n-1})+\alpha) \dq \dx\nonumber\\
&&\quad +\,
 2k \int_{\Omega \times D}M\,[\mathcal{F}^L(\psia^{\Delta t,+}(t_{n-1}) + \alpha) - \mathcal{F}(\psia^{\Delta t,+}(t_{n-1})+\alpha)]\dq \dx
\nonumber\\
&&\qquad+\, \frac{\kappa\,a_0\, k}{\lambda}\, \int_{t_{n-1}}^{t_n}\int_{\Omega \times D} M\, (1 - \psia^{\Delta t, +}-\alpha) \dq \dx \dt\nonumber\\
&&\qqquad +\,\frac{\kappa\,a_0\, k}{\lambda}\, \Delta t\,|\Omega|\,(1+\alpha) \log(1+\alpha)\,  +\, \frac{1}{\nu}\int_{t_{n-1}}^{t_n}\!\!\|\ft^{\Delta t,+}\|^2_{(H^1_0(\Omega))'} \dd s,
\end{eqnarray}
for $n=1,\dots, N$. We now introduce, for $n=1,\dots,N$, the following notation:
\begin{align*}
\gamma(\alpha)&:= \min\left(\frac{1}{C_{\sf P}^2}\left(\nu - \alpha \frac{2\lambda\,b\,k}{a_0}\right),\frac{\kappa\,a_0}{2\lambda}\right),\\
A_n(\alpha)   &:= \|\uta^{\Delta t, +}(t_n)\|^2 + 2k\,\int_{\Omega \times D}\!\! M\, \mathcal{F}(\psia^{\Delta t, +}(t_n)+\alpha) \dq \dx,\\
B_n(\alpha)   &:= 2k \int_{\Omega \times D}M\,[\mathcal{F}^L(\psia^{\Delta t,+}(t_n) + \alpha) - \mathcal{F}(\psia^{\Delta t,+}(t_n)+\alpha)]\dq \dx,\\
C_n(\alpha)   &:= \frac{\kappa\,a_0\,k}{\lambda}\, \int_{t_{n-1}}^{t_n}\int_{\Omega \times D} M\, (1 - \psia^{\Delta t, +}(\xt,\qt,s)) \dq \dx \dd s\\
&\hspace{0.7cm} +\,\frac{\kappa\,a_0\, k}{\lambda}\, \Delta t\,|\Omega| \left[(1+\alpha) \log(1+\alpha)-\alpha\right]  + \frac{1}{\nu}\int_{t_{n-1}}^{t_n}\!\!\|\ft^{\Delta t,+}\|^2_{(H^1_0(\Omega))'} \dd s.
\end{align*}
We shall assume henceforth that $\alpha$ is sufficiently small in the sense that \eqref{alphacond} holds. For all such $\alpha$,
$\gamma(\alpha)>0$; further, trivially, $A_n(\alpha)$ is nonnegative; by \eqref{eq:FL2c},  we have that
$B_n(\alpha)$ is nonnegative, and by \eqref{additional2} and since $\mathcal{F}(1+\alpha)\geq 0$ for all $\alpha \geq 0$, $C_n(\alpha)$ is also nonnegative. In terms of this notation \eqref{eq:energy-u+psi2-local4} can be rewritten as follows:
\[ \left(1 + \gamma(\alpha) \, \Delta t\right) A_n(\alpha) + B_n(\alpha) \leq A_{n-1}(\alpha) + B_{n-1}(\alpha) + C_n(\alpha),\qquad n=1,\dots, N.\]
It then follows by induction that
\[ A_n(\alpha) \leq (1 + \gamma(\alpha) \,\Delta t)^{-n} A_0(\alpha) + \sum_{j=1}^n D_j(\alpha),\qquad n = 1,\dots, N.\]
That is,
\[ A_n(\alpha) + B_n(\alpha) \leq (1 + \gamma(\alpha) \,\Delta t)^{-n} A_0(\alpha) + \left\{B_0(\alpha) + \sum_{j=1}^n C_j(\alpha)\right\},\qquad n = 1,\dots, N.\]
%
%
%
%
In particular, with $n=N$, by omitting the nonnegative term $B_N(\alpha)$ from the left-hand side of the resulting inequality, and recalling that $T = t_N = N \Delta t$, we get
\begin{align}\label{bd3}
&\|\uta^{\Delta t, +}(T)\|^2 + 2k\,\int_{\Omega \times D}\!\! M\, \mathcal{F}(\psia^{\Delta t, +}(T)+\alpha) \dq \dx \nonumber\\
&\quad \leq \left(1 + \frac{\gamma(\alpha)\, T}{N}\right)^{-N}\left[ \|\uta^{\Delta t, +}(0)\|^2 + 2k\,\int_{\Omega \times D}\!\! M\, \mathcal{F}(\psia^{\Delta t, +}(0)+\alpha) \dq \dx \right]\nonumber\\
&\quad\quad +\, 2k\, \int_{\Omega \times D} M \left[\mathcal{F}^L(\psia^{\Delta t, +}(0)+\alpha)
- \mathcal{F}(\psia^{\Delta t, +}(0)+\alpha)\right] \dq\dx\nonumber\\
&\quad\quad + \frac{\kappa\,a_0\, k}{\lambda}\, \int_{0}^{T}\int_{\Omega \times D} M\, (1 - \psia^{\Delta t, +}(\xt,\qt,s)) \dq \dx \dd s
\nonumber\\
&\quad\quad +\,\frac{\kappa\, a_0\, k}{\lambda}\, T\,|\Omega|\,\mathcal{F}(1+\alpha) + \frac{1}{\nu}\int_{0}^{T}\!\!\|\ft^{\Delta t,+}\|^2_{(H^1_0(\Omega))'} \dd s.
\end{align}
Using that $\uta^{\Delta t, +}(0) = \ut^0$ and $\psia^{\Delta t,+} = \beta^L(\hat\psi^0)$,
we then obtain from \eqref{bd3} that
\begin{align}\label{bd3a}
&\|\uta^{\Delta t, +}(T)\|^2 + 2k\,\int_{\Omega \times D}\!\! M\, \mathcal{F}(\psia^{\Delta t, +}(T)+\alpha) \dq \dx \nonumber\\
&\quad \leq \left(1 + \frac{\gamma(\alpha)\, T}{N}\right)^{-N}\left[ \|\ut^0\|^2 + 2k\,\int_{\Omega \times D}\!\! M\, \mathcal{F}(\beta^L(\hat\psi^0)+\alpha) \dq \dx \right]\nonumber\\
&\quad\quad +\, 2k\, \int_{\Omega \times D} M \left[\mathcal{F}^L(\beta^L(\hat\psi^0)+\alpha)
- \mathcal{F}(\beta^L(\hat\psi^0)+\alpha)\right]\!\dq\dx\nonumber\\
&\quad\quad + \frac{\kappa\,a_0\, k}{\lambda}\, \int_{0}^{T}\int_{\Omega \times D} M\, (1 - \psia^{\Delta t, +}(\xt,\qt,s)) \dq \dx \dd s
\nonumber\\
&\quad\quad +\,\frac{\kappa\,a_0\, k}{\lambda}\, T\,|\Omega|\, \mathcal{F}(1+\alpha) + \frac{1}{\nu}\int_{0}^{T}\!\!\|\ft^{\Delta t,+}\|^2_{(H^1_0(\Omega))'} \dd s.
\end{align}
Applying \eqref{eq:FL2c} and \eqref{bound-on-t5} in the second factor in the first term on the right-hand side  of \eqref{bd3a}
and using \eqref{before-two} in the square brackets in the second term on the right-hand side, we have that
\begin{align}\label{bd4}
&\|\uta^{\Delta t, +}(T)\|^2 + 2k\,\int_{\Omega \times D}\!\! M\, \mathcal{F}(\psia^{\Delta t, +}(T)+\alpha) \dq \dx \nonumber\\
&\quad \leq \left(1 + \frac{\gamma(\alpha)\, T}{N}\right)^{-N}\left[ \|\ut^0\|^2 + 3\alpha\,k\, |\Omega| + 2k\int_{\Omega \times D} M\, \mathcal{F}(\hat\psi^0 + \alpha) \dq \dx \right]
\nonumber\\
&\quad\quad+ 3 \alpha\,k\,|\Omega|\,+ \frac{\kappa\,a_0\, k}{\lambda}\, \int_{0}^{T}\int_{\Omega \times D} M\, (1 - \psia^{\Delta t, +}(\xt,\qt,s)) \dq \dx \dd s
\nonumber
\\
&\quad\quad +\,\frac{\kappa\,a_0\, k}{\lambda}\, T \,|\Omega|\,\mathcal{F}(1+\alpha) + \frac{1}{\nu}\int_{0}^{T}\!\!\|\ft^{\Delta t,+}\|^2_{(H^1_0(\Omega))'} \dd s.
\end{align}
We now pass to the limit $\alpha \rightarrow 0_+$, with $L$ and $\Delta t$ fixed, in much the same way as in Section \ref{Lindep-space}. Noting that $\lim_{\alpha \rightarrow 0_+}\gamma(\alpha)= \gamma_0$,  we thus obtain from \eqref{bd4}, \eqref{idatabd}
and \eqref{inidata-1}, that
\begin{align}\label{bd5}
&\|\uta^{\Delta t, +}(T)\|^2 + 2k\,\int_{\Omega \times D}\!\! M\, \mathcal{F}(\psia^{\Delta t, +}(T)) \dq \dx \nonumber\\
&\quad \leq \left(1 + \frac{\gamma_0\, T}{N}\right)^{-N}\left[ \|\ut_0\|^2 + 2k\int_{\Omega \times D} M\, \mathcal{F}(\hat\psi_0) \dq \dx \right]\nonumber\\
&\qquad+ \frac{\kappa\,a_0\, k}{\lambda}\, \int_{0}^{T}\int_{\Omega \times D} M\, (1 - \psia^{\Delta t, +}) \dq \dx \dd s
+ \frac{1}{\nu}\int_{0}^{T}\!\!\|\ft^{\Delta t,+}\|^2_{(H^1_0(\Omega))'} \dd s.
\end{align}
In order to pass to the limits $L \rightarrow \infty$ and $\Delta t \rightarrow 0_+$
(with $\Delta t = o(L^{-1})$)
in the first two terms on the left-hand side of \eqref{bd5} we require
additional considerations.

Noting \eqref{usconL2a} for the sequence $\{\ut_{\epsilon,L}^{\Delta t}\}_{L>1}$ of Theorem \ref{convfinal}, and passing to a subsequence (not indicated),
as $L \rightarrow \infty$ and $\Delta t \rightarrow 0_+$ (with $\Delta t = o(L^{-1})$) we have that $\|\ut_{\epsilon,L}^{\Delta t}(t) - \ut_\epsilon(t)\|$ converges to $0$ for a.e. $t \in (0,T)$; let $t_\ast$ be one such $t$ in $(0,T)$. It then follows from (\ref{utwconL2a}) that, for any $\vt \in \Vt_\sigma \subset \Ht$,
\begin{align}
&\hspace{-0.8mm}|\langle \ut_\epsilon(T)- \ut_{\epsilon,L}^{\Delta t}(T),\vt\rangle_{V_\sigma}
|
\leq \left|\int_{t_\ast}^T\!\! \left \langle \frac{\partial (\ut_\epsilon-
\ut_{\epsilon,L}^{\Delta t})}{\partial t}(t),\vt \right \rangle_{\!\!V_\sigma}
\!\!\!\dt \,\right|
+ \|\ut_{\epsilon,L}^{\Delta t}(t_\ast) - \ut_\epsilon(t_\ast)\|\,\|\vt\| \nonumber \\
& \hspace{3.55cm}\rightarrow 0 \mbox{ as } L \rightarrow \infty \mbox{ and } \Delta t
\rightarrow 0_+ \mbox{ with } \Delta t=o(L^{-1}).\!\!\!\!
\label{uTV}
\end{align}
Since $\ut_\epsilon\,: [0,T] \rightarrow \Ht$ is weakly continuous,
we have that $\ut_\epsilon(T) \in \Ht$.
It follows from the bound on the first term in \eqref{eq:energy-u+psi-final6a},
as $t \in [0,T]\mapsto \|\ut_{\epsilon,L}^{\Delta t}(t)\|\in \mathbb{R}_{\geq 0}$
is a {\em continuous} (piecewise linear) function, that,
for any $\vt_0 \in \Ht$ and any $\vt \in \Vt_\sigma$,
\begin{align*}
|(\ut_\epsilon(T)- \ut_{\epsilon,L}^{\Delta t}(T),\vt_0)
| &\leq |\langle \ut_\epsilon(T)- \ut_{\epsilon,L}^{\Delta t}(T),\vt\rangle_{V_\sigma}
| + \left[ \|\ut_\epsilon(T)\| + C_\ast^{\frac{1}{2}}\right]\, \|\vt_0 -\vt\|.
\end{align*}
Recalling (\ref{uTV}), it follows from the last inequality that
\[
\mbox{lim}~\!\mbox{sup}_{L \rightarrow \infty}|(\ut_\epsilon(T)-
\ut^{\Delta t}_{\epsilon, L}(T), \vt_0)| \leq C\,\|\vt_0-\vt\|\qquad
\forall \vt_0 \in \Ht,\quad\forall \vt \in \Vt_\sigma.
\]
As $\Vt_\sigma$ is dense in $\Ht$, we thus deduce that $\{\ut_{\epsilon,L}^{\Delta t}(T)\}_{L>1}$ converges to $\ut_\epsilon(T)$
weakly in $\Ht$ as $L \rightarrow \infty$ and $\Delta t \rightarrow 0_+$,
with $\Delta t = o(L^{-1})$.
Hence, by the weak lower-semicontinuity of the $L^2(\Omega)$ norm
and noting that
$\ut^{\Delta t}_{\epsilon, L}(T) = \ut^{\Delta t,+}_{\epsilon, L}(T)$, we have
\begin{equation}\label{lowersc1}
\|\ut_\epsilon(T)\| \leq
\mbox{lim~\!inf}_{L \rightarrow \infty} \|\ut^{\Delta t,+}_{\epsilon, L}(T)\|.
\end{equation}

Analogously to \eqref{uTV},  noting \eqref{psisconL2a} for the sequence
$\{\hat \psi_{\epsilon,L}^{\Delta t}\}_{L>1}$ of Theorem \ref{convfinal}, we have
(on passing to a subsequence, not indicated,) that
$\hat \psi_{\epsilon,L}^{\Delta t}(T)$ converges weakly to $\hat \psi_\epsilon(T)$
in $M^{-1}(H^s(\Omega \times D))'$ as $L \rightarrow \infty$ and $\Delta t \rightarrow 0_+$, with $\Delta t = o(L^{-1})$.
Thanks to Theorem \ref{convfinal}, $\hat \psi_\epsilon\,:\, [0,T] \rightarrow L^1_M(\Omega \times D)$ is weakly
continuous; hence we have that $\hat\psi_\epsilon(T) \in L^1_M(\Omega \times D)$.
Similarly to the argument in the proof of \ding{205} of Lemma \ref{psi0properties},
it follows from the bound on the fourth term in \eqref{eq:energy-u+psi-final6a}, on noting that $\mathcal{F}(r)/r \rightarrow \infty$ as $r\rightarrow \infty$, together with the de la Vall\'ee-Poussin and Dunford--Pettis theorems, that, upon extraction of a further subsequence (not indicated), $\hat\psi^{\Delta t}_{\epsilon, L}(T)$ converges weakly in $L^1_M(\Omega \times D)$ to some limit $A$, as $L \rightarrow \infty$
and $\Delta t \rightarrow 0_+$, with $\Delta t = o(L^{-1})$.
The fact that $A= \psi_\epsilon(T)$ follows 
from the weak convergence of
$\hat \psi_{\epsilon,L}^{\Delta t}(T)$ to $\hat \psi_\epsilon(T)$
in $M^{-1} (H^s(\Omega \times D))' ~(\hookleftarrow  L^1_M(\Omega \times D))$.
Finally, since $r \in [0,\infty) \mapsto \mathcal{F}(r) \in \mathbb{R}_{\geq 0}$ is continuous and convex, on applying Tonelli's weak lower semicontinuity theorem in $L^1_M(\Omega \times D)$ (cf. Theorem 3.20
in Dacorogna \cite{Dacorogna}),
%
\begin{equation}\label{lowersc2}
\int_{\Omega \times D} M \mathcal{F}(\hat\psi_\epsilon(T)) \dq \dx \leq
\mbox{lim}~\!\mbox{inf}_{L \rightarrow \infty}
 \int_{\Omega \times D} M \mathcal{F}(\hat\psi^{\Delta t,+}_{\epsilon,L}(T)) \dq \dx,
\end{equation}
where we have noted that
$\hat\psi^{\Delta t}_{\epsilon, L}(T) = \hat\psi^{\Delta t,+}_{\epsilon, L}(T)$.

We are now ready to pass to the limit in \eqref{bd5}. Using \eqref{lowersc1} and \eqref{lowersc2}, \eqref{psisconL2a}, \eqref{mass-conserved} and \eqref{fncon},
and letting $L \rightarrow \infty$ (whereby $\Delta t \rightarrow 0_+$ according to $\Delta t = o(L^{-1})$ and therefore $N=T/\Delta t \rightarrow \infty$), we deduce from \eqref{bd5} that
\begin{align}\label{bd7}
&\|\ut_\epsilon(T)\|^2 + 2k\,\int_{\Omega \times D}\!\! M\, \mathcal{F}(\hat\psi_\epsilon(T)) \dq \dx \nonumber\\
&\qquad \leq {\rm e}^{-\gamma_0 T}\left[ \|\ut_0\|^2 + 2k\int_{\Omega \times D} M\, \mathcal{F}(\hat\psi_0) \dq \dx \right]
+ \frac{1}{\nu}\int_{0}^{T}\!\!\|\ft(s)\|^2_{(H^1_0(\Omega))'} \dd s.
\end{align}

The Csisz\'ar--Kullback inequality (cf., for example, (1.1) and (1.2) in the work of
Unterreiter et al. \cite{UAMT}) with respect to the Gibbs measure $\undertilde{\mu}$
defined by $\dd \undertilde{\mu}= M(\qt)\dq$  yields, on noting \eqref{mass-conserved}, for a.e.\ $\xt \in \Omega$, that
\[ \|\hat\psi_\epsilon(\xt,\cdot,T) - 1\|_{L^1_M(D)} \leq \left[2 \int_{D} M\, \mathcal{F}(\hat\psi_\epsilon(\xt,\qt,T))
\dq\right]^{\frac{1}{2}},\]
which, after integration over $\Omega$ implies, by the Cauchy--Schwarz inequality, that
%
\[ \|\hat\psi_\epsilon(T) - 1\|^2_{L^1_M(\Omega \times D)} \leq 2|\Omega| \int_{\Omega \times D} M\, \mathcal{F}(\hat\psi_\epsilon(T))
\dq\dx.\]
Combining this with \eqref{bd7} yields (\ref{eq:bd55a}).
Taking $\ft\equiv 0$, the stated exponential decay in time of $(\ut_\epsilon,\hat\psi_\epsilon)$ to $(\zerot,1)$
in the $\Lt^2(\Omega) \times L^1_M(\Omega \times D)$ norm follows from \eqref{eq:bd55a}.
\end{proof}

\begin{remark}
By introducing the free energy
as the sum of the kinetic energy and the relative entropy:
\[\mbox{$\mathfrak{E}(t):= \frac{1}{2}\|\ut_\epsilon(t)\|^2 + k\,\int_{\Omega \times D} M\, \mathcal{F}(\hat\psi_\epsilon(t)) \dq \dx$,}\]
we deduce from \eqref{bd7} that, for any $T>0$,
\[ \mbox{$\mathfrak{E}(T) \leq {\rm e}^{-\gamma_0 T} \mathfrak{E}(0) + \frac{1}{2\nu}\int_{0}^{T} \|\ft(s)\|^2_{(H^1_0(\Omega))'} \dd s$}.\]
Thus in particular when $\ft=\zerot$, the free energy decays to $0$ as a function of time
from any initial datum $(\ut_0, \psi_0)$ with initial velocity $\ut_0 \in \Ht$
and initial probability density function $\psi_0$ that has finite relative entropy with respect to the log-concave Maxwellian $M$.

It is interesting to note the dependence of $\gamma_0 = \min\left(\frac{\nu}{C_{\sf P}^2}\,,\,\frac{\kappa\,a_0}{2\lambda}\right)$,
the rate at which the fluid relaxes to equilibrium,
on the dimensionless viscosity coefficient $\nu$ of the solvent, the minimum eigenvalue $a_0$
of the Rouse matrix $A$, the geometry of the flow domain encoded in the Poincar\'e constant $C_{\sf P}(\Omega)$, the Weissenberg number $\lambda$, and the Bakry--\'{E}mery constant $\kappa$ for the Maxwellian $M$ of the model.
We also observe that the right-hand side of the energy inequality \eqref{eq:energyest} and  $\gamma_0$ are independent of
the centre-of-mass diffusion coefficient $\varepsilon$ appearing in the equation \eqref{fp0}.$\quad\diamond$
\end{remark}

\noindent
\textbf{Acknowledgement}
ES was supported by the EPSRC Science and Innovation award to the Oxford Centre
for Nonlinear PDE (EP/E035027/1).


\bibliographystyle{siam}

\bibliography{polyjwbesrefs}

\appendix

\section{Cartesian products of Lipschitz domains}
\label{AppendixA}
\setcounter{equation}{0}

Let us suppose that $D := D_1 \times \cdots \times D_K$, where $D_i$, $i=1,\dots, K$, are
bounded open balls in $\mathbb{R}^d$ centred at $\zerot \in \mathbb{R}^d$. Then, $D$ is a bounded open
Lipschitz domain (cf. \ref{AppendixC} below for a precise definition of a Lipschitz domain).
We present two different proofs of this statement.

The first argument is based on the observation that the Cartesian product of $K$ bounded open Lipschitz
domains $D_i \subset \mathbb{R}^{n_i}$, $i=1, \dots, K$, with $n_i \geq 1$, is a bounded open Lipschitz domain in $\mathbb{R}^{n_1 +\dots + n_K}$. First, note that as a Cartesian product of a finite collection of
open sets, $D$ itself is open. That $D$ is a Lipschitz domain follows by combining Theorem 3.1 in the Ph.D. Thesis of Hochmuth \cite{RH}, which implies that the Cartesian product of a finite number of bounded domains
$D_i \subset \mathbb{R}^{n_i}$, $i=1, \dots, K$, with $n_i \geq 1$,  each satisfying the uniform cone property, is a bounded domain in $\mathbb{R}^{n_1 +\dots + n_K}$ satisfying the uniform cone property; and Theorem 1.2.2.2 in the book of Grisvard \cite{PG}, which states that a bounded open set in $\mathbb{R}^n$ has the uniform cone property if, and only if, its boundary is Lipschitz.

In the special case of our domain $D$ an alternative proof proceeds by noting that, as a Cartesian product of $K$ bounded open convex sets, $D_i \subset \mathbb{R}^d$, $i=1, \dots, K$, $D$ is a bounded open convex set in $\mathbb{R}^{K d}$ (cf. Hiriart-Urruty \& Lemar{\'e}chal \cite{HUL}, p.23), and then applying Corollary 1.2.2.3 in Grisvard \cite{PG}, which states that a bounded open convex set in $\mathbb{R}^{K d}$ has a Lipschitz boundary.

\section{\!Completeness and separability of $L^2_M(D)$ and $H^1_M(D)$}
\label{AppendixB}
\setcounter{equation}{0}

The completeness of the spaces $H^1_{M}(D)$ and $L^2_{M}(D)$ follows from Theorem 8.10.2
on p.418 in the monograph of Kufner, John \& Fu{\v c}ik \cite{KJF}.

By Theorem 19 in Section IV.8.19 of Dunford \& Schwartz \cite{DS}, $C(\overline{D})$ is dense in $L^2_M(D) = L^2(D; \undertilde{\mu})$, where the Radon measure $\undertilde{\mu}$ is defined on the $\sigma$ algebra of Borel subsets of $D$ by $\undertilde{\mu}(B) := \int_B M(\qt) \dd \qt$; $\undertilde{\mu}$ is absolutely continuous with respect to the Lebesgue measure on $\mathbb{R}^{K d}$
and $\dd \undertilde{\mu} = M(\qt) \dd \qt$. Thus, given $v \in L^2_{M}(D)$ and any $\varepsilon > 0$,
there exists $\varphi \in C(\overline{D})$ such that
\[ \|v - \varphi\|_{L^2_{M}(D)} < \textstyle{\frac{1}{2}}\varepsilon.\]
By the Stone--Weierstrass theorem (cf., for example, Pinkus \cite{AP}), $C(\overline{D})$ is separable: there exists a countable dense
set $\mathcal{P} \subset C(\overline{D})$, where $\mathcal{P}$ is the set of restrictions from $\mathbb{R}^{K d}$ to $\overline{D}$ of all polynomials with rational coefficients (cf. Theorem 1.4.5 on p.30 of Kufner, John \& Fu\v{c}ik \cite{KJF}); hence, given $\varepsilon>0$
there exists $p \in \mathcal{P}$ such that
\[ \|\varphi - p \|_{C(\overline{D})} <
\textstyle{\frac{1}{2}}\varepsilon.\]
Clearly, $C(\overline{D}) \subset L^2_{M}(D)$ and
therefore $\mathcal{P} \subset L^2_{M}(D)$; hence, and since $\mu(D)=1$,
\begin{eqnarray*}
\|v - p\|_{L^2_{M}(D)} &\leq& \|v - \varphi\|_{L^2_{M}(D)} + \|\varphi - p\|_{L^2_{M}(D)}\\
& < & \textstyle{\frac{1}{2}}\,\varepsilon
+ \|\varphi - p \|_{C(\overline{D})} \left(\int_{D} M(\qt)
\dd \qt \right)^{1/2} < \varepsilon.
\end{eqnarray*}
This shows that the countable set $\mathcal{P} \subset L^2_{M}(D)$
is dense in $L^2_{M}(D)$. Therefore $L^2_{M}(D)$ is separable.
By an argument analogous to the one in Section 3.5 on p.61 of Adams \& Fournier \cite{AF:2003},
$H^1_M(D)$ is separable.

\section{Density of $C^\infty(\overline{D})$ in $L^2_M(D)$ and $H^1_M(D)$, and of
$C^\infty(\overline{\Omega \times D})$ in $L^2_M(\Omega \times D)$ and $H^1_M(\Omega \times D)$}
\label{AppendixC}
\setcounter{equation}{0}

Since the set $\mathcal{P}$ defined in the previous subsection is dense in $L^2_M(D)$
and $\mathcal{P} \subset C^\infty(\overline{D}) \subset L^2_M(D)$, we deduce that
$C^{\infty}(\overline{D})$ is dense in $L^2_M(D)$. By an identical argument, $C^\infty(\overline{\Omega \times D})$ is dense in $L^2_M(\Omega \times D)$.

Next we consider the density of $C^\infty(\overline{D})$ in $H^1_M(D)$, and of
$C^\infty(\overline{\Omega \times D})$ in $H^1_M(\Omega \times D)$.
The argument below closely follows the proof of Theorem 1.1 on p.307 in the paper of Ne\v{c}as \cite{Necas}.
We suppose that a fixed Cartesian co-ordinate system, referred to henceforth as the {\em canonical
co-ordinate system}, is introduced in $\mathbb{R}^n$, so that a vector $\qt \in \mathbb{R}^n$ is
represented in terms its co-ordinates as
\[ \qt = (q_1, \dots, q_n).\]
Suppose that $\tilde{\mathbb{A}}$ is an $n \times n$ orthogonal matrix with determinant equal to $+1$ and
$\tilde{C}$ is an $n$-component column vector. The affine transformation
$\qt^{\rm T} \mapsto \tilde{\qt}^{\rm T}:=\tilde{\mathbb{A}} \qt^{\rm T} + \tilde{C}$ defines a new co-ordinate system, which we shall say is {\em equivalent} to the canonical co-ordinate system, and we shall express $\tilde{\qt}$
in terms of its co-ordinates in this second, equivalent, co-ordinate system as
\[ \tilde{\qt} = ({\tilde{q}}_1, \dots, {\tilde{q}}_n).\]
We recall the following definition from Kufner, John \& Fu\v{c}ik \cite{KJF}.

\begin{definition} We say that a bounded open domain $\mathcal{D} \subset \mathbb{R}^n$
is a Lipschitz domain if, and only if, there exist:
\begin{itemize}
\item[(i)] a positive integer $m$ and $m$ (different) Cartesian co-ordinate systems, referred to
as local co-ordinate systems, each of which is equivalent to a fixed canonical co-ordinate system
in $\mathbb{R}^n$.
When an $n$-component vector is expressed in terms of its co-ordinates in the $r$th local co-ordinate
system, we shall write
\[ Q_r = (q_{r1}, \dots, q_{rn}) = (q'_{(r)}, q_{(r)}^n),\]
where $q'_{(r)}:= (q_{r1}, \dots, q_{r n-1})$ and $q_{(r)}^n:= q_{rn}$.
\item[(ii)] a number $\alpha > 0$ and $m$ functions
\[ a_r \in C^{0,1}(\overline \Delta_r),\quad r = 1, \dots, m, \]
where
\[ \Delta_r = \bigg\{ q_{(r)}'\, :\, |q_{(r)}'|:= \left(\sum_{i=1}^{n-1}|q_{r i}|^2\right)^{\frac{1}{2}}
< \alpha \bigg\} .\]

\item[(iii)] a number $\beta>0$ such that
\begin{itemize}
\item[(iii.1)] the sets
\[ \Lambda_r := A_r^{-1}(\{Q_r^{\rm T} = (q_{(r)}',q_{(r)}^n)^{\rm T}\,:\,q_{(r)}' \in \Delta_r\quad
\mbox{and}\quad q_{(r)}^n = a_r(q_{(r)}')\}) \]
are subsets of $\partial \mathcal{D}$ for $r=1, \dots, m$,
\[ \partial \mathcal{D} = \bigcup_{r=1}^m \Lambda_r\]
and $A_r\, : \, Q^{\rm T} \rightarrow Q_r^{\rm T}$ is the affine transformation of co-ordinates $Q_r^{\rm T} := A_r(Q^{\rm T}) = \mathbb{A}_rQ^{\rm T} + C_r$, with $\mathbb{A}_r\in \mathbb{R}^{n \times n}$ orthogonal and determinant equal to $+1$ and $C_r$ an $n$-component column vector, that maps the canonical co-ordinate system in $\mathbb{R}^n$ to the, equivalent, $r$th local co-ordinate system.
\item[(iii.2)] for every $r=1,\dots, m$, the sets $U_r^-$ and $U_r^+$, defined by
\begin{eqnarray*}
&&
\!\!\!\!\!\!\!U_r^- := A_r^{-1}(\{Q_r^{\rm T} = (q'_{(r)}, q^n_{(r)})^{\rm T}\,:\, q'_{(r)} \in \Delta_r,\; a_r(q'_{(r)})-\beta < q^n_{(r)} < a_r(q'_{(r)})\})\\
&&\!\!\!\!\!\!\!U_r^+ := A_r^{-1}(\{Q_r^{\rm T}
= (q'_{(r)}, q^n_{(r)})^{\rm T}\,:\, q'_{(r)} \in \Delta_r,\;
a_r(q'_{(r)}) < q^n_{(r)} < a_r(q'_{(r)})+\beta\})
\end{eqnarray*}
are such that $U_r^-\subset \mathcal{D}$ and $U_r^+ \subset \overline{\mathcal D}^{c}$ (= the complement of $\overline{\mathcal{D}}$ w.r.t. $\mathbb{R}^n$).
\end{itemize}
\end{itemize}
We shall write
\[
U_r := A_r^{-1}(\{Q_r^{\rm T} = (q'_{(r)}, q^n_{(r)})^{\rm T}\,:\, q'_{(r)} \in \Delta_r\;
\mbox{and}\; a_r(q'_{(r)})-\beta < q^n_{(r)} < a_r(q'_{(r)})+\beta\}).
\]
\end{definition}

\begin{theorem}
 $C^\infty(\overline{D})$ is dense in $H^1_M(D)$.
\end{theorem}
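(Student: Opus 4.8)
The plan is to follow the classical partition-of-unity and local-flattening argument for density of smooth functions in Sobolev spaces, as in Ne\v{c}as \cite{Necas}, but keeping careful track of the Maxwellian weight $M$, which is the only new feature. Since $D = D_1 \times \cdots \times D_K$ is a bounded open Lipschitz domain (Appendix A), it admits a finite cover by open sets $U_1, \dots, U_m$ associated with local coordinate systems in which $\partial D$ is locally the graph of a Lipschitz function $a_r$, together with an interior open set $U_0 \Subset D$ so that $\overline{D} \subset \bigcup_{r=0}^m U_r$. Let $\{\chi_r\}_{r=0}^m$ be a smooth partition of unity subordinate to this cover. Given $\hat\varphi \in H^1_M(D)$, write $\hat\varphi = \sum_{r=0}^m \chi_r \hat\varphi$; it suffices to approximate each $\chi_r \hat\varphi$ in $H^1_M(D)$ by functions in $C^\infty(\overline{D})$. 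For $r=0$, the function $\chi_0 \hat\varphi$ has compact support inside $D$, where $M$ is bounded above and below by positive constants; hence standard mollification gives a smooth approximation in $H^1(U_0)$, which is the same as approximation in $H^1_M$ there.

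For the boundary pieces $r \geq 1$, I would work in the $r$th local coordinate system and use the Lipschitz-graph structure: translate the (compactly supported in the tangential directions) function $\chi_r \hat\varphi$ slightly into the interior of $D$ along the local ``vertical'' direction $q^n_{(r)}$, obtaining a function whose support is at positive distance from $\partial D$ in $U_r$, and then mollify. Two points need care. First, one must check that translation is continuous on $H^1_M$ restricted to $U_r$; this follows because $M \in C^2(\overline D)$ and is strictly positive on compact subsets of $D$, so on the relevant translated neighbourhoods $M$ is comparable to a fixed positive constant, reducing matters to continuity of translation on the unweighted $H^1$. Second, near $\partial D$ the weight $M$ degenerates (it vanishes on $\partial D$, by \eqref{growth1}), but this works \emph{in our favour}: after the inward translation by a fixed amount $h>0$, the support of the translated function lies in the region $\{ \mathrm{dist}(\cdot,\partial D) \geq c\,h \}$ where $M \geq c_1 (c h)^{\gamma_r} > 0$, so the weighted and unweighted $H^1$ norms are equivalent there and the usual mollification estimates apply; letting $h \to 0_+$ recovers $\chi_r \hat\varphi$ in $H^1_M(U_r)$ by the translation-continuity just mentioned. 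Summing over $r$ yields a function in $C^\infty(\overline D)$ arbitrarily close to $\hat\varphi$ in $H^1_M(D)$.

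The main obstacle — or rather the point requiring the most attention — is the interaction between the degeneracy of $M$ at $\partial D$ and the order of operations: one cannot mollify first and translate afterwards, since an arbitrary element of $H^1_M(D)$ need not extend across $\partial D$ and the weighted norm of a mollified function is not controlled uniformly up to the boundary. The correct order is translate-then-mollify, and the quantitative estimate controlling $\|\tau_h(\chi_r\hat\varphi) - \chi_r\hat\varphi\|_{H^1_M(U_r)} \to 0$ as $h\to 0_+$ must be established directly on $H^1_M$, not merely inferred from the unweighted theory; here one uses that $M(\qt)\,|\hat\varphi(\qt)|^2$ and $M(\qt)\,|\nabq\hat\varphi(\qt)|^2$ are in $L^1(D)$ together with absolute continuity of the Lebesgue integral and the uniform continuity of the (locally Lipschitz) direction field defining the translation. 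Once this weighted continuity-of-translation estimate is in hand, everything else is the standard localisation argument.

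\textbf{Remark on the product space.} The same scheme, applied to the Lipschitz domain $\Omega \times D \subset \mathbb{R}^{(K+1)d}$ with weight $M(\qt)$ (independent of $\xt$), gives density of $C^\infty(\overline{\Omega \times D})$ in $H^1_M(\Omega \times D)$, which is the statement \eqref{cal K} used elsewhere; I would state this as an immediate corollary, noting that the weight degenerates only in the $\qt$-directions near $\Omega \times \partial D$ and is bounded below on neighbourhoods of $\partial\Omega \times D$, so the boundary-flattening near $\partial\Omega$ is the entirely classical unweighted one.
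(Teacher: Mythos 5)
Your overall architecture — partition of unity, direct mollification of the interior piece, translate-then-mollify for the boundary pieces, with a weighted continuity-of-translation estimate as the key step — is the same as the paper's. But the direction of your boundary translation is reversed, and this is fatal rather than cosmetic. You shift the \emph{support} of $\chi_r\hat\varphi$ inward, so that the translated function lives in $\{\mathrm{dist}(\cdot,\partial D)\ge c\,h\}$, and you argue that there $M$ is bounded below, hence weighted and unweighted $H^1$ norms are equivalent and standard mollification applies. The trouble is that near the inner surface of its new support the translated function takes exactly the values that $\hat\varphi$ takes near $\partial D$, and these need not be square-integrable without the weight: since $M\approx[\mathrm{dist}(\cdot,\partial D)]^{\gamma_i}$ with $\gamma_i>1$ by \eqref{growth1}, an element of $H^1_M(D)$ may satisfy $|\nabq\hat\varphi|\sim [\mathrm{dist}(\cdot,\partial D)]^{-\beta}$ with $\frac12<\beta<\frac{\gamma_i+1}{2}$ (and for $\gamma_i>2$ even $\hat\varphi\notin L^2$ near $\partial D$), so $\hat\varphi$ does not belong to unweighted $H^1$ of any boundary neighbourhood. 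Your inward shift merely relocates this blow-up to an interior surface where $M$ no longer tames it, so the shifted function in general lies neither in $H^1(\{\mathrm{dist}\ge c\,h\})$ nor in $H^1_M(D)$; in particular the quantity $\|\,\chi_r\hat\varphi(\cdot',\cdot^n+h)-\chi_r\hat\varphi\,\|_{H^1_M}$ can be $+\infty$ for every $h>0$, translation in that direction is \emph{not} continuous on $H^1_M$, and the mollification step has nothing to act on. Your first justification of translation-continuity (comparability of $M$ to a constant on the translated neighbourhoods) also cannot work, because the comparability constant degenerates as $h\to0_+$; your later appeal to absolute continuity of the integral is the right ingredient, but it only closes with the opposite sign of the shift, where the weight at the evaluation points is dominated by the weight where the mass of $\hat\varphi$ sits.

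The correct move (and the paper's) is to translate the \emph{values} from deeper inside: in the $r$th chart, with $D$ locally below the graph $q^n_{(r)}=a_r(q'_{(r)})$, set $u_{r\lambda}(q'_{(r)},q^n_{(r)}):=u_r(q'_{(r)},q^n_{(r)}-\lambda)$. Then the potential singularity of $u_r$ at $\partial D$ is pushed into $\overline D^{\,c}$, so $u_{r\lambda}|_D\in H^1(D)$ and can be mollified (radius $h\le\lambda/2$) up to $\overline D$ in the classical way; and the heart of the proof is the weighted estimate $u_{r\lambda}\to u_r$ in $H^1_M(D)$ as $\lambda\to0_+$, obtained by splitting $g\sqrt M-g(\cdot',\cdot^n-\lambda)\sqrt{M(\cdot',\cdot^n-\lambda)}$ (continuity of translation in unweighted $L^2$ applied to $g\sqrt M$) and $g(\cdot',\cdot^n-\lambda)\,\bigl(\sqrt{M(\cdot',\cdot^n-\lambda)}-\sqrt M\bigr)$, the latter handled near the graph using the monotonicity $M(q',q^n-\lambda)>M(q',q^n)$, which gives $|\sqrt{M(\cdot',\cdot^n-\lambda)}-\sqrt M|^2\le M(\cdot',\cdot^n-\lambda)$ and lets absolute continuity of the Lebesgue integral act uniformly in $\lambda$, and away from the graph using uniform convergence of the ratio of the two square roots to $1$. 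With this reversal your sketch becomes essentially the paper's proof, and your closing remark about $\Omega\times D$ is then fine.
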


\begin{proof}
Let $\qt = (\qt_1, \dots, \qt_K) \in D = D_1 \times \cdots \times D_K$, where
$D_i \subset \mathbb{R}^d$ is a bounded open ball in $\mathbb{R}^d$ centred at the origin,
and let $n:= K d$; thus, after relabelling the co-ordinates of $\qt$,
$\qt = (q_1, \dots, q_n) \in D$, and $D$ is a bounded Lipschitz
domain in $\mathbb{R}^n$ by \ref{AppendixA}.
In particular, by using the notation introduced in the previous definition,
there exist $m$ local co-ordinate systems in $\mathbb{R}^{n}$ and real-valued functions
$a_r$, $r=1, \dots, m$, such that for each $\qt \in \partial D$ there is an
$r \in \{1, \dots, m\}$ for which
\[ \qt^{\rm T} = A_r^{-1}((q_{(r)}', a_r(q'_{(r)}))^{\rm T}), \]
where $q_{(r)}' \in \Delta_r$, $a_r \in C^{0,1}(\overline{\Delta_r})$ and $A_r$ is as in the definition above.
Since the Jacobian of each mapping $A_r$ is equal to $+1$, the specific choice of the matrix
$\mathbb{A}_r$ and the vector ${C}_r$ in the definition of the local co-ordinate transformation
$A_r$ does not affect the argument below. We shall therefore assume for ease of exposition that
$\mathbb{A}_r = \mathbb{I}$ and ${C}_r = 0$, so that $A_r$ is the identity mapping. Thus, for example,
we shall write $u(q'_{(r)}, q^n_{(r)})$ instead of $u((A_r^{-1}((q'_{(r)}, q^n_{(r)}))^{\rm T})^{\rm T})$.

There exist functions $\varphi_r \in C^\infty_0(U_r)$, with $0 \leq \varphi_r \leq 1$, $r=1,\dots, m$,
and a function $\varphi_{m+1} \in C^\infty_0(D)$, with $0 \leq \varphi_{m+1} \leq 1$,  such that
we have the partition of unity
\[
\sum_{i=1}^{m+1} \varphi_r  \equiv 1\quad \mbox{on}\quad \overline{D}.
\]

Now, given $u \in H^1_{M}(D)$, let $u_r:= u \varphi_r$ for $r=1,\dots, m+1$; clearly,
$u_r \in H^1_M(D)$ and $u_{m+1} \in H^1(D) \subset H^1_M(D)$. Having thus decomposed $u \in H^1_M(D)$ as
\begin{equation}\label{partition}
 u = \sum_{r=1}^{m+1} u_r \qquad \mbox{on $D$},
\end{equation}
we give a brief overview of the rest of the proof. Let us first note that on any compact subdomain $B$ of $D$ the Maxwellian $M$ is bounded above and below by positive constants; on such subdomains $B$ we have that  $H^1_M(B) = H^1(B)$. On $D$ itself the situation is quite different: of course, $H^1(D) \subset H^1_M(D)$; however the converse inclusion is false as there exist elements in $H^1_M(D)$, which, due to the decay of the weight function $M$ to $0$ on $\partial D$, tend to infinity at a faster rate than can be accommodated within $H^1(D)$. The idea of the proof is therefore to translate each of the functions $u_r$, $r=1,\dots,m$, in the direction of the boundary in order to shift the locations of potential `bad' behaviour in $u_{r}\in H^1_M(D)$ from $\partial D$ to $\overline{D}^c$; this then ensures that, for any (sufficiently small) positive value of the shift parameter $\lambda$, the shifted function, $u_{r\lambda}$, belongs to $H^1(D)$, and can be therefore well approximated in $H^1(D)$ by functions (denoted $u_{r \lambda h}$ below) that lie in $C^\infty(\overline{D})$, thanks to the density of $C^\infty(\overline{D})$ in $H^1(D)$. As $\varphi_{m+1} \in C^\infty_0(D)$, the function $u_{m+1}=u \varphi_{m+1}$ has compact support in $D$,
and can be therefore well approximated in $H^1_D(D)$ (and thereby also in $H^1_M(D)$)
by smooth functions, without the need to form its shifted counterpart
$u_{m+1 \lambda}$ beforehand and approximate that instead; however, $u_{m+1}$ will be shifted in any case, by the same amount $\lambda$ as the functions $u_r$, $r=1,\dots,m$, simply to ensure that a shifted counterpart of the partition \eqref{partition} holds.

With this motivation in mind, we define shifted counterparts, $u_{r\lambda}$, of the $u_r$ by
\[u_{r\lambda}(q'_{(r)}, q^n_{(r)}) := u_{r}(q'_{(r)}, q^n_{(r)}-\lambda),\qquad \mbox{for $r=1, \dots, m+1$.}\]

\textit{Step 1.} The first step in the argument amounts to showing that
\begin{equation}\label{shift}
  \lim_{\lambda \rightarrow 0_+} u_{r\lambda} = u_r \quad \mbox{in $H^1_{M}(D)$},\quad r=1,\dots, m+1.
\end{equation}
This clearly holds for $r=m+1$. Let us therefore assume that $r \in \{1, \dots, m\}$, let
$g_r$ signify the function $u_r$ or any of its first partial derivatives, and define $V_r : = U_r^-$.
Clearly, $V_r \subset U_r$, $r=1,\dots, m$. By the triangle inequality, with $\qt = (q'_{(r)}, q^n_{(r)})$,
\begin{eqnarray}
&&\left[ \int_{V_r}|g_r(q'_{(r)}, q^n_{(r)}) - g_r(q'_{(r)}, q^n_{(r)} - \lambda)|^2\, {M}(\qt)
\dd \qt \right]^{\frac{1}{2}} \nonumber\\
&&= \left[ \int_{V_r}|g_r(q'_{(r)}, q^n_{(r)})[{M}(\qt)]^{\frac{1}{2}} - g_r(q'_{(r)}, q^n_{(r)} - \lambda) [{M}(\qt)]^{\frac{1}{2}}|^2 \dd \qt \right]^{\frac{1}{2}}
\nonumber\\
&&\leq  \left[ \int_{V_r}|g_r(q'_{(r)}, q^n_{(r)})[{M}(q'_{(r)}, q^n_{(r)})]^{\frac{1}{2}} - g_r(q'_{(r)}, q^n_{(r)} - \lambda) [{M}( q'_{(r)}, q^n_{(r)}-\lambda)]^{\frac{1}{2}}|^2 \dd \qt \right]^{\frac{1}{2}}
\nonumber\\
&&\qquad + \left[ \int_{V_r}|g_r(q'_{(r)}, q^n_{(r)}-\lambda)|^2\, |[{M}(q'_{(r)}, q^n_{(r)}-\lambda)]^{\frac{1}{2}} - [{M}(q'_{(r)}, q^n_{(r)})]^{\frac{1}{2}}|^2 \dd \qt \right]^{\frac{1}{2}}\nonumber \\
&& =: {\rm T}_1 + {\rm T}_2.
\end{eqnarray}
We begin by considering ${\rm T}_2$. Let $\lambda \in (0,\beta/2]$; then, for
$\beta>0$ sufficiently small, there exists $c_0 \in (0,\beta]$ sufficiently small such that
\[ |[{M}(q'_{(r)}, q^n_{(r)}-\lambda)]^{\frac{1}{2}} - [{M}(q'_{(r)}, q^n_{(r)})]^{\frac{1}{2}}|^2 \leq {M}(q'_{(r)}, q^n_{(r)}-\lambda),
\]
for $q'_{(r)} \in \Delta_r$ and $q^n_{(r)} \in (a_r(q'_{(r)}) - c_0,  a_r(q'_{(r)}))$.
This follows on noting that for all $\lambda \in (0,\beta/2]$ and $c_0$ sufficiently small,
\[{M}(q'_{(r)}, q^n_{(r)}-\lambda) > {M}(q'_{(r)}, q^n_{(r)})\quad\mbox{for}\quad
\left\{\begin{array}{ll}q'_{(r)} \in \Delta_r, \\
                        q^n_{(r)} \in (a_r(q'_{(r)}) - c_0,  a_r(q'_{(r)})).\end{array}\right. \]
Hence, and by the absolute continuity of the Lebesgue integral, for any $\varepsilon > 0$ there exists $c_1 \in (0,c_0]$ small (independent of $\lambda \in (0, \beta/2]$) such that
\begin{eqnarray}\label{b1}
&&\int_{\Delta_r} \dd q'_{(r)} \int_{a_r(q'_{(r)})-c_1}^{a_r(q'_{(r)})}
|g_r(q'_{(r)}, q^n_{(r)}-\lambda)|^2\,
|[{M}(q'_{(r)}, q^n_{(r)}-\lambda)]^{\frac{1}{2}} - [{M}(q'_{(r)}, q^n_{(r)})]^{\frac{1}{2}}|^2  \dd q^n_{(r)}\nonumber\\
&&\leq \int_{\Delta_r} \dd q'_{(r)} \int_{a_r(q'_{(r)})-c_1}^{a_r(q'_{(r)})}
|g_r(q'_{(r)}, q^n_{(r)}-\lambda)|^2\,  {M}(q'_{(r)}, q^n_{(r)}-\lambda)  \dd q^n_{(r)}
 < \frac{1}{2}\varepsilon^2.
\end{eqnarray}
Now for $c_1>0$ fixed, there exists $\lambda>0$ sufficiently small such that
\begin{eqnarray}\label{b2}
&&\int_{\Delta_r} \dd q'_{(r)} \int_{a_r(q'_{(r)})-\beta}^{a_r(q'_{(r)})-c_1}
|g_r(q'_{(r)}, q^n_{(r)}-\lambda)|^2
\, |[{M}(q'_{(r)}, q^n_{(r)}-\lambda)]^{\frac{1}{2}} - [{M}(q'_{(r)}, q^n_{(r)})]^{\frac{1}{2}}|^2 \dd q^n_{(r)}\nonumber\\
&& \leq \max_{q'_{(r)} \in \Delta_r,\; a_r(q'_{(r)})-\beta \leq q^n_{(r)} \leq a_r(q'_{(r)})-c_1}
\left|1 - \frac{[{M}(q'_{(r)}, q^n_{(r)})]^{\frac{1}{2}}}{[{M}(q'_{(r)}, q^n_{(r)}-\lambda)]^{\frac{1}{2}}}\right|^2\nonumber\\
&&\qquad \times \int_{\Delta_r} \dd q'_{(r)} \int_{a_r(q'_{(r)})-\beta}^{a_r(q'_{(r)})-c_1}
|g_r(q'_{(r)}, q^n_{(r)}-\lambda)|^2\,  {M}(q'_{(r)}, q^n_{(r)}-\lambda)  \dd q^n_{(r)}
< \frac{1}{2}\varepsilon^2.
\end{eqnarray}
Summing \eqref{b1} and \eqref{b2} and taking the square root of both sides of the resulting inequality,
we deduce that for any $\varepsilon>0$ there exists $\lambda>0$ such that
\[
\left[ \int_{V_r}|g_r(q'_{(r)}, q^n_{(r)}-\lambda)|^2\, |[{M}(q'_{(r)}, q^n_{(r)}-\lambda)]^{\frac{1}{2}} - [{M}(q'_{(r)}, q^n_{(r)})]^{\frac{1}{2}}|^2 \dd \qt \right]^{\frac{1}{2}} < \varepsilon.
\]
Hence,
\begin{equation}\label{b3}
\hspace{8mm}\left[\int_{V_r}|g_r(q'_{(r)}, q^n_{(r)}-\lambda)|^2\, |[{M}(q'_{(r)}, q^n_{(r)}-\lambda)]^{\frac{1}{2}} - [{M}(q'_{(r)}, q^n_{(r)})]^{\frac{1}{2}}|^2 \dd \qt \right]^{\frac{1}{2}}\hspace{-9mm}
\end{equation}
converges to $0$ as $\lambda \rightarrow 0_+$. That concludes the analysis of term ${\rm T}_2$.

Concerning $T_1$, continuity in the $L^2$-norm of the translation operator implies that
\begin{equation}\label{b4}
\hspace{-0.4mm}\left[ \int_{V_r}|g_r(q'_{(r)}, q^n_{(r)})[{M}(q'_{(r)}, q^n_{(r)})]^{\frac{1}{2}} - g_r(q'_{(r)}, q^n_{(r)} - \lambda) [{M}( q'_{(r)}, q^n_{(r)}-\lambda)]^{\frac{1}{2}}|^2 \dd \qt \right]^{\frac{1}{2}}\hspace{-9mm}
\end{equation}
converges to $0$ as $\lambda \rightarrow 0_+$. That concludes the analysis of term ${\rm T}_1$.

Finally, \eqref{b3} and \eqref{b4} imply \eqref{shift}.

\smallskip

\textit{Step 2.}
Having shown \eqref{shift}, it now suffices to prove that each of the functions $u_{r \lambda}$, $r=1,\dots, m+1$, is a limit in $H^1_M(D)$ of $C^\infty(\overline{D})$ functions. To that end, for $r=1,\dots, m+1$ we define the functions
\[ u_{r\lambda h}(\qt) : = \frac{1}{C_0 h^n}  \int_{|\qt - \pt|< h}
\mbox{exp} \left(\frac{|\qt - \pt|^2}{|\qt - \pt|^2 - h^2}\right) u_{r\lambda}(\pt) \dd \pt,\]
(for $h$ sufficiently small, e.g., $0<h \leq \frac{1}{2}\lambda$, to ensure that the integral is correctly defined), with $C_0$ chosen so that
\[\frac{1}{C_0 h^n}  \int_{|\qt - \pt|< h}
\mbox{exp} \left(\frac{|\qt - \pt|^2}{|\qt - \pt|^2 - h^2}\right) \dd \pt = 1.\]
Clearly, $u_{r\lambda h} \in C^\infty(\overline{D})$, $r=1,\dots,m$, and for $\lambda \in (0, \beta/2]$
fixed, we have that
\[ \lim_{h \rightarrow 0_+} u_{r \lambda h} = u_{r \lambda} \quad \mbox{in}\quad H^1(D).\]
Thus, a fortiori (noting that ${M} \in L^\infty(D)$), for $\lambda \in (0,\beta/2]$ fixed,
\[ \lim_{h \rightarrow 0_+} u_{r \lambda h} = u_{r \lambda} \quad \mbox{in}\quad H^1_M(D),\quad
r=1,\dots,m.\]

Hence, given $\varepsilon > 0$ and $\lambda \in (0,\beta/2]$ fixed, there exists $h>0$ such that
\begin{equation}\label{c1}
\| u_{r \lambda h} - u_{r \lambda} \|_{H^1_M(D)} < \frac{\varepsilon}{m+1},\quad
r=1,\dots,m.
\end{equation}
Further, using that ${M} \in L^\infty(D)$ and the closeness of
$u_{m+1\,\lambda h}$ and $u_{m+1\,\lambda}$ in $H^1(D)$, and thereby in $H^1_M(D)$, we also have
\begin{equation}\label{c2}
\| u_{m+1\,\lambda h} - u_{m+1\, \lambda} \|_{H^1_M(D)} < \frac{\varepsilon}{m+1}
\end{equation}
for $h>0$ sufficiently small.

We define
\[ u_{\lambda h} := \sum_{r=1}^{m+1} u_{r \lambda h} \quad \mbox{and}\quad u_{\lambda} := \sum_{r=1}^{m+1}
u_{r \lambda }.\]
The inequalities \eqref{c1} and \eqref{c2} then imply that
\begin{equation}\label{density-partial}
\|u_{\lambda h} - u_\lambda\|_{H^1_M(D)}  \leq  \sum_{r=1}^{m+1} \|u_{r \lambda h }  -  u_{r \lambda}\|_{H^1_M(D)} \leq \varepsilon.
\end{equation}
Since the functions $u_{r \lambda h}$, $r=1,\dots,m+1$,
all belong to $C^\infty(\overline{D})$ the same is true of $u_{\lambda h}$. Finally,
\[ \|u - u_{\lambda h}\|_{H^1_M(D)} \leq  \|u - u_\lambda\|_{H^1_M(D)} +  \|u_{\lambda} -u_{\lambda h}\|_{H^1_M(D)},\]
and therefore the stated density result immediately follows, on recalling \eqref{partition}, \eqref{shift}, the definition of $u_\lambda$, \eqref{density-partial} and that $u_{\lambda h} \in C^\infty(\overline{D})$.
\end{proof}

By an identical argument, $C^\infty(\overline{\Omega \times D})$ is dense in $H^1_M(\Omega \times D)$.

\section{Compact embeddings in Maxwellian weighted spaces}
\label{AppendixD}
\setcounter{equation}{0}

\subsection{Step 1: Compact embedding of $H^1_M(D)$ into $L^2_M(D)$}\label{sec:comptensorise}
\textit{We are grateful to Leonardo Figueroa (University of Oxford) for suggesting the proof
presented in Section \ref{sec:comptensorise}.}

Let $D:=D_1 \times \cdots \times D_K$, where $D_i = B(\zerot, \sqrt{b_i})$, $b_i>0$, $i=1,\dots,K$, and
suppose that $M(\qt):=M_1(\qt_1)\cdots M_K(\qt_K)$. We shall prove that
\begin{equation}\label{compEmbNu}
     H^1_{M}(D) \compEmb L^2_{M}(D).
\end{equation}
We begin by recalling from the Appendix of Barrett \& S\"uli \cite{BS2} that
\begin{equation}\label{partial-compEmb}
H^1_{M_i}(D_i) \compEmb L^2_{M_i}(D_i)
\end{equation}
for $i=1,\dots,K$, which was proved there using a compactness result due to Antoci \cite{FA}.
We shall prove \eqref{compEmbNu} for the case of $K=2$,
with $D=D_1 \times D_2$ and $M(\qt) = M_1(\qt_1) M_2(\qt_2)$. For $K>2$ the proof is completely analogous.

Let $u \in H^1_M(D)$. As $M = M_1 \times M_2$, it follows from Fubini's theorem that, for almost all $\qt_1 \in D_1$,
\begin{equation*}
u(\qt_1,\cdot) \in L^1_{\mathrm{loc}}(D_2) \quad\text{and}\quad \partial^\alpha u(\qt_1,\cdot) \in L^2_{M_2}(D_2),\\
\end{equation*}
where $\alpha$ is any $d$-component multi-index with $0 \leq |\alpha| \leq 1$. Fubini's theorem also
implies that, given $\varphi_2 \in C^\infty_0(D_2)$ and a $d$-component multi-index $\alpha_2$, with
$0 \leq |\alpha_2| \leq 1$, we have
\begin{equation*}
\int_{D_1} \left[ (-1) \int_{D_2} u(\qt_1,\cdot) \partial^{\alpha_2}\varphi_2 \dd  \qt_2 \right] \varphi_1 \dd \qt_1
= \int_{D_1} \left[ \int_{D_2} \partial^{(0,\alpha_2)}u(\qt_1,\cdot) \varphi_2 \dd \qt_2 \right] \varphi_1 \dd \qt_1,
\end{equation*}
for all $\varphi_1 \in C^\infty_0(D_1)$. Therefore,
$\partial^{\alpha_2}[u(\qt_1,\cdot)] = \partial^{(0,\alpha_2)}
u(\qt_1,\cdot)$ in the sense of weak derivatives on $D_2$ for almost all $\qt_1 \in D_1$.
As $\partial^{(0,\alpha_2)} u(\qt_1,\cdot)$ belongs to $L^2_{M_2}(D_2)$ for
almost all $\qt_1 \in D_1$ we have that
\begin{equation}\label{regularity-a.e.}
u(\qt_1,\cdot) \in H^1_{M_2}(D_2) \quad\text{for almost all }\qt_1 \in D_1.
\end{equation}
Analogously,
\[ u(\cdot,\qt_2) \in H^1_{M_1}(D_1) \quad\text{for almost all }\qt_2 \in D_2. \]
As each of the partial Maxwellians, $M_1$ and $M_2$, is bounded from above and below by positive
constants on compact subsets of their respective domains, there exists a sequence $(D_{i,(n)}\,:\,n \in \mathbb{N})$ of open proper Lipschitz subsets of $D_i$, $i=1,2$, such that
\begin{equation*}
D_{i,(n)} \subset D_{i,(n+1)},\ n \in \mathbb{N},\qquad \bigcup_{n=1}^\infty D_{i,(n)} = D_i\quad\text{and}\quad H^1_{M_i}(D_{i,(n)}) \compEmb L^2_{M_i}(D_{i,(n)});
\end{equation*}
e.g., $D_{i,(n)} = B\big(0,\frac{\sqrt{b_i}n}{n+1}\big)$; the compact embeddings stated here
follow by the Rellich--Kondrachov theorem (cf. Adams \& Fournier \cite{AF:2003}, p.168, Theorem 6.3, Part I, eq. (3)) applied on $D_{i,(n)}$, $i=1,2$, $n \in \mathbb{N}$.
Letting, for $n \in \mathbb{N}$, $D_{(n)} := \bigtimes_{i=1}^2 D_{i,(n)} \subsetneq D$, and noting
that, by Appendix A, $D_{(n)}$ is a Lipschitz domain, the above properties get inherited by $D_{(n)}$
from $D_{i,(n)}$:
\begin{equation*}
D_{(n)} \subset D_{(n+1)},\ n \in \mathbb{N},\qquad \bigcup_{n=1}^\infty D_{(n)} = D\quad\text{and}\quad H^1_{M}(D_{(n)}) \compEmb L^2_{M}(D_{(n)}).
\end{equation*}
Let $D_i^{(n)} := D_i \setminus D_{i,(n)}$ and $D^{(n)} := D \setminus
D_{(n)}$. It follows from Opic \cite{Opic}, Theorem 2.4, that the above compact
embeddings on members of a nested covering imply the following characterizations
(the first, for $i \in \{1, 2\}$):
\begin{gather}
\label{equiv-i}
\hspace{-3mm}H^1_{M_i}(D_i) \compEmb L^2_{M_i}(D_i) \iff
\lim_{n \to \infty} \sup_{u \in H^1_{M_i}(D_i) \setminus \{0\}}
\int_{D_i^{(n)}} u^2 \dd M_i / \|u\|_{H^1_{M_i}(D_i)}^2 = 0,\\
\label{equiv-full}
\hspace{-3mm}H^1_M(D) \compEmb L^2_M(D) \iff
\lim_{n \to \infty} \sup_{u \in H^1_M(D) \setminus \{0\}}
\int_{D^{(n)}} u^2 \dd M / \|u\|_{H^1_M(D)}^2 = 0,
\end{gather}
where $\dd M_i:=M_i(\qt_i)\dq_i$, $i=1,2$, and $\dd M:= M(\qt) \dq$.
By virtue of \eqref{partial-compEmb}, the left-hand side of \eqref{equiv-i} holds;
hence, its right-hand side also holds. Using \eqref{regularity-a.e.} and
\eqref{equiv-i} with $i = 2$, we deduce that for any $\varepsilon > 0$
there exists $n=n(\varepsilon) \in \mathbb{N}$ such that
\begin{equation*}
\begin{split}
\int_{D_1 \times D_2^{(n)}} u^2 \dd M & = \int_{D_1} \left[ \int_{D_2^{(n)}} u^2(\qt_1,\cdot) \dd M_2 \right] \dd M_1 \leq \varepsilon \int_{D_1} \|u(\qt_1,\cdot)\|_{H^1_{M_2}(D_2)}^2 \dd M_1\\
& = \varepsilon \int_{D_1} \left[ \int_{D_2} u^2(\qt_1,\cdot)\dd M_2 + \int_{D_2} |{\grad{\qt_2}u(\qt_1,\cdot)}|^2 \dd M_2 \right] \dd M_1\\
& \leq \varepsilon \|u\|_{H^1_M(D)}^2;
\end{split}
\end{equation*}
and similarly for $\int_{D_1^{(n)} \times D_2} u^2 \dd M$. Then,
as $D^{(n)} = (D_1 \times D_2^{(n)}) \cup (D_1^{(n)} \times D_2)$, the right-hand side
of \eqref{equiv-full} holds; therefore, so does its left-hand side; hence \eqref{compEmbNu}.

\subsection{Step 2: Isometric isomorphisms}
Let $\Omega$ be a bounded open Lipschitz domain in $\mathbb{R}^d$.
We now show the isometric isomorphism of
the following pairs of spaces, respectively:
$L^2_M(\Omega \times D)$ and $L^2(\Omega;L^2_M(D))$;
$H^{0,1}_M(\Omega\times D)$ and $L^2(\Omega; H^1_M(D))$;
$H^{1,0}_M(\Omega \times D)$ and $H^1(\Omega;L^2_M(D))$.
The definitions of $H^{0,1}_M(\Omega\times D)$ and $H^{1,0}_M(\Omega \times D)$ are given below.

\subsubsection{Isometric isomorphism of $L^2_M(\Omega \times D)$ and
$L^2(\Omega;L^2_M(D))$}
Let
\[ L^2(\Omega;L^2_M(D)) := \{ v \in \mathcal{M}_w(\Omega,L^2_M(D))\,:\,
\int_\Omega \|v(\xt)\|^2_{L^2_M(D)} \dd \xt < \infty \}, \]
where
\[ \mathcal{M}_w(\Omega,L^2_M(D))
:= \{ v : \Omega \rightarrow L^2_M(D)\,:\, \mbox{$v$ is weakly
measurable on $\Omega$}\}.\]
Let $\{\varphi_j\}_{j=1}^\infty$ be a complete orthonormal system in the (separable)
Hilbert space $L^2_M(D)$ with respect to the inner product
$(\cdot,\cdot)$ of $L^2_M(D)$.
For $v \in L^2(\Omega;L^2_M(D))$, 
we define the function
\[ V_N(\xt,\qt) := \sum_{j=1}^N (v(\xt),\varphi_j)\,\varphi_j(\qt).\]
As $v$ is weakly measurable on $\Omega$, each of the functions
$\xt\mapsto (v(\xt),\varphi_j)$,
$j=1,2,\dots$, is measurable on $\Omega$;
therefore $(\xt,\qt) \mapsto (v(\xt),\varphi_j)$ is
measurable on $\Omega \times D$ for all $j=1,2,\dots$.
Similarly, $\qt \mapsto \varphi_j(\qt)$
is measurable on $D$ for each $j=1,2,\dots$,
and therefore $(\xt,\qt) \mapsto \varphi_j(\qt)$ is
measurable on $\Omega \times D$. Hence, also $V_N$
is a measurable function on $\Omega \times D$. Now,
\[ |V_N(\xt,\qt)|^2 = \sum_{j=1}^N \sum_{m=1}^N (v(\xt),\varphi_j)\,
(v(\xt), \varphi_k) \,\varphi_j(\qt)\, \varphi_k(\qt).\]
By the Cauchy--Schwarz inequality $M \varphi_j\, \varphi_k
= {M}^{\frac{1}{2}}\varphi_j\cdot{M}^{\frac{1}{2}}\varphi_k \in L^1(D)$
for all $j, k \geq 1$;
hence also $M(\cdot) \,|V_N(\xt, \cdot)|^2 \in L^1(D)$ for a.e. $\xt \in \Omega$.
Thus, by the orthonormality
of the $\varphi_j$, $j=1,2,\dots,$ in $L^2_M(D)$,
\[ \int_D M(\qt)\, |V_N(\xt,\qt)|^2 {\dd} \qt =
\sum_{j=1}^N|(v(\xt),\varphi_j)|^2, \qquad \mbox{a.e.
$\xt \in \Omega$}.\]
By Bessel's inequality in $L^2_M(D)$,
the right-hand side of this last equality is bounded by
$\|v(\xt)\|^2_{L^2_M(D)}$ for a.e. $\xt \in \Omega$,
and, by hypothesis, $\xt \mapsto v(\xt) \in L^2(\Omega)$; therefore,
by Fubini's theorem, $M\,|V_N|^2 \in L^1(\Omega \times D)$.
Upon integrating both sides over $\Omega$, and using Fubini's
theorem on the left-hand side to write the
multiple integral over $\Omega$ and $D$ as an integral
over $\Omega \times D$, we have
\begin{equation}\label{eq1}
 \|V_N\|^2_{L^2_M(\Omega \times D)}
 := \int_{\Omega \times D} M(\qt)\, |V_N(\xt,\qt)|^2
 {\dd} \qt {\dd} \xt = \sum_{j=1}^N \int_\Omega |(v(\xt),\varphi_j)|^2
{\dd} \xt.
\end{equation}
Now, let
\[ y_N(\xt) := \sum_{j=1}^N |(v(\xt),\varphi_j)|^2,\qquad \xt \in \Omega.\]
The sequence $\{y_N(\xt)\}_{N=1}^\infty$ is monotonic increasing
for almost all $\xt \in \Omega$; also,
according to Bessel's inequality in $L^2_M(D)$ we have that
\[ 0 \leq y_N(\xt) \leq \|v(\xt)\|^2_{L^2_M(D)}
\qquad \forall N \geq 1, \quad \mbox{a.e. $\xt \in \Omega$}.\]
Thus $\{y_N(\xt)\}_{N=1}^\infty$ is a bounded
sequence of real numbers, for a.e. $\xt \in \Omega$.
Therefore, the sequence $\{y_N(\xt)\}_{N=1}^\infty$
converges in $\mathbb{R}$ for a.e. $\xt \in \mathbb{R}$,
with
\[ y(\xt) = \lim_{N\rightarrow \infty} y_N(\xt)
= \sum_{j=1}^\infty |(v(\xt),\varphi_j)|^2,\qquad
\mbox{a.e. $\xt \in \Omega$}.\]
By the monotone convergence theorem,
\begin{eqnarray}\label{eq2}
 \lim_{N \rightarrow \infty} \sum_{j=1}^N
 \int_\Omega |(v(\xt),\varphi_j)|^2 {\dd} \xt
 &=& \lim_{N \rightarrow \infty} \int_\Omega y_N(\xt) {\dd} \xt \nonumber\\
 &=& \int_\Omega y(\xt) {\dd} \xt = \int_\Omega \sum_{j=1}^\infty
 |(v(\xt),\varphi_j)|^2 {\dd} \xt.
\end{eqnarray}
This implies that
\[ \left\{ \sum_{j=1}^N \int_\Omega |(v(\xt),\varphi_j)|^2 {\dd} \xt \right
\}_{N=1}^\infty\]
is a convergent sequence of real numbers.
Hence, it is also a Cauchy sequence in $\mathbb{R}$.

Since, for any $N > L \geq 1$,
\[\int_{\Omega \times D} |V_N(\xt,\qt) - V_L(\xt,\qt)|^2 {\dd} \qt {\dd} \xt
=  \sum_{j=L+1}^N \int_D |(v(\xt),\varphi_j)|^2 {\dd} \xt,\]
it follows that $\{V_N\}_{N=1}^\infty$ is a
Cauchy sequence in $L^2_M(\Omega \times D)$. Since
$L^2_M(\Omega \times D)$ is a Hilbert space, there exists
a unique $V \in L^2_M(\Omega \times D)$ such that
\begin{equation}\label{eq3}
V = \lim_{N \rightarrow \infty} V_N\qquad \mbox{ in $L^2_M(\Omega \times D)$}.
\end{equation}

Thus we have shown that the mapping
\[ \mathcal{I}\,:\, v \in L^2(\Omega,L^2_M(D))
\mapsto V := \sum_{j=1}^\infty (v(\cdot),\varphi_j)\,
\varphi_j(\cdot) \in L^2_M(\Omega \times D)\]
is correctly defined.
Next, we prove that $\mathcal I$ is a bijective
isometry, and this will imply that the
spaces $L^2(\Omega;L^2_M(D))$ and $L^2_M(\Omega \times D)$
are isometrically isomorphic.

We begin by showing that $\mathcal{I}$ is injective.
As $\mathcal{I}$ is linear it suffices to
prove that if $\mathcal{I}(v)=0$ then $v=0$. Indeed, if $\mathcal{I}(v)=0$, then
\[ \sum_{j=1}^\infty (v(\xt),\varphi_j)\, \varphi_j(\qt) = 0
\qquad \mbox{for a.e. $(\xt,\qt) \in \Omega \times D$}.\]
Since $\{\varphi_j\}_{j=1}^\infty$ is an orthonormal
system in $L^2_M(D)$, it follows that $(v(\xt),\varphi_j) = 0$
for a.e. $\xt \in \Omega$ and all $j=1,2,\dots$. The completeness of the
orthonormal system $\{\varphi_j\}_{j=1}^\infty$ in
$L^2_M(D)$ now implies that $v(\xt)=0$ in $L^2_M(D)$
for a.e. $\xt \in \Omega$, i.e.
$v=0$ in $L^2(\Omega;L^2_M(D))$.

Next we show that $\mathcal{I}$ is surjective.
Suppose that $V \in L^2_M(\Omega \times D)$. Then,
by Fubini's theorem, $V(\xt,\cdot) \in L^2_M(D)$
for a.e. $\xt \in \Omega$. Since
$\{\varphi_j\}_{j=1}^\infty$ is a complete
orthonormal system in $L^2_M(D)$, it follows that
\[ V(\xt,\cdot) = \sum_{j=1}^\infty(V(\xt,\cdot),\varphi_j)\,\varphi_j(\cdot).\]
On defining $v(\xt):= V(\xt,\cdot) \in L^2_M(D)$, we have
that $\mathcal{I}(v) = V$. Hence $\mathcal{I}$ is surjective.

Finally, we show that $\mathcal{I}$ is an isometry. Clearly
\begin{eqnarray*}
\|V\|^2_{L^2_M(\Omega\times D)}
&{(\ref{eq3})\atop{=}}\atop{~}&
\lim_{N \rightarrow \infty} \|V_N\|^2_{L^2_M(\Omega\times D)}
\\
&{(\ref{eq1})\atop{=}}\atop{~}&
\lim_{N \rightarrow \infty} \sum_{j=1}^N
\int_\Omega |(v(\xt),\varphi_j)|^2 {\dd} \xt \\
&{(\ref{eq2})\atop{=}}\atop{~}& \int_\Omega
\sum_{j=1}^\infty |(v(\xt),\varphi_j)|^2 {\dd} \xt.
\end{eqnarray*}
Applying Parseval's identity in $L^2_M(D)$
to the infinite series under the last integral sign, we deduce that
\begin{eqnarray*}
\|V\|^2_{L^2_M(\Omega\times D)}
= \int_\Omega \|v(\xt)\|^2_{L^2_M(D)} {\dd} \xt = \|v\|^2_{L^2(\Omega;
L^2_M(D))}.
\end{eqnarray*}
Thus we have shown that $\|\mathcal{I}v\|_{L^2_M(\Omega\times D)}
= \|v\|_{L^2(\Omega;L^2(D))}$,
whereby $\mathcal{I}$ is an isometry.

\subsubsection{Isometric isomorphism of $H^{0,1}_M(\Omega\times D)$
and $L^2(\Omega;H^1_M(D))$} Let us begin by observing that
 $L^2_M(\Omega \times D) \subset L^1_{\tiny \rm{loc}}(\Omega \times D)$,
and therefore any $V$ in $L^2_M(\Omega \times D)$ can be considered to be an
element of $\mathcal{D}'(\Omega \times D)$, the space of $\mathbb{R}$-valued
distributions on $\Omega \times D$. Let $\nabq$ denote the distributional gradient
with respect to $\qt$, defined on $\mathcal{D}'(\Omega \times D)$.
We define
\[ H^{0,1}_M(\Omega \times D) := \{V \in L^2_M(\Omega \times D)\,:\,
\nabq V \in L^2_M(\Omega \times D)\}.   \]
A completely identical argument to the one above shows that
$H^{0,1}_M(\Omega \times D)$ is isometrically isomorphic
to $L^2(\Omega; H^1_M(D))$; the only change that is required is
to replace $L^2_M(D)$ by $H^1_M(D)$
throughout and to take $\{\varphi_j\}_{j=1}^\infty$ to be a complete
orthonormal system in the inner product
$(\cdot,\cdot)$ of the (separable) Hilbert space $H^1_M(D)$, instead of $L^2_M(D)$.

\subsubsection{Isometric isomorphism of $H^{1,0}_M(\Omega \times D)$ and
$H^1(\Omega;L^2_M(D))$}
Let
\[ H^{1,0}_M(\Omega \times D) := \{V \in L^2_M(\Omega \times D)\,:\,
\nabx V \in L^2_M(\Omega \times D)\}.\]
Concerning the isometric isomorphism of $H^{1,0}_M(\Omega \times D)$
and $H^1(\Omega;L^2_M(D))$
we proceed as follows. Given $v \in H^1(\Omega; L^2_M(D))
\subset L^2(\Omega;L^2_M(D))$, we
define, as in the proof of the isometric
isomorphism of $L^2(\Omega;L^2_M(D))$ and
$L^2_M(\Omega \times D)$ above, the function
\[ V\,:\,(\xt,\qt) \in \Omega \times D \mapsto V(\xt,\qt)
:= \sum_{j=1}^\infty (v(\xt),\varphi_j)\,\varphi_j(\qt)
\in \mathbb{R},\]
where $\{\varphi_j\}_{j=1}^\infty$ is a complete orthonormal
system in $L^2_M(D)$. We showed above
that $V \in L^2_M(\Omega \times D)$,
and $\|V\|_{L^2_M(\Omega \times D)} = \|v\|_{L^2(\Omega;
L^2_M(D))}$.

Now, let $\nabx$ denote the distributional gradient with respect to $\xt$,
defined on $\mathcal{D}'(\Omega \times D)$, and
let $\undertilde{D}_x$ denote the distributional gradient, defined
on $\mathcal{D}'(\Omega;L^2_M(D))$, the space of
$L^2_M(D)$-valued distributions on $\Omega$.
Applying $\nabx$ to
\[ V = \sum_{j=1}^\infty (v,\varphi_j)\,\varphi_j
\quad \mbox{in $\mathcal{D}'(\Omega \times D)$} 
%
\qquad \mbox{and noting that}\qquad
%
\nabx V = \sum_{j=1}^\infty (\undertilde{D}_x v, \varphi_j)\, \varphi_j,\]
it follows from the isometric isomorphism of
$L^2_M(\Omega\times D)$ and $L^2(\Omega;L^2_M(D))$
that
\begin{eqnarray*}
\|V\|^2_{H^{1,0}_M(\Omega\times D)}&=& \|V\|^2_{L^2_M(\Omega \times D)} +
\|\nabx V\|^2_{L^2_M(\Omega\times D)}\\
&=& \|v\|^2_{L^2_M(\Omega; L^2_M(D))} + \|\undertilde{D}_x v \|^2_{L^2(\Omega;L^2_M(D))}
=\|v\|^2_{H^1(\Omega;L^2_M(D))},
\end{eqnarray*}
which shows that $H^{1,0}_M(\Omega \times D)$ and $H^1(\Omega;L^2_M(D))$
are isometrically isomorphic.

\subsection{Step 3: Compact embedding of
$H^1_M(\Omega \times D)$ into $L^2_M(\Omega \times D)$}\label{sec:shakhmurov}
We use the results of Step 2 to
identify the space $L^2_M(\Omega \times D)$ with $L^2(\Omega;L^2_M(D))$
and the space
$H^1_M(\Omega \times D) =  H^{1,0}_M(\Omega \times D) \cap
H^{0,1}_M(\Omega \times D)$ with
$H^1(\Omega; L^2_M(D)) \cap L^2(\Omega; H^1_M(D))$.
Upon doing so, the compact embedding of
$H^1_M(\Omega \times D)$ into $L^2_M(\Omega \times D)$
directly follows from the compact embedding of $H^1(\Omega;L^2_M(D))\cap L^2(\Omega;H^1_M(D))$
into $L^2(\Omega;L^2_M(D))$, implied by Theorem 2 on p.1499 in the paper of Shakhmurov \cite{VS},
thanks to the compact embedding from Step 1.

~\\

~\hfill{\footnotesize \textit{London \& Oxford}

~\hfill{\footnotesize \textit{Original version: April 1, 2010; Revised version: 15 July 2011}}

~\\

\textbf{Note:} The results contained in this preprint have been published, in an abbreviated form, in our paper \cite{BS2011-fene}.


\end{document}